\newcounter {res}[section]
\numberwithin{res}{section}
\newtheorem{thm}[res]{Theorem}
\newtheorem*{thm0}{Theorem}
\newtheorem*{claim}{Claim}
\newtheorem{lem}[res]{Lemma}
\newtheorem{prop}[res]{Proposition}
\newtheorem{cor}[res]{Corollary}
\newtheorem*{coro0}{Corollary}
\theoremstyle{definition}
\newtheorem{notation}[res]{Notation}
\newtheorem{dfn}[res]{Definition}
\newtheorem{rmk}[res]{Remark}
\newcommand{\NN}{\ensuremath{\mathbb{N}}} 
\newcommand{\ZZ}{\ensuremath{\mathbb{Z}}} 
\newcommand{\CC}{\ensuremath{\mathbb{C}}} 
\newcommand{\QQ}{\ensuremath{\mathbb{Q}}}
\newcommand{\RR}{\ensuremath{\mathbb{R}}} 
\renewcommand{\SS}{\ensuremath{\mathbb{S}}} 
\newcommand{\F}{\mathcal{F}}
\let\oldpi\pi
\renewcommand{\pi}{\ensuremath{s}}
\newcommand{\Fz}{\ensuremath{\mathcal{F}_{\mathbf{z}}}}
\newcommand{\Tz}{\ensuremath{\mathcal{T}_{\mathbf{z}}}}
\let\oldmarginpar\marginpar
\renewcommand\marginpar[1]{\oldmarginpar{\color{blue}\fbox{\begin{minipage}{1.4cm} \tiny #1 \end{minipage}}}}
\newcommand\marginparM[1]{}
\newcommand\marginparL[1]{}
\newcommand\eqdef{\ensuremath{\stackrel{\textrm{def}}{=}}}
\newcommand\kups[1]{\left\langle #1 \right\rangle}
\newcommand\kup[1]{\ensuremath\left\langle #1 \right\rangle}
\newcommand\kupc[1]{\ensuremath{\left\langle #1 \right\rangle}}
\newcommand{\imagesfolder}{.}
\newcommand{\sll}{\ensuremath{\mathfrak{sl}}}
\newcommand{\Col}{\ensuremath{\mathbb{P}}}
\newcommand{\NB}[1]{\ensuremath{\vcenter{\hbox{#1}}}}
\newcommand{\FoamN}{\ensuremath{{\mathbf{Foam}_N}}}
\newcommand{\PolN}{\ensuremath{\mathbf{Pol}_N}}
\newcommand{\FQR}{\ensuremath{\mathcal{F}_{QR}}}
\def\co{\colon\thinspace}
\title{A closed formula for the evaluation of $\mathfrak{sl}_N$-foams}
\author{Louis-Hadrien Robert}
\address{Universit\"a{}t Hamburg, Bundesstra\ss{}e 55, 20146 Hamburg, Germany}
\email{louis-hadrien.robert@uni-hamburg.de}
\author{Emmanuel Wagner}
\address{Universit\'e{}~de Bourgogne Franche-Comté, IMB, UMR 5584, 21000 Dijon, France } 
\email{emmanuel.wagner@u-bourgogne.fr}
\tikzset{>=latex}
\tikzset{->-/.style={decoration={
  markings,
  mark=at position .5 with {\arrow{>}}},postaction={decorate}}}
\tikzset{-<-/.style={decoration={
  markings,
  mark=at position .5 with {\arrow{<}}},postaction={decorate}}}
\newcommand{\digona}{\ensuremath{\vcenter{\hbox{\tikz[scale=0.4]{
\coordinate (B) at (0,0);
\coordinate (V1) at (0,0.5);
\coordinate (V2) at (0,2.5);
\coordinate (T) at (0,3);
\draw[white] (0, -0.5) -- (0, 3.5);
\draw[->] (B) -- (V1) node[midway, left] {\tiny{$m+n$}};
\draw[->] (V2) -- (T) node[midway, left] {\tiny{$m+n$}};
\draw[->] (V1) .. controls +(+0.5, +0.5) and +(+0.5, -0.5).. (V2) node[midway, right] {\tiny{$n$}};
\draw[->] (V1) .. controls +(-0.5, +0.5) and +(-0.5, -0.5).. (V2) node[midway, left] {\tiny{$m$}};
}}}}}
\newcommand{\verta}{\ensuremath{\vcenter{\hbox{\tikz[scale=0.4]{
\coordinate (B) at (0,0);
\coordinate (T) at (0,3);
\draw[white] (0, -0.5) -- (0, 3.5);
\draw[->] (B) -- (T) node[midway, right] {\tiny{$m+n$}};
}}}}}
\newcommand{\digonb}{\ensuremath{\vcenter{\hbox{\tikz[scale=0.4]{
\coordinate (B) at (0,0);
\coordinate (V1) at (0,0.5);
\coordinate (V2) at (0,2.5);
\coordinate (T) at (0,3);
\draw[white] (0, -0.5) -- (0, 3.5);
\draw[->] (B) -- (V1) node[midway, left] {\tiny{$m$}};
\draw[->] (V2) -- (T) node[midway, left] {\tiny{$m$}};
\draw[<-] (V1) .. controls +(+0.5, +0.5) and +(+0.5, -0.5).. (V2) node[midway, right] {\tiny{$n$}};
\draw[->] (V1) .. controls +(-0.5, +0.5) and +(-0.5, -0.5).. (V2) node[midway, left] {\tiny{$m+n$}};
}}}}}
\newcommand{\vertb}{\ensuremath{\vcenter{\hbox{\tikz[scale=0.4]{
\coordinate (B) at (0,0);
\coordinate (T) at (0,3);
\draw[white] (0, -0.5) -- (0, 3.5);
\draw[->] (B) -- (T) node[midway, right] {\tiny{$m$}};
}}}}}
\newcommand{\stgamma}{\ensuremath{\vcenter{\hbox{\tikz[scale=0.3]{
\coordinate (B) at (0,0);
\coordinate (V1) at (0,1);
\coordinate (V2) at (1,2);
\coordinate (T1) at (-2,3);
\coordinate (T2) at (0,3);
\coordinate (T3) at (2,3);
\draw[->] (B) -- (V1) node [at start, below] {\tiny{$i+j+k$}};
\draw[->] (V1) -- (T1) node [at end, above] {\tiny{$i$}};
\draw[->] (V1)  -- (V2) node[midway, right] {\tiny{$j+k$}};
\draw[->] (V2) -- (T2) node[at end, above] {\tiny{$j$}};
\draw[->] (V2) -- (T3) node[at end, above] {\tiny{$k$}};
}}}}}
\newcommand{\stgammaprime}{\ensuremath{\vcenter{\hbox{\tikz[scale=0.3]{
\coordinate (B) at (0,0);
\coordinate (V1) at (0,1);
\coordinate (V2) at (-1,2);
\coordinate (T1) at (-2,3);
\coordinate (T2) at (0,3);
\coordinate (T3) at (2,3);
\draw[->] (B) -- (V1) node [at start, below] {\tiny{$i+j+k$}};
\draw[->] (V1) -- (T3) node [at end, above] {\tiny{$k$}};
\draw[->] (V1)  -- (V2) node[midway, left] {\tiny{$i+j$}};
\draw[->] (V2) -- (T1) node[at end, above] {\tiny{$i$}};
\draw[->] (V2) -- (T2) node[at end, above] {\tiny{$j$}};
}}}}}
\newcommand{\squarea}{\ensuremath{\vcenter{\hbox{\tikz[scale=0.4]{
\coordinate (B1) at (-1,0);
\coordinate (B2) at (1,0);
\coordinate (C1) at (-1,1);
\coordinate (D1) at (-1,2);
\coordinate (C2) at (1,1);
\coordinate (D2) at (1,2);
\coordinate (T1) at (-1,3);
\coordinate (T2) at (1,3);
\draw[->] (B1) -- (C1) node[at start, below] {\tiny{$1$}};
\draw[->] (D1) -- (C1) node[midway, left   ] {\tiny{$m$}};
\draw[->] (D1) -- (T1) node[at end , above ] {\tiny{$1$}};
\draw[->] (C2) -- (B2) node[at end, below] {\tiny{$m$}};
\draw[->] (C2) -- (D2) node[midway, right] {\tiny{$1$}};
\draw[->] (T2) -- (D2) node[at start, above] {\tiny{$m$}};
\draw[->] (D2) -- (D1) node[midway, above] {\tiny{$m+1$}};
\draw[->] (C1) -- (C2) node[midway, below] {\tiny{$m+1$}};
}}}}}
\newcommand{\twoverta}{\ensuremath{\vcenter{\hbox{\tikz[scale=0.4]{
\coordinate (B1) at (-1,0);
\coordinate (T1) at (-1,3);
\coordinate (B2) at (1,0);
\coordinate (T2) at (1,3);
\draw[->] (B1) -- (T1) node[midway, left] {\tiny{$1$}};
\draw[->] (T2) -- (B2) node[midway, right] {\tiny{$m$}};
}}}}}
\newcommand{\doubleYa}{\ensuremath{\vcenter{\hbox{\tikz[scale=0.4]{
\coordinate (B1) at (-1,0);
\coordinate (T1) at (-1,3);
\coordinate (C) at (0,1);
\coordinate (D) at (0,2);
\coordinate (B2) at (1,0);
\coordinate (T2) at (1,3);
\draw[->] (B1) -- (C) node[at start, below] {\tiny{$1$}};
\draw[->] (C) -- (B2) node[at end, below] {\tiny{$m$}};
\draw[->] (D) -- (C) node[midway, left] {\tiny{$m-1$}};
\draw[->] (T2) -- (D) node[at start, above] {\tiny{$m$}};
\draw[->] (D) -- (T1) node[at end, above] {\tiny{$1$}};
}}}}}
\newcommand{\squareb}{\ensuremath{\vcenter{\hbox{\tikz[scale=0.55]{
\coordinate (B1) at (-1,0);
\coordinate (B2) at (1,0);
\coordinate (C1) at (-1,1);
\coordinate (D1) at (-1,2);
\coordinate (C2) at (1,1);
\coordinate (D2) at (1,2);
\coordinate (T1) at (-1,3);
\coordinate (T2) at (1,3);
\draw[->] (B1) -- (C1) node[at start, below] {\tiny{$1$}};
\draw[->] (C1) -- (D1) node[midway, left   ] {\tiny{$l+n$}};
\draw[->] (D1) -- (T1) node[at end , above ] {\tiny{$l$}};
\draw[->] (B2) -- (C2) node[at start, below] {\tiny{$m+l-1$}};
\draw[->] (C2) -- (D2) node[midway, right] {\tiny{$m-n$}};
\draw[->] (D2) -- (T2) node[at end, above] {\tiny{$m$}};
\draw[->] (D1) -- (D2) node[midway, above] {\tiny{$n$}};
\draw[->] (C2) -- (C1) node[midway, below] {\tiny{$l+n-1$}};
}}}}}
\newcommand{\squarec}{\ensuremath{\vcenter{\hbox{\tikz[xscale=0.65, yscale=0.55]{
\coordinate (B1) at (-1,0);
\coordinate (B2) at (1,0);
\coordinate (C1) at (-1,1);
\coordinate (D1) at (-1,2);
\coordinate (C2) at (1,1);
\coordinate (D2) at (1,2);
\coordinate (T1) at (-1,3);
\coordinate (T2) at (1,3);
\draw[->] (B1) -- (C1) node[at start, below] {\tiny{$n$}};
\draw[->] (C1) -- (D1) node[midway, left   ] {\tiny{$n+k $}};
\draw[->] (D1) -- (T1) node[at end , above ] {\tiny{$m$}};
\draw[->] (B2) -- (C2) node[at start, below] {\tiny{$m+l$}};
\draw[->] (C2) -- (D2) node[midway, right] {\tiny{$m+l-k$}};
\draw[->] (D2) -- (T2) node[at end, above] {\tiny{$n+l$}};
\draw[->] (D1) -- (D2) node[midway, above] {\tiny{$n+k-m$}};
\draw[->] (C2) -- (C1) node[midway, below] {\tiny{$k$}};
}}}}}
\newcommand{\squared}{\ensuremath{\vcenter{\hbox{\tikz[yscale=0.55, xscale=0.65]{
\coordinate (B1) at (-1,0);
\coordinate (B2) at (1,0);
\coordinate (C1) at (-1,1);
\coordinate (D1) at (-1,2);
\coordinate (C2) at (1,1);
\coordinate (D2) at (1,2);
\coordinate (T1) at (-1,3);
\coordinate (T2) at (1,3);
\draw[->] (B1) -- (C1) node[at start, below] {\tiny{$n$}};
\draw[->] (C1) -- (D1) node[midway, left   ] {\tiny{$m-j$}};
\draw[->] (D1) -- (T1) node[at end , above ] {\tiny{$m$}};
\draw[->] (B2) -- (C2) node[at start, below] {\tiny{$m+l$}};
\draw[->] (C2) -- (D2) node[midway, right] {\tiny{$n+l+j$}};
\draw[->] (D2) -- (T2) node[at end, above] {\tiny{$n+l$}};
\draw[->] (D2) -- (D1) node[midway, above] {\tiny{$j$}};
\draw[->] (C1) -- (C2) node[midway, below] {\tiny{$n+j-m$}};
}}}}}
\newcommand{\doubleYb}{\ensuremath{\vcenter{\hbox{\tikz[scale=0.4]{
\coordinate (B1) at (-1,0);
\coordinate (T1) at (-1,3);
\coordinate (C) at (0,1);
\coordinate (D) at (0,2);
\coordinate (B2) at (1,0);
\coordinate (T2) at (1,3);
\draw[->] (B1) -- (C) node[at start, below] {\tiny{$1$}};
\draw[<-] (C) -- (B2) node[at end, below] {\tiny{$m+l-1$}};
\draw[<-] (D) -- (C) node[midway, left] {\tiny{$l+m$}};
\draw[<-] (T2) -- (D) node[at start, above] {\tiny{$m$}};
\draw[->] (D) -- (T1) node[at end, above] {\tiny{$l$}};
}}}}}
\newcommand{\bigHb}{\ensuremath{\vcenter{\hbox{\tikz[scale=0.4]{
\coordinate (B1) at (-1,0);
\coordinate (T1) at (-1,3);
\coordinate (M1) at (-1,1.5);
\coordinate (M2) at (1,1.5);
\coordinate (B2) at (1,0);
\coordinate (T2) at (1,3);
\draw[->] (B1) -- (M1) node[at start, below] {\tiny{$1$}};
\draw[->] (B2) -- (M2) node[at start, below] {\tiny{$m+l-1$}};
\draw[->] (M2) -- (M1) node[midway, above] {\tiny{$l-1$}};
\draw[->] (M2) -- (T2) node[at end, above] {\tiny{$m$}};
\draw[->] (M1) -- (T1) node[at end, above] {\tiny{$l$}};
}}}}}
\newcommand{\pointing}[1]{\filldraw[ultra thin, draw = black, fill=white] #1 circle (2mm);
\fill #1 circle (0.5mm);}
\newcommand{\cross}[1]{
\filldraw[ultra thin, draw = black, fill=white] #1 circle (2mm);
\draw[ultra thin] #1 -- +(45:2mm); 
\draw[ultra thin] #1 -- +(135:2mm);
\draw[ultra thin] #1 -- +(-45:2mm); 
\draw[ultra thin] #1 -- +(-135:2mm);}
\newcommand{\mysquare}[3]{\NB{
\begin{tikzpicture}
\begin{scope}[xscale = -0.2, yscale=0.2]
\draw[white, opacity=0] (0,-2.5) -- (0, 2.5);
\coordinate (tl) at (-1, 1);  
\coordinate (bl) at (-1,-1);  
\coordinate (tr) at ( 1, 1);  
\coordinate (br) at ( 1,-1);  
\coordinate (TL) at (-2, 2);  
\coordinate (BL) at (-2,-2);  
\coordinate (TR) at ( 2, 2);   
\coordinate (BR) at ( 2,-2);  
\coordinate (HH) at (BL);
\coordinate (DA) at (bl); 
\ifthenelse{\equal{#1}{c} \or \equal{#1}{a1} \or \equal{#1}{a2} \or \equal{#1}{r} \and \(\equal{#2}{l} \or \equal{#2}{b1} \or \equal{#2}{b2} \or \equal{#2}{x} \)}{\draw[->,thick, blue] (HH) -- (DA);}{\draw[densely dotted] (HH) -- (DA);}
\ifthenelse{\equal{#2}{c} \or \equal{#2}{a1} \or \equal{#2}{a2} \or \equal{#2}{r} \and \(\equal{#1}{l} \or \equal{#1}{b1} \or \equal{#1}{b2} \or \equal{#1}{x} \)}{\draw[->,thick, red ] (HH) -- (DA);}{}
\coordinate (HH) at (bl);
\coordinate (DA) at (tl);
\ifthenelse{\equal{#1}{c} \or \equal{#1}{a1} \and \(\equal{#2}{a2} \or \equal{#2}{r} \or \equal{#2}{l} \or \equal{#2}{b1} \or \equal{#2}{b2} \or \equal{#2}{x} \)}{\draw[->,thick, blue] (HH) -- (DA);}{\draw[densely dotted] (HH) -- (DA);}
\ifthenelse{\equal{#2}{c} \or \equal{#2}{a1} \and \(\equal{#1}{a2} \or \equal{#1}{r} \or \equal{#1}{l} \or \equal{#1}{b1} \or \equal{#1}{b2} \or \equal{#1}{x} \)}{\draw[->,thick, red ] (HH) -- (DA);}{}
\coordinate (HH) at (tl);
\coordinate (DA) at (TL);
\ifthenelse{\equal{#1}{c} \or \equal{#1}{a1} \or \equal{#1}{a2} \or \equal{#1}{l} \and \(\equal{#2}{r} \or \equal{#2}{b1} \or \equal{#2}{b2} \or \equal{#2}{x} \)}{\draw[->,thick, blue] (HH) -- (DA);}{\draw[densely dotted] (HH) -- (DA);}
\ifthenelse{\equal{#2}{c} \or \equal{#2}{a1} \or \equal{#2}{a2} \or \equal{#2}{l} \and \(\equal{#1}{r} \or \equal{#1}{b1} \or \equal{#1}{b2} \or \equal{#1}{x} \)}{\draw[->,thick, red ] (HH) -- (DA);}{}
\coordinate (HH) at (BR);
\coordinate (DA) at (br);
\ifthenelse{\equal{#1}{c} \or \equal{#1}{b1} \or \equal{#1}{b2} \or \equal{#1}{l} \and \(\equal{#2}{a2} \or \equal{#2}{r} \or \equal{#2}{a1} \or \equal{#2}{x} \)}{\draw[->,thick, blue] (HH) -- (DA);}{\draw[densely dotted] (HH) -- (DA);}
\ifthenelse{\equal{#2}{c} \or \equal{#2}{b1} \or \equal{#2}{b2} \or \equal{#2}{l} \and \(\equal{#1}{a2} \or \equal{#1}{r} \or \equal{#1}{a1} \or \equal{#1}{x} \)}{\draw[->,thick, red ] (HH) -- (DA);}{}
\coordinate (HH) at (br);
\coordinate (DA) at (tr);
\ifthenelse{\equal{#1}{c} \or \equal{#1}{b1} \or \equal{#1}{b2} \or \equal{#1}{l} \or \equal{#1}{a2} \or \equal{#1}{r} \and \( \equal{#2}{a1} \or \equal{#2}{x} \)}{\draw[->,thick, blue] (HH) -- (DA);}{\draw[densely dotted] (HH) -- (DA);}
\ifthenelse{\equal{#2}{c} \or \equal{#2}{b1} \or \equal{#2}{b2} \or \equal{#2}{l} \or \equal{#2}{a2} \or \equal{#2}{r} \and \( \equal{#1}{a1} \or \equal{#1}{x} \)}{\draw[->,thick, red ] (HH) -- (DA);}{}
\coordinate (HH) at (tr);
\coordinate (DA) at (TR);
\ifthenelse{\equal{#1}{c} \or \equal{#1}{b1} \or \equal{#1}{b2} \or \equal{#1}{r} \and \(\equal{#2}{l} \or \equal{#2}{a2} \or \equal{#2}{a1} \or \equal{#2}{x} \)}{\draw[->,thick, blue] (HH) -- (DA);}{\draw[densely dotted] (HH) -- (DA);}
\ifthenelse{\equal{#2}{c} \or \equal{#2}{b1} \or \equal{#2}{b2} \or \equal{#2}{r} \and \(\equal{#1}{l} \or \equal{#1}{a2} \or \equal{#1}{a1} \or \equal{#1}{x} \)}{\draw[->,thick, red ] (HH) -- (DA);}{}
\coordinate (HH) at (bl);
\coordinate (DA) at (br);
\ifthenelse{\equal{#1}{a2} \or \equal{#1}{r} \and \(\equal{#2}{l} \or \equal{#2}{a1} \or  \equal{#2}{b1} \or \equal{#2}{b2}\or  \equal{#2}{c} \or \equal{#2}{x} \)}{\draw[->,thick, blue] (HH) -- (DA);}{\draw[densely dotted] (HH) -- (DA);}
\ifthenelse{\equal{#2}{a2} \or \equal{#2}{r} \and \(\equal{#1}{l} \or \equal{#1}{b1} \or \equal{#1}{b2} \or  \equal{#1}{a1} \or \equal{#1}{c} \or \equal{#1}{x}  \)}{\draw[->,thick, red ] (HH) -- (DA);}{}
\coordinate (HH) at (tr);
\coordinate (DA) at (tl);
\ifthenelse{\equal{#1}{a2} \or \equal{#1}{l} \and \(\equal{#2}{r} \or \equal{#2}{a1} \or  \equal{#2}{b1} \or \equal{#2}{b2}\or  \equal{#2}{c} \or \equal{#2}{x} \)}{\draw[->,thick, blue] (HH) -- (DA);}{\draw[densely dotted] (HH) -- (DA);}
\ifthenelse{\equal{#2}{a2} \or \equal{#2}{l} \and \(\equal{#1}{r} \or \equal{#1}{b1} \or \equal{#1}{b2} \or  \equal{#1}{a1} \or \equal{#1}{c} \or \equal{#1}{x} \)}{\draw[->,thick, red ] (HH) -- (DA);}{}

\ifthenelse{\equal{#3}{F}}{
\pointing{(bl)}
\cross{(br)}
\pointing{(tr)}
\cross{(tl)}
}{}
\ifthenelse{\equal{#3}{R}}{
\pointing{(bl)}
\cross{(tl)}
}{}
\ifthenelse{\equal{#3}{L}}{
\cross{(br)}
\pointing{(tr)}
}{}
\ifthenelse{\equal{#3}{T}}{
\pointing{(tr)}
\cross{(tl)}
}{}
\ifthenelse{\equal{#3}{B}}{
\pointing{(bl)}
\cross{(br)}
}{}
\ifthenelse{\equal{#3}{BR}}{
\pointing{(bl)}
}{}
\ifthenelse{\equal{#3}{BL}}{
\cross{(br)}
}{}
\ifthenelse{\equal{#3}{TL}}{
\pointing{(tr)}
}{}
\ifthenelse{\equal{#3}{TR}}{
\cross{(tl)}
}{}

\end{scope}
\end{tikzpicture}
}}
\newcommand{\mysquareM}[3]{\NB{
\begin{tikzpicture}
\begin{scope}[xscale= 0.2, yscale = 0.2]
\draw[white, opacity=0] (0,-2.5) -- (0, 2.5);
\coordinate (tl) at (-1, 1);  
\coordinate (bl) at (-1,-1);  
\coordinate (tr) at ( 1, 1);  
\coordinate (br) at ( 1,-1);  
\coordinate (TL) at (-2, 2);  
\coordinate (BL) at (-2,-2);  
\coordinate (TR) at ( 2, 2);  
\coordinate (BR) at ( 2,-2); 
\coordinate (HH) at (BL);
\coordinate (DA) at (bl);
\ifthenelse{\equal{#1}{c} \or \equal{#1}{b1} \or \equal{#1}{b2} \or \equal{#1}{l} \and \(\equal{#2}{r} \or \equal{#2}{a2} \or \equal{#2}{a1} \or \equal{#2}{x} \)}{\draw[->,thick, blue] (HH) -- (DA);}{\draw[densely dotted] (HH) -- (DA);}
\ifthenelse{\equal{#2}{c} \or \equal{#2}{b1} \or \equal{#2}{b2} \or \equal{#2}{l} \and \(\equal{#1}{r} \or \equal{#1}{a2} \or \equal{#1}{a1} \or \equal{#1}{x} \)}{\draw[->,thick, red ] (HH) -- (DA);}{}
\coordinate (HH) at (bl);
\coordinate (DA) at (tl);
\ifthenelse{\equal{#1}{c} \or \equal{#1}{b1} \and \(\equal{#2}{b2} \or \equal{#2}{l} \or \equal{#2}{r} \or \equal{#2}{a2} \or \equal{#2}{a1} \or \equal{#2}{x} \)}{\draw[->,thick, blue] (HH) -- (DA);}{\draw[densely dotted] (HH) -- (DA);}
\ifthenelse{\equal{#2}{c} \or \equal{#2}{b1} \and \(\equal{#1}{b2} \or \equal{#1}{l} \or \equal{#1}{r} \or \equal{#1}{a2} \or \equal{#1}{a1} \or \equal{#1}{x} \)}{\draw[->,thick, red ] (HH) -- (DA);}{}
\coordinate (HH) at (tl);
\coordinate (DA) at (TL);
\ifthenelse{\equal{#1}{c} \or \equal{#1}{b1} \or \equal{#1}{b2} \or \equal{#1}{r} \and \(\equal{#2}{l} \or \equal{#2}{a2} \or \equal{#2}{a1} \or \equal{#2}{x} \)}{\draw[->,thick, blue] (HH) -- (DA);}{\draw[densely dotted] (HH) -- (DA);}
\ifthenelse{\equal{#2}{c} \or \equal{#2}{b1} \or \equal{#2}{b2} \or \equal{#2}{r} \and \(\equal{#1}{l} \or \equal{#1}{a2} \or \equal{#1}{a1} \or \equal{#1}{x} \)}{\draw[->,thick, red ] (HH) -- (DA);}{}
\coordinate (HH) at (BR);
\coordinate (DA) at (br);
\ifthenelse{\equal{#1}{c} \or \equal{#1}{a2} \or \equal{#1}{a1} \or \equal{#1}{r} \and \(\equal{#2}{b1} \or \equal{#2}{l} \or \equal{#2}{b2} \or \equal{#2}{x} \)}{\draw[->,thick, blue] (HH) -- (DA);}{\draw[densely dotted] (HH) -- (DA);}
\ifthenelse{\equal{#2}{c} \or \equal{#2}{a2} \or \equal{#2}{a1} \or \equal{#2}{r} \and \(\equal{#1}{b1} \or \equal{#1}{l} \or \equal{#1}{b2} \or \equal{#1}{x} \)}{\draw[->,thick, red ] (HH) -- (DA);}{}
\coordinate (HH) at (br);
\coordinate (DA) at (tr);
\ifthenelse{\equal{#1}{c} \or \equal{#1}{a2} \or \equal{#1}{a1} \or \equal{#1}{r} \or \equal{#1}{b2} \or \equal{#1}{l} \and \( \equal{#2}{b1} \or \equal{#2}{x} \)}{\draw[->,thick, blue] (HH) -- (DA);}{\draw[densely dotted] (HH) -- (DA);}
\ifthenelse{\equal{#2}{c} \or \equal{#2}{a2} \or \equal{#2}{a1} \or \equal{#2}{r} \or \equal{#2}{b2} \or \equal{#2}{l} \and \( \equal{#1}{b1} \or \equal{#1}{x} \)}{\draw[->,thick, red ] (HH) -- (DA);}{}
\coordinate (HH) at (tr);
\coordinate (DA) at (TR);
\ifthenelse{\equal{#1}{c} \or \equal{#1}{a2} \or \equal{#1}{a1} \or \equal{#1}{l} \and \(\equal{#2}{r} \or \equal{#2}{b1} \or \equal{#2}{b2} \or \equal{#2}{x} \)}{\draw[->,thick, blue] (HH) -- (DA);}{\draw[densely dotted] (HH) -- (DA);}
\ifthenelse{\equal{#2}{c} \or \equal{#2}{a2} \or \equal{#2}{a1} \or \equal{#2}{l} \and \(\equal{#1}{r} \or \equal{#1}{b1} \or \equal{#1}{b2} \or \equal{#1}{x} \)}{\draw[->,thick, red ] (HH) -- (DA);}{}
\coordinate (HH) at (bl);
\coordinate (DA) at (br);
\ifthenelse{\equal{#1}{b2} \or \equal{#1}{l} \and \(\equal{#2}{r} \or \equal{#2}{b1} \or \equal{#2}{a2} \or \equal{#2}{a1} \or \equal{#2}{c} \or \equal{#2}{x} \)}{\draw[->,thick, blue] (HH) -- (DA);}{\draw[densely dotted] (HH) -- (DA);}
\ifthenelse{\equal{#2}{b2} \or \equal{#2}{l} \and \(\equal{#1}{r} \or \equal{#1}{a2} \or \equal{#1}{a1} \or \equal{#1}{b1} \or \equal{#1}{c} \or \equal{#1}{x} \)}{\draw[->,thick, red ] (HH) -- (DA);}{}
\coordinate (HH) at (tr);
\coordinate (DA) at (tl);
\ifthenelse{\equal{#1}{b2} \or \equal{#1}{r} \and \(\equal{#2}{l} \or \equal{#2}{b1} \or \equal{#2}{a2} \or \equal{#2}{a1} \or \equal{#2}{c} \or \equal{#2}{x} \)}{\draw[->,thick, blue] (HH) -- (DA);}{\draw[densely dotted] (HH) -- (DA);}
\ifthenelse{\equal{#2}{b2} \or \equal{#2}{r} \and \(\equal{#1}{l} \or \equal{#1}{a2} \or \equal{#1}{a1} \or \equal{#1}{b1} \or \equal{#1}{c} \or \equal{#1}{x} \)}{\draw[->,thick, red ] (HH) -- (DA);}{}

\ifthenelse{\equal{#3}{F}}{
\pointing{(bl)}
\cross{(br)}
\pointing{(tr)}
\cross{(tl)}
}{}
\ifthenelse{\equal{#3}{L}}{
\pointing{(bl)}
\cross{(tl)}
}{}
\ifthenelse{\equal{#3}{R}}{
\cross{(br)}
\pointing{(tr)}
}{}
\ifthenelse{\equal{#3}{T}}{
\pointing{(tr)}
\cross{(tl)}
}{}
\ifthenelse{\equal{#3}{B}}{
\pointing{(bl)}
\cross{(br)}
}{}
\ifthenelse{\equal{#3}{BL}}{
\pointing{(bl)}
}{}
\ifthenelse{\equal{#3}{BR}}{
\cross{(br)}
}{}
\ifthenelse{\equal{#3}{TR}}{
\pointing{(tr)}
}{}
\ifthenelse{\equal{#3}{TL}}{
\cross{(tl)}
}{}
\end{scope}
\end{tikzpicture}
}}
\begin{document}
\begin{abstract}
We give a purely combinatorial formula for evaluating closed decorated foams. Our evaluation gives an integral polynomial and is directly connected to an integral equivariant version of the $\mathfrak{sl}_N$ link homology categorifying the $\mathfrak{sl}_N$ link polynomial. We also provide connections to the equivariant cohomology rings of partial flag manifolds.
\end{abstract}

\maketitle

\tableofcontents
\allowdisplaybreaks
\renewcommand{\and}{\ensuremath{\quad \textrm{and} \quad}}

\section{Introduction}
\label{sec:introduction}

\subsection{Background}
\label{sec:exposition-context}

Categorification in knot theory developed rapidly over the last fifteen years. The Khovanov homology \cite{MR1740682} and the knot Heegaard--Floer homology \cite{MR2704683, MR2065507} are probably the most popular examples of categorifications of knot invariants. The first categorifies the Jones polynomial, the last categorifies the Alexander polynomial.


On the one hand, the strength of the Alexander polynomial and the knot Heegaard--Floer homology relies in the geometric and topological nature of their definition. This allows to extract information about the genus \cite{MR2023281}, the slice genus \cite{MR2026543} or the fiberedness~\cite{MR2450204, MR2357503} of knots.


On the other hand, the power of the Jones polynomial and the Khovanov homology comes from their very simple pictorial definitions (through the Kauffman bracket and the so-called hypercube of resolution, see~\cite{MR2174270}). Among the greatest achievements of the theory, let us mention Rasmussen's purely combinatorial proof the Milnor's conjecture on the slice genus of torus knots~\cite{MR2729272}.


The vitality of the subject comes from the interplay between those two sides: the Khovanov homology and the Heegaard--Floer theories are related through spectral sequences~\cite{MR2141852}. A very similar spectral sequence relates the Khovanov homology and the singular instanton homology of knots. This yields a proof that the Khovanov homology detects the unknot~\cite{MR2805599}.


The Jones polynomial can be seen as a prototype of the Reshetikhin--Turaev construction \cite{MR1036112} producing link invariants from representations of quantum groups. This procedure has been categorified in full generality by Webster~\cite{MR3084241}. From this perspective, the Jones polynomial corresponds to the the standard representation of quantum $\mathfrak{sl}_2$.


The Alexander polynomial also has a quantum flavor as explained by Kauffman and Saleur \cite{MR1133269} and Viro \cite{MR2255851}. It fits as well naturally in the framework of non-semisimple invariants as developed by Geer, Patureau--Mirand and their collaborators \cite{MR2480500, MR2640994}, but this has not been categorified yet.


The Jones polynomial and the Alexander polynomial can be seen as a specialization of the HOMFLYPT polynomial. Even if its quantum nature is not completely clear, the HOMFLYPT polynomial has been categorified by Khovanov and Rozansky \cite{MR2421131}. Their construction is called the triply graded homology and uses matrix factorizations. This homology theory is a focal point of attention since it is at the crossroads of different areas in mathematics: algebraic geometry~\cite{MR2922375}, representation theory~\cite{MR3273582}, mathematical physics~\cite{MR3294947}.


Even if Rasmussen and Wedrich \cite{MR3447099, 2016arXiv160202769W} proved that it can be related to the Khovanov homology through a spectral sequence, the triply graded homology fails to have a pictorial definition. One can define the triply graded homology using Soergel bimodules and Hochschild homology \cite{WW, MR2258045}. However the geometrical aspects of this definition are quite far from topological geometry and from symplectic geometry used to define the knot Floer homology. Actually constructing a spectral sequence  from the triply graded homology  to the knot Floer homology is a challenging open problem, see for example~\cite{MR3459959}. 



Some natural intermediate steps between the Khovanov homology and the triply graded homology are the so-called $\sll_N$-homologies. They categorify the $\sll_N$-polynomials, other specializations of the HOMFLYPT polynomial \cite{MR2391017}. These theories present many features of the triply graded homology since they have various interesting definitions through different areas of mathematics: algebraic geometry \cite{zbMATH05278251, zbMATH05343986}, Lie theory \cite{MR2567504, MR2710319}, symplectic geometry \cite{MR2313538, MR2254624} categorified quantum groups \cite{queffelec2014mathfrak}, mathematical physics \cite{MR2193547}\dots


Some of the previous constructions categorify as an intermediate step the intertwiners between the exterior powers of the natural representation of quantum $\mathfrak{sl}_N$. This feature can be thought of as a categorification of the so-called MOY-calculus \cite{MR1659228}. It allows to consider all the colored $\mathfrak{sl}_N$ link polynomials at once and gives a natural framework for their categorifications. This has been done by Wu~\cite{pre06302580} and Yonezawa~\cite{MR2863366} using matrix factorizations. Another approach is due to Queffelec and Rose~\cite{queffelec2014mathfrak} (see also \cite{MR3426687}). It uses a categorical version of the skew Howe duality given by Cautis, Kamnitzer and Morrison \cite{MR3263166} in order to describe the category of representations of quantum $\mathfrak{sl}_N$. 


None of the previous definitions has the simple, diagrammatic, computation-friendly features of the Khovanov homology as promoted by Bar-Natan \cite{MR2174270}. The case $N=3$ remains peculiar since the categorification given by Khovanov \cite{MR2100691} has these features. It is based on the graphical calculus for $\mathfrak{sl}_3$-webs introduced by Kuperberg \cite{MR1403861} and introduces cobordisms between webs, called foams. Lewark and Lobb \cite{MR3458146, MR3248745} derived from this construction striking results about the slice genus of knots.


Among all the previous construction of the $\sll_N$ homologies, some of them are already of combinatorial nature. The approach of Mackaay, Sto\v si\'c and Vaz \cite{MR2491657} is very close in spirit to the one of Khovanov for $\mathfrak{sl}_2$ and  $\sll_3$. However they need to use Kapustin--Li formula to evaluate closed foams \cite{MR2039036}. This formula relies heavily on  matrix factorizations and is barely useful in practice.


The aim of the paper is to give a purely combinatorial evaluation of closed $\sll_N$-foams\footnote{Let us mention here that, strictly speaking, we are working with $\mathfrak{gl}_N$-foams but we preferred to stick to the $\sll_n$ notation and formulation to match the one for the link polynomials and the link homologies. See \cite{2016arXiv160108010} for a more detailed discussion on this subject.}. This provides a replacement to the Kapustin--Li formula and yields the $\sll_N$ link homologies in a purely TQFT-way (and their colored and equivariant integral versions). This gives a categorification of the MOY calculus by using a universal construction à la \cite{MR1362791}. Its self-contained and combinatorial nature permits to perform reverse-engineering and provides applications in computations in the cohomology rings of partial flag manifolds.


\subsection{Summary of the results}
\label{sec:summary-results}

\subsubsection{Main result}

The two main topological characters of this papers will be webs and foams.\\

 A web is 
a planar trivalent oriented graph whose edges are labeled by positive integers (circles are also allowed). The orientation is such that none of the vertices is a source or a sink, and the labels around each vertex is such that the sum of the integers of the edges abutting the vertex is equal to the sum of the integers of the edges leaving the vertex, see all around the paper for examples. 
As mentioned earlier they can be thought of as describing the intertwiners between exterior powers of the natural representation of quantum $\sll_n$. 
Hence, we will consider formal linear combinations of graphs in a skein-theoretical fashion which will express relations between intertwiners. All the relations we use are pictured in Definition~\ref{dfn:MOYevaluation} and are called MOY relations. We also called the webs we are using MOY graphs (see Definition~\ref{dfn:MOYgraph}). It is now known by work of Cautis--Kamnitzer--Morrison \cite{MR3263166} that these relations describe completely the category of representations of quantum $\sll_n$ generated monoidally by the exteriors powers of the natural representation.\\

Foams are the natural two dimensional analogs of webs and as such a prototype is obtained by taking a web times an interval or a circle. 
This reveals two things that will be used thought the paper. First, foams can be thought of as cobordisms between webs and altogether they form a cobordism category. Second, closed foams play a particular role. We can already see that foams have facets labeled by integers, and one dimensional oriented singular locus, locally looking as the trivalent vertex times an interval. Moreover, there are conditions on the orientations and the labels of the facets around the singular locus which are inherited from the one around the trivalent vertex on webs. In addition we consider more general foams by allowing closed surfaces (and connected sums of facets with closed surfaces) and singular vertices looking like the cone on the 1-skeleton of a tetrahedron (see Section~\ref{sec:eval-clos-mathfr} for a more detailed definition). Our foams need not to be embedded in the three-dimensional space but we need a cyclic ordering of the facets surrounding the singular locus (if the foam is embedded the cyclic ordering is given by the left-hand rule).\\


The main characters from the algebraic and combinatorial point of views are Schur polynomials and we point to Appendix A for an account of what will be useful to us.\\

In addition to labels, the facets have two more enhancements: a coloring and (eventually) a decoration. For this fix a integer $N$ and a set of variables $X\eqdef (X_1, X_2, \dots , X_N)$. For a facet labeled $k\leq N$ choose a subset of $k$ elements in $X$. A coloring of a foam is such a choice for each facet such that, around each singular edge, one of the subset is the disjoint union of the two others. A decoration of a facet labeled $k$ is a choice of a symmetric polynomial in $k$ variables. We call such foam decorated.

Now from a colored foam $F(c)$, choose a variable $X_i$ appearing in the coloring and consider the union of the facet where the variable $X_i$ appears. It turns out to be a closed oriented surface and it is denoted $F_i(c)$ and is called the monochrome surface. Moreover given two distinct variables denote $F_{ij}(c)$ the symmetric difference of the surfaces $F_i(c)$ and $F_j(c)$. The surface $F_{ij}(c)$ is a well closed and oriented and is called a bichrome surface. In addition there are preferred oriented simple closed curves drawn on $F_{ij}(c)$ which separate regions where the variable $X_i$ appears  from the one where the variable $X_j$ appears.


We have now roughly defined all the ingredients of our formula and we can state our main theorem.


\begin{thm0}
\begin{enumerate}
\item There exists a closed combinatorial evaluation $F\mapsto \langle F \rangle_N \in \ZZ[X_1, \dots, X_N]^{\mathfrak{S}_N}$ which satisfies local relations which categorify the $MOY$ relations.
\item Moreover, given a closed decorated foam $F$ the evaluation $\langle F \rangle_N$ can be expressed as a state sum formula:
$$ \langle F \rangle_N=\sum_c \langle F, c \rangle_N,$$
where the value $\langle F, c \rangle_N$  only depends on the Euler characteristics of the monochrome surfaces and of the bichrome surfaces, on the decoration and on the orientations of the preferred simple closed curves in the bichrome surfaces.
\end{enumerate}
\end{thm0}

The variables $X_i$ can be thought of as a set of simple $\mathfrak{gl}_N$-roots. With this point of view, the importance of the bichrome surfaces comes from their correspondence with the positive $\sll_N$-roots.  \\

The local relations satisfied by the formula can be found for instance in Proposition~\ref{prop:additionalrelation} and Proposition~\ref{prop:complicatedfoam}. The Proposition~\ref{prop:complicatedfoam} is the heart of the above theorem and corresponds to a categorified version of relation~(\ref{eq:relsquare3}) of Definition~\ref{dfn:MOYevaluation}. It is its proof that requires the full strength of the computations in Appendix A.\\

Let us also mention that the previous state-sum formula on colorings naturally generalizes the one of Murakami--Ohtsuki--Yamada for the graph polynomial \cite{MR1659228}, as one can immediately see by looking to foams which are the product of a circle and a closed web.\\

This formula has a certain numbers of advantages that we outline. Maybe the first one is that it allows to give a clean definition of the relevant quotient of the foam cobordism category necessary for constructing link homology theories. This was the original motivation for Mackaay--Sto\v{s}i\'c-Vaz to consider the Kasputin--Li formula in the non-colored case.\\

\subsubsection{Cohomology of partial flag varieties and other applications}

First, mention that the above theorem answers completely a problem suggested by Wedrich and Rose \cite{2015arXiv150102567R} which was to give a complete foam version of the equivariant theory considered by Krasner  \cite{MR2580427} and generalized by Wu \cite{MR3392963}.\\

Second, its integral and equivariant nature allows to recover all the previously known versions of the categorification of the MOY calculus. Comparison with the other versions follows essentially by the work of Queffelec--Rose \cite{queffelec2014mathfrak} who gave a complete set of local relations that are enough to evaluate closed foams recursively and our formula satisfies all these relations, see Section~\ref{sec:applications} for a more detailed account.\\

Third, one can start from our formula to obtain an integral version of the colored equivariant $\sll_N$ link homology and construct the $\sll_N$-web algebra \cite{MR3198835}. All these constructions are straightforward due the TQFT nature of our construction and formula. For instance the invariance of the colored equivariant $\sll_N$ link homology will follow from the local relations that our formula satisfies. In a same vein one can directly use our formula to reconstruct the $2$-functor of Queffelec--Rose \cite{queffelec2014mathfrak} by a now standard procedure first described by Khovanov \cite{MR1928174} in the $\sll_2$ case. \\

Fourth, we hope our formula will allow concrete computations of $\sll_N$-concordance invariant in the same fashion that Lewark did for the $N=3$ case \cite{MR3248745}.\\

All the previous applications are very interesting but stay in the area of link homology and categorification. We finish by consequences of different nature that we would like to emphasize.\\

As we mentioned before, the closed formula together with the universal construction allows to construct a trivalent TQFT. This means that it associates with any web $\Gamma$ a graded vector space $V(\Gamma)$
and with any foam a linear map. It is well-known that if a web $\Gamma$ has a symmetry axis then the (graded) vector space $V(\Gamma)$ carries naturally a structure of a Frobenius algebra. As a byproduct we obtain infinitely many explicit Frobenius algebras for which we can explicitly compute the structure constants.


\begin{thm0}
For any web $\Gamma$ with a symmetry axis, the graded vector space $V(\Gamma)$ is a Frobenius algebra, whose structure constants can be computed explicitly.\\
If $\Gamma$ is a circle labeled $k$ then $V(\Gamma)$ is isomorphic to the cohomology of the Grassmannian of $k$ planes in $\mathbb{C}^N$ and if $\Gamma$ is a generalized theta graph (see Section~\ref{sec:equiv-cohom-part}) then $V(\Gamma)$ is isomorphic to the cohomology of a partial flag manifold.
\end{thm0}

Cohomology of Grassmannians and partial flag manifolds  have been guidelines in the history of the categorification of the $\sll_N$-knot invariant see  \cite{MR2391017}, \cite{MR2100691}, \cite{MR3190356} or \cite{queffelec2014mathfrak}. Here we make the correspondence completely explicit and use it to give for instance a closed formula for the Littlewood--Richardson coefficients (see Corollary~\ref{cor:LRgrassmanian}):

\begin{coro0}
Let $\alpha$, $\beta$ and $\lambda$ be three Young diagrams such that $|\alpha| + |\beta|=  |\lambda|$. Choose two non-negative integers  $a$ and $b$ such that $\alpha$, $\beta$ and $\lambda$ are in $T(b,a)$, then the Littlewood--Richardson coefficient $c_{{\alpha} {\beta}}^{\lambda}$ and can be computed via:
\begin{align*}
c_{\alpha {\beta}}^{\mathbf{\lambda}}= &(-1)^{|\widehat{\lambda}|+ N(N+1)/2 }\kup{\scriptstyle{\NB{\tikz[scale =0.8]{\begin{scope}
  \fill[fill opacity =0.3, thick, fill = red]   (1,0) arc (0:-180:1cm) arc (180:0:1cm and 0.5cm);
  \fill[fill opacity =0.3, thick, fill = blue]  (0,0) ellipse (1cm and 0.5cm);
  \node at (0,0) {$\scriptstyle{b}$};
  \draw[thick, dotted] (1,0) arc (0:180:1cm and 0.5cm);
  \fill[fill opacity =0.3, thick, fill = green] (1,0) arc (0:180:1cm)  node[midway, below, opacity =1, black] {$\scriptstyle{a}$} arc (180:0:1cm and 0.5cm);
   \draw[red, thick, <-] (0.5,0) .. controls (0.5,0.3).. +(0.6, 0.3) node[right, red] {$\pi_{\alpha}\pi_{\beta}$};
  \fill[fill opacity =0.3, thick, fill = red]   (1,0) arc (0:-180:1cm) node[midway,above, opacity =1] {$\scriptstyle{N}$} arc (-180:0:1cm and 0.5cm);
  \fill[fill opacity =0.3, thick, fill = green] (1,0) arc (0:180:1cm)  arc (-180:0:1cm and 0.5cm);
  \draw[thick] (0,0) circle (1cm);
  \draw[thick, ->] (1,0) arc (0:-90:1cm and 0.5cm);
  \draw[thick, -]  (-1,0) arc (-180:-90:1cm and 0.5cm);

   \draw[red, thick, <-] (60:1) -- +(0.6, 0) node[right, red] {$\pi_{\widehat{\lambda}}$};
\end{scope}}}}}_N \\ &= (-1)^{N(N+1)/2 +|\widehat{\lambda}|} \sum_{\substack{A\sqcup B = \{X_1, \dots, X_N\} \\ |A|=a, |B| = b}}(-1)^{|B<A|} \frac{a_\alpha(A) a_\beta(A) a_{\widehat{\lambda}}(B)}{\Delta(X_1, \dots, X_N)}.
\end{align*}
where $N= a+b$.
\end{coro0}

All the necessary definitions of the polynomials are given in Appendix A. Note that this formula can derived from the Atiyah--Bott--Berline--Vergne integration formula (see for example \cite[Theorem 2.10 and Section 3.6]{MR2976939}).\\

Let us also mention that in \cite{MR3190356}, Lobb and Zentner explored the connections between the MOY calculus and the cohomology of certain moduli associated with MOY graphs. They proved that the Euler characteristic of their moduli is equal to the $\sll_N$ polynomial evaluated at $-1$. They noticed in addition that the cohomology of their moduli can not be equal to the vector space associated with this MOY graph by the $\sll_N$-homology. We plan 
 to investigate further similar questions in light of the results of this paper.\\

We finish by coming back to the physical origin of the Kasputin--Li formula \cite{MR2039036}. Kasputin and Li were using matrix factorizations to understand $D$-branes on Calabi--Yau models. It would be great to understand their work as well as the beautiful work of Vaintrob and Polishchuk \cite{MR2954619} through the prism of our formula.\\

\subsection{Outline of the paper}
\label{sec:outline-paper}

Aside the introduction, the paper is divided in three sections and two appendices.

In Section~\ref{sec:eval-clos-mathfr} we introduce  closed $\sll_N$-foams and define a $\ZZ[X_1, \dots, X_N]^{\mathfrak{S}_N}$-valued evaluation of these foams. For this purpose we introduce \emph{colorings} of $\sll_N$-foams and define a $\QQ(X_1, \dots, X_N)$-evaluation of colored foams. In Section~\ref{sec:foams-colors-}, we discuss the behavior of this colored evaluation under some semi-local moves of the colorings. In Section~\ref{sec:symmetric-evaluation}, we prove that the evaluation of an uncolored foam is a symmetric polynomial. Finally in Section~\ref{sec:2-rks}, we relate this evaluation to different normalization of the evaluation of $\sll_2$-foams and discuss how it can be extended to some generalized $\sll_N$-foams.

In Section~\ref{sec:categ-moy-calc}, we prove that after applying a universal construction, the foam evaluation extends to a trivalent TQFT functor $\mathcal{F}$ from the category of ($\sll_N$-MOY-graphs, $\sll_N$-foams) to the category of finitely generated projective $\ZZ[X_1, \dots, X_N]$\footnote{We use the ring $\ZZ[X_1, \dots, X_N]$ to compare with the $T$-equivariant cohomology rings in Section~\ref{sec:applications} but we could have also used the ring $\ZZ[X_1, \dots, X_N]^{\mathfrak{S}_N}$ and compared with $GL_n$-equivariant cohomology rings.}-modules. In Section~\ref{sec:MOYgraph}, we recall some results on the MOY-calculus, while in Section~\ref{sec:univ-constr-main}, we prove Theorem~\ref{thm:main} stating that $\mathcal{F}$ categorifies the MOY-calculus. The proof is basically based on some careful computations (some detailed tables are given in Appendix~\ref{sec:tables}) and on some identities on Schur polynomials given in Appendix~\ref{sec:an-identiy-schur}.


In Section~\ref{sec:applications}, we relate our work with some other results and discuss some applications. 
In Section~\ref{sec:relat-with-appr}, we investigate the specializations of the functor $\mathcal{F}$ when the variables $X_i$ take complex values. First, we state that when setting all variables to $0$, we recover the functor defined by Queffelec--Rose in \cite{queffelec2014mathfrak}. Then, we explain that other ``deformations'' decompose in a way detailed by Rose--Wedrich in \cite{2015arXiv150102567R}. Finally we emphasize that the computations of \cite{queffelec2014mathfrak} applied to the functor $\mathcal{F}$ yields an equivariant $\sll_N$-homology for links.
In Section~\ref{sec:equiv-cohom-part}, we prove that the image of any $\sll_N$-web with a symmetry axis is naturally endowed with a structure of a Frobenius algebra. We observe that the ring associated with some special $\sll_N$-MOY-graph is isomorphic to the cohomology rings of some flag varieties. We give a few results which enable handy computations in these rings. In Section~\ref{sec:furth-poss-devel}, we give a non-exhaustive list of questions or problems which can be either tackled or developed using the results presented in this paper.

Appendix~\ref{sec:an-identiy-schur}, is devoted to the proof of two identities on Schur polynomials. We intend to be more or less self-contained. 

\subsection{Acknowledgments}
\label{sec:acknowledgments}

L.-H.~R.{} thanks, Ehud Meir, Tobias Ohrmann and Ingo Runkel for interesting discussions and the Pony Bar for its nice working atmosphere. L.-H.~R.{} and E.~W.{} wishes to thank Mikhail Khovanov and Allen Knutson for their enlightening suggestions and Hoel Queffelec, Daniel Tubbenhauer and Paul Wedrich for very careful readings of a preliminary version.


\section{Evaluation of closed $\mathfrak{sl}_N$-foams}
\label{sec:eval-clos-mathfr}

\subsection{Of foams and colors}
\label{sec:foams-colors-}
In this section $N$ is a fixed positive integer. 

\begin{dfn}\label{dfn:foam}
  An \emph{$\sll_N$-foam} (or simply \emph{foam}) $F$ is a collection
  of \emph{facets} $\mathcal{F}(F)=(\Sigma_i)_{i\in I}$, that is a collection of
  oriented connected surfaces with boundary, together with the
  following data:
  \begin{itemize}
  \item A labeling $l\co (\Sigma_i)_{i\in I} \to \{0, \dots, N\}$,
  \item A ``gluing recipe'' of the facets along their boundaries such
    that when glued together using the recipe we have the three
    possible local models:

\[
      \begin{tikzpicture}
        \begin{scope}
\tdplotsetmaincoords{80}{140}
  \begin{scope}[tdplot_main_coords]
    \filldraw [very thin, fill=red, opacity = 0.2] (0,-1, -1) -- (0,1,-1) -- (0,1,1) -- (0,-1,1) -- (0,-1,-1);
    \node[sloped, red] at (0,0,0) {$a$};
  \end{scope}
\begin{scope}[xshift = 3cm, tdplot_main_coords]
  \begin{scope}
    \filldraw [very thin, fill =red, opacity = 0.2] (0,-1, -1) -- (0,1,-1) -- (0,1,0) -- (0,-1,0) -- (0,-1,-1);
        \node[sloped, red] at (0,0,-0.5) {$a+b$};
    \end{scope}
  \begin{scope}[rotate around y = 150]
    \filldraw [very thin, fill =blue, opacity = 0.2] (0,-1, -1) -- (0,1,-1) -- (0,1,0) -- (0,-1,0) -- (0,-1,-1);
        \node[sloped, blue] at (0,0,-0.5) {$a$};
  \end{scope}
  \begin{scope}[rotate around y =-130]
    \filldraw [very thin, fill =green, opacity = 0.2] (0,-1, -1) -- (0,1,-1) -- (0,1,0) -- (0,-1,0) -- (0,-1,-1);
        \node[sloped, green!50!black] at (0,0,-0.5) {$b$};
        \draw[very thick, ->] (0,1,0) -- (0,-1,0);
  \end{scope}
  \end{scope}
\begin{scope}[scale = 1.6, xshift = 4.5cm, tdplot_main_coords]
  \begin{scope}
    \filldraw [very thin, fill =red, opacity = 0.2] (0,-1, -1) -- (0,1,-1) -- (0,1,0) -- (0,0,0) -- (0,-1,-1);
    \coordinate (a) at (0, -1, -1);
        \node[sloped, red] at (0,0.2,-0.5) {$a+b+c$};
    \end{scope}
  \begin{scope}[rotate around y = 150]
    \filldraw [very thin, fill= blue, opacity = 0.2] (0,-1, -1) -- (0,1,-1) -- (0,1,0) -- (0,0,0) -- (0,-1,-1);
    \coordinate (b) at (0, -1, -1);
        \node[sloped, blue] at (0,0.5,-0.5) {$a+b$};
  \end{scope}
  \begin{scope}[rotate around y =-130]
    \filldraw [very thin, fill= green, opacity = 0.2] (0,-1, -1) -- (0,1,-1) -- (0,1,0) -- (0,0,0) -- (0,-1,-1);
    \coordinate (c) at (0, -1, -1);
        \node[sloped, green!50!black] at (0,0.5,-0.5) {$c$};
        \node[sloped, purple!50!black] at (0,1,-1.5) {$a$};
        \node[sloped, green!50!black] at (0,+0.5,-1.5) {$b$};
        \node[sloped, gray] at (0,-1.3,0.1) {$b+c$};
  \end{scope}
  \filldraw[very thin, fill= orange, opacity = 0.2] (a) -- (b) -- (0,0,0) -- (a);
  \filldraw[very thin, fill= gray, opacity = 0.2] (a) -- (c) -- (0,0,0) -- (a);
  \filldraw[very thin, fill= purple, opacity = 0.2] (b) -- (c) -- (0,0,0) -- (b);
  \draw[very thick, ->] (a) -- (0,0,0);
  \draw[very thick, <-] (b) -- (0,0,0);
  \draw[very thick, ->] (c) -- (0,0,0);
  \draw[very thick, <-] (0,1, 0) -- (0,0,0);
  \fill[green!50!black, opacity = 0.6] (0,0,0) circle (0.5mm);
\end{scope}
\end{scope}

      \end{tikzpicture}
      \label{fig:FBSP} \]

    The letter appearing on a facet indicates the label of this facet.
    That is we have: \emph{facets}, \emph{bindings} (which are compact oriented
    $1$-manifolds) and \emph{singular points}. Each binding carries:
    \begin{itemize}
    \item an orientation which agrees with the orientations of the facets with
      labels $a$ and $b$ and disagrees with the orientation of the facet with label
      $a+b$.
    \item a cyclic ordering of the three facets around it. When a foam
      is embedded in $\RR^3$, we require this cyclic ordering to agree with 
      the left-hand rule\footnote{This agrees with Khovanov's convention \cite{MR2100691}.} with respect to its
      orientation (the dotted circle in the middle indicates that the
      orientation of the binding points to the reader, a crossed circle indicate the other orientation see figure~\ref{fig:signsofcircles}):
      \[
        \begin{tikzpicture}[xscale=1]
          \begin{scope}
 \draw (0,0) -- +(0:1);
 \draw (0,0) -- +(120:1);
 \draw (0,0) -- +(240:1);
 \filldraw[fill= white, draw=black, very thin] (0,0) circle (0.15cm);
 \filldraw[fill = black] (0,0) circle (0.02cm);
 \draw[very thin,->] (-10:0.8cm) arc (-10:-110 :0.8); 
 \draw[very thin,->] (230:0.8cm) arc (230: 130:0.8); 
 \draw[very thin,->] (110:0.8cm) arc (110:10 :0.8); 
\end{scope}
        \end{tikzpicture}
      \]
    \end{itemize}
    The cyclic orderings of the different bindings adjacent to a
    singular point should be compatible. This means that a
    neighborhood of the singular point is embeddable in $\RR^3$ in a
    way that respects the left-hand rule for the four binding adjacent to this singular point.
  \end{itemize}
\end{dfn}

\begin{rmk}
  Les us explain shortly what is meant by ``gluing recipe''. The boundaries of the facets forms a collection of circles. We denote it by  $\mathcal{S}$. The gluing recipe consists of:
  \begin{itemize}
  \item For a subset $\mathcal{S}'$ of $\mathcal{S}$, a subdivision of each circle of $\mathcal{S'}$ into a finite number of closed intervals. This gives us a collection $\mathcal{I}$ of closed intervals.  
  \item Partitions of $\mathcal{I} \cup (\mathcal{S} \setminus \mathcal{S'})$ into subsets of three elements. For every subset $(Y_1, Y_2, Y_3)$ of this partition, three diffeomorphisms $\phi_1 : Y_2 \to Y_3$, $\phi_2 : Y_3 \to Y_1$, $\phi_3 : Y_1 \to Y_2$  such that $\phi_3 \circ \phi_2 \circ \phi_1 = \mathrm{id}_{Y_2}$.
  \end{itemize}
A foam is obtained by gluing the facets along the diffeomorphisms, provided that the conditions given in the previous definition are fulfilled.  
\end{rmk}

\begin{figure}[h]
  \centering
  \begin{tikzpicture}[scale = 0.7]
    \begin{scope}
\tdplotsetmaincoords{40}{20}
\begin{scope}[tdplot_main_coords]
  \fill[green!50!black] (0,2) circle (1mm);
  \fill[green!50!black] (0,-2) circle (1mm);
  \begin{scope}[rotate around y=0]
    \coordinate (b) at (1,0);
    \fill [fill =red,fill opacity=0.4 ] (0,2) arc (90:-90:2) ;
    \draw [draw = black, very thick , ->, opacity=0.8 ] (0,2) arc (90:-90:2) ;
  \end{scope}
  \begin{scope}[rotate around y=120]
        \coordinate (a) at (1,0);
    \fill [green ,fill opacity=0.3 ] (0,2) arc (90:-90:2) ;
    \draw [draw = black, very thick , <-, opacity=0.8 ] (0,2) arc (90:-90:2) ;
  \end{scope}
  \begin{scope}[rotate around y=240]
    \coordinate (c) at (1,0);
    \fill [orange, fill opacity=0.3 ] (0,2) arc (90:-90:2) ;
    \draw [draw = black, very thick , <-,opacity=0.8 ] (0,2) arc (90:-90:2) ;
  \end{scope}
  \draw[very thick, -> ] (0,2) -- (0,-2);
  \shade [ball color=blue!40!white,opacity=0.30] (0,0,0) circle (2.02cm);
\end{scope}
  \coordinate (d) at (45:2);
  \coordinate (e) at (-90:2);
  \coordinate (f) at (180:2);
  \draw[very thin, -> ] (-4,0)  node [left] {$3$} -- (f);
  \draw[very thin, -> ] (-4,1)    node [left] {$2$} -- (c);
  \draw[very thin, -> ] (-4,-1)  node [left] {$5$} -- (a);
  \draw[very thin, -> ] (4,0)   node [right] {$3$} -- (b);
  \draw[very thin, -> ] (4,-1)   node [right] {$2$} -- (e);
  \draw[very thin, -> ] (4,1)    node [right] {$1$}-- (d);
\end{scope}
  \end{tikzpicture}
  \caption{Example of a foam. If $N= 6$ it has degree $- 26$. The cyclic ordering on the central binding is $(5,2,3)$.}
  \label{fig:exslnfoam}
\end{figure}
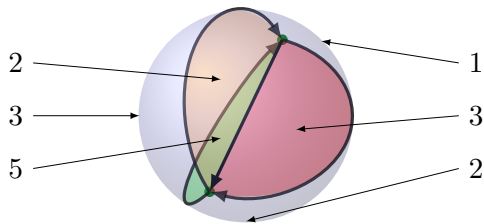

\begin{dfn}\label{dfn:degreefoam}
    We define the \emph{degree} $d_N$ of a foam $F$ as the sum of the following contributions:
    \begin{itemize}
    \item For each facets $f$ with label $a$, set $d(f)=a(N-a)\chi(f)$, where $\chi(f)$ stands for the Euler characteristic of $f$;
    \item For each interval binding $i$ (i. e. not circle-like binding) surrounded by three facets with labels $a$, $b$ and $a+b$, set $d(b)= ab + (a+b)(N-a -b)$;
    \item For each singular point $p$ surrounded with facets with labels $a$, $b$, $c$, $a+b$, $b+c$, $a+b+c$, set $d(p) = ab + bc+ cd + da + ac+ bd $ with $d =N -a -b -c $;
    \item Finally set \[
d_N(F) = -\sum_{f\textrm{ facet}}d(f) + \sum_{\substack{i \textrm{ interval} \\ \textrm{binding}}}d(i) -\sum_{\substack{p \textrm{ singular}\\ \textrm{point}}} d(p).
\]
    \end{itemize}
\end{dfn}
 The degree can be thought of as an analogue of the Euler characteristic.

\begin{dfn}\label{dfn:color}
  A \emph{pigment} is an element of $\Col= \{1,\dots,N\}$. If $A$ is a subset of $\Col$, $\overline{A}$ denotes $\Col\setminus A$. The set $\Col$ is endowed with the canonical order.

  A \emph{coloring} of a foam $F$ is a map $c\co \mathcal{F}(F) \to \mathcal{P}(\Col)$, such that:
  \begin{itemize}
  \item For each facet $f$, the number of elements $\# c(f)$ of $c(f)$ is equal to $l(f)$.
  \item For each binding joining a facet $f_1$ with label $a$, a facet $f_2$ with label $b$, and a facet $f_3$ with label $a+b$, we have $c(f_1) \cup c(f_2) = c(f_3)$. This condition is called the \emph{flow condition}.
  \end{itemize}
A \emph{colored foam} is a foam together with a coloring.
\end{dfn}

A facet labeled $0$ is colored by the empty set.
A careful inspection of how colorings are around bindings and singular points gives the following lemma:

\begin{lem}\label{lem:monobicercle}
  \begin{enumerate}
  \item If $(F,c)$ is a colored foam and $i$ is an element of $\Col$, the union (with the identification coming from the gluing procedure) of all the facets which contain the pigment $i$ in their colors is a surface. It is called the \emph{monochrome surface of $(F,c)$ associated with $i$} and it is denoted $F_i(c)$. The restriction we imposed on the orientations of facets ensures that $F_i(c)$ is oriented.
  \item  If $(F,c)$ is a colored foam and $i$ and $j$ are two distinct elements of $\Col$, the union (with the identification coming from the gluing procedure) of all the facets which contain $i$  or $j$ but not both in their colors is a surface. It is called the \emph{bichrome surface of $(F,c)$ associated with $i,j$}. It is the symmetric difference of $F_i(c)$ and $F_j(c)$ and it is denoted $F_{ij}(c)$. The restriction we imposed on the orientations of facets ensures that $F_{ij}(c)$ can be oriented by taking the orientation of the facets containing $i$ and the reverse orientation on the facets containing $j$.
  \item In the same situation, we may suppose $i<j$. We consider a binding joining the facets $f_1$, $f_2$ and $f_3$. Suppose that $i$ is in $c(f_1)$, $j$ is in $c(f_2)$ and $\{i,j\}$ is  in $c(f_3)$. We say that the binding is \emph{positive with respect to $(i,j)$} if the cyclic order on the binding is $(f_1, f_2, f_3)$ and \emph{negative with respect to $(i,j)$} otherwise. The set $F_{i}(c) \cap F_{j}(c) \cap F_{ij}(c)$ is a collection of disjoint circles. Each of these circles is a union of bindings, for every circle the bindings are either all positive or all negative with respect to $(i,j)$. 
\end{enumerate}
\end{lem}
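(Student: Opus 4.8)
The plan is to argue entirely locally, stratum by stratum: on the interior of a facet everything is a smooth oriented surface by hypothesis, so all the content is concentrated at the bindings and at the singular points, and the only input is the flow condition. The starting observation is a dichotomy at each binding. If $f_1,f_2,f_3$ are the three facets of a binding, with labels $a,b,a+b$, then the flow condition together with the equality of cardinalities forces $c(f_3)=c(f_1)\sqcup c(f_2)$. Hence a pigment $i$ lies in \emph{exactly} $0$ or $2$ of the three facets: none if $i\notin c(f_3)$, and otherwise $f_3$ together with the unique one of $f_1,f_2$ whose colour contains $i$. Therefore near a binding $F_i(c)$ is either empty or the union of two half-disks glued along the binding, i.e.\ a manifold; and the orientation convention (the binding agrees with the facets labelled $a,b$ and disagrees with the one labelled $a+b$) guarantees that the two facets carrying $i$ induce opposite orientations on the binding, so the gluing is oriented. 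A parallel, slightly longer bookkeeping over the possible distributions of $\{i,j\}$ among $c(f_1),c(f_2),c(f_3)$ shows that the number of facets lying on $F_{ij}(c)$ at a binding is again $0$ or $2$, with the stated orientation obtained by reversing the facets containing $j$. This settles \emph{(1)} and \emph{(2)} away from the singular points.

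At a singular point I will use the tetrahedron model: the four bindings correspond to the vertices and the six facets to the edges of the $1$-skeleton of a tetrahedron, the singular point being the cone apex. The coloring is recorded by three pairwise disjoint sets $A,B,C$ (the colours of the facets labelled $a,b,c$), the six facets then being coloured $A,B,C,A\sqcup B,B\sqcup C,A\sqcup B\sqcup C$. For a fixed pigment $i$ the facets containing $i$ are exactly the cones on the edges of a cycle in this $1$-skeleton: a triangle when $i\in A$ or $i\in C$, and the $4$-cycle when $i\in B$ (up to the symmetry of the picture), while $i$ occurs in no facet when $i\notin A\cup B\cup C$. Since the cone on a cycle is a disk, $F_i(c)$ is locally a disk and the apex is an interior manifold point; the orientations glued across the three or four bindings match for the same reason as before. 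The identical computation for $F_{ij}(c)$ produces again a triangle or a $4$-cycle, hence a disk, completing \emph{(1)} and \emph{(2)}.

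For \emph{(3)} I first identify the set $F_i(c)\cap F_j(c)\cap F_{ij}(c)$. Running the binding dichotomy simultaneously for $i$ and $j$ shows that a binding lies on all three surfaces precisely when, after possibly swapping roles, $i\in c(f_1)$, $j\in c(f_2)$ and $\{i,j\}\subseteq c(f_3)$: these are exactly the \emph{separating} bindings of the definition. Feeding the tetrahedron computation into this, one checks that at a singular point either no separating binding occurs (this happens exactly when $i$ and $j$ lie in the same block among $A,B,C$, in which case $F_{ij}(c)$ is empty there) or exactly two of the four bindings are separating and they join smoothly across the apex. Consequently $F_i(c)\cap F_j(c)\cap F_{ij}(c)$ is a closed $1$-manifold built out of bindings, i.e.\ a disjoint union of circles, each a union of bindings.

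The main obstacle is the last assertion, that the sign is constant along each circle. The key is to read the sign off the orientation of the bichrome surface. Orient the curve $\gamma=F_i(c)\cap F_j(c)\cap F_{ij}(c)$ as the boundary of the $i$-coloured region $F_i(c)\cap F_{ij}(c)$ inside the oriented surface $F_{ij}(c)$; this orientation is manifestly consistent along each circle. Near a separating binding the facet $f_3$ carrying both $i$ and $j$ is the unique adjacent facet \emph{not} lying on $F_{ij}(c)$, so it sits to one side of the oriented surface $F_{ij}(c)$; by the left-hand rule the cyclic order of $(f_1,f_2,f_3)$ — that is, the sign with respect to $(i,j)$ — is determined exactly by which side this is. It remains to see that the two separating bindings meeting at a singular point present $f_3$ on the \emph{same} side. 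Here the compatibility of the cyclic orderings is indispensable: embedding a neighbourhood of the apex in $\RR^3$ respecting the left-hand rule, $F_{ij}(c)$ becomes the oriented disk coning the relevant cycle, and the two combined facets attached along the two separating bindings cone onto edges joining that cycle to the complementary vertex (in the triangle case) or onto one common diagonal (in the $4$-cycle case); in either case they are flaps on one and the same side of the disk. Hence the two separating bindings carry equal signs, and the sign is locally constant, therefore constant on each circle. I expect the verification of this ``same side'' statement, and its clean reduction to the embeddability hypothesis, to be the delicate point of the argument.
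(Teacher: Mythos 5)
Your proof is correct and is precisely the ``careful inspection of how colorings are around bindings and singular points'' that the paper invokes without writing out: the $0$-or-$2$ dichotomy at bindings, the cone-on-a-cycle picture at singular points, and the observation that the sign of a separating binding is read off from which side of the oriented surface $F_{ij}(c)$ the doubly-coloured facet lies (so that the two separating bindings at a singular point, whose doubly-coloured flaps both cone toward the complementary vertex of the tetrahedron, carry equal signs) is exactly the intended argument. The only blemish is the parenthetical ``exactly when $i$ and $j$ lie in the same block'', which overlooks the case where one of the two pigments appears in no facet at the singular point; there too no separating binding occurs, so the $0$-or-$2$ count and the rest of the argument are unaffected.
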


Please note that the previous lemma contains the definition of \emph{monochrome} and \emph{bichrome surfaces}. It yields a definition of \emph{positive} and \emph{negative circles}:

\begin{dfn}
Let $(F,c)$ be a colored foam and $i<j$ be two pigments. A circle in $F_{i}(c) \cap F_{j}(c) \cap F_{ij}(c)$ is 
\emph{positive} (resp. \emph{negative}) \emph{with respect to $(i,j)$} if it consists of positive (resp. negative)
bindings. 
We denote by $\theta^+_{ij}(c)_F$ (resp. $\theta^-_{ij}(c)_F$) or simply $\theta^+_{ij}(c)$ (resp. $\theta^-_{ij}(c)$) the number of positive (resp. negative) circles with respect to $(i,j)$. We set  $\theta_{ij}(c)= \theta^+_{ij}(c) +\theta^{-}_{ij}(c)$. See Figure \ref{fig:signsofcircles} for a pictorial definition.
\end{dfn}

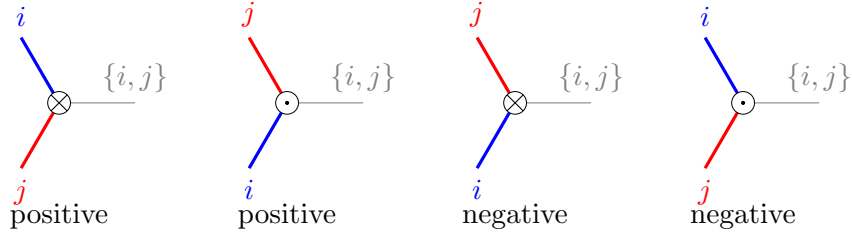
\begin{figure}[h]
  \centering
  \begin{tikzpicture}
    \begin{scope}[xshift = 0cm]
 \draw[gray] (0,0) -- +(0:1)  node[above] {$\{i,j\}$};
 \draw[blue, very thick ] (0,0) -- +(120:1) node[above] {${i}$};;
 \draw[red, very thick] (0,0) -- +(240:1) node[below] {${j}$};;
 \filldraw[fill= white, draw=black, very thin] (0,0) circle (0.15cm);
 \filldraw[fill = black] (0,0) circle (0.02cm);
 \node at (0, -1.5) {negative};
\end{scope}

 \begin{scope}[xshift = -3cm]
  \draw[gray] (0,0) -- +(0:1) node[above] {$\{i,j\}$};
  \draw[red, very thick ] (0,0) -- +(120:1) node[above] {${j}$};;
  \draw[blue, very thick] (0,0) -- +(240:1) node[below] {${i}$};;
  \filldraw[fill= white, draw=black, very thin] (0,0) circle (0.15cm);
  \draw (45:0.15) --  (-135:0.15);
  \draw (-45:0.15) -- (135:0.15);
  \node at (0, -1.5) {negative};
 \end{scope}

\begin{scope}[xshift = -9cm]
 \draw[gray] (0,0) -- +(0:1) node[above] {$\{i,j\}$};
 \draw[blue, very thick ] (0,0) -- +(120:1) node[above] {${i}$};;
 \draw[red, very thick] (0,0) -- +(240:1) node[below] {${j}$};;
 \filldraw[fill= white, draw=black, very thin] (0,0) circle (0.15cm);
  \draw (45:0.15) --  (-135:0.15);
  \draw (-45:0.15) -- (135:0.15);
 \node at (0, -1.5) {positive};
\end{scope}

\begin{scope}[xshift = -6cm]
 \draw[gray] (0,0) -- +(0:1) node[above] {$\{i,j\}$};
 \draw[red, very thick ] (0,0) -- +(120:1) node[above] {${j}$};;
 \draw[blue, very thick] (0,0) -- +(240:1) node[below] {${i}$};;
 \filldraw[fill= white, draw=black, very thin] (0,0) circle (0.15cm);
 \filldraw[fill = black] (0,0) circle (0.02cm);
 \node at (0, -1.5) {positive};
\end{scope}
  \end{tikzpicture}
  \caption{A pictorial definition of the signs of the circle. Here we assume $i<j$.}
  \label{fig:signsofcircles}
\end{figure}

We now inspect the relationships between the different notions which have been introduced in Lemma~\ref{lem:monobicercle} and see how they behave when changing the coloring of a given foam. It will not be always possible to write proper identities, and we will use the symbol $\equiv$ to mean equality modulo 2.  

\begin{lem}\label{lem2.5}
  Let $(F,c)$ be a colored foam and $i$ and $j$ two distinct elements
  of $\Col$. We have:
  \begin{align*}
    \chi(F_{i j}(c)) &=   \chi(F_{i}(c))  + \chi(F_{j}(c)) - 2\chi(F_{i\cap j}(c)) \\
    \chi(F_{i\cap j}(c)) &\equiv \theta_{ij}(c)_F
  \end{align*}
  where $F_{i\cap j}(c)$ denotes the surface (with boundary)
  consisting of the union of all facets of $(F,c)$ containing both $i$
  and $j$ in their colors.
\end{lem}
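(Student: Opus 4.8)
The plan is to obtain the first identity from two applications of the inclusion--exclusion principle for the Euler characteristic, and to deduce the second from the classification of compact oriented surfaces. Throughout I fix the CW structure on the underlying space $|F|$ whose $0$-cells are the singular points, whose $1$-cells subdivide the bindings, and whose $2$-cells subdivide the facets. Then each of $F_{i}(c)$, $F_{j}(c)$, $F_{ij}(c)$ and $F_{i\cap j}(c)$ is a subcomplex, so that Euler characteristics are additive on them: $\chi(A\cup B)=\chi(A)+\chi(B)-\chi(A\cap B)$ for any two of these subcomplexes $A,B$.

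For the first identity I would first analyze the local picture along a binding, whose three facets carry disjoint colors $A$, $B$ and $A\sqcup B$. A point of such a binding lies in $F_{i}(c)$ exactly when $i\in A\sqcup B$ (similarly for $j$), while a facet lies in $F_{i\cap j}(c)$ exactly when its color contains both $i$ and $j$. A short case check—according to whether $i,j$ both lie in $A$, both in $B$, are separated, or at least one is absent—shows that the topological intersection $F_{i}(c)\cap F_{j}(c)$ coincides with $F_{i\cap j}(c)$, since whenever both pigments occur among the three colors the facet colored $A\sqcup B$ carries both. The same verification at a singular vertex, where the six facets carry colors $A,B,C,A\sqcup B,B\sqcup C,A\sqcup B\sqcup C$ and the top facet contains everything present, shows the identification persists there. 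Hence $\chi(F_{i}(c)\cap F_{j}(c))=\chi(F_{i\cap j}(c))$, and inclusion--exclusion gives $\chi(F_{i}(c)\cup F_{j}(c))=\chi(F_{i}(c))+\chi(F_{j}(c))-\chi(F_{i\cap j}(c))$. On the other hand, as collections of facets $F_{i}(c)\cup F_{j}(c)=F_{ij}(c)\sqcup F_{i\cap j}(c)$, so these are the same space glued along the same bindings; the same binding analysis shows their topological intersection meets no facet interior and consists exactly of the ``separated'' bindings, i.e.\ the boundary $\partial F_{i\cap j}(c)$, which by Lemma~\ref{lem:monobicercle}(3) is the union of the $\theta_{ij}(c)_F$ disjoint circles $F_{i}(c)\cap F_{j}(c)\cap F_{ij}(c)$. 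Each such circle has vanishing Euler characteristic, so inclusion--exclusion yields $\chi(F_{i}(c)\cup F_{j}(c))=\chi(F_{ij}(c))+\chi(F_{i\cap j}(c))$. Comparing the two expressions for $\chi(F_{i}(c)\cup F_{j}(c))$ gives the desired $\chi(F_{ij}(c))=\chi(F_{i}(c))+\chi(F_{j}(c))-2\chi(F_{i\cap j}(c))$.

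For the second identity I would use that $F_{i\cap j}(c)$ is a compact \emph{orientable} surface—it is a subsurface of the oriented surface $F_{i}(c)$—whose boundary is exactly the union of the $\theta_{ij}(c)_F$ circles identified above. Decomposing into connected components of genus $g_k$ with $b_k$ boundary circles, each contributes $2-2g_k-b_k$ to the Euler characteristic, whence
\[
\chi(F_{i\cap j}(c))\equiv\sum_k b_k=\theta_{ij}(c)_F \pmod 2 .
\]
I would emphasize that orientability is essential: the parity statement fails for non-orientable surfaces (a Möbius band has $\chi=0$ but a single boundary circle), so the orientations furnished by Lemma~\ref{lem:monobicercle} are genuinely used.

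The main obstacle is the local bookkeeping. Because the relevant surfaces are not transverse submanifolds but unions of facets sharing a singular locus, one must verify cell by cell—at bindings and, more delicately, at singular vertices—that the two topological identifications $F_{i}(c)\cap F_{j}(c)=F_{i\cap j}(c)$ and $F_{ij}(c)\cap F_{i\cap j}(c)=\partial F_{i\cap j}(c)$ really hold. Once these identifications and the additivity of $\chi$ on subcomplexes are established, both formulas follow formally.
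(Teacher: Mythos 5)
Your proof is correct and follows essentially the same route as the paper's (very terse) argument: the first identity is exactly the inclusion--exclusion/symmetric-difference computation the authors invoke, and the second is the observation that $\partial F_{i\cap j}(c)$ consists of $\theta_{ij}(c)_F$ circles combined with the classification of compact oriented surfaces. Your version just makes explicit the local checks at bindings and singular points that the paper leaves implicit.
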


\begin{proof}
  The first identity follows from the facts that $F_{i j}(c)$ is the
  symmetric difference of $F_i(c)$ and $F_{j}(c)$. The second follows
  from the fact that the boundary of $F_{i\cap j}(c)$ consists
  precisely of $\theta_{ij}(c)_F$ circles.
\end{proof}

\begin{rmk}
  We have an action of $\mathfrak{S}_N$, the symmetric group on $N$ letters, on the set of colorings of a
  given foam $F$ by permuting the pigments. We also have a more
  local move given by exchanging $i$ and $j$ along a closed sub surface $\Sigma$ of $F_{ij}(c)$. This is to be understood as a
  foamy analogue of the Kempe move for edge-colorings of graphs see for instance \cite{MR3408102}, therefore
  we denote such a move by \emph{Kempe move along $\Sigma$ relative
  to $i$ and $j$}. It naturally brings the question to know whether or not all colorings of a given foam can be related by Kempe moves.

\end{rmk}

\begin{lem}\label{lem2.7}
  Let $F$ be a foam and $c$ and $c'$ be two colorings of $F$ related
  by a Kempe move relative to $1$ and $2$. If $k\geq 3$ then we have
  \[
    \theta_{1k}^{+}(F,c) + \theta_{2k}^+(F,c) \equiv
    \theta_{1k}^{+}(F,c') + \theta_{2k}^+(F,c').
  \]
\end{lem}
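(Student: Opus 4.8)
The plan is to push everything onto the monochrome surface $F_k$, which the move leaves untouched because $k\geq 3$. Since a Kempe move swaps $1$ and $2$ along a union of connected components of the bichrome surface $F_{12}(c)$ (only such $\Sigma$ respect the flow condition), and since parities add, I may assume $\Sigma$ is a single component. Set $U=F_{1\cap k}(c)$ and $V=F_{2\cap k}(c)$, the subsurfaces of $F_k$ made of the facets carrying $1$ (resp.\ $2$) in addition to $k$. Inspecting the local models around a binding shows that the $(1,k)$-circles are exactly the boundary $\partial U$ on $F_k$ and the $(2,k)$-circles are $\partial V$; indeed a binding lies on $\partial U$ precisely when its facet not meeting $F_k$ carries $1$. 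A facet of $\Sigma$ changes its membership in $F_{1\cap k}$ exactly when it also carries $k$, so with $W:=\Sigma\cap F_k$ one has $F_{1\cap k}(c')=U\triangle W$ and $F_{2\cap k}(c')=V\triangle W$, whence $\partial U'=\partial U\triangle\partial W$ and $\partial V'=\partial V\triangle\partial W$ with one and the same $\partial W$.

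The observation I would isolate next is that the sign of a binding is intrinsic to $F_k$. For a binding of $F_k$ with small facets $Y\ni k$ and $X\not\ni k$ and big facet $Z$, being positive means the cyclic order is $(X,Y,Z)$; since $1<k$ and $2<k$, this criterion is the very same whether the transiting pigment in $X$ is $1$ or $2$, it depends only on the geometry of $F_k$ and on which facet carries $k$, and it is therefore preserved by the move. Thus one well-defined function $B\mapsto s(B)\in\{+,-\}$ governs both systems of circles; by Lemma~\ref{lem:monobicercle} it is constant along each circle, and positivity singles out a fixed set $E^+$ of bindings of $F_k$. The content of the lemma becomes: the number of connected components of $(\partial U\cap E^+)\sqcup(\partial V\cap E^+)$ has the same parity before and after replacing $(\partial U,\partial V)$ by $(\partial U\triangle\partial W,\partial V\triangle\partial W)$. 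A first easy check is that every binding lies in an even number of the four systems $\partial U,\partial V,\partial U',\partial V'$, so on mod-$2$ chains these four sum to zero.

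The hard part is that this chain-level cancellation does not yet control the number of circles, because the number of components of a $1$-manifold is invisible to its mod-$2$ homology class: already on a theta-graph one exhibits disjoint unions of circles $a,b,d$ with $a+b+(a+d)+(b+d)=0$ yet with $\#a+\#(a+d)+\#b+\#(b+d)$ odd. Genuine geometric input is thus unavoidable, and I would extract it from a local analysis along $\partial W$. Away from $\partial W$ the circles of both systems, with their signs, are literally unchanged, so they contribute equally to $c$ and to $c'$ and cancel; the only circles affected are those meeting $\partial W$. The curve $\partial W$ sits where a separating curve of $\Sigma$ (a circle of $F_1(c)\cap F_2(c)\cap F_{12}(c)$) crosses $F_k$ and where $W$ runs through a singular point, and these are exactly the finitely many local configurations recorded in Appendix~\ref{sec:tables}.

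I would then go through these local models one by one and compute, for each, the local change it induces in $\theta^+_{1k}(c)+\theta^+_{2k}(c)$ as the circles are created, destroyed, split or merged across $\partial W$; the claim to verify is that these local contributions sum to an even number. The main obstacle is precisely this bookkeeping: matching the preferred orientations and the intrinsic signs $s$ on the two sides of each separating curve of $\Sigma$, and confirming the parity in every local picture. Once the elementary moves are classified, the global congruence follows by summing over the components of $\partial W$ and then, by additivity of parities, over the components of the original $\Sigma$.
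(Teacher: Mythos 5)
Your setup is sound and in fact close in spirit to the paper's: the observation that the sign of a binding on a $(1,k)$- or $(2,k)$-circle depends only on the geometry of $F_k$ and on which facet carries $k$ (hence is shared by the two systems of circles and unchanged by the move) is exactly the point the paper uses when it declares $\theta^{i+}_{1k}(F,c)=\theta^{i+}_{2k}(F,c')$ for circles contained in $\Sigma$; and your theta-graph example correctly shows that mod-$2$ chain-level cancellation cannot finish the job. But the proof stops precisely where the real content begins. You reduce to showing that ``the local contributions sum to an even number'' at the places where the circle systems are re-spliced, announce that this is ``the main obstacle,'' and then do not carry out the verification. That verification is the lemma.

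Concretely, what is missing is the paper's analysis of the \emph{mixed} circles. A mixed positive circle is a union of arcs inside $\Sigma$ and arcs outside it; the endpoints of the arcs inside $\Sigma$ are the singular points where both pigments $1$ and $2$ appear and where a binding leaves $\Sigma$ (the ``critical singular points''). At each such point the Kempe move re-splices the abstract family of positive mixed circles of types $(1,k)$ and $(2,k)$, changing its cardinality by $\pm 1$ --- an \emph{odd} amount, so a picture-by-picture parity check of the kind you describe cannot succeed locally. Evenness is recovered only globally: each positive $(1,k)$-arc in $\Sigma$ has exactly two endpoints, each of which is a critical singular point, so the number of critical singular points is even and the total change in $\theta^{m+}_{1k}+\theta^{m+}_{2k}$ is even. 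Without this pairing argument (or an equivalent one), your reduction to bookkeeping along $\partial W$ does not close, and the congruence is not established.
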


\begin{proof}
  Let $\Sigma$ be the surface along which the Kempe move relates $c$
  and $c'$. We can distinguish three kinds of positive circles of type
  $(1,k)$ (resp. $(2,k)$) in $(F,c)$ (resp. resp. $(F,c')$):

  \begin{itemize}
  \item The ones disjoint from $\Sigma$, we denote by
    $\theta^{d+}_{1k}(F,c)$ (resp. $\theta^{d+}_{2k}(F,c)$,
    $\theta^{d+}_{1k}(F,c')$, $\theta^{d+}_{2k}(F,c')$) the number of such
    circles;
  \item The ones included in $\Sigma$, we denote by $\theta^{i+}_{1k}(F,c)$
    (resp. $\theta^{i+}_{2k}(F,c)$, $\theta^{i+}_{1k}(F,c')$,
    $\theta^{i+}_{2k}(F,c')$) the number of such circles;
  \item The other ones (which we call the \emph{mixed circles}), we
    denote by $\theta^{m+}_{1k}(F,c)$ (resp. $\theta^{m+}_{2k}(F,c)$,
    $\theta^{m+}_{1k}(F,c')$, $\theta^{m+}_{2k}(F,c')$) the number of such
    circles.
  \end{itemize}
  Since $1<k$ and $2<k$, we have:
  \begin{align*}
    \theta_{1k}^{d+}(F,c) &= \theta_{1k}^{d+}(F,c'), &&&  \theta_{2k}^{d+}(F,c) &= \theta_{2k}^{d+}(F,c'), \\
    \theta_{1k}^{i+}(F,c) &= \theta_{2k}^{i+}(F,c'), &\textrm{and}&&   \theta_{2k}^{i+}(F,c) &= \theta_{1k}^{i+}(F,c'). \\
  \end{align*}
  Hence we only need to prove
  \[
    \theta_{1k}^{m+}(F,c) + \theta_{2k}^{m+}(F,c) \equiv
    \theta_{1k}^{m+}(F,c') + \theta_{2k}^{m+}(F,c').
  \]
  
  Let us consider a mixed circle of type $(1,k)$ in $(F,c)$. It is a union of arcs in $\Sigma$ and of arcs disjoint from $\Sigma$. Let us consider an end of an arc of type $(1,k)$ in $\Sigma$. It has to be a singular point, moreover, if at this singular point the color $2$ is not involved, then all the bindings adjacent to this singular point are in $\Sigma$. 
  \begin{figure}[h]
    \centering
    \begin{tikzpicture}[scale= 0.9]
      \input{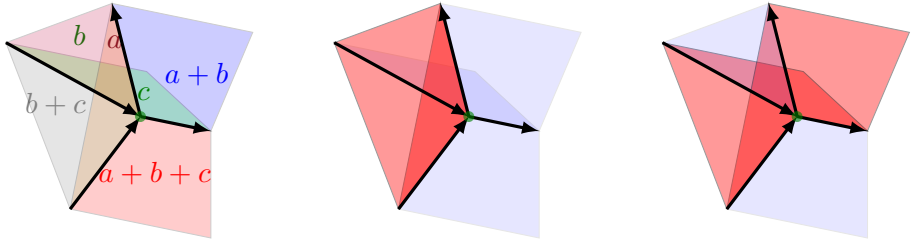}
    \end{tikzpicture}
    \caption{A singular point in $\Sigma$.
Up to symmetry, the two cases are: $1$ is in the facet labeled by $a$ and $2$ in the facet labeled by $b$ (this is depicted on the middle) and $1$ is in the facet labeled by $a$ and $2$ in the facet labeled by $c$ (depicted on the left). The surface $\Sigma$ is in red. An arc can end only if the local situation is depicted on the middle.}
    \label{fig:singpointsigma}
  \end{figure}

There are essentially two different singular points (see Figure~\ref{fig:singpointsigma}) on $\Sigma$ where both colors $1$ and $2$ are involved, but for only one of them, there is an adjacent binding which is not in $\Sigma$. A careful inspection of the situation shows that a singular point in $\Sigma$ is the end of an arc of type $(1,k)$ if and only if it is an end of an arc of type $(2,k)$, and the only binding going out of $\Sigma$ at this singular points is part of a positive circle of type $(1,k)$ and of a positive circle of type $(2,k)$ in $(F,c)$. Let us call such a singular point a \emph{critical singular point} for this proof. In a neighborhood of a critical singular point, the Kempe moves along $\Sigma$ performs the following change on the positive mixed circles of type  $(1,k)$ and $(2,k)$:

\[
  \begin{tikzpicture}
    \begin{scope}
\tdplotsetmaincoords{80}{140}
\begin{scope}[scale = 1.6, xshift = 0cm, tdplot_main_coords]
  \begin{scope}
    \filldraw [very thin, fill =blue, opacity = 0.1] (0,-1, -1) -- (0,1,-1) -- (0,1,0) -- (0,0,0) -- (0,-1,-1);
    \coordinate (a) at (0, -1, -1);
    \coordinate (a1) at (0, -0.9, -1);
    \coordinate (ma) at (0,0, -0.05);
    \coordinate (aa) at (0,1,-0.05);
    \end{scope}
  \begin{scope}[rotate around y = 150]
    \filldraw [very thin, fill= blue, opacity = 0.1] (0,-1, -1) -- (0,1,-1) -- (0,1,0) -- (0,0,0) -- (0,-1,-1);
    \coordinate (b) at (0, -1, -1);
  \end{scope}
  \begin{scope}[rotate around y =-130]
    \filldraw [very thin, fill= blue, opacity = 0.1] (0,-1, -1) -- (0,1,-1) -- (0,1,0) -- (0,0,0) -- (0,-1,-1);
    \coordinate (c) at (0, -1, -1);
    \coordinate (c1) at (0, -0.9, -1);
    \coordinate (mc) at (0,0, -0.1);
    \coordinate (cc) at (0,1,-0.1);
  \end{scope}
  \filldraw[very thin, fill= red, opacity = 0.4] (a) -- (b) -- (0,0,0) -- (a);
  \filldraw[very thin, fill= red, opacity = 0.4] (a) -- (c) -- (0,0,0) -- (a);
  \filldraw[very thin, fill= red, opacity = 0.4] (b) -- (c) -- (0,0,0) -- (b);
  \draw [very thick, orange!50!black, ->] (a1) -- (ma) --(aa);
  \draw [very thick, green!50!black, ->] (c1) -- (mc) --(cc);

\end{scope}
\end{scope}
\node at (3.2, 0) {$\leftrightsquigarrow$};
\begin{scope}
\tdplotsetmaincoords{80}{140}
\begin{scope}[scale = 1.6, xshift = 4cm, tdplot_main_coords]
  \begin{scope}
    \filldraw [very thin, fill =blue, opacity = 0.1] (0,-1, -1) -- (0,1,-1) -- (0,1,0) -- (0,0,0) -- (0,-1,-1);
    \coordinate (a) at (0, -1, -1);
    \coordinate (a1) at (0, -0.9, -1);
    \coordinate (ma) at (0,0, -0.05);
    \coordinate (aa) at (0,1,-0.05);
    \end{scope}
  \begin{scope}[rotate around y = 150]
    \filldraw [very thin, fill= blue, opacity = 0.1] (0,-1, -1) -- (0,1,-1) -- (0,1,0) -- (0,0,0) -- (0,-1,-1);
    \coordinate (b) at (0, -1, -1);
  \end{scope}
  \begin{scope}[rotate around y =-130]
    \filldraw [very thin, fill= blue, opacity = 0.1] (0,-1, -1) -- (0,1,-1) -- (0,1,0) -- (0,0,0) -- (0,-1,-1);
    \coordinate (c) at (0, -1, -1);
    \coordinate (c1) at (0, -0.9, -1);
    \coordinate (mc) at (0,0.2, -0.1);
    \coordinate (cc) at (0,1,-0.1);
  \end{scope}
  \filldraw[very thin, fill= red, opacity = 0.4] (a) -- (b) -- (0,0,0) -- (a);
  \filldraw[very thin, fill= red, opacity = 0.4] (a) -- (c) -- (0,0,0) -- (a);
  \filldraw[very thin, fill= red, opacity = 0.4] (b) -- (c) -- (0,0,0) -- (b);
  \draw [very thick, orange!50!black, ->] (c1) -- (ma) --(aa);
  \draw [very thick, green!50!black, ->] (a1) -- (mc) --(cc);

\end{scope}
\end{scope}

  \end{tikzpicture}
\]

If we now consider the abstract family $\mathcal{A}$ of oriented positive mixed circles of type $(1,k)$ and of type $(2,k)$ (i. e. we forgot about the colors). We see that at each the critical singular point the Kempe move along $\Sigma$ induces the following change on $\mathcal{A}$:

\[
\tikz{\begin{scope}
\draw[dotted] (0,0) circle (1cm); 
\draw[thick, ->, red!50!black] (45:1) .. controls (0,0).. (135:1);
\draw[thick, ->, red!50!black] (-135:1) .. controls (0,0) .. (-45:1);
\end{scope}
\node at (2, 0) {$\leftrightsquigarrow$};
\begin{scope}[xshift=4cm]
\draw[dotted] (0,0) circle (1cm); 
\draw[thick, ->, red!50!black] (45:1) .. controls (0,0) .. (-45:1);
\draw[thick, ->, red!50!black] (-135:1) .. controls (0,0) .. (135:1);
\end{scope}}
\]

This changes the cardinality of $\mathcal{A}$ by $\pm1$. Since there are an even number of such singular points (twice as many as arcs positive arcs of type $(1,k)$ in $\Sigma$), this gives:

 \[\theta_{1k}^{m+}(F,c) + \theta_{2k}^{m+}(F,c) \equiv
    \theta_{1k}^{m+}(F,c') + \theta_{2k}^{m+}(F,c').  \]

\end{proof}

\begin{lem}\label{lem2.8}
  Let $F$ be a foam and $c$ and $c'$ be two colorings of $F$ related
  by a Kempe move relative to $1$ and $2$ along a surface $\Sigma$
  and $k\geq 3$. There exists an integer $\ell_\Sigma(c,k)$ such that:
  \[
    \chi(F_{1k}(c')) = \chi(F_{1k}(c)) + \ell_\Sigma(c,k) \quad \textrm{and} \quad
    \chi(F_{2k}(c')) = \chi(F_{2k}(c)) -\ell_\Sigma(c,k)
  \]
  Furthermore, this integer $\ell_\Sigma(c,k)$ depends only on how the
  surface $\Sigma$ is colored with respect to $1$, $2$ and $k$.
\end{lem}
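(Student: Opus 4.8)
The plan is to localise the entire computation on $\Sigma$ and to exploit the fact that, along $\Sigma$, the Kempe move coincides with the global relabelling $\tau=(1\,2)\in\mathfrak{S}_N$. First I would record that $c$ and $c'$ agree on every facet \emph{not} contained in $\Sigma$, and that for the global relabelling one has $F_{1k}(\tau c)=F_{2k}(c)$ and $F_{2k}(\tau c)=F_{1k}(c)$ as subsurfaces of $F$: a facet carries exactly one of $1,k$ for $\tau c$ if and only if it carries exactly one of $2,k$ for $c$, since $k\notin\{1,2\}$. For a pigment pair $(i,k)$ write $P_{ik}(c)$ for the subsurface of the closed surface $F_{ik}(c)$ cut out by the facets lying in $\Sigma$. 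Its frontier inside $F_{ik}(c)$ separates the facets of $\Sigma$ from the others, and one checks it to be a disjoint union of circles, so additivity of the Euler characteristic gives $\chi(F_{ik}(c))=\chi(P_{ik}(c))+\chi(\text{part outside }\Sigma)$, the second summand being unchanged when $c$ is replaced by $c'$. Hence
\[
\chi(F_{1k}(c'))-\chi(F_{1k}(c))=\chi(P_{1k}(c'))-\chi(P_{1k}(c)),
\]
and likewise for the pair $(2,k)$.

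The heart of the argument is the identity $\chi(P_{1k}(c'))=\chi(P_{2k}(c))$. Because $c'$ and $\tau c$ agree on every facet of $\Sigma$, the subsurfaces $P_{1k}(c')$ and $P_{2k}(c)$ are built from the very same facets of $\Sigma$. I would then check that they also have the same interior bindings and the same interior singular points, which forces equality of Euler characteristics, the boundaries being unions of circles and contributing $0$. For the bindings this is immediate: each surface $F_{ik}$ meets a given binding in either $0$ or $2$ of its three surrounding facets, so a binding is interior to $P_{1k}(c')$ exactly when two of its facets belong to the shared facet set; this condition refers only to that facet set and does not distinguish $P_{1k}(c')$ from $P_{2k}(c)$. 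The same principle applies to a singular point, whose interior status is governed by whether the facets of the shared set fill out a disk around it.

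Setting $\ell_\Sigma(c,k):=\chi(P_{2k}(c))-\chi(P_{1k}(c))$ then yields $\chi(F_{1k}(c'))-\chi(F_{1k}(c))=\ell_\Sigma(c,k)$. Running the identical argument with the roles of $1$ and $2$ interchanged, again using $c'|_\Sigma=\tau c|_\Sigma$ together with $F_{2k}(\tau c)=F_{1k}(c)$, gives $\chi(P_{2k}(c'))=\chi(P_{1k}(c))$, whence $\chi(F_{2k}(c'))-\chi(F_{2k}(c))=\chi(P_{1k}(c))-\chi(P_{2k}(c))=-\ell_\Sigma(c,k)$. Finally $\ell_\Sigma(c,k)$ depends only on the colouring of $\Sigma$ with respect to $1$, $2$ and $k$: the surfaces $P_{1k}(c)$ and $P_{2k}(c)$ are assembled from the facets of $\Sigma$ solely according to which of the pigments $1,2,k$ each facet carries, so their Euler characteristics, and a fortiori their difference, are functions of that data alone.

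The step I expect to be the genuine obstacle is the local bookkeeping underlying the two claims invoked above: that the frontier separating $P_{ik}(c)$ from the rest of $F_{ik}(c)$ is an honest $1$-manifold, and that the interior/boundary status of bindings and, above all, of singular points is dictated by the shared facet set. This forces a case analysis through the local models of Definition~\ref{dfn:foam}, a binding surrounded by three facets and a singular point surrounded by six, while keeping track of the positions of $1$, $2$ and $k$. The ``$0$-or-$2$ facets per binding'' constraint is what makes every binding case collapse, but the singular-point neighbourhoods (cones on the tetrahedron's $1$-skeleton) must be inspected by hand to ensure that the shared facets close up consistently for both colourings.
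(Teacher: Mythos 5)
Your proof is correct and is exactly the argument the paper gives: the paper's one-line proof invokes ``locality of the Euler characteristic,'' noting that on $\Sigma$ the cells contributing to $\chi(F_{1k})$ and $\chi(F_{2k})$ are exchanged while everything outside $\Sigma$ is untouched, which is precisely your decomposition $\chi(F_{ik}(c))=\chi(P_{ik}(c))+\chi(\text{outside})$ with $P_{1k}(c')=P_{2k}(c)$ as subspaces of $F$. You have simply carried out in detail the bookkeeping the paper leaves implicit.
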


\begin{proof}
  This follows from the locality of the Euler characteristic: on the
  surface $\Sigma$, the relevant facets, edges and vertices used to
  compute $\chi(F_{1k}(c))$ and $\chi(F_{2k}(c))$ are exchanged when
  computing $\chi(F_{1k}(c'))$ and $\chi(F_{2k}(c'))$.
\end{proof}

\subsection{A symmetric evaluation of foams}
\label{sec:symmetric-evaluation}
In what follows, $X\eqdef (X_1, X_2, \dots , X_N)$ is a set of variables, we fix their degree to be equal to $2$. 
If $A$ is a subset of $\Col$, and $P$ is a symmetric polynomial in $|A|$ variables, $P(A)$ denotes $P((X_a)_{a\in A})$.

\begin{dfn}\label{dfn:decoratedfoam}
  A \emph{decoration} of a foam $F$ is a map $f\mapsto P_f$ which associates with any facet $f$ of $F$ an homogeneous symmetric polynomial $P_f$ in $l(f)$ variables. A \emph{decorated foam} is a foam together with a decoration. A decorated foam has a \emph{degree} $d_N$ given by:
\[d_N(F) = d_N^{\textrm{undecorated}}(F) + \sum_{f} \deg(P_f).\]
\end{dfn}

  \begin{dfn} If $(F,c)$ is a colored decorated foam, we define:
    \begin{align*}
      s(F,c) &= \sum_{i=1}^N {i\chi(F_i(c))/2}  + \sum_{1\leq i < j \leq N} \theta^+_{ij}(F,c), \\
      P(F,c) &= \prod_{f \textrm{ facet of $F$}} P_f(c(f)),\\
      Q(F,c) &= \prod_{1\leq i < j \leq N} (X_i-X_j)^{\frac{\chi(F_{ij}(c))}{2}} \\
      \kup{F,c} &= (-1)^{s(F,c)} \frac{P(F,c)}{Q(F,c)}.
    \end{align*}
    If $F$ is a decorated foam, we define the \emph{evaluation} of the foam $F$ by:
    \[ \kup{F} := \sum_{c \textrm{ coloring of $F$}} \kup{F,c}. \]
  \end{dfn}

  \begin{rmk}\label{rmk:remove0faces}
    If $F$ is a foam and $F'$ the same foam where the facets with label $0$ have been removed, then there is a one-one correspondence between the colorings of $F$ and the colorings of $F'$. For every coloring $c$ of $F$ and its corresponding coloring $c'$ of $F'$, we have $\kupc{F,c} = \kupc{F',c'}$. This implies $\kup{F} = \kup{F'}$.  \end{rmk}

\begin{lem}\label{lem:degreefit}
   Let $F$ be an decorated $\sll_N$-foam, and $c$ be a coloring of $F$. Then the rational function  $\kup{F,c}$ is homogeneous and has degree $d_N(F)$.
\end{lem}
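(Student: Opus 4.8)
The plan is to observe first that $\kup{F,c} = (-1)^{s(F,c)}\,P(F,c)/Q(F,c)$ is manifestly a homogeneous rational function: the sign is a scalar, each factor $P_f(c(f))$ is homogeneous of degree $\deg P_f$ (in the grading in which every $X_i$ has degree $2$), and each factor $(X_i-X_j)^{\chi(F_{ij}(c))/2}$ is homogeneous of degree $\chi(F_{ij}(c))$. Hence
\[
\deg \kup{F,c} = \sum_{f}\deg P_f - \sum_{1\le i<j\le N}\chi(F_{ij}(c)).
\]
Comparing with $d_N(F) = d_N^{\textrm{undecorated}}(F) + \sum_f \deg P_f$, the lemma reduces to the purely combinatorial identity
\[
d_N^{\textrm{undecorated}}(F) = -\sum_{1\le i<j\le N}\chi(F_{ij}(c)),
\]
which I would prove by computing the right-hand side as a sum of local contributions of the facets, interval bindings and singular points of $F$, and matching these termwise with the three contributions defining $d_N^{\textrm{undecorated}}$.

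For this I would use the additivity of the compactly supported Euler characteristic over the stratification of each closed surface $F_{ij}(c)$ coming from the foam structure: its open $2$-strata are the interiors of the facets lying in $F_{ij}(c)$ (each contributing $\chi(f)$, the boundary circles contributing $0$), its open $1$-strata are the interval bindings lying in $F_{ij}(c)$ (each contributing $-1$, circle bindings contributing $0$), and its $0$-strata are the singular points lying in $F_{ij}(c)$ (each contributing $+1$). This gives
\[
\chi(F_{ij}(c)) = \sum_{f\subset F_{ij}(c)}\chi(f) - \#\{\text{interval bindings} \subset F_{ij}(c)\} + \#\{\text{singular points} \subset F_{ij}(c)\},
\]
after which it remains to count, for each stratum, how many pairs $(i,j)$ put it into $F_{ij}(c)$. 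A facet $f$ lies in $F_{ij}(c)$ iff exactly one of $i,j$ lies in $c(f)$, which occurs for $l(f)(N-l(f))$ pairs, so the total facet contribution is $\sum_f l(f)(N-l(f))\chi(f)=\sum_f d(f)$. Analysing the flow condition at a binding with facets coloured $c_1,c_2,c_1\sqcup c_2$ of labels $a,b,a+b$ shows it lies in $F_{ij}(c)$ in exactly two situations: one of $i,j$ in $c_1\sqcup c_2$ and the other not (giving $(a+b)(N-a-b)$ pairs), or $i$ and $j$ split between $c_1$ and $c_2$ (giving $ab$ pairs); in both cases $F_{ij}(c)$ is smooth across the binding, as befits a closed surface, and the total contribution is $-\bigl(ab+(a+b)(N-a-b)\bigr)=-d(e)$.

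The last and most delicate case is a singular point $p$, surrounded by facets coloured $A,B,C,A\cup B,B\cup C,A\cup B\cup C$ with $|A|=a$, $|B|=b$, $|C|=c$; writing $D$ for the complement of $A\cup B\cup C$, so $|D|=d=N-a-b-c$, the sets $A,B,C,D$ partition $\Col$. The crucial point to verify is that $p$ lies in $F_{ij}(c)$ if and only if $i$ and $j$ lie in two distinct blocks of this partition: if they lie in the same block, every facet colour contains both or neither of them, while if they lie in different blocks one of $A,B,C$ contains exactly one. The number of such pairs is $ab+ac+ad+bc+bd+cd$, which is exactly $d(p)$. Summing the three contributions yields $\sum_{i<j}\chi(F_{ij}(c)) = \sum_f d(f) - \sum_e d(e) + \sum_p d(p) = -d_N^{\textrm{undecorated}}(F)$, whence $\deg\kup{F,c}=d_N(F)$. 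I expect the main obstacle to be the bookkeeping at singular points — both checking that the six facet colours necessarily take the stated union form under the flow condition and confirming the ``distinct blocks'' criterion — since the facet and binding counts are comparatively routine.
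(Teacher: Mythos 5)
Your proof is correct and follows exactly the paper's argument: the paper likewise reduces the claim to the observation that a facet of label $a$ contributes $\chi(f)$ to exactly $a(N-a)$ bichrome surfaces, an interval binding contributes $-1$ to exactly $ab+(a+b)(N-a-b)$ of them, a singular point contributes $+1$ to exactly $ab+bc+cd+da+ac+bd$ of them, and the exponent $\tfrac12$ is offset by the variables having degree $2$. You have merely written out the double-counting and the flow-condition checks that the paper leaves implicit.
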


\begin{proof}
   The homogeneity is clear. The rest follows from the following facts: 
   \begin{itemize}
   \item a facet $f$ with label $a$ contributes with $\chi(f)$ to the Euler characteristic of exactly $a(N-a)$ bi-chromatic surfaces,
   \item a segment-like binding of type $(a,b,a+b)$ contributes with $-1$ to the Euler characteristic of exactly $ab + (a+b) (N-a-b)$
bi-chromatic surfaces,
   \item a point of type $(a,b,c, a+b, a+c, a+b+c)$ contributes with $+1$ to the Euler characteristic of exactly $ab + bc+ cd + da + ac+ bd $ (with $d = N- {a+b+c}$) bi-chromatic surfaces,
   \item the factor $\frac{1}{2}$ in the exponent of $(X_i - X_j)$ is balanced by the variables $X_\bullet$ having degree 2.
   \end{itemize}
\end{proof}

\begin{cor}\label{cor:degreefit}
  Let $F$ be an $\sll_N$-foam, then the rational function $\kup{F,c}$ is either $0$ or is homogeneous of degree $d_N(F)$.
\end{cor}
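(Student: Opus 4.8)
The plan is to read this off directly from Lemma~\ref{lem:degreefit}, the only genuinely new ingredient being the identification of when $\kup{F,c}$ is allowed to vanish. Recall that $\kup{F,c} = (-1)^{s(F,c)} P(F,c)/Q(F,c)$, where $Q(F,c) = \prod_{1 \le i < j \le N}(X_i - X_j)^{\chi(F_{ij}(c))/2}$ and $P(F,c) = \prod_{f} P_f(c(f))$. The corollary just packages the lemma together with the degenerate case in a form convenient for later use, namely for asserting homogeneity of $\kup{F} = \sum_c \kup{F,c}$.

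First I would note that $Q(F,c)$ is a nonzero homogeneous expression: each factor $X_i - X_j$ is nonzero, so $Q(F,c)$ can neither vanish nor blow up, and (once the factor $\tfrac12$ in the exponents is balanced against $\deg X_\bullet = 2$, as in the proof of Lemma~\ref{lem:degreefit}) it carries a well-defined integral degree. Consequently the vanishing of $\kup{F,c}$ is governed entirely by the numerator: $\kup{F,c} = 0$ if and only if $P(F,c) = 0$, which happens if and only if some decoration factor $P_f(c(f)) = P_f\big((X_a)_{a \in c(f)}\big)$ is the zero polynomial. Since the $X_a$ for $a \in c(f)$ are pairwise distinct formal variables, this is equivalent to $P_f$ itself being the zero polynomial. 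This is the first alternative of the statement.

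In the complementary case every $P_f(c(f))$ is a nonzero homogeneous polynomial, so $P(F,c)$ is a nonzero homogeneous polynomial and $\kup{F,c}$ is a nonzero homogeneous rational function; Lemma~\ref{lem:degreefit} then applies without change and pins its degree to $d_N(F)$. There is essentially no obstacle here, and the point I would want to make explicit is precisely the one the corollary is designed to handle: the degree bookkeeping of the lemma is meaningful only when the numerator does not degenerate, so isolating the vanishing case is exactly what upgrades the lemma to a statement that remains correct for arbitrary (possibly vanishing) decorations.
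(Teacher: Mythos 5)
Your argument is correct and is exactly the reading the paper intends: the corollary is an immediate consequence of Lemma~\ref{lem:degreefit}, with the only new observation being that $Q(F,c)$ never vanishes, so $\kup{F,c}=0$ precisely when some decoration $P_f$ is the zero polynomial, and otherwise the lemma's degree computation applies verbatim. The paper gives no separate proof, and yours supplies the same (routine) justification.
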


\begin{lem}\label{lem:transposition}
  Let $F$ be a foam and $c$ a coloring of $F$. Let us denote by $c'$ the coloring of $F$
  obtained by exchanging the colors $i$ and $i+1$ in $c$. We have:
  \[
    \kup{F, c'}(X_1, \dots X_N) = \kupc{F, c}(X_1, \dots, X_{i-1},
    X_{i+1}, X_{i}, X_{i+2}, \dots, X_N).
  \]
\end{lem}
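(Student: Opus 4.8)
The plan is to split $\kup{F,c}$ into its three factors---the sign $(-1)^{s(F,c)}$, the numerator $P(F,c)$, and the denominator $Q(F,c)$---and to track each one under the colour transposition $\sigma$ exchanging the pigments $i$ and $i+1$ (so that $c'=\sigma\cdot c$), comparing with the variable transposition $\tau$ exchanging $X_i$ and $X_{i+1}$. The basic bookkeeping identity is that a facet carries the pigment $k$ in $c'$ precisely when it carries $\sigma(k)$ in $c$; hence $F_k(c')=F_{\sigma(k)}(c)$ for every pigment $k$ and, taking symmetric differences, $F_{kl}(c')=F_{\sigma(k)\sigma(l)}(c)$ for the bichrome surfaces. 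These two identities drive the whole computation.

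The numerator is the easy factor. Since $P(F,c')=\prod_f P_f(\sigma(c(f)))$ and each $P_f$ is symmetric, I would check facet by facet---according to whether $c(f)$ contains both, exactly one, or neither of $i$ and $i+1$---that $P_f(\sigma(c(f)))$ is obtained from $P_f(c(f))$ exactly by the substitution $\tau$; this gives $P(F,c')=\tau(P(F,c))$. For the denominator, reindexing $Q(F,c')=\prod_{k<l}(X_k-X_l)^{\chi(F_{\sigma(k)\sigma(l)}(c))/2}$ through $\sigma$ matches it with $\tau(Q(F,c))$ pair by pair, the only discrepancy coming from the pair $\{i,i+1\}$ whose order $\sigma$ reverses: this turns $(X_i-X_{i+1})$ into $(X_{i+1}-X_i)$, so that $Q(F,c')=(-1)^{\chi(F_{i,i+1}(c))/2}\,\tau(Q(F,c))$. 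The exponents here are integers because the bichrome surfaces are closed, hence of even Euler characteristic.

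The heart of the proof, and the step I expect to be the main obstacle, is the behaviour of $s$. The term $\sum_k k\,\chi(F_k(c'))/2$, reindexed by $\sigma$, contributes $\tfrac12(\chi(F_i(c))-\chi(F_{i+1}(c)))$ to $s(F,c')-s(F,c)$. The delicate term is $\sum_{k<l}\theta^+_{kl}$. I would split the pairs $\{k,l\}$ into three families: those disjoint from $\{i,i+1\}$, which are unaffected; the pairs $\{i,l\}$ and $\{i+1,l\}$ with $l>i+1$ (and symmetrically $\{k,i\}$, $\{k,i+1\}$ with $k<i$), which merely interchange their $\theta^+$ values and so leave the sum unchanged; and the single pair $\{i,i+1\}$. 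For this last pair, at each binding of a circle of type $(i,i+1)$ the facets carrying $i$ and $i+1$ exchange their roles, so Lemma~\ref{lem:monobicercle}(3) together with the left-hand-rule conventions forces the cyclic order to reverse, turning positive bindings into negative ones; hence $\theta^+_{i,i+1}(F,c')=\theta^-_{i,i+1}(F,c)$. The careful verification of this reversal is the genuinely subtle point of the argument.

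Collecting everything yields $s(F,c')-s(F,c)\equiv\tfrac12(\chi(F_i(c))-\chi(F_{i+1}(c)))+\theta_{i,i+1}(c)\pmod 2$, and it remains to check that the total extra exponent $s(F,c')-s(F,c)+\chi(F_{i,i+1}(c))/2$ produced by $Q$ is even. Expanding $\chi(F_{i,i+1}(c))/2$ by the first identity of Lemma~\ref{lem2.5}, this exponent reduces modulo $2$ to $\chi(F_i(c))+\theta_{i,i+1}(c)-\chi(F_{i\cap(i+1)}(c))$, which vanishes because $\chi(F_i(c))$ is even (closed surface) and $\theta_{i,i+1}(c)\equiv\chi(F_{i\cap(i+1)}(c))$ by the second identity of Lemma~\ref{lem2.5}. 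Combined with $P(F,c')=\tau(P(F,c))$ and the sign computed for $Q$, this gives $\kup{F,c'}=\tau(\kup{F,c})$, which is exactly the asserted identity.
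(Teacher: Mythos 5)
Your proposal is correct and follows essentially the same route as the paper: split $\kup{F,c}$ into $P$, $Q$ and the sign, observe $P(F,c')=\tau(P(F,c))$ and $Q(F,c')=(-1)^{\chi(F_{i(i+1)}(c))/2}\tau(Q(F,c))$, and reduce the sign discrepancy $s(F,c')-s(F,c)$ modulo $2$ to $\chi(F_{i(i+1)}(c))/2$ via Lemma~\ref{lem2.5}, the evenness of $\chi$ of closed surfaces, and the key identity $\theta^+_{i(i+1)}(F,c')=\theta^-_{i(i+1)}(F,c)$. The paper uses exactly this cancellation (it merely asserts the $\theta^+\leftrightarrow\theta^-$ swap inside the displayed computation, which you rightly single out and justify), so there is nothing to add.
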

\begin{proof}
  Let us denote by $X'$ the variables $X$ with $X_i$ and $X_{i+1}$
  exchanged. We want to prove
  \[\kupc{F, c'}(X) = \kupc{F, c}(X').\]
  We have:
  \[ P(F, c')(X) = P(F,c)(X') \and Q(F,c')(X) =
    (-1)^{\chi(F_{i(i+1)}(c))/2}Q(F,c)(X'). \] We now need to compute
  $\Delta(s):= s(F,c) - s(F,c')$:
  \begin{align*}
    \Delta(s) &=  \sum_{j=1}^N {j\chi(F_j(c))/2}  + \sum_{1\leq j< k \leq N} \theta^+_{jk}(F,c) -  \sum_{j=1}^N {j\chi(F_j(c'))/2}  - \sum_{1\leq j < k \leq N} \theta^+_{jk}(F,c')\\
    &= i \frac{\chi(F_i(c)) - \chi(F_i(c'))}2 + (i+1) \frac{\chi(F_{i+1}(c)) -  \chi(F_{i+1}(c'))}2 \\
    &\qquad + \sum_{j \neq i, i+1} \left(\theta^+_{ij}(F,c) - \theta^+_{(i+1)j}(F,c') \right) +  \sum_{j \neq i, i+1}\left( \theta^+_{(i+1)j}(F,c) - \theta^+_{ij}(F,c') \right) \\
    &\qquad + \theta^+_{i(i+1)}(F,c) - \theta^{+}_{i(i+1)}(F,c')\\
    &= \frac{\chi(F_{i+1}(c)) -  \chi(F_{i}(c))}2 + \theta^+_{i(i+1)}(F,c) - \theta^{-}_{i(i+1)}(F,c)\\
    &\equiv \frac{\chi(F_{i+1}(c)) +  \chi(F_{i}(c))}2 + \theta_{i(i+1)}(F,c) \\
    &\equiv \frac{\chi(F_{i (i+1)}(c))}2.
  \end{align*}
  The penultimate equality follows from Lemma \ref{lem2.5} and the fact that the Euler characteristic of closed surfaces is even.
  This gives:
  \[\kupc{F, c'}(X) = (-1)^{s(F,c')} \frac{P(F,c')(X)}{Q(F,c')(X)} =
    (-1)^{s(F,c)} \frac{P(F,c)(X')}{Q(F,c)(X')} = \kupc{F, c}(X') \]
\end{proof}

\begin{cor}
  The rational fraction $\kup{F}(X)$ is symmetric.
\end{cor}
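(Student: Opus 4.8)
The plan is to reduce the statement to invariance of $\kup{F}$ under the adjacent transpositions of the variables and then feed in Lemma~\ref{lem:transposition}. Since the transpositions $(i,\,i+1)$ for $1\leq i \leq N-1$ generate $\mathfrak{S}_N$, it suffices to show that $\kup{F}(X)$ is unchanged when $X_i$ and $X_{i+1}$ are swapped, for each such $i$.

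Fix $i$ and write $X'$ for the tuple $X$ with $X_i$ and $X_{i+1}$ interchanged. First I would observe that the operation on colorings that exchanges the pigments $i$ and $i+1$ is an involution, hence a bijection from the set of colorings of $F$ to itself; denote by $c\mapsto c'$ this bijection. Lemma~\ref{lem:transposition} then gives, for every coloring $c$, the identity $\kupc{F,c'}(X) = \kupc{F,c}(X')$.

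Now I would simply reindex the defining sum. By definition $\kup{F}(X') = \sum_{c} \kupc{F,c}(X')$, and applying the lemma term by term this equals $\sum_{c}\kupc{F,c'}(X)$. Because $c\mapsto c'$ is a bijection of the indexing set of colorings, the latter sum equals $\sum_{c'}\kupc{F,c'}(X) = \kup{F}(X)$. Hence $\kup{F}(X') = \kup{F}(X)$, which is exactly invariance under the $i$-th adjacent transposition; invariance under all of $\mathfrak{S}_N$ follows from the generation statement.

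Honestly, there is no real obstacle left at this stage: all the delicate sign bookkeeping---comparing the Euler characteristics, the exponents of the Vandermonde factors, and the $\theta^{\pm}$ counts for $c$ and $c'$---was already carried out in the proof of Lemma~\ref{lem:transposition}, whose penultimate step relied on Lemma~\ref{lem2.5}. The corollary is purely a repackaging of that lemma, and the one point worth stating carefully is that swapping two pigments is an involution on the set of colorings: this is what licenses the reindexing and guarantees that no coloring is created or destroyed in the process.
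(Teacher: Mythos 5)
Your argument is correct and is essentially identical to the paper's own proof: both reduce to invariance under the adjacent transposition $(i,\,i+1)$, apply Lemma~\ref{lem:transposition} term by term, and reindex the sum over colorings using the fact that swapping the pigments $i$ and $i+1$ is a bijection on colorings. Your added remark that the bijection is an involution is a harmless (and accurate) elaboration of the reindexing step the paper performs silently.
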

\begin{proof}
  It is enough to prove that $\kup{F}(X) = \kup{F}(X')$ where $X'$ is
  like in the previous proof for an arbitrary $i$. We have:
  \begin{align*}
    \kup{F}(X') &= \sum_{c \textrm{ coloring of $F$}} \kup{F,c} (X') \\
    &= \sum_{c \textrm{ coloring of $F$}} \kup{F,c'} (X) \\
    &= \sum_{c' \textrm{ coloring of $F$}} \kup{F,c'} (X) \\
    &= \kup{F}(X).
  \end{align*}
\end{proof}

\begin{prop}\label{prop:sympol}
  Let $F$ be a foam, then $\kup{F}(X)$ is a (symmetric) polynomial.
\end{prop}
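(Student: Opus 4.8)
The plan is to use the preceding corollary, which tells us that $\kup{F}(X)$ is a \emph{symmetric} rational fraction whose only possible poles lie along the diagonals $X_i=X_j$. Hence it suffices to prove that $\kup{F}(X)$ has no pole along a single such hyperplane, say $\{X_1=X_2\}$, the general case following by the symmetry just established. For a fixed coloring $c$, the only factor of $Q(F,c)$ that can create such a pole is $(X_1-X_2)^{\chi(F_{12}(c))/2}$, so $\kup{F,c}$ is singular along $\{X_1=X_2\}$ exactly when $\chi(F_{12}(c))>0$, that is, when the bichrome surface $F_{12}(c)$ has spherical components.

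The organizing idea is to group the colorings into orbits under the Kempe moves relative to the pigments $1$ and $2$ performed along the connected components $\Sigma_1,\dots,\Sigma_m$ of $F_{12}(c)$. All colorings in a given orbit share the same unoriented surface $F_{12}$, hence the same candidate pole order $e:=\chi(F_{12}(c))/2$, and it is enough to show that each orbit-sum is regular along $\{X_1=X_2\}$. Writing $c_T$ for the coloring obtained from a base coloring $c$ by swapping $1$ and $2$ along $\Sigma_T:=\bigcup_{r\in T}\Sigma_r$ for $T\subseteq\{1,\dots,m\}$, I would first check that the three ingredients of $\kup{F,c_T}$ are multiplicative over the components: $P(F,c_T)$ changes only through the facets of $\Sigma_T$ (on which $P_f$ is symmetric), the exponents of $(X_1-X_k)$ and $(X_2-X_k)$ in $Q(F,c_T)$ change as prescribed by Lemma~\ref{lem2.8}, and the sign splits as $(-1)^{s(F,c_T)}=(-1)^{s(F,c)}\prod_{r\in T}\epsilon_r$ for suitable signs $\epsilon_r$. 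This produces a factorization
\[
  \sum_{T\subseteq\{1,\dots,m\}} \kup{F,c_T} = \frac{C}{(X_1-X_2)^{e}}\prod_{r=1}^m\bigl(1+\epsilon_r\,\rho_r(X)\bigr),
\]
where $C$ is independent of $T$ and regular along $\{X_1=X_2\}$, and each $\rho_r(X)$ is a rational fraction with $\rho_r(X)\to 1$ as $X_1\to X_2$. The convergence $\rho_r\to 1$ uses Lemma~\ref{lem2.8} crucially: it forces the combined exponent $\chi(F_{1k})+\chi(F_{2k})$ to be constant over the orbit, so that on $\{X_1=X_2\}$ the non-singular part of $Q$ agrees across the orbit, while the substitution $X_1=X_2$ makes the $1\leftrightarrow 2$ swap invisible to $P$.

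It then remains to show that whenever $\Sigma_r$ is a sphere one has $\epsilon_r=-1$: the corresponding factor $1+\epsilon_r\rho_r$ then vanishes along $\{X_1=X_2\}$, and since the number of spherical components is at least $e=\chi(F_{12})/2$, the product vanishes to order at least $e$ and cancels the pole. To compute $\epsilon_r$ I would split $s(F,c)$ into its monochrome part $\sum_i i\,\chi(F_i)/2$ and its circle part $\sum_{i<j}\theta^+_{ij}(F,c)$. In the circle part, the contributions of the pairs $(1,k)$ and $(2,k)$ for $k\geq 3$ cancel modulo $2$ by Lemma~\ref{lem2.7}, so only the pair $(1,2)$ survives, contributing the number $\theta^{\Sigma_r}_{12}$ of $(1,2)$-circles on $\Sigma_r$; in the monochrome part only $\chi(F_1)$ and $\chi(F_2)$ move, by $\pm$ the Euler characteristic $a$ of the $1$-region of $\Sigma_r$. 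Using $a+b=\chi(\Sigma_r)=2$ for the two regions and the congruence $a\equiv\theta^{\Sigma_r}_{12}\pmod 2$ (a compact surface with boundary has Euler characteristic congruent to its number of boundary circles, the elementary fact already used in the proof of Lemma~\ref{lem2.5}), these parities combine to give $\Delta s\equiv 1\pmod 2$, i.e. $\epsilon_r=-1$.

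The main obstacle is precisely this sign computation together with the bookkeeping behind the factorization: one must verify that $P$, $Q$ and the sign all behave multiplicatively over the connected components of $F_{12}$, that each per-component factor tends to $1$ on the diagonal so that non-spherical components introduce no spurious poles, and that the parity count for a sphere genuinely yields a sign reversal. Once these are in place, the reduction to one hyperplane and the comparison of the number of spherical components with $e$ are purely formal, and regularity of every orbit-sum along $\{X_1=X_2\}$ gives the result.
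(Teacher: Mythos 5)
Your proposal follows essentially the same route as the paper: reduce to the single hyperplane $X_1=X_2$ by symmetry, group colorings into orbits under Kempe moves along the connected components of $F_{12}(c)$, factor the orbit-sum over those components, and kill the pole from each spherical component by a sign flip — your parity computation for $\epsilon_r$ is precisely the paper's Lemma~\ref{lem:signKempe} (which rests on Lemmas~\ref{lem2.5}, \ref{lem2.7} and \ref{lem2.8}) specialized to a sphere, where the resulting antisymmetry of the per-component factor in $X_1,X_2$ forces divisibility by $X_1-X_2$. The argument is correct and matches the paper's proof in all essential respects.
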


The rest of this section is devoted to the proof of 
Proposition~\ref{prop:sympol}. From its very definition $\kup{F}(X)$
is a rational function which can be written in the following form:

\[\kupc{F}(X) = \frac{F_1(X)}{\prod_{i<j}(X_i - X_j)^k}
\]
for some non-negative integer $k$ and some polynomial $F_1$. The
polynomial $F_1$ is either symmetric or anti-symmetric depending on
the parity of $k$. By symmetry it is enough to check that
$(X_1-X_2)^k$ divides $F_1(X)$.

\begin{lem}\label{lem:signKempe}
  Let $F$ be a foam and $c$ and $c'$ be two colorings of $F$ related
  by a Kempe move relative to $1$ and $2$ along a surface $\Sigma$,
  then
  \[
    s(F,c') \equiv s(F,c) + \chi(\Sigma)/2.
  \]
\end{lem}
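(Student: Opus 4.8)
The plan is to track, modulo $2$, the difference $s(F,c')-s(F,c)$ term by term in the definition $s(F,c)=\sum_{i=1}^N i\chi(F_i(c))/2+\sum_{i<j}\theta^+_{ij}(F,c)$, exploiting that a Kempe move relative to $1$ and $2$ only permutes these two pigments on the facets of $\Sigma$. First I would note that $\Sigma$, being a closed subsurface of the closed surface $F_{12}(c)$, is a union of connected components of $F_{12}(c)$. Consequently every monochrome surface $F_k(c)$ with $k\ge 3$ and every circle of type $(i,j)$ with $3\le i<j$ is left untouched, so these contributions cancel identically. The mixed circles counted by $\theta^+_{1k}+\theta^+_{2k}$ for $k\ge 3$ do change, but Lemma~\ref{lem2.7} says their sum is preserved modulo $2$. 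Hence only the $i=1,2$ part of the first sum and the single term $\theta^+_{12}$ can survive modulo $2$, giving
\[
s(F,c')-s(F,c)\equiv \frac{\chi(F_1(c'))-\chi(F_1(c))}{2}+\bigl(\chi(F_2(c'))-\chi(F_2(c))\bigr)+\bigl(\theta^+_{12}(F,c')-\theta^+_{12}(F,c)\bigr)\pmod 2.
\]

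Next I would evaluate the three surviving pieces. The $F_2$-term vanishes modulo $2$ because $F_2(c)$ and $F_2(c')$ are closed surfaces, of even Euler characteristic. For the $F_1$-term, write $\Sigma_1:=F_1(c)\cap\Sigma$, $\Sigma_2:=F_2(c)\cap\Sigma$ and $R_1:=F_1(c)\setminus\Sigma$; since $F_1(c)=\Sigma_1\cup R_1$ is a gluing along circles, $\chi$ is additive. The crucial local point is that the bindings along which $R_1$ meets $\Sigma_1$ inside $F_1(c)$ are exactly the type-$(1,2)$ bindings lying on $\Sigma$, and that performing the swap reattaches $R_1$ to the facets of $\Sigma_2$ along these same circles, so $F_1(c')=\Sigma_2\cup R_1$ with the same gluing. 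This yields $\chi(F_1(c'))-\chi(F_1(c))=\chi(\Sigma_2)-\chi(\Sigma_1)$.

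For $\theta^+_{12}$, each type-$(1,2)$ circle lies on a single connected component of $F_{12}(c)$, hence is either contained in $\Sigma$ or disjoint from it. Circles disjoint from $\Sigma$ keep their sign, while a short inspection of the cyclic ordering at the bindings (as in the computation of Lemma~\ref{lem:transposition}) shows that each circle on $\Sigma$ reverses its sign. Writing $p$ and $n$ for the numbers of positive and negative type-$(1,2)$ circles on $\Sigma$, this gives $\theta^+_{12}(F,c')-\theta^+_{12}(F,c)=n-p$.

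Finally I would combine, using $\chi(\Sigma)=\chi(\Sigma_1)+\chi(\Sigma_2)$. The desired congruence $s(F,c')\equiv s(F,c)+\chi(\Sigma)/2$ then reduces to $\chi(\Sigma_1)\equiv p+n\pmod 2$, which holds because $\Sigma_1$ is a surface whose boundary consists precisely of the $p+n$ type-$(1,2)$ circles on $\Sigma$ (cf.\ Lemma~\ref{lem2.5}), and the Euler characteristic of a surface with boundary has the parity of its number of boundary components. I expect the re-gluing claim $F_1(c')=\Sigma_2\cup R_1$ to be the only delicate step; verifying it requires the binding-by-binding analysis using the disjointness built into the flow condition, whereas the remaining arguments are bookkeeping of parities.
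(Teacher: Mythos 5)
Your proposal is correct and follows essentially the same route as the paper: reduce modulo $2$ to the $\chi(F_1)$ and $\theta^+_{12}$ contributions using Lemma~\ref{lem2.7} and the evenness of Euler characteristics of closed surfaces, identify the change in $\chi(F_1)$ with swapping $F_1(c)\cap\Sigma$ for $F_2(c)\cap\Sigma$, record the sign reversal of the type-$(1,2)$ circles on $\Sigma$, and close with the parity relation between the Euler characteristic of a surface with boundary and its number of boundary circles. The only cosmetic difference is that the paper applies that last parity fact to $F_2(c)\cap\Sigma$ rather than to $\Sigma_1$, which is equivalent since both have the same boundary circles.
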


\begin{proof}Note that this is somewhat a local version of the signs
  statement in the proof of Lemma~\ref{lem:transposition}.  We want to
  compute $\Delta(s) = s(F,c) - s(F,c')$:
  \begin{align*}
    \Delta(s) &=  \sum_{j=1}^N {j\chi(F_j(c))/2}  + \sum_{1\leq j< k \leq N} \theta^+_{jk}(F,c) -  \sum_{j=1}^N {j\chi(F_j(c'))/2}  - \sum_{1\leq j < k \leq N} \theta^+_{jk}(F,c')\\
    &\equiv \chi(F_1(c))/2 - \chi(F_1(c'))/2  + \theta^+_{12}(F,c) - \theta^+_{12}(F,c') \\
    &\equiv \frac{\chi(F_1(c) \cap \Sigma) - \chi(F_2(c) \cap \Sigma) }2 + \#\{\textrm{circles of type $(1,2)$ on $\Sigma$}\} \\
    & \equiv \frac{\chi(F_1(c) \cap \Sigma) - \chi(F_2(c) \cap \Sigma) }2 + \chi(F_2(c) \cap \Sigma) \\
    & \equiv \frac{\chi(F_1(c) \cap \Sigma) + \chi(F_2(c) \cap \Sigma) }2  \\
    & \equiv \frac{\chi(\Sigma)}{2}.  \\
  \end{align*}
  The second equality follows from Lemma \ref{lem2.7}

\end{proof}

\begin{proof}[Proof of Proposition~\ref{prop:sympol}] Let $F$ be a
  decorated foam.

  Let $\mathbf{\Sigma}=(\Sigma_1, \Sigma_2, \dots \Sigma_r)$ be a
  collection of disjoint connected closed surface in $F$. Let $c$ be a
  coloring of $F$ such that $F_{12}= \bigcup_{s=1}^r \Sigma_s$. We
  consider the set $C_c$ of colorings of $F$ obtained from $c$ by a sequence of Kempe moves relative to $1$ and $2$ along (part
  of) $\mathbf{\Sigma}$. There are precisely $2^r$ of them. We will
  show that
  \[\sum_{c'\in C_c} \kupc{F,c'} = \frac{A(X)}{B(X)}
  \]
  with $A(X)$ and $B(X)$ two polynomials and $B(X)$ is not divisible by
  $(X_1 - X_2)$. We need to set up a few notations to facilitate the computations.
  \begin{itemize}
  \item We denote by $X'$ the variables $X$ where $X_1$ and $X_2$ has
    been exchanged.
  \item We write
    \[P_{\Sigma_s,c}(X) = \prod_{f \textrm{ facet in $\Sigma_s$}}
      P({c(f)})\] and \[P_{F\setminus \mathbf{\Sigma},c}(X) = \prod_{f
      \textrm{ facet not in $\bigcup_s \Sigma_s$}} P(c(f)).\]
  \item We write
    \[\widetilde{Q}(X) = \frac{Q(F,c)\prod_{k>2,s} (X_1-X_k)^{\ell_{\Sigma_s}(c,k)/2} }{(X_1-X_2)^{\chi{F_{12}(c)}/2}
      },\] where $\ell_{\Sigma_s}(c,k)$ is defined in Lemma \ref{lem2.8}. Note that
    for all $c'$ in $C_c$, $\chi{F_{12}(c)} = \chi{F_{12}(c')}$ and
    that $(X_1 -X_2)$ is not a factor of $\widetilde{Q}(X)$.
  \item Finally we set
    \begin{align*}& T_s(X) =\left(P_{\Sigma_s,c}(X) \prod_{k=3}^N(X_1- X_k)^{\ell_{\Sigma_s}(c,k)/2} \right.\\ &\left.\qquad \qquad \quad+ (-1)^{\chi(\Sigma_s)/2} P_{\Sigma_s,c}(X') \prod_{k=3}^N(X_2- X_k)^{\ell_{\Sigma_s}(c,k)/2} \right).\end{align*}
  \end{itemize}
  We claim that the following identity holds:
  \begin{align*}
    \sum_{c'\in C_c} \kupc{F,c'} &= (-1)^{s(F,c)}\frac{P_{F\setminus
    \mathbf{\Sigma},c}(X)}{\widetilde{Q}(X)} \prod_{s=1}^r
    (X_1-X_2)^{-\chi(\Sigma_s)/2} T_s(X).
  \end{align*}
  Indeed developing the product gives exactly $2^r$ terms
  corresponding to the $2^r$ colorings of $C_c$. For every $s$ in $\{1, \dots r\}$, every coloring of $C_c$ either agrees with $c$ on $\Sigma_s$ or disagrees. These two possibilities correspond to the two terms in $T_s(X)$.
The signs are correct thanks to Lemma~\ref{lem:signKempe}.

  The only possibility to have a power of $(X_1-X_2)$ at the
  denominator arises when $\Sigma_s$ is a sphere. However in this
  case, $\chi(\Sigma_s)/2=1$ and
  \[T_s(X) =\left(P_{\Sigma_s,c}(X) (X_1- X_k)^{\ell_{\Sigma_s} (c,k)/2}
      - P_{\Sigma_s,c}(X') (X_2- X_k)^{\ell_{\Sigma_s}(c,k)/2} \right)
  \]
  is anti-symmetric\footnote{One may have to multiply $T_s$ with an
  appropriate power of $(X_1 -X_k)(X_2-X_k)$ to make it a
  polynomial. This does not affect the argumentation.} in $X_1$ and
  $X_2$. This implies that it is divisible by $(X_1- X_2)$. This
  proves that $\sum_{c'\in C_c} \kupc{F,c'} $ can be written as
  rational fraction without $(X_1-X_2)$ in the denominator. Since
  $\kup{F}$ is a sum contribution of the type
  $\sum_{c'\in C_c} \kupc{F,c'}$, this proves that $\kup{F}$ can be
  written as rational fraction without $(X_1-X_2)$ in the
  denominator. Since this rational fraction is symmetric in $X$ and
  has a denominator of the form $\prod_{i<j}(X_i - X_j)^k$, this
  proves that $\kup{F}$ is a polynomial.
\end{proof}

\subsection{Two remarks}
\label{sec:2-rks}

\subsubsection{Generalized foams}
\label{sec:generalized-foams}
As in \cite{2015arXiv151202370R} for MOY graphs, one could have here a more general version of foams. Instead of requiring the labels of the facets adjacent to a binding to be of the form $(a, b, a+b)$ with an additional condition on the orientation, one can ask that the labels (or minus the labels, depending on the orientation) add up to a multiple of $N$. The colorings of such foams are required to satisfy another condition along the bindings:
The union (as multi-sets) of the colors of the facets (or the complement of the color, depending on the orientation) are required to be a multiple of the set $\Col$. We believe this more flexible model can be handy for some applications.

The evaluation has to be changed a little bit. The monochrome components are no longer only closed surfaces, but can have boundary and singularities. Therefore $\chi(F_i)$ can be odd and $(-1)^{i\chi(F_i)/2}$ does not make sense anymore. We choose a square root $\omega$ of $-1$ and replace $(-1)^{i\chi(F_i)/2}$ with  $\omega^{i\chi(f_i)}$. 

If we look at the case $N=2$, there are essentially two theories which give satisfactory treatment of signs:
\begin{itemize}
\item  Blanchet's foams \cite{1195.57024}, this corresponds to our treatment.
\item  The \emph{disoriented} model from Clark--Morrison--Walker \cite{MR2496052} and the \emph{singular} model from Caprau \cite{MR2443094} . They correspond to this more general treatment, where all facets with labels $0$ are erased, but the bindings (which are only circles in this case) still exist. 
\end{itemize}
Hence the appearance of the fourth root of unity in \cite{MR2496052} and the fact that it does not in~\cite{1195.57024} becomes clear from the perspective of our formula.\\

\subsubsection{Sub-$\sll_{k}$-foams}
\label{sec:sub-sl_k-foams}

In our definition of the evaluation of colored foams, the term
\[
  \prod_{1\leq i < j \leq N} (X_i-X_j)^{\frac{\chi(F_{ij}(c))}{2}}(-1)^{\theta^+_{ij}(F,c)}.
\]
can be seen as the product of the contribution à la Blanchet \cite{1195.57024} of all colored sub-$\sll_2$-foams in the colored foam $(F,c)$. One can therefore try to express this part of 
the evaluation of colored $\sll_N$-foams as a product of sub-$\sll_{k}$-foams for arbitrary $k$. In principle one should then replace the previous product by:
\[
 (-1)^{k(k+1)/2}\prod_{1\leq i_1 < i_2 < \dots < i_k \leq N} \Delta((X_{i_r})_{1\leq r \leq k})^{\frac{d_k(F_{i_1\dots i_k}(c))}{2{k \choose 2} }}(-1)^{\frac{\sum_{1\leq r< s \leq k}\theta^+_{i_{r}i_s} (F,c)}{ {k \choose 2} }},
\]
where $\Delta$ stands for the Vandermonde determinant and $(F_{i_1\dots i_k}(c))$ denotes the sub-$\sll_k$-foam given by the pigments $i_1, \dots i_k$. However, it does not work since there is no reason for $d_k(F_{i_1\dots i_k}(c))$ to be divisible by $2{k \choose 2}$ except for $k=2$. 
One can still imagine modifying our formula to make sense of this idea but it will require more work.

\section{Categorification of the MOY calculus}
\label{sec:categ-moy-calc}
\subsection{MOY graphs and MOY relations}
\label{sec:MOYgraph}
\begin{dfn}\label{dfn:MOYgraph}
  A \emph{MOY graph} is a plane\footnote{By plane, we mean \emph{embedded} in the plane, not just embeddable.} oriented trivalent multi-graph with labels (non-negative integers). The labels and the orientations at every vertex are required to satisfy a flow condition: every vertex corresponds to one of these two models  
\[
    \tikz{
\draw[->] (0,0) -- (0,0.5) node [at end, above] {$a+b$};  
\draw[>-] (-0.5, -0.5) -- (0,0) node [at start, below] {$a$};  
\draw[>-] (+0.5, -0.5) -- (0,0) node [at start, below] {$b$};  
\begin{scope}[xshift = 5cm]
  \draw[-<] (0,0) -- (0,0.5) node [at end, above] {$a+b$};  
\draw[<-] (-0.5, -0.5) -- (0,0) node [at start, below] {$a$};  
\draw[<-] (+0.5, -0.5) -- (0,0) node [at start, below] {$b$};  
\end{scope}
}
\]
We also allow oriented (and labeled) circles. If all the labels of the MOY graph are in $\{0, \dots, N\}$, this graph can be considered as an \emph{$\sll_N$-MOY graph}. The set of all $\sll_N$-MOY graph is denoted $\mathcal{M}_N$.
\end{dfn}

\begin{dfn}(\cite{MR1659228}, see \cite[Theorem 2.4]{pre06302580} for a proof of the uniqueness)\label{dfn:MOYevaluation}
  Let $N$ be a positive integer, the $\sll_N$ evaluation of MOY graphs is the only map $\kup{\bullet}\co \mathcal{M}_N \to \ZZ[q, q^{-1}]$ which satisfies the following  local relations:

\begin{align}\label{eq:relcircle}
  \kups{\vcenter{\hbox{\tikz[scale= 0.5]{\draw[->] (0,0) arc(0:360:1cm) node[right] {\small{$\!k\!$}};}}}}=
\begin{bmatrix}
  N \\ k
\end{bmatrix}
\end{align}
\begin{align} \label{eq:relass}
   \kup{\stgamma} = \kup{\stgammaprime}
 \end{align}
\begin{align} \label{eq:relbin1} 
\kup{\digona} = \arraycolsep=2.5pt
  \begin{bmatrix}
    m+n \\ m
  \end{bmatrix}
\kup{\verta}
\end{align}
\begin{align} \label{eq:relbin2}
\arraycolsep=2.5pt
\kup{\digonb} = 
  \begin{bmatrix}
    N-m \\ n
  \end{bmatrix}
\kup{\vertb} 
\end{align}

\begin{align}
 \kup{\squarea} = \kup{\twoverta} + [N-m-1]_q\kup{\doubleYa} \label{eq:relsquare1}
\end{align}

\begin{align}
\kup{\squareb}=\!
  \begin{bmatrix}
    m-1 \\ n
  \end{bmatrix}
\kup{\bigHb}  +
\!\begin{bmatrix}
  m-1 \\n-1
\end{bmatrix}
\kup{\doubleYb} \label{eq:relsquare2}
\end{align}
\begin{align}
  \kup{\squarec}= \sum_{j=\max{(0, m-n)}}^m\begin{bmatrix}l \\ k-j \end{bmatrix}
 \kup{\squared}\label{eq:relsquare3}
\end{align}

In the previous formulas, $q$ is a formal variable, $[k]:= \frac{q^{+k}- q^{-k}}{q^{+1}- q^{-1}}$ and 
\[
  \begin{bmatrix}
    l \\ k
  \end{bmatrix}= 
  \begin{cases}
    0 & \textrm{ if $k<0$,} \\

\frac{[l][l-1] \cdots [l-k+1]}{[k][k-1] \cdots [1]}&\textrm{else,}
  \end{cases}
\]
\end{dfn}

\begin{rmk}
  Given any MOY-graph $\Gamma$, the $\sll_N$-evaluation of $\Gamma$ can be inductively computed using the relations given in the previous definition see the proof of \cite[Theorem 2.4]{pre06302580}.
\end{rmk}

The following two results can be easily deduced from the definition of the MOY evaluation. Alternative proofs can be found in \cite{2015arXiv151202370R}.

\begin{lem}\label{lem:remove0}
 Let $\Gamma$ be a $\sll_N$-MOY graph and $\widetilde{\Gamma}$ the same graph where all the edges with label $0$ have been removed. It is called a \emph{$0$-edge removal}. Then $\widetilde{\Gamma}$ is a $\sll_N$-MOY graph and $\kup{\Gamma}_N = \langle \widetilde{\Gamma}\rangle_N$.
\end{lem}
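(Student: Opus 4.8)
The plan is to prove the two assertions separately, handling the evaluation statement by induction on the number of $0$-labelled edges of $\Gamma$, so that it suffices to treat the removal of a single such edge.

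First I would check that $\widetilde{\Gamma}$ is again an $\sll_N$-MOY graph. Let $e$ be an edge of $\Gamma$ with label $0$ that is not a closed circle, and let $v$ be one of its endpoints. Since $v$ is trivalent with incident labels $\{a,b,a+b\}$, and since non-negativity of the labels forces $a+b=0$ to imply $a=b=0$, the two edges at $v$ other than $e$ carry the same label; by the flow condition one is oriented into $v$ and the other out of $v$. Hence, after deleting $e$, the vertex $v$ becomes a bivalent vertex joining two equally-labelled, coherently oriented edges, which I smooth into a single edge. Performing this at both endpoints of $e$ yields a trivalent oriented graph still satisfying the flow condition, so $\widetilde{\Gamma}\in\mathcal{M}_N$. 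A $0$-labelled circle is dealt with directly by \eqref{eq:relcircle}, as $\begin{bmatrix}N\\0\end{bmatrix}=1$.

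For the evaluation, the heart of the easy case is when $e$ sits in a bigon, i.e. $e$ and a parallel edge join the same two vertices so that the two strands actually meet: there I would apply \eqref{eq:relbin1} with $n=0$ (or \eqref{eq:relbin2} with $n=0$), where the quantum binomial collapses, $\begin{bmatrix}m\\m\end{bmatrix}=1$ (resp. $\begin{bmatrix}N-m\\0\end{bmatrix}=1$), so that the bigon carrying a $0$-arc evaluates like the bare strand. The hard part will be the remaining case, in which $e$ joins two strands with different labels, possibly lying in different connected components (picture two circles tied together by a single $0$-edge): here no MOY relation applies locally, so the identity cannot be produced by a single rewriting.

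To overcome this I would argue by uniqueness of the MOY evaluation (Definition~\ref{dfn:MOYevaluation} and \cite[Theorem 2.4]{pre06302580}). Consider the map $\Phi\co \mathcal{M}_N\to\ZZ[q,q^{-1}]$, $\Phi(\Gamma)=\langle\widetilde{\Gamma}\rangle_N$, obtained by deleting every $0$-edge and then evaluating; I claim $\Phi$ satisfies all the defining relations \eqref{eq:relcircle}--\eqref{eq:relsquare3}. For an instance with strictly positive labels this is immediate, since $\widetilde{(\,\cdot\,)}$ is the identity there. For an instance involving a label $0$, deleting the $0$-edges from both sides turns it, using the collapse of the quantum binomials together with $\begin{bmatrix}l\\k\end{bmatrix}=0$ for $k<0$, into a true (lower) instance of the same relations, which holds for $\langle\,\cdot\,\rangle_N$; this is a short case check over \eqref{eq:relbin1}--\eqref{eq:relsquare3}. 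Since $0$-edge deletion is a local operation that commutes with the local substitutions, $\Phi$ respects the relations, so by uniqueness $\Phi=\langle\,\cdot\,\rangle_N$, that is $\langle\Gamma\rangle_N=\langle\widetilde{\Gamma}\rangle_N$. Alternatively, as suggested by the proof in \cite{2015arXiv151202370R}, one can invoke the MOY state-sum model of \cite{MR1659228}: a $0$-edge admits the empty state only, of weight $1$, and is inert with respect to the rest of the configuration, so the state sums of $\Gamma$ and of $\widetilde{\Gamma}$ coincide.
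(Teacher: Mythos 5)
Your argument is correct. Note, though, that the paper does not actually supply a proof of this lemma: it merely asserts that the statement ``can be easily deduced from the definition of the MOY evaluation'' and points to the authors' earlier paper for alternative proofs. Your write-up is a legitimate way of making that deduction precise: the local smoothing analysis at the endpoints of a $0$-edge (including the degenerate case $a+b=0$, which forces all three edges at the vertex to be $0$-labelled and hence all removed) shows $\widetilde{\Gamma}\in\mathcal{M}_N$, and the appeal to the uniqueness statement of Definition~\ref{dfn:MOYevaluation} correctly reduces the evaluation identity to checking that $\Gamma\mapsto\langle\widetilde{\Gamma}\rangle_N$ satisfies the seven defining relations, which is a finite (if slightly tedious) case check using the collapse of the quantum binomials at $n=0$ and the convention $\begin{bmatrix}l\\k\end{bmatrix}=0$ for $k<0$. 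You are also right that a purely local rewriting cannot work in general (a $0$-edge joining distinct components is the obstruction), which is exactly why a global uniqueness or state-sum argument is needed; your closing alternative via the MOY state sum is essentially the ``alternative proof'' the paper delegates to \cite{2015arXiv151202370R}. The only thing I would ask you to spell out, were this to be written in full, is the verification of relation~(\ref{eq:relsquare3}) in the degenerate cases, since that is the one relation where several labels can simultaneously vanish.
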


\begin{lem}\label{lem:turncycle}
 Let $\Gamma$ be a $\sll_N$-MOY graph, $\mathcal{C}$ an oriented cycle in $\Gamma$ and $\widetilde{\Gamma}$ the same graph as $\Gamma$ where for every edge of $\mathcal{C}$ we change the orientation and replace its label $a$ by $N-a$. Then $\widetilde{\Gamma}$ is a $\sll_N$-MOY graph and $\kup{\Gamma}_N = \langle\widetilde{\Gamma}\rangle_N$. In such a situation we say that $\Gamma$ and $\widetilde{\Gamma}$ are related by a \emph{cycle move} along $\mathcal{C}$.
\end{lem}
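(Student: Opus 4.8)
The plan is to prove the two assertions in turn: first that $\widetilde{\Gamma}$ is again a legitimate $\sll_N$-MOY graph, and then that the cycle move preserves the evaluation.

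\textbf{Validity of $\widetilde{\Gamma}$.} Since $\mathcal{C}$ is a simple oriented cycle and $\Gamma$ is trivalent, at every vertex $v$ lying on $\mathcal{C}$ exactly two of the three incident edges belong to $\mathcal{C}$, one entering $v$ and one leaving $v$; the third edge is untouched by the move. I would run the short local case analysis. Up to the symmetry between the two vertex models of Definition~\ref{dfn:MOYgraph} it suffices to treat a vertex of the first type, with incoming labels $a,b$ and outgoing label $a+b$, where $\mathcal{C}$ uses the outgoing edge (label $a+b$) and the incoming edge of label $a$, leaving the edge of label $b$ fixed. After the move the edge of label $a$ points out of $v$ with label $N-a$, the edge of label $a+b$ points into $v$ with label $N-a-b$, and the edge of label $b$ is unchanged; the flow condition becomes $b+(N-a-b)=N-a$, which holds, and the new labels $N-a$ and $N-a-b$ lie in $\{0,\dots,N\}$ because $0\le a\le a+b\le N$. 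The other configurations are identical after swapping roles, so $\widetilde{\Gamma}\in\mathcal{M}_N$.

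\textbf{Invariance via a complementation bijection.} For the equality of evaluations I would use the Murakami--Ohtsuki--Yamada state-sum description of $\kup{\bullet}_N$ (the formula of \cite{MR1659228}, which agrees with the relations of Definition~\ref{dfn:MOYevaluation} by the uniqueness of \cite[Theorem 2.4]{pre06302580}). A \emph{state} $\sigma$ assigns to each edge of label $k$ a $k$-element subset $\sigma(e)\subseteq\Col$ so that at each vertex joining labels $a,b,a+b$ one has $\sigma(e_{a+b})=\sigma(e_a)\sqcup\sigma(e_b)$, and $\kup{\Gamma}_N=\sum_\sigma w(\sigma)$ for an explicit monomial weight $w(\sigma)$. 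The key device is the involution $\sigma\mapsto\widetilde{\sigma}$ given by $\widetilde{\sigma}(e)=\overline{\sigma(e)}$ for $e$ on $\mathcal{C}$ and $\widetilde{\sigma}(e)=\sigma(e)$ otherwise. I would check it is a bijection between admissible states of $\Gamma$ and of $\widetilde{\Gamma}$: at the vertex above, admissibility of $\widetilde{\sigma}$ reads $\overline{\sigma(e_a)}=\sigma(e_b)\sqcup\overline{\sigma(e_{a+b})}$, and since $\overline{\sigma(e_{a+b})}=\overline{\sigma(e_a)}\cap\overline{\sigma(e_b)}$ this is equivalent to $\sigma(e_a)\cap\sigma(e_b)=\emptyset$, which is exactly the disjointness built into the admissibility of $\sigma$. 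As the map squares to the identity, it is a bijection.

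\textbf{Matching the weights and conclusion.} It then remains to prove $w_\Gamma(\sigma)=w_{\widetilde{\Gamma}}(\widetilde{\sigma})$ for every state, after which $\kup{\Gamma}_N=\sum_\sigma w_\Gamma(\sigma)=\sum_{\widetilde{\sigma}} w_{\widetilde{\Gamma}}(\widetilde{\sigma})=\kup{\widetilde{\Gamma}}_N$ follows at once. This weight comparison is where I expect the real work to be: the MOY weight is read off from a height function and records, color by color, the relative position of the strands, so I would have to verify that reversing the cycle edges together with complementing their colors rearranges these local contributions without changing their product. The degenerate case where $\mathcal{C}$ is a single circle already exhibits the mechanism, since there the claim reduces to the symmetry $\binom{N}{k}_q=\binom{N}{N-k}_q$ of \eqref{eq:relcircle} realised by $S\mapsto\overline{S}$. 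Conceptually, $\kup{\bullet}_N$ is the Reshetikhin--Turaev evaluation in which an edge of label $k$ carries $\Lambda^k_q$ and orientation reversal carries the dual; as $(\Lambda^k_q)^*\cong\Lambda^{N-k}_q$ up to a determinant twist that cancels around the closed cycle $\mathcal{C}$, the two morphisms coincide, which is precisely the asserted equality. Alternatively one can bypass the state sum and induct on the number of vertices, checking that each reduction relation \eqref{eq:relcircle}--\eqref{eq:relsquare3} commutes with the cycle move; the instances \eqref{eq:relbin1} and \eqref{eq:relbin2}, which differ exactly by $m\leftrightarrow N-m$, make this compatibility plausible but demand a case-by-case check.
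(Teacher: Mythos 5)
The paper itself does not prove this lemma: it is stated with the remark that it ``can be easily deduced from the definition of the MOY evaluation'' and a pointer to \cite{2015arXiv151202370R}, so there is no internal argument to compare yours against line by line. Your first two steps are correct and are the natural ones: the local check that $\widetilde{\Gamma}$ satisfies the flow condition with labels in $\{0,\dots,N\}$ is fine, and the complementation involution $\sigma\mapsto\widetilde{\sigma}$, $\widetilde{\sigma}(e)=\overline{\sigma(e)}$ on the edges of $\mathcal{C}$, is indeed a bijection between admissible states of $\Gamma$ and of $\widetilde{\Gamma}$ (your verification $\sigma(e_b)\sqcup\bigl(\overline{\sigma(e_a)}\cap\overline{\sigma(e_b)}\bigr)=\overline{\sigma(e_a)}$ is correct). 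This is exactly the mechanism used in the reference the paper cites.

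The genuine gap is the step you yourself flag: the identity $w_\Gamma(\sigma)=w_{\widetilde{\Gamma}}(\widetilde{\sigma})$ is never established, and it is the entire content of the lemma. The MOY weight is not a function of the state as an abstract colouring; it is a product over pairs of pigments $i<j$ of $q$-powers read off from rotation numbers and from the local order of colours at each vertex, and complementing the colours on $\mathcal{C}$ while reversing its orientation changes each of these local contributions individually. One must check that the changes cancel around the closed cycle — this is a real computation (it is where the hypothesis that $\mathcal{C}$ is a \emph{cycle}, rather than an arbitrary collection of edges, enters), and nothing in your bijection argument substitutes for it. The two alternatives you sketch do not close the gap either: the Reshetikhin--Turaev duality argument hides the same determinant-twist bookkeeping in the phrase ``cancels around the closed cycle,'' and the induction over the relations (\ref{eq:relcircle})--(\ref{eq:relsquare3}) founders on the fact that a reduction step need not be compatible with a fixed cycle $\mathcal{C}$ (the cycle can be destroyed or modified by the local move), so the case analysis you would need is substantially more delicate than the $m\leftrightarrow N-m$ symmetry of (\ref{eq:relbin1})/(\ref{eq:relbin2}) suggests. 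As written, the proposal is a correct reduction of the lemma to its hardest step, not a proof of it.
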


\begin{dfn}\label{dfn:facelike}
Let $\Gamma$ be a MOY graph, an oriented cycle $\mathcal{C}$ is \emph{face-like} if it bounds a disk in $\RR^2 \setminus \Gamma$.
\end{dfn}

\begin{lem}[{\cite[Lemma 4.2]{2015arXiv151202370R}}] \label{lem:uniquenessMOY}
  The $\sll_N$ evaluation of MOY graphs is the only map $\mathcal{M}_N \to \ZZ[q, q^{-1}]$  which fulfills the local Relations~(\ref{eq:relass}) and (\ref{eq:relsquare3}), is invariant under the operation described in the Lemmas~\ref{lem:remove0} and \ref{lem:turncycle} and satisfies $\kup{\emptyset} =1$. Moreover, the Relations~(\ref{eq:relass}) and (\ref{eq:relsquare3}), the cycle move and the $0$-labeled edge removal are enough to compute it.
\end{lem}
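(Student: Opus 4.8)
The plan is to deduce both the uniqueness and the ``moreover'' clause from a single reduction procedure, after first noting that existence is already in hand: the evaluation $\kup{\bullet}_N$ of Definition~\ref{dfn:MOYevaluation} satisfies Relations~(\ref{eq:relass}) and~(\ref{eq:relsquare3}) by definition, is invariant under $0$-edge removal and under cycle moves by Lemmas~\ref{lem:remove0} and~\ref{lem:turncycle}, and takes the value $1$ on the empty graph. It therefore suffices to show that these four ingredients---Relations~(\ref{eq:relass}) and~(\ref{eq:relsquare3}), the cycle move, and $0$-edge removal---force the value of an arbitrary $\Gamma\in\mathcal{M}_N$. Uniqueness is then automatic, since any map obeying the listed rules is computed by the same forced sequence of substitutions and so must agree with $\kup{\bullet}_N$ at every step.

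For the computation I would argue by induction on a suitable complexity of $\Gamma$, reducing $\kup{\Gamma}_N$ to a $\ZZ[q,q^{-1}]$-combination of evaluations of strictly simpler graphs, and ultimately of the empty graph. The inductive step exploits planarity: a nonempty MOY graph carrying a vertex possesses an innermost \emph{face-like} cycle (Definition~\ref{dfn:facelike}) bounding a disk that meets $\Gamma$ only along its boundary. I would first clear the degenerate labels---discard $0$-labeled edges by $0$-edge removal, and use cycle moves along face-like cycles both to turn $N$-labeled edges into removable $0$-labeled ones and to bring the labels around the innermost face into a standard range. The innermost face can then be brought, using the associativity Relation~(\ref{eq:relass}) and further cycle moves, into a bigon or a square in exactly the shape required by Relation~(\ref{eq:relsquare3}); applying it replaces the face by simpler configurations, the degenerate summands (those with a $0$-labeled rung) collapsing under a subsequent $0$-edge removal.

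The base cases are vertexless graphs, that is disjoint unions of labeled circles together with the empty graph. Evaluating a single circle labeled $k$ is the delicate point: the coefficients $\begin{bmatrix} l \\ k-j \end{bmatrix}$ of Relation~(\ref{eq:relsquare3}) carry no dependence on $N$, so \emph{all} of the $N$-dependence of $\kup{\bullet}_N$ must be injected through the cycle move, which sends a label $a$ to $N-a$. I would therefore evaluate circles by a recursion relating the circle labeled $k$ to circles of neighbouring labels with $N$-dependent coefficients obtained in this way, anchored at the circle labeled $N$ (turned by a cycle move into the circle labeled $0$, then deleted to give $\kup{\emptyset}=1$). Concretely this amounts to re-deriving the circle Relation~(\ref{eq:relcircle}) and the digon Relations~(\ref{eq:relbin1})--(\ref{eq:relbin2}) from the reduced set; once these, together with~(\ref{eq:relsquare1}) and~(\ref{eq:relsquare2}), are recovered, the full list of Definition~\ref{dfn:MOYevaluation} holds and the evaluation is pinned down.

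The main obstacle is twofold and concentrated in the two previous paragraphs. First, one must guarantee that a reduction is \emph{always} available for a nonempty graph---that associativity and cycle moves can in every case present an innermost face to Relation~(\ref{eq:relsquare3}) in the precise form it demands---and that the chosen complexity is genuinely well-founded: since Relation~(\ref{eq:relsquare3}) preserves the number of vertices, the measure must be designed so that the accompanying $0$-edge removals and the circle recursion, both of which do drop complexity, strictly decrease it. Second, extracting the correct $N$-dependence of the circle and digon values from cycle moves alone, and checking that it matches $\begin{bmatrix} N \\ k \end{bmatrix}$, is the technical heart of the argument; it is exactly the step where the limited toolkit must reproduce the $N$-dependent coefficients such as the one appearing in~(\ref{eq:relsquare1}).
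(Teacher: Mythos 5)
First, a point of reference: the paper does not prove this lemma at all --- it is imported verbatim from \cite[Lemma 4.2]{2015arXiv151202370R}, so there is no in-paper argument to compare yours against. Judged on its own terms, your proposal is a reasonable outline of the standard strategy (reduce an innermost face-like cycle using Relations~(\ref{eq:relass}) and~(\ref{eq:relsquare3}) together with cycle moves and $0$-edge removal, then handle vertexless graphs as the base case), and the existence half and the ``uniqueness follows from computability'' reduction are fine. But it is not a proof: the two points you yourself label as ``the main obstacle'' are precisely the mathematical content of the lemma, and you leave both unresolved.

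Concretely: (a) you never exhibit the complexity measure nor verify that every nonempty graph with a vertex admits, after preprocessing by cycle moves and $0$-edge removals, a face in the exact shape demanded by Relation~(\ref{eq:relsquare3}); since that relation preserves the number of vertices, termination hinges entirely on the claim that the degenerate summands collapse and that the measure drops, which is asserted but not checked. (b) You never actually derive the value $\begin{bmatrix} N \\ k \end{bmatrix}$ of a $k$-labeled circle, nor the digon Relations~(\ref{eq:relbin1})--(\ref{eq:relbin2}), from the restricted toolkit; observing that all $N$-dependence must enter through the cycle move $a \mapsto N-a$ identifies where the difficulty lives but does not resolve it, and without these base-case values the induction does not close. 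A complete argument must carry out at least one explicit such derivation (e.g.\ a recursion in $k$ relating the $k$-circle to the $(k\pm 1)$-circle via a square introduced by reading Relation~(\ref{eq:relsquare3}) backwards, anchored at the $N$-circle which a cycle move turns into the removable $0$-circle). As it stands, the proposal is a plan for a proof rather than a proof.
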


Actually we may restrict ourselves to the face-like cycles since we have the following lemma:

\begin{lem}
Suppose that $\Gamma$ and $\Gamma'$ are related by a cycle move, then there exists a finite sequence 
$\Gamma=\Gamma_0, \Gamma_1, \dots, \Gamma_{k}$ such that for all $i$ in $\{ 0, \dots, k-1\}$, $\Gamma_i$ and $\Gamma_{i+1}$ are related by a cycle move along a face-like cycle.
\end{lem}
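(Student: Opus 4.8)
The plan is to prove this by peeling off, one at a time, the faces enclosed by the cycle, in an order governed by a height function on the faces. The governing observation is that a cycle move changes only the orientations and the labels of the edges, never the underlying embedded graph; consequently the set of faces (connected components of $\RR^2\setminus\Gamma$) and the property of being face-like are the same for every $\Gamma_i$ along the sequence. Before starting I would reduce to a convenient situation: $0$-labelled edges may be deleted first (Lemma~\ref{lem:remove0}), so I may assume all labels are positive, and I assume $\Gamma$ connected; for a trivalent graph with positive labels this rules out bridges and hence cut vertices, so $\Gamma$ is $2$-connected and every face is an open disk bounded by a simple cycle.

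The key tool is a \emph{height function} $h$ on the faces: crossing an edge $e$ along its orientation, decree the face on the left to be higher than the face on the right by $l(e)$, i.e.\ $h(\mathrm{left}(e))-h(\mathrm{right}(e))=l(e)$. The flow condition at a trivalent vertex is exactly what makes this rule consistent around each vertex (for a merge $a,b\to a+b$ the three relations $h(L)=h(M)+a$, $h(M)=h(R)+b$, $h(L)=h(R)+(a+b)$ are compatible), and since $\Gamma$ is connected and the plane is simply connected these local consistencies integrate to a well-defined $h$, unique up to an additive constant. Because all labels are positive, two faces sharing an edge always have \emph{distinct} heights.

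Now let $\mathcal{C}$ be the directed cycle of the given move. Being a cycle in a trivalent graph it is simple, so it bounds a disk $D$ tiled by the finitely many enclosed faces. The situation splits into two symmetric cases according to whether $D$ lies to the left of $\mathcal{C}$ (the enclosed faces are then higher than the outside faces they meet across $\mathcal{C}$) or to the right; I treat the first, ordering the enclosed faces $F_1,\dots,F_m$ so that $h(F_1)\ge\cdots\ge h(F_m)$ and letting $\Gamma_i$ be the result of the cycle move along $\partial F_i$ applied to $\Gamma_{i-1}$. The heart of the proof is that $\partial F_i$ is a face-like cycle for $\Gamma_{i-1}$: for an edge $e$ of $\partial F_i$ with opposite face $G$, if $G=F_j$ with $j<i$ then $e$ has been flipped exactly once and, since $h(G)>h(F_i)$ put $F_i$ originally on the right, that single flip now places $F_i$ on its left; if $G$ is unprocessed (in particular if $G$ lies outside $D$) then $e$ has not been flipped and $h(F_i)>h(G)$ already places $F_i$ on its left. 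In every case all edges of $\partial F_i$ carry $F_i$ on their left in $\Gamma_{i-1}$, so $\partial F_i$ is a coherently oriented simple cycle bounding the disk $F_i$, that is, a face-like cycle, and the move along it is legitimate by Lemma~\ref{lem:turncycle}.

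Finally I would check $\Gamma_m=\Gamma'$ by a parity count: an edge interior to $D$ borders two enclosed faces and is flipped exactly twice — and flipping twice restores orientation and label, as $a\mapsto N-a\mapsto a$ — whereas an edge of $\mathcal{C}$ borders one enclosed face and is flipped once, and edges outside $D$ are untouched; so the composite of the $m$ face-like moves flips precisely the edges of $\mathcal{C}$, producing $\Gamma'$. The case where $D$ lies to the right of $\mathcal{C}$ is identical after reversing all inequalities and processing the faces in increasing height. The step I expect to be most delicate is the bookkeeping in the central claim — keeping track, in $\Gamma_{i-1}$, of which edges of $\partial F_i$ have already been reversed and confirming that in each case $F_i$ lands on the left — together with the preliminary reductions (removal of $0$-edges and passage to the connected, $2$-connected setting) that guarantee all enclosed faces are disks bounded by simple cycles.
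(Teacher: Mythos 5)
Your argument is correct in substance but takes a genuinely different route from the paper's. The paper proceeds by induction on the number of faces enclosed by $\mathcal{C}$: if $\mathcal{C}$ is not already face-like, the flow condition yields an oriented path through the interior of the disk separating the enclosed faces into two nonempty groups, hence two oriented cycles each enclosing fewer faces; the move along $\mathcal{C}$ is the composite of the moves along these two cycles (the separating path is flipped twice and restored), and one recurses. You make the decomposition global and canonical instead: the height function $h$ on faces — well defined precisely because of the flow condition — linearly orders the enclosed faces, and peeling them off monotonically in $h$ guarantees at each stage that the next face's boundary is coherently oriented, with the same ``flipped twice equals untouched'' parity count closing the argument. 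Your proof is essentially the fully unrolled form of the paper's recursion, with $h$ replacing the repeated choice of a separating oriented path; it buys an explicit order of face-like moves and an actual verification that each intermediate boundary is an oriented cycle, which the paper's three-line proof leaves entirely to the reader, at the cost of a longer setup. One caveat: the preliminary removal of $0$-labelled edges is not a legitimate reduction for the lemma \emph{as stated}, since it changes the underlying graph and its faces, and a $0$-edge of $\mathcal{C}$ becomes an $N$-edge in $\Gamma'$, so the reduced graphs need not even be related by a cycle move. Positivity is only used to make adjacent faces have distinct heights, so you should either break height ties across $0$-edges using their orientations directly, or observe that in the sole application (Lemma~\ref{lem:uniquenessMOY}) $0$-edge removal is available as a separate move; the connectivity assumption (no component of $\Gamma$ nested inside an enclosed face) likewise deserves a word, though the paper is silent on both points as well.
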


\begin{proof}
  We proceed by induction on the number of faces surrounded by $\mathcal{C}$.
  If the cycle is not face-like, we can find (because of the flow condition on MOY graphs) an oriented path separating the faces surrounded by $\mathcal{C}$ into two non empty subsets. This gives one oriented cycle $\mathcal{C}_1$ we use the induction on $\mathcal{C}_1$, we get another cycle $\mathcal{C}_2$ and we use the induction $\mathcal{C}_2$.
\[
\tikz{\begin{scope}[scale = 0.4]
\begin{scope}
  \node at (4, -1) {$\mathcal{C}$};
  \draw[    very thick] (3,2) -- (1,3) -- (-1,2); 
  \draw[->, very thick] (-1,2) -- (-3,2) -- (-4,0);
  \draw[    very thick] (-4,0) -- (-4, -2) -- (-2,-3) -- (-1, -2);
  \draw[    very thick] (-1,-2) -- (1, -3) -- (3, -2) -- (3,0);
  \draw[->, very thick] (3,0) -- (4,1) -- (3,2);
  \draw[->] (-1,-2) -- (0,0) ;
  \draw  (0,0) -- (-1,2);
\end{scope}

\begin{scope}[xshift = 11cm]
  \node[red] at (-2, 0) {$\mathcal{C}_1$};
  \draw[    very thick] (3,2) -- (1,3) -- (-1,2); 
  \draw[red, very thick] (-1,2) -- (-3,2) -- (-4,0);
  \draw[red,<-,   very thick] (-4,0) -- (-4, -2) -- (-2,-3) -- (-1, -2);
  \draw[    very thick] (-1,-2) -- (1, -3) -- (3, -2) -- (3,0);
  \draw[->, very thick] (3,0) -- (4,1) -- (3,2);
  \draw[red ] (-1,-2) -- (0,0) ;
  \draw[red,<-]  (0,0) -- (-1,2);
\end{scope}

\begin{scope}[xshift = 22cm]
  \node[green!50!black] at (1, 1) {$\mathcal{C}_2$};
  \draw[green!50!black, very thick] (3,2) -- (1,3) -- (-1,2); 
  \draw[ very thick] (-1,2) -- (-3,2) -- (-4,0);
  \draw[<-,   very thick] (-4,0) -- (-4, -2) -- (-2,-3) -- (-1, -2);
  \draw[ green!50!black,    very thick] (-1,-2) -- (1, -3) -- (3, -2) -- (3,0);
  \draw[<-,green!50!black , very thick] (3,0) -- (4,1) -- (3,2);
  \draw[green!50!black,-> ] (-1,-2) -- (0,0) ;
  \draw[green!50!black]  (0,0) -- (-1,2);
\end{scope}
\end{scope}}
\]

\end{proof}

We now turn back to $2$-dimensional objects and study some local relations satisfied by the formula for the evaluation of closed foams given in the previous section in Definition~\ref{dfn:decoratedfoam}. These local relations should be thought as $2$-dimensional analogue of the ones in 
Definition~\ref{dfn:MOYevaluation}.

\subsection{Local relation on foams}

The pictures in this subsection have to be understood as local pieces of a closed foam.

\begin{lem}
\label{lem-col-matveev-pierigliani}
  Let $F$ and $F'$ two foams which differ only in a small ball where they are described by the following picture:
\[
F = \, {\scriptstyle{\NB{\tikz[scale = 0.8]{\tdplotsetmaincoords{60}{110}
\begin{scope}[tdplot_main_coords]
  \coordinate (bT) at (-1, 0, 3);
  \coordinate (dT) at (1, 0, 3);
  \coordinate (AT) at (-2, -2, 3);
  \coordinate (BT) at (-2, 2, 3);
  \coordinate (CT) at (2, 2, 3);
  \coordinate (DT) at (2, -2, 3);
  \coordinate (MB) at (0,0, -1.5);
  \coordinate (bB) at (-1, 0, -3);
  \coordinate (dB) at (1, 0, -3);
  \coordinate (AB) at (-2, -2, -3);
  \coordinate (BB) at (-2, 2, -3);
  \coordinate (CB) at (2, 2, -3);
  \coordinate (DB) at (2, -2, -3);
  \draw[->, thick] (bT) -- (AT);
  \draw[->, thick] (bT) -- (BT);
  \draw[->, thick] (dT) -- (CT);
  \draw[->, thick] (DT) -- (dT);
  \draw[->, thick] (dT) -- (bT);
  \draw[->, thick] (bB) -- (AB);
  \draw[->, thick] (bB) -- (BB);
  \draw[->, thick] (dB) -- (CB);
  \draw[->, thick] (DB) -- (dB);
  \draw[->, thick] (dB) -- (bB);
  \draw[->, thick] (bB) -- (bT);
  \draw[->, thick] (dB) -- (dT);
  \filldraw[draw = black, rounded corners=1pt, thick, fill opacity = 0.3, fill = red]     (BT) -- (BB) -- (bB)  -- (bT)  -- (BT) node[midway, below, sloped, opacity=1] {$\scriptstyle{b}$};
  \filldraw[draw = black, rounded corners=1pt, thick, fill opacity = 0.3, fill = red]     (CT) -- (CB)  -- (dB) node[midway, above, sloped, opacity=1] { $\scriptstyle{c}$} -- (dT)-- cycle;
  \filldraw[draw = black, rounded corners=1pt, thick, fill opacity = 0.3, fill = red]     (AT) -- (AB) -- (bB) -- (bT) -- (AT) node[midway, below, sloped, opacity=1] {$\scriptstyle{a}$};
  \filldraw[draw = black, rounded corners=1pt, thick, fill opacity = 0.3, fill = red]     (DT) -- (DB) -- (dB) node[midway, above, sloped, opacity=1] { $\scriptstyle{a+b+c}$} -- (dT) -- cycle;
  \filldraw[draw = black, rounded corners=1pt, thick, fill opacity = 0.3, fill = blue]    (bT)  -- (dT) node[pos= 0.7, below, sloped, opacity=1] {$\scriptstyle{a+b}$}  -- (dB) -- (bB)-- cycle;
\end{scope}}}}} \quad \textrm{ and } \quad
F'= \,  {\scriptstyle{\NB{\tikz[scale = 0.8]{\tdplotsetmaincoords{60}{115}
\begin{scope}[tdplot_main_coords]
  \coordinate (bT) at (-1, 0, 3);
  \coordinate (dT) at (1, 0, 3);
  \coordinate (AT) at (-2, -2, 3);
  \coordinate (BT) at (-2, 2, 3);
  \coordinate (CT) at (2, 2, 3);
  \coordinate (DT) at (2, -2, 3);
  \coordinate (MT) at (0,0, 1.5);
  \coordinate (aM) at (0, -1, 0);
  \coordinate (cM) at (0, 1, 0);
  \coordinate (AM) at (-2, -2, 0);
  \coordinate (BM) at (-2, 2, 0);
  \coordinate (CM) at (2, 2, 0);
  \coordinate (DM) at (2, -2, 0);
  \coordinate (MB) at (0,0, -1.5);
  \coordinate (bB) at (-1, 0, -3);
  \coordinate (dB) at (1, 0, -3);
  \coordinate (AB) at (-2, -2, -3);
  \coordinate (BB) at (-2, 2, -3);
  \coordinate (CB) at (2, 2, -3);
  \coordinate (DB) at (2, -2, -3);
  \draw[->, thick] (bT) -- (AT);
  \draw[->, thick] (bT) -- (BT);
  \draw[->, thick] (dT) -- (CT);
  \draw[->, thick] (DT) -- (dT);
  \draw[->, thick] (dT) -- (bT);
  \draw[->, thick] (bB) -- (AB);
  \draw[->, thick] (bB) -- (BB);
  \draw[->, thick] (dB) -- (CB);
  \draw[->, thick] (DB) -- (dB);
  \draw[->, thick] (dB) -- (bB);
  \draw[->, thick] (MT) -- (bT);
  \draw[->, thick] (MT) -- (dT);
  \draw[->, thick] (bB) -- (MB);
  \draw[->, thick] (dB) -- (MB);
  \draw[->, thick] (aM) -- (MT);
  \draw[->, thick] (cM) -- (MT);
  \filldraw[draw = black, rounded corners=1pt, thick, fill opacity = 0.3, fill = red]     (BT) -- (BB) -- (bB) -- (MB) -- (cM) -- (MT) -- (bT) -- (BT) node[midway, below, sloped, opacity=1] {$\scriptstyle{b}$};
  \filldraw[draw = black, rounded corners=1pt, thick, fill opacity = 0.3, fill = red]   (AT) -- (AB) -- (bB) -- (MB) -- (aM) -- (MT) -- (bT)-- (AT)  node[midway, below, sloped, opacity=1] { $\scriptstyle{a}$};;
  \filldraw[draw = black, rounded corners=1pt, thick, fill opacity = 0.3, fill = red]    (CT) -- (CB) -- (dB) node[midway, above, sloped, opacity=1] { $\scriptstyle{c}$}-- (MB) -- (cM) -- (MT) -- (dT)-- (CT);
  \filldraw[draw = black, rounded corners=1pt, thick, fill opacity = 0.3, fill = red]  (DT) -- (DB) -- (dB) node[midway, above, sloped, opacity=1] { $\scriptstyle{a+b+c}$} -- (MB) -- (aM) -- (MT) -- (dT)-- cycle;
  \filldraw[draw = black, rounded corners=1pt, thick, fill opacity = 0.3, fill = blue]  (bT) -- (dT) node[pos= 0.7, below, sloped, opacity=1] {$\scriptstyle{a+b}$} -- (MT) -- cycle;
  \filldraw[draw = black, rounded corners=1pt, thick, fill opacity = 0.3, fill = blue]     (bB)-- (dB) node[pos = 0.3, above, sloped, opacity=1] {$\scriptstyle{a+b}$}  -- (MB)-- cycle;
  \filldraw[draw = black, rounded corners=1pt, thick, fill opacity = 0.3, fill = green]     (MT)-- (aM) -- (MB)-- (cM)-- cycle;
  \draw[dotted] (AM) -- (aM) -- (cM) node[midway, above, sloped, opacity=1] {$\scriptstyle{b+c}$}  -- (CM);
  \draw[dotted] (BM) -- (cM);
  \draw[dotted] (DM) -- (aM);
\end{scope}}}}}.
\]
Then the colorings of $F$ and $F'$ are in one-one correspondence. Moreover if $c$ is a coloring of $F$ and $c'$ is the corresponding coloring of $F'$, we have:
Then we have:
\[\kupc{F,c} =
 \kupc{F',c'}.
\]
\end{lem}
\begin{proof}
The statement about the colorings is obvious. 
For any two pigments $i<j$, the surfaces $F_i(c)$ and $F'_i(c')$ are diffeomorphic. The same holds for the surface 
$F_{ij}(c)$ and $F'_{ij} (c')$. This implies:
  \begin{align*}
    \chi(F_i(c)) = \chi(F'_i(c')) &&& \textrm{for all $i$ in $\Col$,}\\
    \chi(F_{ij}(c)) = \chi(F'_{ij}(c'))  &&& \textrm{for all $i$ and $j$ in $\Col$,}\\
\end{align*}
Moreover the positive\footnote{Actually, the negative as well, but we are not interested in them.} circles on $(F,c)$ and in $(F',c')$ are in one-one correspondence. Hence
\begin{align*}
    \theta_{ij}^+(F,c) = \theta_{ij}^+(F',c')  &&& \textrm{for all $i$ and $j$ in $\Col$.}\\
  \end{align*}
\end{proof}

\begin{cor} \label{cor:matveev-pierigliani}
  The following local relation holds for the foam evaluation.
\[\kup{ \scriptstyle{\NB{\tikz[scale = 0.8]{}}} }
= \kup{ \scriptstyle{\NB{\tikz[scale = 0.8]{}}} }
\]
\end{cor}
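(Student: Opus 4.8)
The plan is to deduce this identity directly from the preceding Lemma~\ref{lem-col-matveev-pierigliani} by summing the colored evaluation over all colorings. Recall that the evaluation of a decorated foam is defined by
\[
\kup{F} = \sum_{c \textrm{ coloring of } F} \kup{F,c},
\]
so the statement reduces to matching the two sums term by term.

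First I would invoke the bijection between colorings furnished by the lemma: since $F$ and $F'$ agree outside a small ball and the local models inside carry the same combinatorial flow data, every coloring $c$ of $F$ restricts away from the ball and extends uniquely to a coloring $c'$ of $F'$, and conversely. This yields a bijection $c \mapsto c'$ between the (finite) sets of colorings of $F$ and of $F'$. Then I would simply apply the term-wise equality $\kup{F,c} = \kup{F',c'}$ already established in the lemma; summing it over the bijection gives
\[
\kup{F} = \sum_{c} \kup{F,c} = \sum_{c'} \kup{F',c'} = \kup{F'},
\]
which is the claimed relation.

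There is essentially no obstacle at the level of the corollary itself: all the genuine content has been absorbed into Lemma~\ref{lem-col-matveev-pierigliani}, namely the verification that the monochrome surfaces $F_i(c)$ and $F'_i(c')$, as well as the bichrome surfaces $F_{ij}(c)$ and $F'_{ij}(c')$, are diffeomorphic, so that all the relevant Euler characteristics agree, together with the one-one correspondence of positive circles, which guarantees that the counts $\theta^+_{ij}$ match. Once those are in hand, the factors $P$, $Q$, and the sign $(-1)^{s(F,c)}$ entering $\kup{F,c}$ coincide with their primed counterparts, so the only thing left to do — and the entirety of the corollary's proof — is the bookkeeping of summing a term-wise equality across a bijection of index sets.
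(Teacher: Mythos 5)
Your proof is correct and follows exactly the paper's own argument: the corollary is obtained by summing the term-wise identity $\kup{F,c}=\kup{F',c'}$ of Lemma~\ref{lem-col-matveev-pierigliani} over the bijection between colorings of the two foams. Nothing further is needed.
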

\begin{proof}
  The colorings of the foam on the left-hand side are in 1-1 correspondence with the colorings of the foam on the right-hand side. Therefore, the result follows from Lemma~\ref{lem-col-matveev-pierigliani}.
\end{proof}
\begin{rmk}
  This is a categorified version of Relation~(\ref{eq:relass}) in Definition~\ref{dfn:MOYevaluation}.
\end{rmk}

The rest of the section is devoted to proving a categorified version of Relation~(\ref{eq:relsquare3}) in Definition~\ref{dfn:MOYevaluation} which is the content of Theorem~\ref{prop:complicatedfoam}. 

\begin{notation}
  \label{not:somenotations}
  \begin{itemize}
  \item If $\alpha$ is a Young diagram, $|\alpha|$ is the number of
    boxes of $\alpha$, $\pi_{\alpha}$ denotes the Schur polynomial
    associated with $\alpha$ and if $\alpha, \beta$ and $\gamma$ are
    three Young diagrams, $c_{\alpha\beta}^\gamma$ denotes the
    Littlewood-Richardson coefficient associated with $\alpha$,
    $\beta$ and $\gamma$.
  \item If $\alpha$ is a Young diagram, its transposed is denoted by
    $\alpha^t$.  If $a$ and $b$ are two non-negative integers,
    $T(a,b)$ denotes the set of all Young diagram having at most $a$
    columns and $b$ lines, $\rho(a,b)$ denotes the rectangular Young
    diagram with $a$ columns and $b$ lines.  If a Young diagram
    $\alpha$ is specified to be in a certain $T(a,b)$, then
    \begin{itemize}
    \item its complement is denote by $\alpha^c$: it is obtained by
      rotating by 180° the set of boxes of $\rho(a,b)$ which are not
      in $\alpha$,
    \item the \emph{dual} of $\alpha$ is defined as $(\alpha^t)^c$ or
      equivalently as $(\alpha^c)^t$ and is denoted by
      $\widehat{\alpha}$.  An illustration is given in
      Figure~\ref{fig:YDcomplement1}).
      \begin{figure}[!h]
        \centering
        \begin{tikzpicture}[scale = 0.8]
          \input{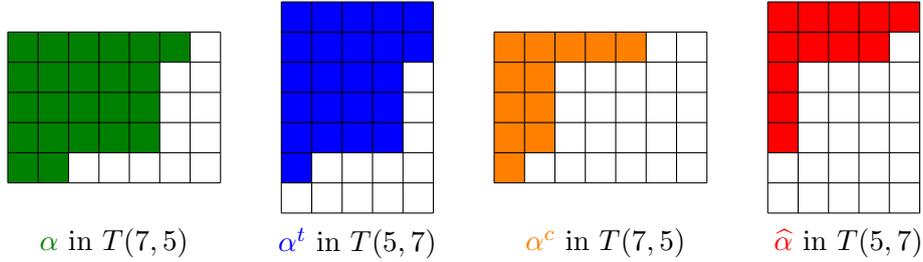}
        \end{tikzpicture}
        \caption{A Young diagram, its transposed, its complement and
        its dual.}
        \label{fig:YDcomplement1}
      \end{figure}
    \end{itemize}
  \item If $A=\{a_1, \dots, a_n\}$ is a set of variables (with the
    order given by indices) the \emph{Vandermonde determinant on $A$}
    is denoted by $\Delta(A)$.
  \item If $A$ and $B$ are two sets of variables, we set:
  \[
  \nabla (A,B) := \prod_{\substack{a\in A \\ b\in B}}(a -b) =
  \frac{\Delta(BA)}{\Delta(A)\Delta(B)}
    \]
  \item If $A$ and $B$ are two disjoint subsets of an ordered set $C$,
    $|A<B|$ denotes the number of pairs $(a,b)$ in $A\times B$ such
    that $a$ is lower than $b$. We have $|A<B|+ |B<A| = |A||B|$.
  \end{itemize}
\end{notation}

For details about Schur polynomials and Littlewood--Richardson constants, we refer to Appendix~\ref{sec:an-identiy-schur}. 
Until the end of this section, we fix $n$, $m$, $l$ and $k$ to be non-negative integers and define 

\[
F\eqdef \scriptstyle{\NB{\tikz[yscale = 0.5]{\tdplotsetmaincoords{70}{100}
\begin{scope}[tdplot_main_coords]
  \coordinate (aT) at (-1, -1, 3);
  \coordinate (bT) at (-1, 1, 3);
  \coordinate (cT) at (1, 1, 3);
  \coordinate (dT) at (1, -1, 3);
  \coordinate (AT) at (-2, -2, 3);
  \coordinate (BT) at (-2, 2, 3);
  \coordinate (CT) at (2, 2, 3);
  \coordinate (DT) at (2, -2, 3);
  \coordinate (DM) at (2, -2, 0);
  \coordinate (aB) at (-1, -1, -3);
  \coordinate (bB) at (-1, 1, -3);
  \coordinate (cB) at (1, 1, -3);
  \coordinate (dB) at (1, -1, -3);
  \coordinate (AB) at (-2, -2, -3);
  \coordinate (BB) at (-2, 2, -3);
  \coordinate (CB) at (2, 2, -3);
  \coordinate (DB) at (2, -2, -3);
  \draw[thin, <-] ($(aT)!0.5!(dB)$) -- +(0,-1.5,0) node [left, sloped] {$\scriptstyle{n+k}$}; 
  \draw[thin, <-] ($(cB)!0.5!(bT)$) -- +(0,+1.5,0) node [right, sloped] {$\scriptstyle{m+l-k}$}; 
  \draw[->, thick] (aT) -- (AT);
  \draw[->, thick] (DT) -- (dT);
  \draw[->, thick] (bT) -- (BT);
  \draw[->, thick] (CT) -- (cT);
  \draw[->, thick] (dT) -- (aT);
  \draw[->, thick] (aT) -- (bT);
  \draw[->, thick] (cT) -- (dT);
  \draw[->, thick] (cT) -- (bT);
  \draw[->, thick] (dB) -- (aB);
  \draw[->, thick] (aB) -- (bB);
  \draw[->, thick] (cB) -- (dB);
  \draw[->, thick] (cB) -- (bB);
  \draw[->, thick] (aB) -- (AB);
  \draw[->, thick] (DB) -- (dB);
  \draw[->, thick] (bB) -- (BB);
  \draw[->, thick] (CB) -- (cB);
  \draw[->, thick] (aB) -- (aT);
  \draw[->, thick] (bT) -- (bB);
  \draw[->, thick] (cB) -- (cT);
  \draw[->, thick] (dT) -- (dB);
  \filldraw[draw = black, rounded corners=1pt, thick, fill opacity = 0.3, fill = red]  (aT) -- (aB) -- (AB) -- (AT) -- (aT) node[sloped, midway, below, opacity = 1] {$\scriptstyle{m}$};
  \filldraw[draw = black, rounded corners=1pt, thick, fill opacity = 0.3, fill = red]  (bT) -- (bB) -- (BB)  -- (BT) -- (bT) node[sloped, midway, below, opacity = 1] {$\scriptstyle{n+l}$};
  \filldraw[draw = black, rounded corners=1pt, thick, fill opacity = 0.3, fill = green]   (aT) --  (aB) -- (bB) -- (bT) --  (aT) node[sloped, midway, below, opacity = 1] {$\scriptstyle{n+k-m}$};
  \filldraw[draw = black, rounded corners=1pt, thick, fill opacity = 0.3, fill = red]  (dT) -- (dB) -- (DB) node[sloped, midway, above, opacity = 1] {$\scriptstyle{n}$}-- (DT) -- cycle;
  \filldraw[draw = black, rounded corners=1pt, thick, fill opacity = 0.3, fill = green]   (aT) --  (aB) -- (dB) -- (dT) --  cycle;
  \filldraw[draw = black, rounded corners=1pt, thick, fill opacity = 0.3, fill = green]   (cT) -- (cB) --  (bB) --  (bT) -- cycle;
  \filldraw[draw = black, rounded corners=1pt, thick, fill opacity = 0.3, fill = green]   (cT) -- (cB) --  (dB) node[sloped, midway, above, opacity = 1] {$\scriptstyle{k}$} --  (dT) -- cycle;
   \filldraw[draw = black, rounded corners=1pt, thick, fill opacity = 0.3, fill = red]  (cT) -- (cB) -- (CB) node[sloped, midway, above, opacity = 1] {$\scriptstyle{m+l}$} -- (CT) -- cycle; 
\end{scope}}}}
\]

For $j$ between $\max (0, m-n)$ and $m$ and $\alpha$ in $T(k-j, l-k+j)$, we set: 
\[
F_\alpha^j\eqdef \sum_{\substack{\beta_1, \beta_2\\ \gamma_1, \gamma_2}}   c^\alpha_{\beta_1\beta_2} c^{\widehat{\alpha}}_{\gamma_1 \gamma_2} 
\scriptstyle{\NB{\tikz[scale = 1]{\tdplotsetmaincoords{70}{100}
\begin{scope}[tdplot_main_coords]
  \coordinate (aT) at (-1, -1, 3);
  \coordinate (bT) at (-1, 1, 3);
  \coordinate (cT) at (1, 1, 3);
  \coordinate (dT) at (1, -1, 3);
  \coordinate (AT) at (-2, -2, 3);
  \coordinate (BT) at (-2, 2, 3);
  \coordinate (CT) at (2, 2, 3);
  \coordinate (DT) at (2, -2, 3);
  \coordinate (aMt) at (-1, -1, 1);
  \coordinate (aMb) at (-1, -1, -1);
  \coordinate (bMt) at (-1, 1, 1);
  \coordinate (bMb) at (-1, 1, -1);
  \coordinate (cMt) at (1, 1, 1);
  \coordinate (cMb) at (1, 1, -1);
  \coordinate (dMt) at (1, -1, 1);
  \coordinate (dMb) at (1, -1, -1);
  \coordinate (AMt) at (-2, -2, 1);
  \coordinate (AMb) at (-2, -2, -1);
  \coordinate (BMt) at (-2, 2, 1);
  \coordinate (BMb) at (-2, 2, -1);
  \coordinate (CMt) at (2, 2, 1);
  \coordinate (CMb) at (2, 2, -1);
  \coordinate (DM) at (2, -2, 0);
  \coordinate (aB) at (-1, -1, -3);
  \coordinate (bB) at (-1, 1, -3);
  \coordinate (cB) at (1, 1, -3);
  \coordinate (dB) at (1, -1, -3);
  \coordinate (AB) at (-2, -2, -3);
  \coordinate (BB) at (-2, 2, -3);
  \coordinate (CB) at (2, 2, -3);
  \coordinate (DB) at (2, -2, -3);
  \draw[thin, <-] ($(dMb)!0.7!(aMt)$) --    +(0,-1.5,0) node [left, sloped] {$\scriptstyle{m-j}$}; 
  \draw[thin, <-] ($(aT)!0.3!(dMt)$) --     +(0,-1.5,0) node [left, sloped] {$\scriptstyle{n+k}$}; 
  \draw[red, thick, <-]($(aMb)!0.3!(dMt)$) --  +(0,-1.5,0) node [left, sloped] {$\scriptstyle{\pi_{\gamma_2}}$}; 
  \draw[red,thick, <-] ($(bT)!0.7!(cMt)$)-- +(0,+1.5,0) node [right, sloped] {$\scriptstyle{\pi_{\beta_1}}$}; 
  \draw[thin, <-] ($(dB)!0.5!(aMb)$) --     +(0,-1.5,0) node [left, sloped] {$\scriptstyle{n+k}$}; 
  \draw[thin, <-] ($(cMb)!0.7!(bMt)$) --    +(0,+1.5,0) node [right, sloped] {$\scriptstyle{n+l+j}$}; 
  \draw[thin, <-] ($(bT)!0.3!(cMt)$) --     +(0,+1.5,0) node [right, sloped] {$\scriptstyle{m+l-k}$}; 
  \draw[thin, <-] ($(cB)!0.7!(bMb)$) --     +(0,+1.5,0) node [right, sloped] {$\scriptstyle{m+l-k}$}; 
  \draw[thin, <-] ($(aMb)!0.3!(bMt)$) .. controls +(-3,0,0) and  +(0,0,-1) .. +(-3,0,+3) node [above, sloped] {$\scriptstyle{j}$}; 
  \draw[red, thick, <-] ($(aB)!0.5!(bMb)$) .. controls +(-3,0,0) and  +(0,+1,0) .. +(-3,-3.1,0) node [left, sloped] {$\scriptstyle{\pi_{\gamma_1}}$}; 
  \filldraw[draw = black, rounded corners=1pt, thick, fill opacity = 0.3, fill = red]  (aT) -- (aB) -- (AB) -- (AT) -- (aT) node[sloped, midway, below, opacity = 1] {$\scriptstyle{m}$};
  \filldraw[draw = black, rounded corners=1pt, thick, fill opacity = 0.3, fill = red]  (bT) -- (bB) -- (BB)  -- (BT) -- (bT) node[sloped, midway, below, opacity = 1] {$\scriptstyle{n+l}$};
  \filldraw[draw = black, rounded corners=1pt, thick, fill opacity = 0.3, fill = green]   (aT) --  (aMt) -- (bMt) -- (bT) --  (aT) node[sloped, midway, below, opacity = 1] {$\scriptstyle{n+k-m}$};
  \filldraw[draw = black, rounded corners=1pt, thick, fill opacity = 0.3, fill = green]   (aMb) -- (aB) --  (bB) node[sloped, midway, above, opacity = 1] {$\scriptstyle{n+k-m}$} --  (bMb) -- cycle;
  \filldraw[draw = black, rounded corners=1pt, thick, fill opacity = 0.3, fill = red]  (dT) -- (dB) -- (DB) node[sloped, midway, above, opacity = 1] {$\scriptstyle{n}$}-- (DT) -- cycle;
  \filldraw[draw = black, rounded corners=1pt, thick, fill opacity = 0.3, fill = yellow]  (aMt) -- (aMb) -- (dMb) -- (dMt) -- cycle;
  \filldraw[draw = black, rounded corners=1pt, thick, fill opacity = 0.3, fill = green]   (aT) --  (aMt) -- (dMt) -- (dT) --  cycle;
  \filldraw[draw = black, rounded corners=1pt, thick, fill opacity = 0.3, fill = green]   (aMb) -- (aB) --  (dB) --  (dMb) -- cycle;
  \filldraw[draw = black, rounded corners=1pt, thick, fill opacity = 0.3, fill = yellow]  (cMt) -- (cMb) -- (dMb)  -- (dMt) -- cycle;
  \filldraw[draw = black, rounded corners=1pt, thick, fill opacity = 0.3, fill = yellow]  (aMt) -- (aMb) -- (bMb) -- (bMt) -- cycle;
  \filldraw[draw = black, rounded corners=1pt, thick, fill opacity = 0.3, fill = yellow]  (cMt) -- (cMb) -- (bMb) -- (bMt) -- cycle;
  \filldraw[draw = black, rounded corners=1pt, thick, fill opacity = 0.3, fill = green]   (cT) --  (cMt) -- (bMt) -- (bT) --  (cT);
  \filldraw[draw = black, rounded corners=1pt, thick, fill opacity = 0.3, fill = green]   (cMb) -- (cB) --  (bB) --  (bMb) -- cycle;
  \filldraw[draw = black, rounded corners=1pt, thick, fill opacity = 0.3, fill = green]   (cT) --  (cMt) -- (dMt) -- (dT) --  (cT) node[sloped, midway, below, opacity = 1] {$\scriptstyle{k}$};
  \filldraw[draw = black, rounded corners=1pt, thick, fill opacity = 0.3, fill = green]   (cMb) -- (cB) --  (dB) node[sloped, midway, above, opacity = 1] {$\scriptstyle{k}$} --  (dMb) -- cycle;
  \filldraw[draw = black, rounded corners=1pt, thick, fill opacity = 0.3, fill = blue]    (aMt) -- (bMt) -- (cMt) -- (dMt) node[sloped, midway, above, opacity = 1] {$\scriptstyle{n+k-m+j}$} -- cycle;
  \filldraw[draw = black, rounded corners=1pt, thick, fill opacity = 0.3, fill = blue]    (aMb) -- (bMb) -- (cMb) -- (dMb) node[sloped, midway, above, opacity = 1] {$\scriptstyle{n+k-m+j}$} -- cycle;
  \filldraw[draw = black, rounded corners=1pt, thick, fill opacity = 0.3, fill = red]  (cT) -- (cB) -- (CB) node[sloped, midway, above, opacity = 1] {$\scriptstyle{m+l}$} -- (CT) -- cycle; 
  \draw[thin, <-] ($(dMb)!0.3!(cMt)$) .. controls +(+3,0,0) and  +(0,0,+1) .. (+3,0,-3) node [below, sloped] {$\scriptstyle{n+j-m}$}; 
  \draw[red, thick, <-]($(cMt)!0.3!(dMb)$) .. controls  +(+1,0,0) and +(0, -1, 0) .. +(1,3,0)  node [right, sloped] {$\scriptstyle{\pi_{\beta_2}}$}; 
\end{scope}}}}
\]
In order to simplify the figures, the orientation of the facets are not depicted, they are meant to be consistent with the orientations of $F$.

\begin{prop}\label{prop:complicatedfoam}
The following local relation holds:
\[
\kup{F} = \sum_{j={\max(0, m-n)}}^m \sum_{\alpha \in T(k-j, l-k+j)} \kup{(-1)^{|\alpha| + (l-k+j)(m-j)} F^j_\alpha},
\]
where the evaluation of foams is linearly extended to formal linear combination of foams.
Furthermore on the right hand-side the terms are orthogonal idempotents. 
\end{prop}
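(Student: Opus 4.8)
The plan is to exploit the state-sum nature of the evaluation to reduce the claimed identity to a purely algebraic statement about Schur polynomials, to be settled by the identities of Appendix~\ref{sec:an-identiy-schur}. First I would localize: since $\kup{\bullet}$ is a sum over colorings and the closed foams underlying the two sides agree outside a fixed ball $B$, I would fix a coloring of the complement of $B$ --- equivalently the colors of the facets meeting $\partial B$ --- and compare the total contribution of all colorings extending it. Because the two sides differ only inside $B$, the global Euler characteristics, circle counts and decorations entering $\kup{F,c}$ differ only by locally supported amounts; the contributions away from $B$ are common and cancel in the comparison, exactly in the spirit of Lemma~\ref{lem-col-matveev-pierigliani}. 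Everything thus reduces to an identity between the inner contributions, summed over the colorings of the facets internal to $B$.

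Second I would parametrize these internal colorings. On the square foam $F$ the flow condition determines the color sets on the facets around the central square from the boundary colors together with the size of a single overlap, which is governed by the integer $j$; the admissible values are exactly $\max(0,m-n)\le j\le m$, matching the outer sum. For fixed $j$ the residual freedom is a choice of subset that I would encode by a Young diagram $\alpha\in T(k-j,l-k+j)$, the two further splittings being recorded by the Littlewood--Richardson coefficients $c^\alpha_{\beta_1\beta_2}$ and $c^{\widehat\alpha}_{\gamma_1\gamma_2}$ appearing in $F^j_\alpha$. Using Lemma~\ref{lem2.5} I would express the Euler characteristics $\chi(F_{ij}(c))$ and the counts $\theta^+_{ij}(F,c)$ in terms of $j$ and of these subsets, turning the inner contribution of $F$ into an explicit ratio of Vandermonde and $\nabla$ factors.

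Third --- and this is the heart of the matter --- I would recognize the resulting local identity as an instance of the Schur-polynomial identity of Appendix~\ref{sec:an-identiy-schur}. Summing the decorated pieces $F^j_\alpha$ over $\alpha\in T(k-j,l-k+j)$, the factorizations $\pi_\alpha=\sum_{\beta_1,\beta_2}c^{\alpha}_{\beta_1\beta_2}\pi_{\beta_1}\pi_{\beta_2}$ and $\pi_{\widehat\alpha}=\sum_{\gamma_1,\gamma_2}c^{\widehat\alpha}_{\gamma_1\gamma_2}\pi_{\gamma_1}\pi_{\gamma_2}$ collapse the right-hand side into the generating function that Appendix~\ref{sec:an-identiy-schur} equates with the inner contribution of $F$. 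The sign $(-1)^{|\alpha|+(l-k+j)(m-j)}$ is then accounted for by combining the sign produced by this algebraic identity (which involves the duality $\alpha\mapsto\widehat\alpha$ and the complementation inside $\rho(k-j,l-k+j)$) with the changes of $(-1)^{s(F,c)}$ forced by the reorientations relating a coloring of $F$ to the corresponding coloring of $F^j_\alpha$, as controlled by Lemmas~\ref{lem:transposition} and~\ref{lem:signKempe}. I expect the bookkeeping of these signs, and the precise matching of the $\theta^+$-counts across the two foam topologies, to be the main obstacle; the algebraic skeleton is provided by Appendix~\ref{sec:an-identiy-schur}, but fitting the geometric data to it is delicate.

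Finally, for the orthogonality and idempotency statement I would compute the vertical composite of $F^{j}_{\alpha}$ with $F^{j'}_{\alpha'}$ directly from the evaluation formula. Stacking creates internal closed surfaces which I would simplify using the already-established local relations (Corollary~\ref{cor:matveev-pierigliani}) together with the sphere and digon evaluations that follow from Definition~\ref{dfn:decoratedfoam} and relations~(\ref{eq:relbin1})--(\ref{eq:relbin2}). The composite reduces to a decorated foam whose facets carry a product $\pi_\alpha\pi_{\alpha'}$ (and $\pi_{\widehat\alpha}\pi_{\widehat{\alpha'}}$) weighted by a Vandermonde factor; the orthogonality of Schur polynomials under this pairing (again Appendix~\ref{sec:an-identiy-schur}) yields $\delta_{jj'}\delta_{\alpha\alpha'}$, while the same computation supplies the normalization making each $F^j_\alpha$ idempotent.
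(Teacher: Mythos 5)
Your proposal follows essentially the same route as the paper: fix the boundary coloring data, compare the local contributions of the two sides (the paper implements this by taking ratios against auxiliary reference foams $G$ and $E$ and consulting the tables of Appendix~\ref{sec:tables}), and reduce the decomposition identity and the orthogonality respectively to Propositions~\ref{prop:schurformula_complicated} and~\ref{prop:orthog} of Appendix~\ref{sec:an-identiy-schur}. The only slip is that $\alpha$ indexes decorations rather than colorings --- the residual coloring freedom for fixed boundary data is the splitting $B=B_1\sqcup B_2$ (and $A=A_1\sqcup A_2$ on top and bottom), over which the Schur identity is summed together with $\alpha$ --- but this does not change the substance of the argument.
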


We need to deal with the colorings of the foams appearing in Proposition~\ref{prop:complicatedfoam}. We introduce a few conventions and notations which will be used throughout the section:

The colorings of the foam $F$ are parametrized (and denoted by) $(A_1, A_2, B, C, L, R)$ where  $A_1$, $A_2$, $B$, $C$, $L$ and $R$ are disjoint subsets of $\Col$. A fixed pigment $i$ appearing in the coloring of $F$ (and in the local piece under consideration) belongs exactly to one of the subsets $(A_1, A_2, B, C, L, R)$. The correspondence with the coloring is given by Table~\ref{tab:coloringF}. 
\begin{table}[!h]
  \centering
  \begin{tabular}{|c|c|c|c|c|c|c|}
\hline 
$i\in $  & $A_1$ & $A_2$ & $B$ & $C$ & $L$ & $R$ \\ \hline
$F_i$    &\mysquare{a1}{x}{N} &\mysquare{a2}{x}{N}&\mysquare{b1}{x}{N}&\mysquare{c}{x}{N}&\mysquare{l}{x}{N}&\mysquare{r}{x}{N} \\ \hline
  \end{tabular}
  \caption{Correspondence between $(A_1, A_2, B, C, L, R)$ and the coloring of $F$. For readability, the foam $F$ has been projected onto the horizontal plane.}
  \label{tab:coloringF}
\end{table}
Note that the sets $A:=A_1\cup A_2$, $B$, $C$, $L$ and $R$ are completely determined by the coloring of the four ``winglets'' of the foam $F$.

The colorings of the foam
\[
F^j\eqdef\scriptstyle{\NB{\tikz[yscale=0.5]{\tdplotsetmaincoords{70}{100}
\begin{scope}[tdplot_main_coords]
  \coordinate (aT) at (-1, -1, 3);
  \coordinate (bT) at (-1, 1, 3);
  \coordinate (cT) at (1, 1, 3);
  \coordinate (dT) at (1, -1, 3);
  \coordinate (AT) at (-2, -2, 3);
  \coordinate (BT) at (-2, 2, 3);
  \coordinate (CT) at (2, 2, 3);
  \coordinate (DT) at (2, -2, 3);
  \coordinate (DM) at (2, -2, 0);
  \coordinate (aB) at (-1, -1, -3);
  \coordinate (bB) at (-1, 1, -3);
  \coordinate (cB) at (1, 1, -3);
  \coordinate (dB) at (1, -1, -3);
  \coordinate (AB) at (-2, -2, -3);
  \coordinate (BB) at (-2, 2, -3);
  \coordinate (CB) at (2, 2, -3);
  \coordinate (DB) at (2, -2, -3);
  \draw[thin, <-] ($(aT)!0.5!(dB)$) -- +(0,-1.5,0) node [left, sloped] {$\scriptstyle{m-j}$}; 
  \draw[thin, <-] ($(cB)!0.5!(bT)$) -- +(0,+1.5,0) node [right, sloped] {$\scriptstyle{n+l-j}$}; 
  \filldraw[draw = black, rounded corners=1pt, thick, fill opacity = 0.3, fill = red]  (aT) -- (aB) -- (AB) -- (AT) -- (aT) node[sloped, midway, below, opacity = 1] {$\scriptstyle{m}$};
  \filldraw[draw = black, rounded corners=1pt, thick, fill opacity = 0.3, fill = red]  (bT) -- (bB) -- (BB)  -- (BT) -- (bT) node[sloped, midway, below, opacity = 1] {$\scriptstyle{n+l}$};
  \filldraw[draw = black, rounded corners=1pt, thick, fill opacity = 0.3, fill = yellow]   (aT) --  (aB) -- (bB) -- (bT) --  (aT) node[sloped, midway, below, opacity = 1] {$\scriptstyle{j}$};
  \filldraw[draw = black, rounded corners=1pt, thick, fill opacity = 0.3, fill = red]  (dT) -- (dB) -- (DB) node[sloped, midway, above, opacity = 1] {$\scriptstyle{n}$}-- (DT) -- cycle;
  \filldraw[draw = black, rounded corners=1pt, thick, fill opacity = 0.3, fill = yellow]   (aT) --  (aB) -- (dB) -- (dT) --  cycle;
  \filldraw[draw = black, rounded corners=1pt, thick, fill opacity = 0.3, fill = yellow]   (cT) -- (cB) --  (bB) --  (bT) -- cycle;
  \filldraw[draw = black, rounded corners=1pt, thick, fill opacity = 0.3, fill = yellow]   (cT) -- (cB) --  (dB) node[sloped, midway, above, opacity = 1] {$\scriptstyle{n+j-m}$} --  (dT) -- cycle;
   \filldraw[draw = black, rounded corners=1pt, thick, fill opacity = 0.3, fill = red]  (cT) -- (cB) -- (CB) node[sloped, midway, above, opacity = 1] {$\scriptstyle{m+l}$} -- (CT) -- cycle; 
\end{scope}}}}
\]
are parametrized and denoted by $(A, B_1, B_2, C, L, R)$ where  $A$, $B_1$, $B_2$, $C$, $L$ and $R$ are disjoint subsets of $\{1,\dots, N\}$. The correspondence with the coloring is given by Table~\ref{tab:coloringFj}. 
\begin{table}[!h]
  \centering
  \begin{tabular}{|c|c|c|c|c|c|c|}
\hline 
$i\in $  & $A$ & $B_1$ & $B_2$ & $C$ & $L$ & $R$ \\ \hline
$F^j_i$  &\mysquareM{a2}{x}{N}&\mysquareM{b1}{x}{N}&\mysquareM{b2}{x}{N}&\mysquareM{c}{x}{N}&\mysquareM{l}{x}{N}&\mysquareM{r}{x}{N}  \\ \hline
  \end{tabular}
  \caption{Correspondence between $(A, B_1, B_2, C, L, R)$ and the coloring of $F^j$. For readability, the foam $F^j$ has been projected onto the horizontal plane.}
  \label{tab:coloringFj}
\end{table}
Note that the sets $A$, $B:=B_1 \cup B_2$, $C$, $L$ and $R$ are completely determined by the coloring of the four ``winglets'' of the foam $F^j$.

It makes sense to speak of a coloring of $F^j_\alpha$ since it is a linear combination of the same foam with different decorations. Using the previous notations, we can parameterize a coloring of $F^j_\alpha$ by  $(A_1^t, A_2^t, A_1^b, A_2^b, B_1, B_2, C, L, R)$ with $A_1^t \cup A_2^t = A = A_1^b\cup A_2^b$ ($t$ and $b$ are for top and bottom).

\begin{lem}\label{lem:squareratioevaluation1}
  Let us consider a coloring $c \eqdef (A_1^t, A_2^t, A_1^b, A_2^b, B_1, B_2, C, L, R)$ of $F^j_\alpha$ and let $s \eqdef |A_1^t\cap A_2 ^b| = |A_2^t\cap A_1^b|$, we have
\begin{align*}\frac{\kup{F^j_\alpha,c}}{\kup{G,g}}&= (-1)^{|B_2<B_1| + |C|(|A_2^b\cap A_2^t| - |B_2|)} \\ &\frac{\nabla(B_1, A_2^b\cap A_2^t) \nabla(B_2, A_1^b\cap A_1^t) \Delta(B_1) \Delta(B_2) \pi_\alpha(A_1^t B_2  C L R) \pi_{\widehat{\alpha}}(A_2^b B_1  C L R )}{\Delta(B) \nabla(A_1^b\cap A_1^t, A_2^b \cap A_2^t) \nabla(A_1^b\cap A_2^t, A_2^b \cap A_1^t)}, \end{align*}
where 
\[
G \eqdef \scriptstyle{\NB{\tikz[scale=1]{\tdplotsetmaincoords{70}{100}
\begin{scope}[tdplot_main_coords]
  \coordinate (aT) at (-1, -1, 3);
  \coordinate (bT) at (-1, 1, 3);
  \coordinate (cT) at (1, 1, 3);
  \coordinate (dT) at (1, -1, 3);
  \coordinate (AT) at (-2, -2, 3);
  \coordinate (BT) at (-2, 2, 3);
  \coordinate (CT) at (2, 2, 3);
  \coordinate (DT) at (2, -2, 3);
  \coordinate (aMt) at (-1, -1, 1);
  \coordinate (aMb) at (-1, -1, -1);
  \coordinate (bMt) at (-1, 1, 1);
  \coordinate (bMb) at (-1, 1, -1);
  \coordinate (cMt) at (1, 1, 1);
  \coordinate (cMb) at (1, 1, -1);
  \coordinate (dMt) at (1, -1, 1);
  \coordinate (dMb) at (1, -1, -1);
  \coordinate (AMt) at (-2, -2, 1);
  \coordinate (AMb) at (-2, -2, -1);
  \coordinate (BMt) at (-2, 2, 1);
  \coordinate (BMb) at (-2, 2, -1);
  \coordinate (CMt) at (2, 2, 1);
  \coordinate (CMb) at (2, 2, -1);
  \coordinate (DM) at (2, -2, 0);
  \coordinate (aB) at (-1, -1, -3);
  \coordinate (bB) at (-1, 1, -3);
  \coordinate (cB) at (1, 1, -3);
  \coordinate (dB) at (1, -1, -3);
  \coordinate (AB) at (-2, -2, -3);
  \coordinate (BB) at (-2, 2, -3);
  \coordinate (CB) at (2, 2, -3);
  \coordinate (DB) at (2, -2, -3);
  \draw[thin, <-] ($(dMb)!0.7!(aMt)$) --    +(0,-1.5,0) node [left, sloped] {$\scriptstyle{n+k-s}$}; 
  \draw[thin, <-] ($(aT)!0.3!(dMt)$) --     +(0,-1.5,0) node [left, sloped] {$\scriptstyle{n+k}$}; 
  \draw[thin, <-] ($(dB)!0.5!(aMb)$) --     +(0,-1.5,0) node [left, sloped] {$\scriptstyle{n+k}$}; 
  \draw[thin, <-] ($(cMb)!0.7!(bMt)$) --    +(0,+1.5,0) node [right, sloped] {$\scriptstyle{m+l-k+s}$}; 
  \draw[thin, <-] ($(cT)!0.3!(bMt)$) --     +(0,+1.5,0) node [right, sloped] {$\scriptstyle{m+l-k}$}; 
  \draw[thin, <-] ($(cB)!0.7!(bMb)$) --     +(0,+1.5,0) node [right, sloped] {$\scriptstyle{m+l-k}$}; 
  \draw[thin, <-] ($(aMb)!0.3!(bMt)$) .. controls +(-3,0,0) and  +(0,0,-1) .. +(-3,0,+3) node [above, sloped] {$\scriptstyle{n+k-m-s}$}; 
  \filldraw[draw = black, rounded corners=1pt, thick, fill opacity = 0.3, fill = red]  (aT) -- (aB) -- (AB) -- (AT) -- (aT) node[sloped, midway, below, opacity = 1] {$\scriptstyle{m}$};
  \filldraw[draw = black, rounded corners=1pt, thick, fill opacity = 0.3, fill = red]  (bT) -- (bB) -- (BB)  -- (BT) -- (bT) node[sloped, midway, below, opacity = 1] {$\scriptstyle{n+l}$};
  \filldraw[draw = black, rounded corners=1pt, thick, fill opacity = 0.3, fill = green]   (aT) --  (aMt) -- (bMt) -- (bT) --  (aT) node[sloped, midway, below, opacity = 1] {$\scriptstyle{n+k-m}$};
  \filldraw[draw = black, rounded corners=1pt, thick, fill opacity = 0.3, fill = green]   (aMb) -- (aB) --  (bB) node[sloped, midway, above, opacity = 1] {$\scriptstyle{n+k-m}$} --  (bMb) -- cycle;
  \filldraw[draw = black, rounded corners=1pt, thick, fill opacity = 0.3, fill = red]  (dT) -- (dB) -- (DB) node[sloped, midway, above, opacity = 1] {$\scriptstyle{n}$}-- (DT) -- cycle;
  \filldraw[draw = black, rounded corners=1pt, thick, fill opacity = 0.3, fill = green!50!blue]  (aMt) -- (aMb) -- (dMb) -- (dMt) -- cycle;
  \filldraw[draw = black, rounded corners=1pt, thick, fill opacity = 0.3, fill = green]   (aT) --  (aMt) -- (dMt) -- (dT) --  cycle;
  \filldraw[draw = black, rounded corners=1pt, thick, fill opacity = 0.3, fill = green]   (aMb) -- (aB) --  (dB) --  (dMb) -- cycle;
  \filldraw[draw = black, rounded corners=1pt, thick, fill opacity = 0.3, fill = green!50!blue]  (cMt) -- (cMb) -- (dMb)  -- (dMt) -- cycle;
  \filldraw[draw = black, rounded corners=1pt, thick, fill opacity = 0.3, fill = green!50!blue]  (aMt) -- (aMb) -- (bMb) -- (bMt) -- cycle;
  \filldraw[draw = black, rounded corners=1pt, thick, fill opacity = 0.3, fill = green!50!blue]  (cMt) -- (cMb) -- (bMb) -- (bMt) -- cycle;
  \filldraw[draw = black, rounded corners=1pt, thick, fill opacity = 0.3, fill = green]   (cT) --  (cMt) -- (bMt) -- (bT) --  (cT);
  \filldraw[draw = black, rounded corners=1pt, thick, fill opacity = 0.3, fill = green]   (cMb) -- (cB) --  (bB) --  (bMb) -- cycle;
  \filldraw[draw = black, rounded corners=1pt, thick, fill opacity = 0.3, fill = green]   (cT) --  (cMt) -- (dMt) -- (dT) --  (cT) node[sloped, midway, below, opacity = 1] {$\scriptstyle{k}$};
  \filldraw[draw = black, rounded corners=1pt, thick, fill opacity = 0.3, fill = green]   (cMb) -- (cB) --  (dB) node[sloped, midway, above, opacity = 1] {$\scriptstyle{k}$} --  (dMb) -- cycle;
  \filldraw[draw = black, rounded corners=1pt, thick, fill opacity = 0.3, fill = orange]    (aMt) -- (bMt) -- (cMt) -- (dMt) node[sloped, midway, above, opacity = 1] {$\scriptstyle{s}$} -- cycle;
  \filldraw[draw = black, rounded corners=1pt, thick, fill opacity = 0.3, fill = orange]    (aMb) -- (bMb) -- (cMb) -- (dMb) node[sloped, midway, above, opacity = 1] {$\scriptstyle{s}$} -- cycle;
  \filldraw[draw = black, rounded corners=1pt, thick, fill opacity = 0.3, fill = red]  (cT) -- (cB) -- (CB) node[sloped, midway, above, opacity = 1] {$\scriptstyle{m+l}$} -- (CT) -- cycle; 
  \draw[thin, <-] ($(dMb)!0.5!(cMt)$) .. controls +(+3,0,0) and  +(0,0,+1) .. (+3,0,-3) node [below, sloped] {$\scriptstyle{k-s}$}; 

\end{scope}}}}
\]
and $g$ is the only coloring of  $G$ compatible with $(A_1^t, A_2^t, A_1^b, A_2^b, B, C, L, R)$. This means that on top and bottom, it is the same as the coloring $c$ of $F^j_\alpha$, and on the middle it is given by Table~\ref{tab:coloringofG}.
\begin{table}[h!]
  \centering
  \begin{tabular}[h!]{|c|c|c|c|c|c|c|c|c|}
    \hline 
$i \in$   &    $A_1^t\cap A_1^b$& $A_2^t\cap A_2^b$ &$A_1^t\cap A_2^b$& $A_2^t\cap A_1^b$& $B$  & $C$ & $L$ & $R$ \\ \hline
$G_i$&
\mysquare{a1}{x}{N}&\mysquare{a2}{x}{N}&\mysquare{a1}{x}{N}&\mysquare{a1}{x}{N}&\mysquare{b1}{x}{N}&\mysquare{c}{x}{N}&\mysquare{l}{x}{N}&\mysquare{r}{x}{N} \\
    \hline
  \end{tabular}
  \caption{Details of the coloring $g$ of the foam $G$: the picture reflects how the coloring looks on the middle of $G$.}
  \label{tab:coloringofG}
\end{table}
\end{lem}

\begin{rmk}
The foam $G$ and its coloring $g$ depend neither on $j$ nor on $\alpha$.
\end{rmk}
\begin{proof}
  The proof is a careful computation of 
$\chi((F^j_\alpha)_i) - \chi((G)_i)$, $\chi((F^j_\alpha)_{hi}) - \chi((G)_{hi})$, and $\theta^+_{hi}(c) - \theta^+_{hi}(g)$ for every pair of $(h,i)$ of distinct element of $\{1, \dots, N \}$. This is done with help of the tables of page~\pageref{tables} (where the set $Y$ stands for $\Col \setminus{A_1^t \cup A_2^t \cup B \cup C \cup L \cup R})$.
\end{proof}

Let us fix $A_1^t$, $A_2^t$, $A_1^b$, $A_2^b$, $B$, $C$, $L$ and $R$ and consider the set $\mathcal{C}$ of all colorings of $F^j_{\alpha}$ compatible with this data.

\begin{cor}\label{cor:squarecomplicated1}
 If $A_1^t\neq A_1^b$ (and therefore $A_2^t \neq A_1^b$), then:
\[
\sum_{j= \max(0,m-n)}^m \sum_{\alpha \in T(k-j, l-k+j)} \sum_{c\in \mathcal{C}}(-1)^{|\alpha| +(l-k+j)(m-j)|}\kup{F^j_\alpha,c} = 0.
\]
If $A_1^t = A_1^b$ ($\eqdef A_1$) (and therefore $A_2^t = A_2^b$ ($\eqdef A_2$)), then:
\[\sum_{j= \max(0,m-n)}^m \sum_{\alpha \in T(k-j, l-k+j)} \sum_{c\in \mathcal{C}}(-1)^{|\alpha| +(l-k+j)(m-j)}\kup{F^j_\alpha,c} = \kup{F,f},\]
where $f$ is the coloring of $F$ parametrized by $(A_1, A_2, B, C, L, R)$.
\end{cor}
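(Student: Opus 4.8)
The plan is to reduce the statement, by means of Lemma~\ref{lem:squareratioevaluation1}, to a purely algebraic identity among Schur polynomials and then to feed it into the identities of Appendix~\ref{sec:an-identiy-schur}. First I would fix the data $(A_1^t, A_2^t, A_1^b, A_2^b, B, C, L, R)$ and observe that, since the foam $G$ and its coloring $g$ depend on none of $j$, $\alpha$, nor the chosen $c\in\mathcal{C}$, Lemma~\ref{lem:squareratioevaluation1} lets me factor $\kup{G,g}$ out of the entire sum. It then suffices to evaluate
\[
\Xi := \sum_{j=\max(0,m-n)}^m \sum_{\alpha\in T(k-j,l-k+j)} \sum_{c\in\mathcal{C}} (-1)^{|\alpha|+(l-k+j)(m-j)} \frac{\kup{F^j_\alpha,c}}{\kup{G,g}},
\]
where the ratio on the right is the explicit rational function supplied by the lemma. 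The only remaining freedom inside $\mathcal{C}$ is the splitting $B=B_1\sqcup B_2$, and since $|B_1|$ is tied to $j$ by the labels of $F^j$, the $j$-sum together with this splitting amounts to summing over \emph{all} subsets $B_1\subseteq B$, each paired with $\alpha$ ranging over the corresponding box $T(k-j,l-k+j)$.

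Next I would record the fourfold overlap $A = (A_1^t\cap A_1^b)\sqcup(A_1^t\cap A_2^b)\sqcup(A_2^t\cap A_1^b)\sqcup(A_2^t\cap A_2^b)$ and rewrite the numerators $\pi_\alpha(A_1^t B_2 CLR)$ and $\pi_{\widehat{\alpha}}(A_2^b B_1 CLR)$ accordingly, keeping in mind that $C$, $L$, $R$ are unavoidable common variables of both Schur polynomials, whereas the set $Q:=A_1^t\cap A_2^b$ is an \emph{excess} overlap measuring exactly how far the top and bottom $A$-splittings disagree (indeed $|Q|=s$). The content of the corollary is then that the double sum over $\alpha$ and over the $B$-splitting, once the $\Delta$- and $\nabla$-weights of Lemma~\ref{lem:squareratioevaluation1} and the signs are absorbed, is an instance of the dual-Cauchy-type Schur identity proved in Appendix~\ref{sec:an-identiy-schur}. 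That identity evaluates $\sum_{\alpha}(-1)^{|\alpha|}\pi_\alpha(U)\pi_{\widehat{\alpha}}(V)$ over a rectangle, with the requisite Vandermonde weights, to an antisymmetrized/determinantal closed form in the variable sets $U$ and $V$.

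From here the two cases separate. When $A_1^t\neq A_1^b$ we have $Q\neq\emptyset$; the excess variables of $Q$ then sit on both sides of the Appendix~A expression, producing a repeated variable, so the resulting determinantal form is antisymmetric in these variables and vanishes, giving $\Xi=0$ and hence the first assertion. When $A_1^t=A_1^b=A_1$ (so $A_2^t=A_2^b=A_2$, $Q=\emptyset$, $s=0$), the denominators of Lemma~\ref{lem:squareratioevaluation1} collapse to $\nabla(A_1,A_2)$ and the Appendix~A identity returns a single closed form with no residual $B$-splitting. To finish I would compute $\kup{F,f}/\kup{G,g}$ directly --- an Euler-characteristic and circle-sign bookkeeping entirely parallel to the proof of Lemma~\ref{lem:squareratioevaluation1} but comparing $F$ instead of $F^j_\alpha$ to $G$ --- and check that it matches that closed form, so that $\Xi\cdot\kup{G,g}=\kup{F,f}$.

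The main obstacle is the middle step: matching the triple sum to the identity of Appendix~\ref{sec:an-identiy-schur}. The difficulty is that both the box $T(k-j,l-k+j)$ and the size $|B_1|$ vary with $j$, so the $j$-sum, the $\alpha$-sum and the $B$-splitting must be reassembled into the single rectangular Schur sum handled by the appendix; simultaneously one must reconcile the four distinct sign contributions $(-1)^{|\alpha|}$, $(-1)^{(l-k+j)(m-j)}$, $(-1)^{|B_2<B_1|}$ and $(-1)^{|C|(|A_2^b\cap A_2^t|-|B_2|)}$ with the signs native to that identity, and identify the variable sets $U$ and $V$ correctly. This indexing-and-sign reconciliation is precisely the computation for which the full strength of Appendix~\ref{sec:an-identiy-schur} is required.
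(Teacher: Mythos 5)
Your proposal follows essentially the same route as the paper: factor out $\kup{G,g}$ via Lemma~\ref{lem:squareratioevaluation1}, reassemble the sums over $j$, $\alpha$ and the splitting of $B$ into the Schur identity of Proposition~\ref{prop:schurformula_complicated} (in the form of Corollary~\ref{cor:schurformula_complicated}, which absorbs the common variables $C$, $L$, $R$, after multiplying through by $\nabla(A_1^t\cap A_2^b,B)$ to complete the $\nabla$-weights), and obtain the vanishing from the factor $\nabla(A_1^t,A_2^b)$, which is zero exactly when $A_1^t\cap A_2^b\neq\emptyset$. The only point where you do unnecessary work is the second case: the paper simply observes that $A_1^t=A_1^b$ forces $F=G$ and $f=g$, so the ratio $\kup{F,f}/\kup{G,g}$ is $1$ and no separate Euler-characteristic bookkeeping is required.
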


\begin{proof}

Let us first notice that $|C| + |B_1| = m-j $ and $|A_2^b| - |B_2| = |A_2^t| -|B_2|= l-k+j$.
If we sum over all $j$ and $\alpha$, we get:
\begin{align*}
&\sum_{j= \max(0,m-n)}^m \sum_{\alpha \in T(k-j, l-k+j)} \sum_{c\in \mathcal{C}}(-1)^{|\alpha| + (l-k+j)(m-j)}\kup{F^j_\alpha,c} \\ 
&=\kup{G,g}\sum_{j= \max(0,m-n)}^m \sum_{\alpha \in T(k-j, l-k+j)} \sum_{c\in \mathcal{C}}(-1)^{|\alpha| + (l-k+j)(m-j)}\frac{\kup{F^j_\alpha,c}}{\kup{G,g}} \\ 
&=\sum_{B_1 \cup B_2 =B} \sum_{\alpha \in T(k-j, l-k+j)} (-1)^{|\alpha|+|C|(|A_2^t| - |A_2^t\cap A_2^b|)+|B_1|(|A_2^t| -|B_2|)}\\ & \qquad \frac{\nabla(A_1^t\cap A_1^b, B_2) \nabla(A_2^t\cap A_2^b, B_1) \pi_{\alpha}(A_1^t B_2 C  L  R) \pi_{\alpha}(A_2^b B_1 C L R)}{\nabla(A_1^t\cap A_1^b, A_2^t\cap A_2^b) \nabla(B_1, B_2)}.\end{align*}

Multiplying by $\nabla(A_1^t\cap A_2^b,B) = \nabla(A_1^t\cap A_2^b,B_1)\nabla(A_1^t\cap A_2^b,B_2) $ and applying Proposition~\ref{prop:schurformula_complicated}, we get:

\begin{align*}
&\nabla(A_1^t\cap A_2^b,B)\sum_{j= \max(0,m-n)}^m \sum_{\alpha \in T(k-j, l-k+j)} \sum_{c\in \mathcal{C}}\frac{\kup{F^j_\alpha,c}}{\kup{G,g}} \\
&\qquad \qquad \qquad \qquad= \frac{(-1)^{|C|(|A_2^t|-|A_2^t\cap A_2^b|)}\nabla(A_1^t,A_2^b) }{\nabla(A_1^b\cap A_1^t, A_2^b \cap A_2^t) \nabla(A_1^b\cap A_2^t, A_2^b \cap A_1^t)}\end{align*}

If $A_1^t\neq A_1^b$, we have $A_1^t\cap A_2^b\neq 0$, and the sum is equal to $0$. On the other hand, if $A_1^t= A_1^b = A_1$, we have $A_2^b=A_2^t= A_2$. Moreover, in this case the foam $F$ and $G$ are equal and the coloring $f$ and $g$ are the same. Finally, we have  $\nabla(A_1^t\cap A_2^b,B)=1$ and
\[
\nabla(A_1^b\cap A_1^t, A_2^b \cap A_2^t) \nabla(A_1^b\cap A_2^t, A_2^b \cap A_1^t) = \nabla(A_1,A_2).
\]
Hence, the sum is equal to $1$.
\end{proof}

\begin{cor}
  The following local relation holds:
\[
\kup{F} = \sum_{j = \max(0, m-n)}^m \sum_{\alpha \in T(k-j, l-k+j)} (-1)^{|\alpha| + (l-k+j)(m-j)}\kup{F^j_\alpha}.
\]
\end{cor}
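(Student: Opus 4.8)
The plan is to derive this corollary purely formally from Corollary~\ref{cor:squarecomplicated1}, by reorganizing the sum that defines the foam evaluation and with no further geometric input. Since the evaluation is linear in the foam, it suffices to prove
\[
\kup{F} = \sum_{j=\max(0,m-n)}^m \sum_{\alpha \in T(k-j, l-k+j)} (-1)^{|\alpha| + (l-k+j)(m-j)} \kup{F^j_\alpha},
\]
and I would begin by expanding each $\kup{F^j_\alpha}$ as $\sum_c \kup{F^j_\alpha, c}$, where $c$ runs over all colorings of $F^j_\alpha$. This turns the right-hand side into a triple sum over $j$, over $\alpha \in T(k-j, l-k+j)$, and over colorings $c$.

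Next I would regroup this triple sum according to the boundary data $(A_1^t, A_2^t, A_1^b, A_2^b, B, C, L, R)$ attached to each coloring (with $B = B_1 \cup B_2$). The decisive point, already exploited in the proof of Corollary~\ref{cor:squarecomplicated1}, is that once this boundary data is fixed, a compatible coloring is the same as a choice of splitting $B = B_1 \sqcup B_2$, and such a splitting pins down $j$ through $|B_1| = m - j - |C|$. Hence, for fixed boundary data, summing over $j$, over $\alpha$, and over the compatible colorings $c \in \mathcal{C}$ is precisely the sum evaluated in Corollary~\ref{cor:squarecomplicated1}; interchanging the order of summation, I would rewrite the right-hand side as a sum over all choices of pairwise disjoint boundary data of these inner sums.

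I would then apply Corollary~\ref{cor:squarecomplicated1} termwise: each inner sum vanishes when $A_1^t \neq A_1^b$, and when $A_1^t = A_1^b =: A_1$ (hence $A_2^t = A_2^b =: A_2$) it equals $\kup{F, f}$, where $f$ is the coloring of $F$ parametrized by $(A_1, A_2, B, C, L, R)$. The surviving boundary data are thus exactly the ``diagonal'' ones $(A_1, A_2, A_1, A_2, B, C, L, R)$, which are in bijection with the tuples $(A_1, A_2, B, C, L, R)$, that is, with the colorings of $F$. Summing the surviving contributions therefore gives
\[
\sum_{(A_1, A_2, B, C, L, R)} \kup{F, f} = \sum_{f \text{ coloring of } F} \kup{F, f} = \kup{F},
\]
which is the desired identity.

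The genuine work has all been carried out in Lemma~\ref{lem:squareratioevaluation1} and Corollary~\ref{cor:squarecomplicated1}, so the only thing to watch here is the bookkeeping. The main, and rather mild, obstacle is to verify that the regrouping is a genuine repartition of the index set, namely that every coloring of every $F^j_\alpha$ is counted exactly once even though the admissible cardinalities vary with $j$, and that the diagonal boundary data correspond bijectively to the colorings of $F$. Both facts are immediate once one observes that $j$ is recovered from any coloring through the cardinality of $B_1$.
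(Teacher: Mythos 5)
Your proposal is correct and is exactly the argument the paper intends: the corollary is stated immediately after Corollary~\ref{cor:squarecomplicated1} with no written proof precisely because it follows by expanding each $\kup{F^j_\alpha}$ over colorings, regrouping by the boundary data $(A_1^t, A_2^t, A_1^b, A_2^b, B, C, L, R)$, and applying that corollary termwise, with the diagonal data in bijection with the colorings of $F$. Your observation that $j$ is recovered from $|B_1|$ (via $|C|+|B_1|=m-j$) is the right justification that the regrouping is a genuine repartition of the index set.
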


This proves the first assertion of Proposition~\ref{prop:complicatedfoam}. 

In order to deal with the rest of the proposition we need to introduce another foam which is the top of $F^{j_b}_{\alpha_b}$ glued together with the bottom of $F^{j_t}_{\alpha_t}$:

\[
F^{j_t j_b}_{\alpha_t \alpha_b}\eqdef \sum_{\substack{\gamma_1,\gamma_2 \\ \beta_1 \beta_2}} \scriptstyle{\NB{ \tikz[scale =1]{\tdplotsetmaincoords{70}{100}
\begin{scope}[tdplot_main_coords]
  \coordinate (aT) at (-1, -1, 3);
  \coordinate (bT) at (-1, 1, 3);
  \coordinate (cT) at (1, 1, 3);
  \coordinate (dT) at (1, -1, 3);
  \coordinate (AT) at (-2, -2, 3);
  \coordinate (BT) at (-2, 2, 3);
  \coordinate (CT) at (2, 2, 3);
  \coordinate (DT) at (2, -2, 3);
  \coordinate (aMt) at (-1, -1, 1);
  \coordinate (aMb) at (-1, -1, -1);
  \coordinate (bMt) at (-1, 1, 1);
  \coordinate (bMb) at (-1, 1, -1);
  \coordinate (cMt) at (1, 1, 1);
  \coordinate (cMb) at (1, 1, -1);
  \coordinate (dMt) at (1, -1, 1);
  \coordinate (dMb) at (1, -1, -1);
  \coordinate (AMt) at (-2, -2, 1);
  \coordinate (AMb) at (-2, -2, -1);
  \coordinate (BMt) at (-2, 2, 1);
  \coordinate (BMb) at (-2, 2, -1);
  \coordinate (CMt) at (2, 2, 1);
  \coordinate (CMb) at (2, 2, -1);
  \coordinate (DM) at (2, -2, 0);
  \coordinate (aB) at (-1, -1, -3);
  \coordinate (bB) at (-1, 1, -3);
  \coordinate (cB) at (1, 1, -3);
  \coordinate (dB) at (1, -1, -3);
  \coordinate (AB) at (-2, -2, -3);
  \coordinate (BB) at (-2, 2, -3);
  \coordinate (CB) at (2, 2, -3);
  \coordinate (DB) at (2, -2, -3);
  \draw[thin, <-] ($(dMb)!0.5!(aMt)$) --    +(0,-1.5,0) node [left, sloped] {$\scriptstyle{n+k}$}; 
  \draw[thin, <-] ($(aT)!0.5!(dMt)$) --     +(0,-1.5,0) node [left, sloped] {$\scriptstyle{m-j_t}$}; 
 \draw[red, thick, <-]($(aT)!0.3!(dMt)$) --  +(0,-1.5,0) node [left, sloped] {$\scriptstyle{\pi_{\gamma_2}}$}; 
  \draw[red,thick, <-] ($(bMb)!0.7!(cB)$)-- +(0,+1.5,0) node [right, sloped] {$\scriptstyle{\pi_{\beta_1}}$}; 
  \draw[thin, <-] ($(dB)!0.5!(aMb)$) --     +(0,-1.5,0) node [left, sloped] {$\scriptstyle{m-j_b}$}; 
  \draw[thin, <-] ($(bMt)!0.5!(cMb)$) --    +(0,+1.5,0) node [right, sloped] {$\scriptstyle{m+l-k}$}; 
  \draw[thin, <-] ($(cT)!0.5!(bMt)$) --     +(0,+1.5,0) node [right, sloped] {$\scriptstyle{n+l+j_t}$}; 
  \draw[thin, <-] ($(cB)!0.5!(bMb)$) --     +(0,+1.5,0) node [right, sloped] {$\scriptstyle{n+l+j_b}$}; 
 \draw[red, thick, <-] ($(aMt)!0.5!(bMb)$) .. controls +(-3,0,0) and  +(0,+1,0) .. +(-3,-3.1,0) node [left, sloped] {$\scriptstyle{\pi_{\gamma_1}}$}; 
  \draw[thin, <-] ($(aMb)!0.5!(bMt)$) .. controls +(-3,0,0) and  +(0,0,-1) .. (-3,0,+3) node [above, sloped] {$\scriptstyle{n+k-m}$}; 
  \filldraw[draw = black, rounded corners=1pt, thick, fill opacity = 0.3, fill = red]  (aT) -- (aB) -- (AB) -- (AT) -- (aT) node[sloped, midway, below, opacity = 1] {$\scriptstyle{m}$};
  \filldraw[draw = black, rounded corners=1pt, thick, fill opacity = 0.3, fill = red]  (bT) -- (bB) -- (BB)  -- (BT) -- (bT) node[sloped, midway, below, opacity = 1] {$\scriptstyle{n+l}$};
  \filldraw[draw = black, rounded corners=1pt, thick, fill opacity = 0.3, fill = yellow]   (aT) --  (aMt) -- (bMt) -- (bT) --  (aT) node[sloped, midway, below, opacity = 1] {$\scriptstyle{j_t}$};
  \filldraw[draw = black, rounded corners=1pt, thick, fill opacity = 0.3, fill = yellow]   (aMb) -- (aB) --  (bB) node[sloped, midway, above, opacity = 1] {$\scriptstyle{j_b}$} --  (bMb) -- cycle;
  \filldraw[draw = black, rounded corners=1pt, thick, fill opacity = 0.3, fill = red]  (dT) -- (dB) -- (DB) node[sloped, midway, above, opacity = 1] {$\scriptstyle{n}$}-- (DT) -- cycle;
  \filldraw[draw = black, rounded corners=1pt, thick, fill opacity = 0.3, fill = green]  (aMt) -- (aMb) -- (dMb) -- (dMt) -- cycle;
  \filldraw[draw = black, rounded corners=1pt, thick, fill opacity = 0.3, fill = yellow]   (aT) --  (aMt) -- (dMt) -- (dT) --  cycle;
  \filldraw[draw = black, rounded corners=1pt, thick, fill opacity = 0.3, fill = yellow]   (aMb) -- (aB) --  (dB) --  (dMb) -- cycle;
  \filldraw[draw = black, rounded corners=1pt, thick, fill opacity = 0.3, fill = green]  (cMt) -- (cMb) -- (dMb)  -- (dMt) -- cycle;
  \filldraw[draw = black, rounded corners=1pt, thick, fill opacity = 0.3, fill = green]  (aMt) -- (aMb) -- (bMb) -- (bMt) -- cycle;
  \filldraw[draw = black, rounded corners=1pt, thick, fill opacity = 0.3, fill = green]  (cMt) -- (cMb) -- (bMb) -- (bMt) -- cycle;
  \filldraw[draw = black, rounded corners=1pt, thick, fill opacity = 0.3, fill = yellow]   (cT) --  (cMt) -- (bMt) -- (bT) --  (cT);
  \filldraw[draw = black, rounded corners=1pt, thick, fill opacity = 0.3, fill = yellow]   (cMb) -- (cB) --  (bB) --  (bMb) -- cycle;
  \filldraw[draw = black, rounded corners=1pt, thick, fill opacity = 0.3, fill = yellow]   (cT) --  (cMt) -- (dMt) -- (dT) --  (cT) node[sloped, midway, below, opacity = 1] {$\scriptstyle{n+j_t-m}$};
  \filldraw[draw = black, rounded corners=1pt, thick, fill opacity = 0.3, fill = yellow]   (cMb) -- (cB) --  (dB) node[sloped, midway, above, opacity = 1] {$\scriptstyle{n+j_b-m}$} --  (dMb) -- cycle;
  \filldraw[draw = black, rounded corners=1pt, thick, fill opacity = 0.3, fill = blue]    (aMt) -- (bMt) -- (cMt) -- (dMt) node[sloped, midway, above, opacity = 1] {$\scriptstyle{n+k-m+j_t}$} -- cycle;
  \filldraw[draw = black, rounded corners=1pt, thick, fill opacity = 0.3, fill = blue]    (aMb) -- (bMb) -- (cMb) -- (dMb) node[sloped, midway, above, opacity = 1] {$\scriptstyle{n+k-m+j_b}$} -- cycle;
  \filldraw[draw = black, rounded corners=1pt, thick, fill opacity = 0.3, fill = red]  (cT) -- (cB) -- (CB) node[sloped, midway, above, opacity = 1] {$\scriptstyle{m+l}$} -- (CT) -- cycle; 
  \draw[red, thick, <-]($(cMb)!0.3!(dMt)$) .. controls  +(+1,0,0) and +(0, -1, 0) .. +(1,3,0)  node [right, sloped] {$\scriptstyle{\pi_{\beta_2}}$}; 
  \draw[thin, <-] ($(dMb)!0.5!(cMt)$) .. controls +(+3,0,0) and  +(0,0,+1) .. (+3,0,-3) node [below, sloped] {$\scriptstyle{k}$}; 

\end{scope}}} }
\]

Using the same conventions as before, we can parameterize the colorings of $F^{j_t j_b}_{\alpha_t \alpha_b}$ by 

$(A_1, A_2, B_1^t, B_2^t, B_1^b, B_2^b, C, L, R)$.  

\begin{lem}\label{lem:squareratio2}
   Let us consider a coloring $c \eqdef (A_1, A_2, B_1^t, B_2^t, B_1^b, B_2^b, C, L, R)$ of $F^{j_t j_b}_{\alpha_t \alpha_b}$ and let $s \eqdef |B_1^t\cap B_2 ^b|$ and $q \eqdef |B_2^t\cap B_1^b|$, we have:
\begin{align*}&\frac{\kup{F^{j_t j_b}_{\alpha_t \alpha_b},c}}{\kup{E,e}}= (-1)^{|C|(|A_2|-|B_2^b\cap B_2^t|)+|A_1<A_2| +|B_1^b\cap B_1^t||B_2^b\cap B_2^t|} \\ &\frac{\nabla(A_1, B_2^b\cap B_2^t) \nabla(A_2, B_1^b\cap B_1^t) \Delta(A_1) \Delta(A_2) \pi_{\alpha_b}(A_1 B_2^t C L R) \pi_{\alpha_t}(A_2 B_1^b C L R )}{\nabla(B_1^t\cap B_1^b, B_2^t\cap B_2^b) \Delta(A) }
 \end{align*}
Where 
\[
E \eqdef 
\NB{\tikz[scale=1]{\tdplotsetmaincoords{70}{100}
\begin{scope}[tdplot_main_coords]
  \coordinate (aT) at (-1, -1, 3);
  \coordinate (bT) at (-1, 1, 3);
  \coordinate (cT) at (1, 1, 3);
  \coordinate (dT) at (1, -1, 3);
  \coordinate (AT) at (-2, -2, 3);
  \coordinate (BT) at (-2, 2, 3);
  \coordinate (CT) at (2, 2, 3);
  \coordinate (DT) at (2, -2, 3);
  \coordinate (aMt) at (-1, -1, 1);
  \coordinate (aMb) at (-1, -1, -1);
  \coordinate (bMt) at (-1, 1, 1);
  \coordinate (bMb) at (-1, 1, -1);
  \coordinate (cMt) at (1, 1, 1);
  \coordinate (cMb) at (1, 1, -1);
  \coordinate (dMt) at (1, -1, 1);
  \coordinate (dMb) at (1, -1, -1);
  \coordinate (AMt) at (-2, -2, 1);
  \coordinate (AMb) at (-2, -2, -1);
  \coordinate (BMt) at (-2, 2, 1);
  \coordinate (BMb) at (-2, 2, -1);
  \coordinate (CMt) at (2, 2, 1);
  \coordinate (CMb) at (2, 2, -1);
  \coordinate (DM) at (2, -2, 0);
  \coordinate (aB) at (-1, -1, -3);
  \coordinate (bB) at (-1, 1, -3);
  \coordinate (cB) at (1, 1, -3);
  \coordinate (dB) at (1, -1, -3);
  \coordinate (AB) at (-2, -2, -3);
  \coordinate (BB) at (-2, 2, -3);
  \coordinate (CB) at (2, 2, -3);
  \coordinate (DB) at (2, -2, -3);
  \draw[thin, <-] ($(dMb)!0.5!(aMt)$) --    +(0,-1.5,0) node [left, sloped] {$\scriptstyle{m-j_t+s}$}; 
  \draw[thin, <-] ($(aT)!0.5!(dMt)$) --     +(0,-1.5,0) node [left, sloped] {$\scriptstyle{m-j_t}$}; 
  \draw[thin, <-] ($(dB)!0.5!(aMb)$) --     +(0,-1.5,0) node [left, sloped] {$\scriptstyle{m-j_b}$}; 
  \draw[thin, <-] ($(bMt)!0.5!(cMb)$) --    +(0,+1.5,0) node [right, sloped] {$\scriptstyle{m+l+j_t-s}$}; 
  \draw[thin, <-] ($(cT)!0.5!(bMt)$) --     +(0,+1.5,0) node [right, sloped] {$\scriptstyle{n+l+j_t}$}; 
  \draw[thin, <-] ($(cB)!0.5!(bMb)$) --     +(0,+1.5,0) node [right, sloped] {$\scriptstyle{n+l+j_b}$}; 
  \draw[thin, <-] ($(dMb)!0.5!(cMt)$) .. controls +(+3,0,0) and  +(0,0,+1) .. (+3,0,-3) node [below, sloped] {$\scriptstyle{n+j_b-m-s}$}; 
  \draw[thin, <-] ($(aMb)!0.5!(bMt)$) .. controls +(-3,0,0) and  +(0,0,-1) .. (-3,0,+3) node [above, sloped] {$\scriptstyle{j_b-s}$}; 
  \filldraw[draw = black, rounded corners=1pt, thick, fill opacity = 0.3, fill = red]  (aT) -- (aB) -- (AB) -- (AT) -- (aT) node[sloped, midway, below, opacity = 1] {$\scriptstyle{m}$};
  \filldraw[draw = black, rounded corners=1pt, thick, fill opacity = 0.3, fill = red]  (bT) -- (bB) -- (BB)  -- (BT) -- (bT) node[sloped, midway, below, opacity = 1] {$\scriptstyle{n+l}$};
  \filldraw[draw = black, rounded corners=1pt, thick, fill opacity = 0.3, fill = yellow]   (aT) --  (aMt) -- (bMt) -- (bT) --  (aT) node[sloped, midway, below, opacity = 1] {$\scriptstyle{j_t}$};
  \filldraw[draw = black, rounded corners=1pt, thick, fill opacity = 0.3, fill = yellow]   (aMb) -- (aB) --  (bB) node[sloped, midway, above, opacity = 1] {$\scriptstyle{j_b}$} --  (bMb) -- cycle;
  \filldraw[draw = black, rounded corners=1pt, thick, fill opacity = 0.3, fill = red]  (dT) -- (dB) -- (DB) node[sloped, midway, above, opacity = 1] {$\scriptstyle{n}$}-- (DT) -- cycle;
  \filldraw[draw = black, rounded corners=1pt, thick, fill opacity = 0.3, fill = orange]  (aMt) -- (aMb) -- (dMb) -- (dMt) -- cycle;
  \filldraw[draw = black, rounded corners=1pt, thick, fill opacity = 0.3, fill = yellow]   (aT) --  (aMt) -- (dMt) -- (dT) --  cycle;
  \filldraw[draw = black, rounded corners=1pt, thick, fill opacity = 0.3, fill = yellow]   (aMb) -- (aB) --  (dB) --  (dMb) -- cycle;
  \filldraw[draw = black, rounded corners=1pt, thick, fill opacity = 0.3, fill = orange]  (cMt) -- (cMb) -- (dMb)  -- (dMt) -- cycle;
  \filldraw[draw = black, rounded corners=1pt, thick, fill opacity = 0.3, fill = orange]  (aMt) -- (aMb) -- (bMb) -- (bMt) -- cycle;
  \filldraw[draw = black, rounded corners=1pt, thick, fill opacity = 0.3, fill = orange]  (cMt) -- (cMb) -- (bMb) -- (bMt) -- cycle;
  \filldraw[draw = black, rounded corners=1pt, thick, fill opacity = 0.3, fill = yellow]   (cT) --  (cMt) -- (bMt) -- (bT) --  (cT);
  \filldraw[draw = black, rounded corners=1pt, thick, fill opacity = 0.3, fill = yellow]   (cMb) -- (cB) --  (bB) --  (bMb) -- cycle;
  \filldraw[draw = black, rounded corners=1pt, thick, fill opacity = 0.3, fill = yellow]   (cT) --  (cMt) -- (dMt) -- (dT) --  (cT) node[sloped, midway, below, opacity = 1] {$\scriptstyle{n+j_t-m}$};
  \filldraw[draw = black, rounded corners=1pt, thick, fill opacity = 0.3, fill = yellow]   (cMb) -- (cB) --  (dB) node[sloped, midway, above, opacity = 1] {$\scriptstyle{n+j_b-m}$} --  (dMb) -- cycle;
  \filldraw[draw = black, rounded corners=1pt, thick, fill opacity = 0.3, fill = gray]    (aMt) -- (bMt) -- (cMt) -- (dMt) node[sloped, midway, above, opacity = 1] {$\scriptstyle{q}$} -- cycle;
  \filldraw[draw = black, rounded corners=1pt, thick, fill opacity = 0.3, fill = gray]    (aMb) -- (bMb) -- (cMb) -- (dMb) node[sloped, midway, above, opacity = 1] {$\scriptstyle{s}$} -- cycle;
  \filldraw[draw = black, rounded corners=1pt, thick, fill opacity = 0.3, fill = red]  (cT) -- (cB) -- (CB) node[sloped, midway, above, opacity = 1] {$\scriptstyle{m+l}$} -- (CT) -- cycle; 
\end{scope}}}
\]
and $e$ is the only coloring of $E$ compatible with $(A, B_1^t, B_2^t, B_1^b, B_2^b, C, L, R)$. This means that on top and bottom, it is the same as the coloring $c$ of $F^{j_t j_b}_{\alpha_t \alpha_b}$, and on the middle it is given by Table~\ref{tab:coloringofE}.
\begin{table}[h!]
  \centering
  \begin{tabular}[h!]{|c|c|c|c|c|c|c|c|c|}
    \hline 
$i \in$   & $A$ &   $B_1^t\cap B_1^b$& $B_2^t\cap B_2^b$ &$B_1^t\cap B_2^b$& $B_2^t\cap B_1^b$  & $C$ & $L$ & $R$ \\ \hline 
$E_i$ &\mysquareM{a1}{x}{N}&\mysquareM{b1}{x}{N}&\mysquareM{b2}{x}{N}&\mysquareM{b1}{x}{N}&\mysquareM{b1}{x}{N}&\mysquareM{c}{x}{N}&\mysquareM{l}{x}{N}&\mysquareM{r}{x}{N}\\
    \hline
  \end{tabular}
  \caption{Detail of the coloring $e$ of the foam $E$: the picture reflects how the coloring looks on the middle of $E$.}
  \label{tab:coloringofE}
\end{table}

\end{lem}

\begin{proof}
  This is essentially the same proof as for Lemma~\ref{lem:squareratioevaluation1}: We use the tables of page~\pageref{tables2} to evaluate $\chi((F^{j_t j_b}_{\alpha_t \alpha_b})_i(c)) - \chi(E_i(e))$, $\chi((F^{j_t j_b}_{\alpha_t \alpha_b})_{hi}(c)) - \chi(E_{hi}(e))$, and 
$\theta^+_{hi}(c) - \theta^+_{hi}(e)$.
\end{proof}

\begin{cor}\label{cor3.16}
  Let us fix $A$, $B_1^t$, $B_2^t$, $B_1^b$, $B_2^b$, $C$, $L$ and  $R$ and consider the set $\mathcal{C}$ of colorings of $ F^{j_t j_b}_{\alpha_t \alpha_b}$ compatible with $A$, $B_1^t$, $B_2^t$, $B_1^b$, $B_2^b$, $C$, $L$.
Then if $B_1^t \neq B_1 ^b$ (or $B_2^t \neq B_2^b$ these two conditions are equivalent), we have:
\[
\sum_{c \in \mathcal{C}} \kup{F^{j_t j_b}_{\alpha_t \alpha_b},c} = 0.
\]
If $B_1^t = B_1 ^b \eqdef B_1$ (and therefore $B_2^t = B_2^b \eqdef B_2$ and $j_b =j_t$) 
then we have:
\[\sum_{c \in } \kup{F^{j_t j_b}_{\alpha_t \alpha_b},c} =
\begin{cases}
(-1)^{|\alpha_b| + (m-j)(l-k+j) }\kup{F^j, f^j} & \textrm{if $\alpha_b = \widehat{\alpha_t}$,} \\
0 & \textrm{else.}
\end{cases}
\]
Where $f^j$ is the coloring of $F^j$ parametrized by $(A, B_1, B_2, C, L, R)$.
\end{cor}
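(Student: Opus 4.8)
The plan is to transcribe the proof of Corollary~\ref{cor:squarecomplicated1}, interchanging the roles of the $A$-sets and the $B$-sets. First I would apply Lemma~\ref{lem:squareratio2} to rewrite each summand $\kupc{F^{j_t j_b}_{\alpha_t\alpha_b},c}$ as $\kupc{E,e}$ times the ratio displayed there. The point is that $E$ and its coloring $e$ depend only on the fixed data $A, B_1^t, B_2^t, B_1^b, B_2^b, C, L, R$, hence not on the way $A$ splits as $A_1\sqcup A_2$; so $\kupc{E,e}$ factors out of the sum over $\mathcal{C}$, and the colorings in $\mathcal{C}$ are indexed precisely by the decompositions $A = A_1\sqcup A_2$ into subsets of the cardinalities prescribed by the labels.

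After this reduction I am left with $\kupc{E,e}$ times the sum over $A = A_1\sqcup A_2$ of the $(A_1,A_2)$-dependent part of the ratio, namely of
$$(-1)^{|A_1<A_2|}\,\nabla(A_1, B_2^b\cap B_2^t)\,\nabla(A_2, B_1^b\cap B_1^t)\,\Delta(A_1)\,\Delta(A_2)\,\pi_{\alpha_b}(A_1 B_2^t C L R)\,\pi_{\alpha_t}(A_2 B_1^b C L R),$$
the remaining factors of the ratio being constant over $\mathcal{C}$. As in the proof of Corollary~\ref{cor:squarecomplicated1}, I would multiply and divide by the auxiliary Vandermonde factor $\nabla(B_1^t\cap B_2^b, A)$ so that this inner sum takes exactly the shape of the Schur identity of Appendix~\ref{sec:an-identiy-schur} (Proposition~\ref{prop:schurformula_complicated}, or its companion orthogonality statement). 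Applying that identity collapses the sum over the splittings of $A$: since $\pi_{\alpha_b}$ and $\pi_{\alpha_t}$ live on the two halves of $A$ and the frozen variables agree, the identity returns a Kronecker factor $\delta_{\alpha_b,\widehat{\alpha_t}}$ together with a Vandermonde of the form $\nabla(B_1^t, B_2^b)$.

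The two cases now read off exactly as for Corollary~\ref{cor:squarecomplicated1}. If $B_1^t\neq B_1^b$, then $B_1^t\cap B_2^b\neq\emptyset$ (because $B_1^t\cup B_2^t = B = B_1^b\cup B_2^b$), so the surviving factor $\nabla(B_1^t, B_2^b)$ contains a vanishing difference $X_x - X_x$ and the entire sum is $0$; the auxiliary factor $\nabla(B_1^t\cap B_2^b, A)$ we divided by is nonzero, its two argument sets being disjoint, so no cancellation is lost and we get $\sum_{c\in\mathcal{C}}\kupc{F^{j_t j_b}_{\alpha_t\alpha_b},c}=0$. If instead $B_1^t = B_1^b =: B_1$, then $B_2^t = B_2^b =: B_2$ and hence $j_t = j_b =: j$; now the auxiliary Vandermonde is trivial, the pair $(E,e)$ is literally $(F^j, f^j)$ with $f^j$ the coloring parametrized by $(A, B_1, B_2, C, L, R)$, and the Schur identity contributes the factor $\delta_{\alpha_b,\widehat{\alpha_t}}$. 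It remains to check that the overall sign from Lemma~\ref{lem:squareratio2} and the sign produced by the Schur identity combine to $(-1)^{|\alpha_b|+(m-j)(l-k+j)}$, which gives the stated value.

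The genuinely delicate step is this last sign bookkeeping: one must verify that the two sources of signs, reduced modulo $2$ using Lemma~\ref{lem2.5} and the parity of Euler characteristics of closed surfaces, assemble into the single exponent $|\alpha_b|+(m-j)(l-k+j)$. Everything else is a formal copy of the argument for Corollary~\ref{cor:squarecomplicated1} with $(A_1,A_2)$ and $(B_1,B_2)$ exchanged, so the only conceptual content is confirming that the Schur identity of Appendix~\ref{sec:an-identiy-schur} applies in this transposed configuration and that it is precisely the duality $\alpha_b=\widehat{\alpha_t}$ that it detects.
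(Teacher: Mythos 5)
Your skeleton is right where it overlaps with the paper — factor out $\kupc{E,e}$ via Lemma~\ref{lem:squareratio2} and note that $\mathcal{C}$ is indexed by the splittings $A=A_1\sqcup A_2$ — but the core of your argument, the vanishing when $B_1^t\neq B_1^b$, does not work, and the reason is that this corollary is \emph{not} Corollary~\ref{cor:squarecomplicated1} with $A$ and $B$ exchanged. There the sum ran over $j$, $\alpha$ \emph{and} the colorings, so the completeness identity (Proposition~\ref{prop:schurformula_complicated}) applied; here $j_t,j_b,\alpha_t,\alpha_b$ are fixed and only the splitting of $A$ is summed, so the only candidate is the orthogonality statement (Proposition~\ref{prop:orthog}). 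That proposition requires its four sets to be disjoint, and matching its shape forces you to take ``$B_2$''$=B_2^t$ and ``$B_1$''$=B_1^b$: the $\nabla$-factors coming from Lemma~\ref{lem:squareratio2} involve the intersections $B_2^b\cap B_2^t$ and $B_1^b\cap B_1^t$, while the Schur polynomials involve $B_2^t$ and $B_1^b$, and the only global factor that completes $\nabla(A_1,B_2^b\cap B_2^t)\nabla(A_2,B_1^b\cap B_1^t)$ to $\nabla(A_1,B_2^t)\nabla(A_2,B_1^b)$ is $\nabla(A,B_2^t\cap B_1^b)$. But then $B_2^t$ and $B_1^b$ are disjoint exactly when $B_2^t\cap B_1^b=\emptyset$ — i.e.\ the hypotheses of Proposition~\ref{prop:orthog} fail precisely in the regime where you want to use it to produce a vanishing Vandermonde. (Your auxiliary factor $\nabla(B_1^t\cap B_2^b,A)$ completes the $\nabla$'s to $\nabla(A_1,B_2^b)\nabla(A_2,B_1^t)$, which still does not match the Schur arguments.) Separately, the implication ``$B_1^t\neq B_1^b\Rightarrow B_1^t\cap B_2^b\neq\emptyset$'' is false: since $j_t$ and $j_b$ may differ, the cardinalities $|B_1^t|$ and $|B_1^b|$ may differ and one set can strictly contain the other.

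What the paper actually does is split into two regimes. When $B_2^t\not\subseteq B_2^b$ (equivalently $B_1^b\not\subseteq B_1^t$), it observes that the sum over $A_1\sqcup A_2=A$ of the numerator is an antisymmetric polynomial in the variables of $A$, hence divisible by $\Delta(A)$, while its degree in each variable of $A$ is at most $|A|-1+|B_2^b\cap B_2^t|-|B_2^t|<|A|-1$; so it is identically zero — a degree count, not a vanishing Vandermonde. In the remaining nested regime the sets really are disjoint, Proposition~\ref{prop:orthog} applies and yields $\delta_{\alpha_t,\widehat{\alpha_b}}$ — but with the dual taken in the rectangle $T(k-j_b,\,l-k+j_t)$, and one must still argue that since $\alpha_t\in T(k-j_t,l-k+j_t)$ and $\alpha_b\in T(k-j_b,l-k+j_b)$, this duality is impossible unless $j_t=j_b$, which then gives $B_i^t=B_i^b$, $E=F^j$ and $e=f^j$. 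Your two-case split (equal versus not equal) silently skips this intermediate nested case, and your sign bookkeeping is deferred; the former is the genuine missing idea.
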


\begin{proof}
 We have:
 \begin{align*}
  & \sum_{c \in \mathcal{C}} \frac{\kup{F^{j_t j_b}_{\alpha_t \alpha_b},c}}{\kup{E,e}}    = \sum_{A_1 \sqcup A_2 = A} (-1)^{|C|(|A_2|-|B_2^b\cap B_2^t|)+|A_1<A_2| +|B_1^b\cap B_1^t||B_2^b\cap B_2^t|}  \\ &\qquad\frac{\nabla(A_1, B_2^b\cap B_2^t) \nabla(A_2, B_1^b\cap B_1^t) \Delta(A_1) \Delta(A_2) \pi_{\alpha_b}(A_1 B_2^t C L R) \pi_{\alpha_b}(A_2 B_1^b C L R )}{\nabla(B_1^t\cap B_1^b, B_2^t\cap B_2^b) \Delta(A) }.
\\ & \qquad = \sum_{A_1 \sqcup A_2 = A} (-1)^{|C|(|A_2|-|B_2^b\cap B_2^t|)+|A_1<A_2| +|B_1^b\cap B_1^t||B_2^b\cap B_2^t|}  \\ &\qquad  \frac{\nabla(A_1, B_2^b\cap B_2^t) \nabla(A_2, B_1^b\cap B_1^t) \Delta(A_1) \Delta(A_2) \pi_{\alpha_b}(A_1 B_2^t C L R) \pi_{\alpha_t}(A_2 B_1^b C L R )}{\nabla(B_1^t\cap B_1^b, B_2^t\cap B_2^b) \Delta(A) }.
 \end{align*}
The numerator is an anti-symmetric polynomial in $A$. Its degree in a variable $X$ in $A$ is lower than or equal to:
\[
\max{(|A|-1 +|B_2^b\cap B_2^t|-|B_2^t|, |A|-1 +|B_1^b\cap B_1^t|-|B_1^b|)}.
\] 
Hence if $B_2^t\not \subseteq B_2^b$ and $B_1^b\not \subseteq B_1^t$ (the two condition are actually equivalent), the degree in $X$ of this polynomial is strictly lower than $|A|-1$, hence it is equal to $0$. 

Suppose that $B_2^t \subseteq B_2^b$ and $B_1^t \subseteq B_1^b$. This imposes $j_b \leq j_t$. We  have:
\begin{align*}
 \sum_{c \in \mathcal{C}} \frac{\kup{F^{j_t j_b}_{\alpha_t \alpha_b},c}}{\kup{E,e}} 
&=  \sum_{A_1 \sqcup A_2 = A} (-1)^{|C|(|A_2|-|B_2^t|)+|A_1<A_2| +|B_1^b||B_2^t|}  \\ &\quad \frac{\nabla(A_1, B_2^t) \nabla(A_2, B_1^b) \Delta(A_1) \Delta(A_2) \pi_{\alpha_b}(A_1 B_2^t C L R) \pi_{\alpha_t}(A_2 B_1^b C L R )}{\nabla(B_1^b, B_2^t) \Delta(A) }.
\end{align*}
Thanks to Proposition~\ref{prop:orthog}, we have:
\begin{align*}
   \sum_{c \in \mathcal{C}} \frac{\kup{F^{j_t j_b}_{\alpha_t \alpha_b},c}}{\kup{E,e}}
= 
\begin{cases}
  (-1)^{|C|(|A_2|-|B_2^t|)+|A_2||A_1| +|B_1^b||B_2^t| + |B_2^t|(|A_1|-|B_1^b|)  + |{\alpha_b}| } &\textrm{ if $\alpha_t = \widehat{\alpha_b}$} \\
  0 &\textrm{else},
\end{cases}
\end{align*}
where $\widehat{\bullet}$ is understood in $T(k-j_b, l-k+j_t)$. But $\alpha_t$ is in  $T(k-j_t, l-k+j_t)$ and $\alpha_b$ in  $T(k-j_b, l-k+j_b)$. If $j_t \neq j_b$  $\alpha_t$ cannot  be equal to $\widehat{\alpha_b}$ in $T(k-j_t, l-k+j_b)$. If $j_b = j_t =j$, it means that $B_2^t = B_2^b = B_2$, $B_1^t = B_1^b =B_1$,  $E=F^j$ and $e = f^j$. In this case,  we have: 
\begin{align*}
\sum_{c \in \mathcal{C}} {\kup{F^{j j}_{\alpha_t \alpha_b},c}}=
\begin{cases}
  (-1)^{|C|(|A_2|-|B_2|)+|A_2||A_1| +|B_1||B_2| + |\alpha_t| }  {\kup{F^j,f^j}} &\textrm{ if $\alpha_t = \widehat{\alpha_b}$}, \\
  0 &\textrm{else},
\end{cases}
\end{align*}
This concludes, since
\[
|C|(|A_2|-|B_2|)+|A_2||A_1| +|B_1||B_2| + |\alpha_t| \equiv (m-j)(l-k+j)+ |\alpha_b|  \mod 2.
\]
\end{proof}
\begin{cor}\label{cor:idempotents}
  The foams $\left( (−1)^{|\alpha|+(l−k+j)(m−j)} F^j_\alpha\right)$ are pairwise orthogonal idempotents.
\end{cor}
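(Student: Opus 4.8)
The plan is to read the family $\bigl(g^j_\alpha := (-1)^{|\alpha|+(l-k+j)(m-j)}F^j_\alpha\bigr)_{j,\alpha}$ as endomorphisms of the web sitting at the top and bottom of $F$, and to establish the defining relations of a system of orthogonal idempotents, namely
\[
g^{j_t}_{\alpha_t}\circ g^{j_b}_{\alpha_b} = \delta_{j_t j_b}\,\delta_{\alpha_t \alpha_b}\, g^{j_b}_{\alpha_b},
\]
where $\circ$ denotes vertical composition (stacking) of foams. Since the foam evaluation of the previous section together with the universal construction equips the morphism spaces with a nondegenerate pairing, it suffices to verify this identity at the level of closed evaluations, that is, after gluing arbitrary foams on top and bottom; equivalently, one simplifies the stacked foam directly. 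First I would record that $g^{j_t}_{\alpha_t}\circ g^{j_b}_{\alpha_b}$ is, up to the product of the two normalising signs, the foam obtained by stacking $F^{j_t}_{\alpha_t}$ on top of $F^{j_b}_{\alpha_b}$.

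The crucial point is that this stacked foam contains, as the cobordism between its two inner webs, exactly the foam $F^{j_t j_b}_{\alpha_t \alpha_b}$ introduced before Corollary~\ref{cor3.16}: by construction the latter is ``the top of $F^{j_b}_{\alpha_b}$ glued together with the bottom of $F^{j_t}_{\alpha_t}$''. I would therefore apply Corollary~\ref{cor3.16} to collapse this neck. That corollary, whose analytic content is the Schur/Littlewood--Richardson orthogonality of Proposition~\ref{prop:orthog}, yields two outcomes: the neck evaluates to $0$ unless $j_t=j_b=:j$ and the two diagrams are dual in $T(k-j,l-k+j)$, and on the surviving locus it equals $(-1)^{|\alpha_b|+(m-j)(l-k+j)}\kup{F^j,f^j}$, i.e.\ a single undecorated copy $F^j$ of the inner foam. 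Substituting this reduction back into the stack, the outer halves recombine into one copy of the appropriate $F^j_{\bullet}$; hence off the diagonal the composite vanishes and on the diagonal it is a signed copy of a single $g^j_{\bullet}$.

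It then remains to collect the signs. There are three contributions: the two normalisations $(-1)^{|\alpha_t|+(l-k+j)(m-j)}$ and $(-1)^{|\alpha_b|+(l-k+j)(m-j)}$ coming from the definition of the $g$'s, and the sign $(-1)^{|\alpha_b|+(m-j)(l-k+j)}$ produced by Corollary~\ref{cor3.16}. The plan is to feed these into the mod-$2$ congruence established at the very end of the proof of Corollary~\ref{cor3.16}, which relates $|\alpha_t|$ and $|\alpha_b|$ to $(m-j)(l-k+j)$; once the two copies of $(m-j)(l-k+j)$ and of $|\alpha_b|$ cancel pairwise, the surviving sign is exactly $+1$, so the composite equals $g^j_{\bullet}$ with coefficient $1$. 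Completeness of the system (that $\sum_{j,\alpha}g^j_\alpha$ acts as the identity on the top/bottom web) is not strictly needed for the present statement but is supplied by the first assertion of Proposition~\ref{prop:complicatedfoam}.

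The delicate part will be precisely this sign-and-duality bookkeeping in the last two steps: one must check that the dual-complementarity condition ``$\alpha_b=\widehat{\alpha_t}$'' delivered by Corollary~\ref{cor3.16} is exactly what is needed, given that each $F^j_\alpha$ is itself assembled from the complementary pair $(\alpha,\widehat\alpha)$ through the Littlewood--Richardson coefficients $c^{\alpha}_{\beta_1\beta_2}$ and $c^{\widehat\alpha}_{\gamma_1\gamma_2}$, so that the net surviving constraint becomes the genuine diagonal $\alpha_t=\alpha_b$ and the accumulated sign trivialises. All the genuinely hard analytic work, namely the evaluation of the glued neck, has already been carried out in Corollary~\ref{cor3.16}, so beyond this bookkeeping the argument is formal.
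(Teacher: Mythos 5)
Your proposal follows the paper's own proof essentially verbatim: the paper likewise concatenates $F^{j_t}_{\alpha_t}$ and $F^{j_b}_{\alpha_b}$, recognises the resulting neck as (a Littlewood--Richardson-weighted sum of copies of) $F^{j_t j_b}_{\alpha_t\alpha_b}$, and invokes Corollary~\ref{cor3.16} to obtain $(-1)^{|\alpha_t|+(l-k+j_t)(m-j_t)}\delta_{j_b j_t}\delta_{\alpha_b\alpha_t}F^{j_t}_{\alpha_t}$, after which the three signs combine exactly as you describe. The subtlety you flag — that the condition $\alpha_b=\widehat{\alpha_t}$ from Corollary~\ref{cor3.16} translates into the genuine diagonal because each $F^j_\alpha$ is built from the pair $(\alpha,\widehat{\alpha})$ — is precisely the point the paper's proof relies on, so the approaches coincide.
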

\begin{proof}
  If we concatenate $F^{j_t}_{\alpha_t}$ and $F^{j_b}_{\alpha_b}$, we get:
\[
\sum_{\substack{\beta_1,\beta_2 \\ \gamma_1,\gamma_2 \\ \delta_1, \delta_2 \\ \epsilon_1, \epsilon_2}} c_{\beta_1 \beta_2}^{\alpha_t}c_{\gamma_1 \gamma_2}^{\widehat{\alpha_t}} c_{\delta_1 \delta_2}^{\alpha_b} c_{\epsilon_1 \epsilon_2}^{\widehat{\alpha_b}} \scriptstyle{\NB{\tikz{\tdplotsetmaincoords{70}{100}
\begin{scope}[tdplot_main_coords,yscale =1]
  \coordinate (aT) at (-1, -1, 4);
  \coordinate (bT) at (-1, 1,  4);
  \coordinate (cT) at (1, 1,   4);
  \coordinate (dT) at (1, -1,  4);
  \coordinate (AT) at (-2, -2, 4);
  \coordinate (BT) at (-2, 2,  4);
  \coordinate (CT) at (2, 2,   4);
  \coordinate (DT) at (2, -2,  4);
  \coordinate (aTt) at (-1, -1,  3);
  \coordinate (aTb) at (-1, -1,  1);
  \coordinate (aBt) at (-1, -1, -1);
  \coordinate (aBb) at (-1, -1, -3);
  \coordinate (bTt) at (-1, 1,   3);
  \coordinate (bTb) at (-1, 1,   1);
  \coordinate (bBt) at (-1, 1,  -1);
  \coordinate (bBb) at (-1, 1,  -3);
  \coordinate (cTt) at (1, 1,    3);   
  \coordinate (cTb) at (1, 1,    1);   
  \coordinate (cBt) at (1, 1,   -1);   
  \coordinate (cBb) at (1, 1,   -3);   
  \coordinate (dTt) at (1, -1,   3);   
  \coordinate (dTb) at (1, -1,   1);   
  \coordinate (dBt) at (1, -1,  -1);
  \coordinate (dBb) at (1, -1,  -3);
  \coordinate (aB) at (-1, -1, -4);
  \coordinate (bB) at (-1, 1,  -4);
  \coordinate (cB) at (1, 1,   -4);
  \coordinate (dB) at (1, -1,  -4);
  \coordinate (AB) at (-2, -2, -4);
  \coordinate (BB) at (-2, 2,  -4);
  \coordinate (CB) at (2, 2,   -4);
  \coordinate (DB) at (2, -2,  -4);

  \draw[red, thick, <-]($(aTb)!0.3!(dTt)$) --  +(0,-1.5,0) node [left, sloped] {$\scriptstyle{\pi_{\gamma_2}}$}; 
  \draw[red,thick, <-] ($(bT)!0.3!(cTt)$)-- +(0,+1.5,0) node [right, sloped] {$\scriptstyle{\pi_{\beta_1}}$}; 
  \draw[red, thick, <-]($(aBb)!0.3!(dBt)$) --  +(0,-1.5,0) node [left, sloped] {$\scriptstyle{\pi_{\epsilon_2}}$}; 
  \draw[red,thick, <-] ($(bTb)!0.3!(cBt)$)-- +(0,+1.5,0) node [right, sloped] {$\scriptstyle{\pi_{\delta_1}}$}; 
  \draw[red, thick, <-] ($(aBt)!0.5!(bTb)$) .. controls +(-3,0,0) and  +(0,+1,0) .. +(-3,-3.1,0) node [left, sloped] {$\scriptstyle{\pi_{\gamma_1}}$}; 
  \draw[red, thick, <-] ($(aB)!0.5!(bBb)$) .. controls +(-3,0,0) and  +(0,+1,0) .. +(-3,-3.1,0) node [left, sloped] {$\scriptstyle{\pi_{\epsilon_1}}$};

  \filldraw[draw = black, rounded corners=1pt, thick, fill opacity = 0.3, fill = red]  (aT) -- (aB) -- (AB) -- (AT) -- (aT) node[sloped, midway, below, opacity = 1] {$\scriptstyle{m}$};
  \filldraw[draw = black, rounded corners=1pt, thick, fill opacity = 0.3, fill = red]  (bT) -- (bB) -- (BB)  -- (BT) -- (bT) node[sloped, midway, below, opacity = 1] {$\scriptstyle{n+l}$};
  \filldraw[draw = black, rounded corners=1pt, thick, fill opacity = 0.3, fill = green]  (aT) -- (aTt) -- (bTt) -- (bT) -- cycle;
  \filldraw[draw = black, rounded corners=1pt, thick, fill opacity = 0.3, fill = yellow]  (aTt) -- (aTb) -- (bTb) -- (bTt) -- cycle;
  \filldraw[draw = black, rounded corners=1pt, thick, fill opacity = 0.3, fill = green]  (aTb) -- (aBt) -- (bBt) -- (bTb) -- cycle;
  \filldraw[draw = black, rounded corners=1pt, thick, fill opacity = 0.3, fill = yellow]  (aBt) -- (aBb) -- (bBb) -- (bBt) -- cycle;
  \filldraw[draw = black, rounded corners=1pt, thick, fill opacity = 0.3, fill = green]  (aB) -- (aBb) -- (bBb) -- (bB) -- cycle;
  \filldraw[draw = black, rounded corners=1pt, thick, fill opacity = 0.3, fill = green]  (aT) -- (aTt) -- (dTt) -- (dT) -- cycle;
  \filldraw[draw = black, rounded corners=1pt, thick, fill opacity = 0.3, fill = yellow]  (aTt) -- (aTb) -- (dTb) -- (dTt) -- cycle;
  \filldraw[draw = black, rounded corners=1pt, thick, fill opacity = 0.3, fill = green]  (aTb) -- (aBt) -- (dBt) -- (dTb) -- cycle;
  \filldraw[draw = black, rounded corners=1pt, thick, fill opacity = 0.3, fill = yellow]  (aBt) -- (aBb) -- (dBb) -- (dBt) -- cycle;
  \filldraw[draw = black, rounded corners=1pt, thick, fill opacity = 0.3, fill = green]  (aB) -- (aBb) -- (dBb) -- (dB) -- cycle;
  \filldraw[draw = black, rounded corners=1pt, thick, fill opacity = 0.3, fill = blue]    (aTt) -- (bTt) -- (cTt) -- (dTt) node[sloped, midway, above, opacity = 1] {$\scriptstyle{n+k-m+j_t}$} -- cycle;
  \filldraw[draw = black, rounded corners=1pt, thick, fill opacity = 0.3, fill = blue]    (aTb) -- (bTb) -- (cTb) -- (dTb) node[sloped, midway, above, opacity = 1] {$\scriptstyle{n+k-m+j_t}$} -- cycle;
  \filldraw[draw = black, rounded corners=1pt, thick, fill opacity = 0.3, fill = blue]    (aBt) -- (bBt) -- (cBt) -- (dBt) node[sloped, midway, above, opacity = 1] {$\scriptstyle{n+k-m+j_b}$} -- cycle;
  \filldraw[draw = black, rounded corners=1pt, thick, fill opacity = 0.3, fill = blue]    (aBb) -- (bBb) -- (cBb) -- (dBb) node[sloped, midway, above, opacity = 1] {$\scriptstyle{n+k-m+j_b}$} -- cycle;
  \filldraw[draw = black, rounded corners=1pt, thick, fill opacity = 0.3, fill = green]  (cT) -- (cTt) -- (bTt) -- (bT) -- cycle;
  \filldraw[draw = black, rounded corners=1pt, thick, fill opacity = 0.3, fill = yellow]  (cTt) -- (cTb) -- (bTb) -- (bTt) -- cycle;
  \filldraw[draw = black, rounded corners=1pt, thick, fill opacity = 0.3, fill = green]  (cTb) -- (cBt) -- (bBt) -- (bTb) -- cycle;
  \filldraw[draw = black, rounded corners=1pt, thick, fill opacity = 0.3, fill = yellow]  (cBt) -- (cBb) -- (bBb) -- (bBt) -- cycle;
  \filldraw[draw = black, rounded corners=1pt, thick, fill opacity = 0.3, fill = green]  (cB) -- (cBb) -- (bBb) -- (bB) -- cycle;
  \filldraw[draw = black, rounded corners=1pt, thick, fill opacity = 0.3, fill = red]  (cT) -- (cB) -- (CB) node[sloped, midway, above, opacity = 1] {$\scriptstyle{m+l}$} -- (CT) -- cycle; 
  \filldraw[draw = black, rounded corners=1pt, thick, fill opacity = 0.3, fill = red]  (dT) -- (dB) -- (DB) node[sloped, midway, above, opacity = 1] {$\scriptstyle{n}$}-- (DT) -- cycle;
  \filldraw[draw = black, rounded corners=1pt, thick, fill opacity = 0.3, fill = green]  (cT) -- (cTt) -- (dTt) -- (dT) -- cycle;
  \filldraw[draw = black, rounded corners=1pt, thick, fill opacity = 0.3, fill = yellow]  (cTt) -- (cTb) -- (dTb) -- (dTt) -- cycle;
  \filldraw[draw = black, rounded corners=1pt, thick, fill opacity = 0.3, fill = green]  (cTb) -- (cBt) -- (dBt) -- (dTb) -- cycle;
  \filldraw[draw = black, rounded corners=1pt, thick, fill opacity = 0.3, fill = yellow]  (cBt) -- (cBb) -- (dBb) -- (dBt) -- cycle;
  \filldraw[draw = black, rounded corners=1pt, thick, fill opacity = 0.3, fill = green]  (cB) -- (cBb) -- (dBb) -- (dB) -- cycle;
 \draw[red, thick, <-]($(cTt)!0.3!(dTb)$) .. controls  +(+1,0,0) and +(0, -1, 0) .. +(1,3,0)  node [right, sloped] {$\scriptstyle{\pi_{\beta_2}}$}; 
 \draw[red, thick, <-]($(cBt)!0.3!(dBb)$) .. controls  +(+1,0,0) and +(0, -1, 0) .. +(1,3,0)  node [right, sloped] {$\scriptstyle{\pi_{\delta_2}}$};

\end{scope}}}}
\]
Thanks to Corollary~\ref{cor3.16}, we get that the previous (linear combination of) foam(s) is equal to 
\[
(-1)^{|\alpha_t|+ (l-k+j_t)(m-j_t)}\delta_{j_bj_t}\delta_{\alpha_b \alpha_t} F^{j_t}_{\alpha_t}
\]
\end{proof}

The two next lemmas, will be used in the proof of Theorem~\ref{thm:main} in order to get categorified versions of Lemmas~\ref{lem:remove0} and \ref{lem:turncycle}.
\begin{lem}\label{lem:turningface}
The following local identity holds:
\[
\kup{\NB{\tikz[xscale=0.9,yscale=0.9]{\tdplotsetmaincoords{75}{100}
\begin{scope}[tdplot_main_coords]
  \coordinate (aT) at (-1, -1, 3);
  \coordinate (bT) at (-1, 1, 3);
  \coordinate (cT) at (1, 1, 3);
  \coordinate (dT) at (1, -1, 3);

  \coordinate (amT) at (-1, -1, 1.35);
  \coordinate (bmT) at (-1, 1, 1.35);
  \coordinate (cmT) at (1, 1, 1.35); 
  \coordinate (dmT) at (1, -1, 1.35);

  \coordinate (amB) at (-1, -1, -1.35);
  \coordinate (bmB) at (-1, 1, -1.35);
  \coordinate (cmB) at (1, 1, -1.35); 
  \coordinate (dmB) at (1, -1, -1.35);

  \coordinate (at) at (-1, -1, 2);
  \coordinate (ab) at (-1, -1, -2);
  \coordinate (bt) at (-1, 1, 2);
  \coordinate (bb) at (-1, 1, -2);
  \coordinate (ct) at (1, 1, 2);
  \coordinate (cb) at (1, 1, -2);
  \coordinate (dt) at (1, -1, 2);
  \coordinate (db) at (1, -1, -2);
  \coordinate (AT) at (-2, -2, 3);
  \coordinate (BT) at (-2, 2, 3);
  \coordinate (CT) at (2, 2, 3);
  \coordinate (DT) at (2, -2, 3);
  \coordinate (aMt) at (-1, -1, 0.7);
  \coordinate (aMb) at (-1, -1, -0.7);
  \coordinate (bMt) at (-1, 1, 0.7);
  \coordinate (bMb) at (-1, 1, -0.7);
  \coordinate (cMt) at (1, 1, 0.7);
  \coordinate (cMb) at (1, 1, -0.7);
  \coordinate (dMt) at (1, -1, 0.7);
  \coordinate (dMb) at (1, -1, -0.7);
  \coordinate (AMt) at (-2, -2, 0.7);
  \coordinate (AMb) at (-2, -2, -0.7);
  \coordinate (BMt) at (-2, 2, 0.7);
  \coordinate (BMb) at (-2, 2, -0.7);
  \coordinate (CMt) at (2, 2, 0.7);
  \coordinate (CMb) at (2, 2, -0.7);
  \coordinate (DM) at (2, -2, 0);
  \coordinate (aB) at (-1, -1, -3);
  \coordinate (bB) at (-1, 1, -3);
  \coordinate (cB) at (1, 1, -3);
  \coordinate (dB) at (1, -1, -3);
  \coordinate (AB) at (-2, -2, -3);
  \coordinate (BB) at (-2, 2, -3);
  \coordinate (CB) at (2, 2, -3);
  \coordinate (DB) at (2, -2, -3);
  \coordinate (aM) at (-1, -1, 0);
  \coordinate (bM) at (-1, 1, 0);
  \coordinate (cM) at (1, 1, 0);
  \coordinate (dM) at (1, -1, 0);

  \filldraw[draw = black, rounded corners=1pt, thick, fill opacity = 0.3, fill = red]  (aT) -- (aB) -- (AB) -- (AT) -- (aT);
  \filldraw[draw = black, rounded corners=1pt, thick, fill opacity = 0.3, fill = red]  (bT) -- (bB) -- (BB)  -- (BT) -- (bT);
  \filldraw[draw = black, rounded corners=1pt, thick, fill opacity = 0.3, fill = green]    (aT) --  (at) -- (bt) -- (bT) --  (aT);
  \filldraw[draw = black, rounded corners=1pt, thick, fill opacity = 0.3, fill = yellow]   (aMt) --  (at) -- (bt) -- (bMt) --  (aMt);
  \filldraw[draw = black, rounded corners=1pt, thick, fill opacity = 0.3, fill = green]    (aMb) --  (aMt) -- (bMt) -- (bMb) --  (aMb);
  \filldraw[draw = black, rounded corners=1pt, thick, fill opacity = 0.3, fill = yellow]   (aMb) --  (ab) -- (bb) -- (bMb) --  (aMb);
  \filldraw[draw = black, rounded corners=1pt, thick, fill opacity = 0.3, fill = green]    (ab) -- (aB) --  (bB) --  (bb) -- cycle;
  \filldraw[draw = black, rounded corners=1pt, thick, fill opacity = 0.3, fill = red]  (dT) -- (dB) -- (DB) -- (DT) -- cycle;

  \filldraw[draw = black, rounded corners=1pt, thick, fill opacity = 0.3, fill = green]   (aT) --  (at) -- (dt) -- (dT) --  cycle;
  \filldraw[draw = black, rounded corners=1pt, thick, fill opacity = 0.3, fill = yellow]  (aMt) -- (at) -- (dt) -- (dMt) -- cycle;
  \filldraw[draw = black, rounded corners=1pt, thick, fill opacity = 0.3, fill = green]   (aMb) --  (aMt) -- (dMt) -- (dMb) --  cycle;
  \filldraw[draw = black, rounded corners=1pt, thick, fill opacity = 0.3, fill = yellow]  (aMb) -- (ab) -- (db) -- (dMb) -- cycle;
  \filldraw[draw = black, rounded corners=1pt, thick, fill opacity = 0.3, fill = green]   (ab) -- (aB) --  (dB) --  (db) -- cycle;

  \filldraw[draw = black, rounded corners=1pt, thick, fill opacity = 0.3, fill = green]   (cT) --  (ct) -- (dt) -- (dT) --  (cT);
  \filldraw[draw = black, rounded corners=1pt, thick, fill opacity = 0.3, fill = yellow]  (ct) -- (cMt) -- (dMt)  -- (dt) -- cycle;
  \filldraw[draw = black, rounded corners=1pt, thick, fill opacity = 0.3, fill = green]   (cMb) -- (cMt) --  (dMt)  --  (dMb) -- cycle;
  \filldraw[draw = black, rounded corners=1pt, thick, fill opacity = 0.3, fill = yellow]  (cMb) -- (cb) -- (db)  -- (dMb) -- cycle;
  \filldraw[draw = black, rounded corners=1pt, thick, fill opacity = 0.3, fill = green]   (cb) -- (cB) --  (dB)  --  (db) -- cycle;

  \filldraw[draw = black, rounded corners=1pt, thick, fill opacity = 0.3, fill = green]   (cT) --  (ct) -- (bt) -- (bT) --  (cT);
  \filldraw[draw = black, rounded corners=1pt, thick, fill opacity = 0.3, fill = yellow]  (cMt) -- (ct) -- (bt) -- (bMt) -- cycle;
  \filldraw[draw = black, rounded corners=1pt, thick, fill opacity = 0.3, fill = green]   (cMt) -- (cMb) --  (bMb) --  (bMt) -- cycle;
  \filldraw[draw = black, rounded corners=1pt, thick, fill opacity = 0.3, fill = yellow]  (cb) -- (cMb) -- (bMb) -- (bb) -- cycle;
  \filldraw[draw = black, rounded corners=1pt, thick, fill opacity = 0.3, fill = green]   (cb) -- (cB) --  (bB) --  (bb) -- cycle;

  \filldraw[draw = black, rounded corners=1pt, thick, fill opacity = 0.3, fill = blue, pattern=north west lines, pattern color=black]    (aMt) -- (bMt) -- (cMt) -- (dMt) -- cycle;
  \filldraw[draw = black, rounded corners=1pt, thick, fill opacity = 0.3, fill = blue, pattern=north west lines, pattern color=black]    (aMb) -- (bMb) -- (cMb) -- (dMb) -- cycle;
  \filldraw[draw = black, rounded corners=1pt, thick, fill opacity = 0.3, fill = blue, pattern=north west lines, pattern color=black]    (at) -- (bt) -- (ct) -- (dt) -- cycle;
  \filldraw[draw = black, rounded corners=1pt, thick, fill opacity = 0.3, fill = blue, pattern=north west lines, pattern color=black]    (ab) -- (bb) -- (cb) -- (db) -- cycle;
  \filldraw[draw = black, rounded corners=1pt, thick, fill opacity = 0.3, fill = blue]    (aMt) -- (bMt) -- (cMt) -- (dMt) -- cycle;
  \filldraw[draw = black, rounded corners=1pt, thick, fill opacity = 0.3, fill = blue]    (aMb) -- (bMb) -- (cMb) -- (dMb) -- cycle;
  \filldraw[draw = black, rounded corners=1pt, thick, fill opacity = 0.3, fill = blue]    (at) -- (bt) -- (ct) -- (dt) -- cycle;
  \filldraw[draw = black, rounded corners=1pt, thick, fill opacity = 0.3, fill = blue]    (ab) -- (bb) -- (cb) -- (db) -- cycle;

  \filldraw[draw = black, rounded corners=1pt, thick, fill opacity = 0.3, fill = red]  (cT) -- (cB) -- (CB) -- (CT) -- cycle; 
 \draw[very thick, ->] (aT) -- (bT);
  \draw[very thick, ->] (bT) -- (cT);
  \draw[very thick, ->] (cT) -- (dT);
  \draw[very thick, ->] (dT) -- (aT);
  \draw[very thick, ->] (aB) -- (bB);
  \draw[very thick, ->] (bB) -- (cB);
  \draw[very thick, ->] (cB) -- (dB);
  \draw[very thick, ->] (dB) -- (aB);
  \draw[very thick, ->, dashed] (aM) -- (bM);
  \draw[very thick, ->, dashed] (bM) -- (cM);
  \draw[very thick, ->, dashed] (cM) -- (dM);
  \draw[very thick, ->, dashed] (dM) -- (aM);

  \draw[very thick, <-, dashed] (amT) -- (bmT);
  \draw[very thick, <-, dashed] (bmT) -- (cmT);
  \draw[very thick, <-, dashed] (cmT) -- (dmT);
  \draw[very thick, <-, dashed] (dmT) -- (amT);

  \draw[very thick, <-, dashed] (amB) -- (bmB);
  \draw[very thick, <-, dashed] (bmB) -- (cmB);
  \draw[very thick, <-, dashed] (cmB) -- (dmB);
  \draw[very thick, <-, dashed] (dmB) -- (amB);

\end{scope}}}}= \kup{\NB{\tikz[xscale=0.9,yscale=0.9]{\tdplotsetmaincoords{75}{100}
\begin{scope}[tdplot_main_coords]
  \coordinate (aT) at (-1, -1, 3);
  \coordinate (bT) at (-1, 1, 3);
  \coordinate (cT) at (1, 1, 3);
  \coordinate (dT) at (1, -1, 3);
  \coordinate (AT) at (-2, -2, 3);
  \coordinate (BT) at (-2, 2, 3);
  \coordinate (CT) at (2, 2, 3);
  \coordinate (DT) at (2, -2, 3);
  \coordinate (DM) at (2, -2, 0);
  \coordinate (aB) at (-1, -1, -3);
  \coordinate (bB) at (-1, 1, -3);
  \coordinate (cB) at (1, 1, -3);
  \coordinate (dB) at (1, -1, -3);
  \coordinate (AB) at (-2, -2, -3);
  \coordinate (BB) at (-2, 2, -3);
  \coordinate (CB) at (2, 2, -3);
  \coordinate (DB) at (2, -2, -3);
 \coordinate (aM) at (-1, -1, 0);
  \coordinate (bM) at (-1, 1, 0);
  \coordinate (cM) at (1, 1, 0);
  \coordinate (dM) at (1, -1, 0);
  \filldraw[draw = black, rounded corners=1pt, thick, fill opacity = 0.3, fill = red]  (aT) -- (aB) -- (AB) -- (AT) -- (aT);
  \filldraw[draw = black, rounded corners=1pt, thick, fill opacity = 0.3, fill = red]  (bT) -- (bB) -- (BB)  -- (BT) -- (bT);
  \filldraw[draw = black, rounded corners=1pt, thick, fill opacity = 0.3, fill = green]   (aT) --  (aB) -- (bB) -- (bT) --  (aT);
  \filldraw[draw = black, rounded corners=1pt, thick, fill opacity = 0.3, fill = red]  (dT) -- (dB) -- (DB) -- (DT) -- cycle;
  \filldraw[draw = black, rounded corners=1pt, thick, fill opacity = 0.3, fill = green]   (aT) --  (aB) -- (dB) -- (dT) --  cycle;
  \filldraw[draw = black, rounded corners=1pt, thick, fill opacity = 0.3, fill = green]   (cT) -- (cB) --  (bB) --  (bT) -- cycle;
  \filldraw[draw = black, rounded corners=1pt, thick, fill opacity = 0.3, fill = green]   (cT) -- (cB) --  (dB) --  (dT) -- cycle;
   \filldraw[draw = black, rounded corners=1pt, thick, fill opacity = 0.3, fill = red]  (cT) -- (cB) -- (CB) -- (CT) -- cycle; 
 \draw[very thick, ->] (aT) -- (bT);
  \draw[very thick, ->] (bT) -- (cT);
  \draw[very thick, ->] (cT) -- (dT);
  \draw[very thick, ->] (dT) -- (aT);
  \draw[very thick, ->] (aB) -- (bB);
  \draw[very thick, ->] (bB) -- (cB);
  \draw[very thick, ->] (cB) -- (dB);
  \draw[very thick, ->] (dB) -- (aB);
\end{scope}}}}.
\]
where the hashed facets are meant to have labels $N$, and the equality holds not only for squares but for all face-like cycles (see Definition~\ref{dfn:facelike}).
\end{lem}

\begin{proof}
  Let us denote by $F$ the foam on the right-hand side and by $F'$ the foam on the left-hand side. 
  There is a one-one correspondence between the colorings of $F$ and the colorings of $F'$. Let us fix a coloring $c$ of $F$ and let us denote by $c'$ the corresponding coloring of $F'$. We will prove that $\kupc{F',c'} = \kupc{F,c}$.
We clearly have:    
\begin{align*}
  \chi(F_i(c)) = \chi(F'_i(c')) - 4 &&& \textrm{for all $i$ in $\Col$,}\\
  \chi(F_{ij}(c)) = \chi(F'_{ij}(c'))  &&& \textrm{for all $i$ and $j$ in $\Col$,}\\
\end{align*}
The only thing to realize is that the following holds:
\[ \theta^+_{ij}(F,c) \equiv \theta^+_{ij}(F',c')  \quad \textrm{for all $i$ and $j$ in $\Col$.} \]
Indeed, in comparison with the foam $(F,c)$, some new positive circles might be created. This happens only by pair, since the foam $(F',c')$ is the concatenation of two identical foams.  
On the other hand, the other circles of $(F',c')$ are in one-one correspondence with the ones of $(F, c)$.
\end{proof}
\begin{lem}\label{lem:remove0foam}
  Let $\Gamma$ be a MOY-graph and $\Gamma'$ be the same MOY-graph where all the $0$-edges has been removed. There exists a  $\Gamma$-foam-$\Gamma'$ foam $F$ such that if we remove the $0$-faces of $F$ we obtain $\Gamma'\times [0,1]$.
\end{lem}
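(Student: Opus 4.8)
The plan is to build $F$ by hand, starting from the cylinder $\Gamma'\times[0,1]$ and grafting back the $0$-edges of $\Gamma$ near its bottom boundary.

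First I would analyze the local shape of a $0$-edge. By the flow condition of Definition~\ref{dfn:MOYgraph}, any $0$-edge of $\Gamma$ which is not an isolated $0$-labeled circle joins two trivalent vertices, and at each such vertex the two edges other than the $0$-edge must carry one common label $a$ (since $a+0=a$). Performing the corresponding $0$-edge removal (Lemma~\ref{lem:remove0}) erases the $0$-edge together with its two endpoints and fuses those two $a$-edges into a single $a$-edge of $\Gamma'$. Thus $\Gamma$ is obtained from $\Gamma'$ by re-inserting finitely many local ``$(a,0,a)$-bigons'' along single $a$-edges of $\Gamma'$, together with finitely many isolated $0$-circles.

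Next I would perform the construction. Take $\Gamma'\times[0,1]$ and keep its top copy equal to $\Gamma'$. For each $(a,0,a)$-bigon of $\Gamma$, let $\bar e$ be the $a$-edge of $\Gamma'$ on which it sits; inside a small rectangle near the bottom of the facet $\bar e\times[0,1]$ I would replace the plain $a$-facet by the standard ``unzip'' foam, i.e.\ a $0$-labeled facet attached along two bindings of type $(a,0,a)$ whose bottom boundary reconstitutes the bigon and which is capped off a short distance above the bottom. For each isolated $0$-circle of $\Gamma$ I would simply attach a disjoint $0$-labeled disk capping it at the bottom. To see that this is a legitimate foam in the sense of Definition~\ref{dfn:foam}, one orients each new $0$-facet so that along each $(a,0,a)$-binding the orientation agrees with the facets playing the roles of $a$ and of $b=0$ and disagrees with the one playing the role of $a+b=a$, and one chooses a cyclic ordering of the three facets around each binding; this is unobstructed because a $0$-facet imposes no constraint of its own and the new bindings carry no singular points.

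Finally I would check the two required properties. By construction the bottom boundary of $F$ is $\Gamma$ and its top boundary is $\Gamma'$, so $F$ is a $\Gamma$-foam-$\Gamma'$. When the $0$-faces are removed, each inserted $0$-facet disappears together with its two bindings and the ambient $a$-facet closes up into the unbroken rectangle we started from, while the capping disks of the $0$-circles vanish entirely; what survives is exactly $\Gamma'\times[0,1]$. The only genuinely delicate point, and the part I would write out with care, is verifying the orientation and cyclic-ordering axioms along the grafted bindings and confirming that these purely local modifications patch together into a single globally well-defined foam; everything else is formal.
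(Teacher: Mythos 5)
There is a genuine gap in your structural analysis of where the $0$-edges sit. At a trivalent MOY vertex with labels $a,b,a{+}b$, a $0$-labeled edge forces the vertex to be of type $(0,a,a)$ or of type $(0,0,0)$; in particular a vertex \emph{all} of whose adjacent edges are $0$-labeled is allowed, so the $0$-labeled subgraph of $\Gamma$ can be an arbitrary trivalent graph attached to the nonzero part at $(0,a,a)$ vertices --- not merely a disjoint union of arcs and circles. Moreover, even for a single $0$-edge with two $(0,a,a)$ endpoints, its two endpoints need not lie on the same edge of $\Gamma'$, and the labels at the two endpoints need not agree (take two disjoint circles labeled $a\neq b$ joined by a $0$-edge). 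So your reduction of $\Gamma$ to ``$\Gamma'$ with $(a,0,a)$-bigons inserted along single $a$-edges, plus isolated $0$-circles'' is false in general, and the explicit ``unzip'' foam you build only covers the bigon and circle cases. The $(0,0,0)$ vertices are not addressed at all, and for those one must also \emph{choose} which facet plays the role of the $(a{+}b)$-facet along a $(0,0,0)$-binding, since all three labels coincide; this choice is part of the foam data via the orientation/cyclic-ordering conditions of Definition~\ref{dfn:foam}.

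The paper sidesteps the global classification entirely by building $F$ as a movie out of purely local modifications: one move performed for each $0$-labeled edge, one for each vertex of type $(0,a,a)$ with $a>0$, and one for each vertex of type $(0,0,0)$. Since (as you can check) a vertex cannot have exactly two $0$-labeled adjacent edges, these local moves exhaust all configurations and patch together regardless of how the $0$-subgraph sits inside $\Gamma$. Your construction can be repaired along the same lines --- treat each $0$-edge and each vertex independently rather than trying to pair up the endpoints of a $0$-edge into a bigon --- but as written the argument does not cover the general case.
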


\begin{proof}
  We will present the foam $F$ by a movie. We start with $\Gamma$ and for each $0$-labeled edge, we perform:
\[
  \begin{tikzpicture}[scale=0.5]
    \begin{scope}
\draw[draw=  gray, line width = 2mm] (-0.03, 2) --  (15.03, 2); 
\draw[draw=  gray, line width = 2mm] (-0.03,-2) --  (15.03,-2); 
\draw[draw=  gray, very thick] (0, -2) -- +(0,4);
\draw[draw=  gray, very thick] (5, -2) -- +(0,4);
\draw[draw=  gray, very thick] (10,-2) -- +(0,4);
\draw[draw=  gray, very thick] (15,-2) -- +(0,4);
\draw[draw= white, dotted, line width =1.2mm] (0,2) --  (15,2); 
\draw[draw= white, dotted, line width = 1.2mm] (0,-2) --  (15,-2); 
\begin{scope}[xshift=2.5cm]  
  \draw [->] (-2,0) -- (0,0);
  \draw  (0,0) -- (2,0);
\end{scope}
\begin{scope}[xshift=7.5cm]
  \draw [->] (-2,0) -- (-1,0);
  \draw [->] (1,0) -- (2,0);
  \draw [->] (-1, 0) .. controls (0, -1) .. ( 1,0);
  \draw [->] ( 1, 0) .. controls (0,  1) .. (-1,0);
\end{scope}
\begin{scope}[xshift=12.5cm]
  \draw [->] (-2,0) -- (-1,0);
  \draw [->] (1,0) -- (2,0);
  \draw [->] (-1, 0) .. controls (-0, -1.5) and (-0,  1.5) .. (-1,0);
  \draw [->] ( 1, 0) .. controls ( 0,  1.5) and ( 0, -1.5) .. ( 1,0);  
\end{scope}
\end{scope}
  \end{tikzpicture}
\]

For each vertex of $\Gamma$ which has a $0$-labeled adjacent edge and whose other adjacent edges are not $0$-labeled we perform\footnote{The orientations of the $0$-label edges adjacent to the non-$0$-labeled edges and of the non-$0$-labeled edges might be in the other direction, the movie should then be adapted by changing the orientation of these edges.

The careful reader, might find strange the orientations involved in these movies. This is because, on a binding of type $(0,0,0+0)$, all the facets have the biggest label. Hence one has to make a choice. However, this choice is part of the data because of the orientation conditions we impose in Definition~\ref{dfn:foam}.
}:
\[
  \begin{tikzpicture}[scale=0.5]
    \begin{scope}
  \draw[draw=  gray, line width = 2mm] (-0.03, 2) --  (15.03, 2); 
  \draw[draw=  gray, line width = 2mm] (-0.03,-2) --  (15.03,-2); 
  \draw[draw=  gray, very thick] (0, -2) -- +(0,4);
  \draw[draw=  gray, very thick] (5, -2) -- +(0,4);
  \draw[draw=  gray, very thick] (10,-2) -- +(0,4);
  \draw[draw=  gray, very thick] (15,-2) -- +(0,4);
  \draw[draw= white, dotted, line width =1.2mm] (0,2) --  (15,2); 
  \draw[draw= white, dotted, line width = 1.2mm] (0,-2) --  (15,-2); 
\begin{scope}[xshift=1.5cm]  
  \draw [thick, red!50!black, ->] (0,-1.5) -- (0,0) -- (0,1.5);
  \draw [->] (0,0) -- (1,0); 
  \draw [<-] ( 1, 0) .. controls ( 2,  1.5) and ( 2, -1.5) .. ( 1,0);  
\end{scope}
\begin{scope}[xshift=7.5cm]
  \draw [thick, red!50!black, ->] (0,-1.5) -- (0,0) -- (0,1.5);
  \draw [->] (0,-0.5) .. controls ( 1,-0.5) and ( 1, 0.5)  .. (0,0.5); 
\end{scope}
\begin{scope}[xshift=12.5cm]
  \draw [thick, red!50!black, ->] (0,-1.5) -- (0,0) -- (0,1.5);
\end{scope}
\end{scope}
  \end{tikzpicture}
\]
For each vertex of $\Gamma$ which has only $0$-labeled adjacent edges, we perform:
\[
  \begin{tikzpicture}[scale=0.5]
    \begin{scope}
  \draw[draw=  gray, line width = 2mm] (-0.03, 2) --  (25.03, 2); 
  \draw[draw=  gray, line width = 2mm] (-0.03,-2) --  (25.03,-2); 
  \draw[draw=  gray, very thick] (0, -2) -- +(0,4);
  \draw[draw=  gray, very thick] (5, -2) -- +(0,4);
  \draw[draw=  gray, very thick] (10,-2) -- +(0,4);
  \draw[draw=  gray, very thick] (15,-2) -- +(0,4);
  \draw[draw=  gray, very thick] (20,-2) -- +(0,4);
  \draw[draw=  gray, very thick] (25,-2) -- +(0,4);
  \draw[draw= white, dotted, line width =1.2mm] (0,2) --  (25,2); 
  \draw[draw= white, dotted, line width = 1.2mm] (0,-2) --  (25,-2); 
\begin{scope}[xshift=2cm]  
  \draw [->] (0,0) -- (1,0); 
  \draw [<-] ( 1, 0) .. controls ( 2,  1.5) and ( 2, -1.5) .. ( 1,0);  
  \begin{scope}[rotate=-120]
  \draw [<-] (0,0) -- (1,0); 
  \draw [<-] ( 1, 0) .. controls ( 2,  1.5) and ( 2, -1.5) .. ( 1,0);  
  \end{scope}
  \begin{scope}[rotate=120]
  \draw [->] (0,0) -- (1,0); 
  \draw [<-] ( 1, 0) .. controls ( 2,  1.5) and ( 2, -1.5) .. ( 1,0);  
  \end{scope}
\end{scope}
\begin{scope}[xshift=8cm]  
  \draw [<-] (120:0.5) .. controls ( 1,  1.5) and ( 1, -1.5) .. (-120:0.5);  
  \draw [<-, rounded corners] (120:1) -- (0,0) --  (-120:1);  
  \begin{scope}[rotate=-120]
  \draw [<-] ( 1, 0) .. controls ( 2,  1.5) and ( 2, -1.5) .. ( 1,0);  
  \end{scope}
  \begin{scope}[rotate=120]
  \draw [<-] ( 1, 0) .. controls ( 2,  1.5) and ( 2, -1.5) .. ( 1,0);  
  \end{scope}
\end{scope}
\begin{scope}[xshift=12.5cm]  
  \draw [<-, rounded corners] (90:0.8) -- (0,0) --  (-90:0.8);  
  \begin{scope}[rotate=-90]
  \draw [<-] ( 0.8, 0) .. controls ( 1.8,  1.5) and ( 1.8, -1.5) .. ( 0.8,0);  
  \end{scope}
  \begin{scope}[rotate=90]
  \draw [<-] ( 0.8, 0) .. controls ( 1.8,  1.5) and ( 1.8, -1.5) .. ( 0.8,0);  
  \end{scope}
\end{scope}
\begin{scope}[xshift=17.5cm]  
  \draw[->] (0.5,0) arc (0:360:0.5 and 1.5);
  \end{scope}
\end{scope}
  \end{tikzpicture}
\]
\end{proof}

\subsection{Universal construction and the main theorem}
\label{sec:univ-constr-main}

We explain how the now classical \emph{universal construction} à la \cite{MR1362791} allows us to derive a functor from the evaluation of closed foams we defined in Section~\ref{sec:eval-clos-mathfr}. We first need to define the categories involved in this functor we will define.

\begin{dfn}\label{dfn:foamwithboundary}
Let $\Gamma_0$ and $\Gamma_1$ be two $\sll_N$-MOY-graphs. An \emph{$\sll_N$-foam with boundary $(\Gamma_0, \Gamma_1)$ } or \emph{$\Gamma_1$-foam-$\Gamma_0$} is the intersection of a decorated foam $F$ embedded in $\RR^3$ with $\RR^2 \times [0,1]$ such that: 
\begin{itemize}
\item There exist an $\epsilon_0>0$  such that $F \cap \RR^2 \times [0,\epsilon_0] = (-\Gamma_0) \times [0, \epsilon]$, 
\item There exist an $\epsilon_1<1$  such that $F \cap \RR^2 \times [\epsilon_1, 1] = \Gamma_1 \times [\epsilon_1, 1]$.
\end{itemize}
By $-\Gamma$ we mean the graph $\Gamma$ with opposite orientation. 
The foam with boundary is considered up to ambient isotopy 
fixing the boundary. If $F_b$ is a $\Gamma_1$-foam-$\Gamma_0$ and $F_t$ is a $\Gamma_2$-foam-$\Gamma_1$, then $F_t\circ F_b$ is the  $\Gamma_2$-foam-$\Gamma_0$ obtained by stacking $F_t$ on the top of $F_b$ and re-scaling to fit in $\RR^2\times [0,1]$. We say that $F_t\circ F_b$ is \emph{the composition of $F_b$ and $F_t$}.  A $\Gamma_1$-foam-$\Gamma_0$ $F$ has a degree given in Definition~\ref{dfn:decoratedfoam}.
\end{dfn}

\begin{rmk}
  The degree is additive with respect to the composition of foam. This is the same degree as in \cite{queffelec2014mathfrak}, but since we are not in a $2$-categorical setting, the contributions of $\Gamma_0$ and $\Gamma_1$ to the degree are equal to $0$. 
\end{rmk}

\begin{dfn}\label{dfn:Foam}
The category $\FoamN$ has for objects $\sll_N$-MOY-graphs and for (graded) hom-sets\footnote{In this definition and the following one, we write $\mathrm{HOM}$ rather than $\mathrm{hom}$ to emphasize that we consider all morphism, not only the one with degree 0.}:
\[
\mathrm{HOM}_{\FoamN}(\Gamma_0, \Gamma_1)= \{ \textrm{$\Gamma_1$-foam-$\Gamma_0$}. \}
\]
The composition of morphisms is given by the composition of foams. It has a monoidal structure given by the disjoint union of $\sll_N$-MOY-graphs.  
\end{dfn}

\begin{dfn}\label{dfn:PolN}
The category $\widetilde{\PolN}$ has for objects projective graded $\ZZ[X_1, X_2, \dots, X_N]$-modules. And for hom-sets
\[
\mathrm{HOM}_{\PolN}(M, N)=  \{ \textrm{$\ZZ[X_1, X_2, \dots, X_N]$-linear map from $M$ to $N$}\}
\]
The category $\PolN$ is a full subcategory of  $\widetilde{\PolN}$ with object finitely generated projective graded $\ZZ[X_1, X_2, \dots, X_N]$-modules.
If $M$ is an object of $\widetilde{\PolN}$ and $n$ an integer we denote by $M\{q^n\}$ the module $M$ shifted by $n$. We have: $M\{q^n\}_k = M_{k-n}$ for all $k$ in $\ZZ$.
If $P= \sum_{i\in \ZZ} n_i q^i$ is a Laurent polynomial in $q$, $M\{P\}\eqdef \bigoplus_{i\in \ZZ} n_i M\{q^i\}$.
\end{dfn}

\begin{rmk}
  Thanks to Quillen--Suslin's theorem \cite{MR0427303, MR0344238}, finitely generated projective $\ZZ[X_1, X_2, \dots, X_N]$-modules are free.
\end{rmk}

We first define a naive functor $\widetilde{\F}\colon \FoamN \to \widetilde{\PolN}$ and then use the evaluation of foam to construct  ${\F}: \FoamN \to {\PolN}$.
The functor $\widetilde{\F}$ send an $\sll_N$-MOY-graph $\Gamma$ on the free module spanned by all $\emptyset$-foams-$\Gamma$ and a $\Gamma_1$-foam-$\Gamma_2$ on the corresponding linear map. Formally this reads
\[
  \begin{array}{crcl}
    \widetilde{\F}\colon & \FoamN & \longrightarrow     & \widetilde{\PolN} \\
                         & \Gamma & \longmapsto & \bigoplus_{G \in \mathrm{HOM}_{\FoamN}(\emptyset, \Gamma)} \ZZ[X_1, X_2, \dots, X_N]\{d_N(G)\} \\
                         & F\colon \Gamma_0 \to \Gamma_1      &\longmapsto  & \left(
                         \begin{array}{crcl}
                           \widetilde{\F}(F)\colon & \widetilde{\F}(\Gamma_0)  & \to & \widetilde{\F}(\Gamma_1) \\
                           & \mathrm{HOM}_{\FoamN}(\emptyset, \Gamma_0) \ni G &\mapsto &F\circ G  \in \mathrm{HOM}_{\FoamN}(\emptyset, \Gamma_1)
                         \end{array} \right).
  \end{array}
\]

If $\Gamma$ is an $\sll_N$-MOY-graph, for every element $G$ in $\mathrm{HOM}_{\FoamN}(\Gamma, \emptyset)$, we   define the $\ZZ[X_1, X_2, \dots, X_N]$-linear map:
\[
  \begin{array}{crcl}
\phi_G\colon&  \widetilde{\F} (\Gamma) &\to& \ZZ[X_1, X_2, \dots, X_N] \\
& {G \in \mathrm{HOM}_{\FoamN}(\emptyset, \Gamma)} \ni F & \mapsto & \kup{G\circ F}
  \end{array}
\]

We define \[\F(\Gamma) \eqdef \widetilde{\F}(\Gamma) / \bigcap_{G \in \mathrm{HOM}_{\FoamN}(\Gamma, \emptyset)} \ker \phi_G. \]

\begin{rmk}\label{rmk:evaluationwelldefined}
  For every $G$ in  $\mathrm{HOM}_{\FoamN}(\Gamma, \emptyset)$, the map $\phi_G$ induces a map from $\F(\Gamma)$ to $\ZZ[X_1, \dots, X_N]$ still denoted by $\phi_G$.
\end{rmk}
\begin{lem}
  Let $F$ be a $\Gamma_1$-foam-$\Gamma_0$ and $x$ in $\widetilde{\F}(\Gamma_0)$. If $x$ is in  $\bigcap_{G_0 \in \mathrm{HOM}_{\FoamN}(\Gamma_0, \emptyset)} \ker \phi_{G_0}$, then $\widetilde{\F}(F)(x)$ is in $\bigcap_{G_1 \in \mathrm{HOM}_{\FoamN}(\Gamma_1, \emptyset)} \ker \phi_{G_1}$.
\end{lem}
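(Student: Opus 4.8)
The plan is to reduce the statement to associativity of foam composition in $\FoamN$ together with the defining formulas for $\widetilde{\F}$ and for the maps $\phi_{\bullet}$. First I would expand $x$ as a finite linear combination $x = \sum_i \lambda_i H_i$ of generators $H_i \in \mathrm{HOM}_{\FoamN}(\emptyset, \Gamma_0)$; by the definition of $\widetilde{\F}$ on morphisms this gives $\widetilde{\F}(F)(x) = \sum_i \lambda_i (F \circ H_i)$, each summand being a $\Gamma_1$-foam-$\emptyset$, i.e.\ an element of $\mathrm{HOM}_{\FoamN}(\emptyset, \Gamma_1)$.

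Next I would fix an arbitrary $G_1 \in \mathrm{HOM}_{\FoamN}(\Gamma_1, \emptyset)$ and unwind the definition of $\phi_{G_1}$:
\[
\phi_{G_1}\left(\widetilde{\F}(F)(x)\right) = \sum_i \lambda_i \kup{G_1 \circ (F \circ H_i)}.
\]
The key move is to apply associativity of the composition of foams, $G_1 \circ (F \circ H_i) = (G_1 \circ F) \circ H_i$, and to record the boundaries: since $F$ is a $\Gamma_1$-foam-$\Gamma_0$ and $G_1$ a $\emptyset$-foam-$\Gamma_1$, the composite $G_0 := G_1 \circ F$ is a $\emptyset$-foam-$\Gamma_0$, hence lies in $\mathrm{HOM}_{\FoamN}(\Gamma_0, \emptyset)$. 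Substituting and using the definition of $\phi_{G_0}$ yields
\[
\phi_{G_1}\left(\widetilde{\F}(F)(x)\right) = \sum_i \lambda_i \kup{G_0 \circ H_i} = \phi_{G_0}(x).
\]

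Finally I would invoke the hypothesis: $x$ lies in $\bigcap_{G \in \mathrm{HOM}_{\FoamN}(\Gamma_0, \emptyset)} \ker \phi_{G}$, so in particular $\phi_{G_0}(x) = 0$; as $G_1$ was arbitrary, this shows that $\widetilde{\F}(F)(x)$ belongs to $\bigcap_{G_1 \in \mathrm{HOM}_{\FoamN}(\Gamma_1, \emptyset)} \ker \phi_{G_1}$, which is exactly the claim. There is no genuine obstacle here: the whole content is formal, resting only on the fact that $\FoamN$ is a category (so composition is associative and the boundary-matching is automatic) and on the $\ZZ[X_1,\dots,X_N]$-linearity of the evaluation in each argument. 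The single point deserving care is the bookkeeping of sources and targets, namely checking that $G_1 \circ F$ really is a legitimate element of $\mathrm{HOM}_{\FoamN}(\Gamma_0, \emptyset)$, so that the hypothesis on $x$ can be applied to the map $\phi_{G_1 \circ F}$.
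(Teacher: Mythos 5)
Your proof is correct and is exactly the paper's argument: the paper's entire proof is the one-line observation that $G_1 \circ F \in \mathrm{HOM}_{\FoamN}(\Gamma_0,\emptyset)$ whenever $G_1 \in \mathrm{HOM}_{\FoamN}(\Gamma_1,\emptyset)$, which you have simply unwound in full detail via associativity of composition and linearity of $\phi_\bullet$.
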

\begin{proof}
  This is straightforward since if $G_1$ is in $\mathrm{HOM}_{\FoamN}(\Gamma_1, \emptyset)$, then $G_1\circ F$ is in $\mathrm{HOM}_{\FoamN}(\Gamma_0, \emptyset)$.
\end{proof}

The previous lemma implies that if $F$ is a $\Gamma_1$-foam-$\Gamma_0$, then the $\ZZ[X_1, X_2, \dots, X_N]$-linear map
\[
\widetilde{\F}(\Gamma_0) \stackrel{\widetilde{\F}(F)}{\longrightarrow} \widetilde{\F}(\Gamma_1) \stackrel{\oldpi}{\longrightarrow} \F(\Gamma_1).
\]
induces a map $\F(\Gamma_0) \to \F(\Gamma_1)$, which we denote by $\F(F)$.

\begin{thm}\label{thm:main}
  The previous construction yields a functor $\mathcal{F}:\FoamN \to \PolN$ such that for any MOY-graph $\Gamma$, the graded rank of $\mathcal{F}(\Gamma)$ is equal to $\kup{\Gamma}$.
\end{thm}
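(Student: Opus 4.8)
The plan is to run the universal construction inductively, feeding the categorified local relations of this section into the reduction algorithm that computes $\kup{\cdot}$ (Lemma~\ref{lem:uniquenessMOY}). Functoriality is the easy half: the lemma preceding the theorem already shows that $\widetilde{\F}(F)$ descends to a well-defined map $\F(\Gamma_0)\to\F(\Gamma_1)$, and since $\widetilde{\F}$ is visibly a functor and each $\F(F)$ is induced from $\widetilde{\F}(F)$ by the canonical projection $\widetilde{\F}(\Gamma_1)\to\F(\Gamma_1)$, compatibility with composition and identities is immediate. The real content is the computation of the graded ranks, which will simultaneously show that each $\F(\Gamma)$ is finitely generated free, hence an object of $\PolN$ by Quillen--Suslin.

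The crucial mechanism is that each categorified foam relation, being a local identity of closed foam evaluations, becomes an identity of linear maps between the modules $\F(\Gamma)$ once composed with the defining pairings $\phi_G$. For the moves that do not change the underlying evaluation this gives isomorphisms directly: Corollary~\ref{cor:matveev-pierigliani} yields $\F(\Gamma)\cong\F(\Gamma')$ for the associativity move, and Lemmas~\ref{lem:turningface} and~\ref{lem:remove0foam} yield the isomorphisms realizing the cycle move and the $0$-edge removal. In each case the two foams involved provide mutually inverse morphisms, so no rank is lost.

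The substantive relation is the square relation~(\ref{eq:relsquare3}). Writing $e^j_\alpha\eqdef(-1)^{|\alpha|+(l-k+j)(m-j)}F^j_\alpha$, Proposition~\ref{prop:complicatedfoam} reads $\F(\id)=\sum_{j,\alpha}\F(e^j_\alpha)$ on the module of the left-hand web $\Gamma_{\mathrm{lhs}}$, while Corollary~\ref{cor:idempotents} shows the $\F(e^j_\alpha)$ are pairwise orthogonal idempotents. Together these give an honest internal direct sum decomposition $\F(\Gamma_{\mathrm{lhs}})=\bigoplus_{j,\alpha}\im\F(e^j_\alpha)$. Each $e^j_\alpha$ factors through the right-hand web $\Gamma^j$, and Corollary~\ref{cor3.16}, via the duality $\alpha_b=\widehat{\alpha_t}$, identifies, for fixed $j$, the blocks indexed by $\alpha\in T(k-j,l-k+j)$ with copies of $\F(\Gamma^j)$; weighting these by their degree reproduces the Gaussian binomial $\begin{bmatrix} l \\ k-j\end{bmatrix}$, so that $\F(\Gamma_{\mathrm{lhs}})\cong\bigoplus_j\F(\Gamma^j)\left\{\begin{bmatrix} l \\ k-j\end{bmatrix}\right\}$, exactly categorifying~(\ref{eq:relsquare3}).

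Finally I would induct along the reduction procedure. The base case $\F(\emptyset)\cong\ZZ[X_1,\dots,X_N]$ is free of graded rank $1=\kup{\emptyset}$. By Lemma~\ref{lem:uniquenessMOY} the relations~(\ref{eq:relass}) and~(\ref{eq:relsquare3}), the cycle move and the $0$-edge removal reduce any $\Gamma$ to $\emptyset$ in finitely many steps while computing $\kup{\Gamma}$; following the same steps with the module (iso)morphisms above shows that $\F(\Gamma)$ is a finite direct sum of graded shifts of $\ZZ[X_1,\dots,X_N]$, hence finitely generated free, and that its graded rank obeys the same recursion as $\kup{\Gamma}$, so that it equals $\kup{\Gamma}$. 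I expect the square-relation step to be the main obstacle: one must check not merely the numerical relation~(\ref{eq:relsquare3}) but that the $\F(e^j_\alpha)$ are genuine orthogonal idempotents on $\F(\Gamma_{\mathrm{lhs}})$ summing to the identity — so the decomposition is a true direct sum rather than a coincidence of ranks — and that each block is free of the predicted Gaussian-binomial rank. This is precisely where the evaluations of Lemmas~\ref{lem:squareratioevaluation1} and~\ref{lem:squareratio2}, Corollary~\ref{cor3.16}, and ultimately the Schur-polynomial identities of Appendix~\ref{sec:an-identiy-schur} are indispensable.
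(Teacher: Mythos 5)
Your proposal is correct and follows essentially the same route as the paper: the base case $\F(\emptyset)\simeq\ZZ[X_1,\dots,X_N]$, isomorphisms realizing the associativity move, the cycle move and the $0$-edge removal, the categorified square relation obtained from Proposition~\ref{prop:complicatedfoam} together with the orthogonality statements (the paper phrases this as explicit injection/projection pairs rather than as images of orthogonal idempotents, but these are the same decomposition), and finally the reduction to $\emptyset$ via Lemma~\ref{lem:uniquenessMOY}. Nothing essential is missing.
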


\begin{rmk}\label{rmk:orthbase}
  Since for every $\sll_N$-MOY-graph $\Gamma$, $\F(\Gamma)$ is a finitely generated free $\ZZ[X_1, X_2, \dots, X_N]$-module, for every base $(b_i)_{i\in I}$ of $\F(\Gamma)$, there exists a dual base $(b_i^\star)_{i\in I}$ in $\mathrm{HOM}_{\PolN}(\F(\Gamma), \ZZ[X_1, \dots, X_n])$ i.~e.~ we have $b_j^{\star}(b_i) = \delta_{ij}$. Furthermore, these elements $(b_i^\star)_{i\in I}$ can be thought of as (linear combinations of) $\emptyset$-foams-$\Gamma$. 

The proof of the theorem shows how to construct inductively a base of $\F(\Gamma)$ using MOY calculus.

Note that, as in a classical TQFT, $\F(\Gamma\times \SS^1)=\kup{\Gamma\times \SS^1}$ is equal to the rank of module $\F(\Gamma)$. A coloring of $\Gamma\times \SS^1$ is essentially a coloring of $\Gamma$. Since all monochrome and bichrome surfaces are torus, the value $\kup{\Gamma\times \SS^1}$ is precisely the number of states used in the state-sum which defines the evaluation of $\sll_N$-MOY-graphs \cite{MR1659228}. 
\end{rmk}

\begin{proof}[Proof of Theorem~\ref{thm:main}.]
  \begin{claim}
    The space $\F(\emptyset)$ is isomorphic to $\simeq \ZZ[X_1, X_2, \dots, X_N]$.
  \end{claim}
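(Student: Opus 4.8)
The plan is to unwind the universal construction for $\Gamma=\emptyset$ and show that the class of the empty foam generates $\F(\emptyset)$ freely. First I would record the two structural facts that drive everything. The morphisms $\emptyset\to\emptyset$ in $\FoamN$ are exactly the closed decorated foams, and the composition of two such morphisms is their disjoint union (stacking along the empty boundary). Moreover the evaluation is multiplicative under disjoint unions: a coloring of $G\sqcup F$ is the same datum as a pair $(c_G,c_F)$ of colorings, and each ingredient entering $\kupc{G\sqcup F,(c_G,c_F)}$ factors, since $s(G\sqcup F)=s(G)+s(F)$, $P(G\sqcup F)=P(G)P(F)$ and $Q(G\sqcup F)=Q(G)Q(F)$ (the monochrome and bichrome surfaces of a disjoint union are the disjoint unions of the respective surfaces, and Euler characteristics and circle counts are additive). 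Summing over colorings gives $\kup{G\sqcup F}=\kup{G}\kup{F}$, and in particular $\kup{\emptyset}=1$.

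Next I would exploit these facts at the level of the maps $\phi_G$. Writing $[F]$ for the basis element of $\widetilde{\F}(\emptyset)$ attached to a closed foam $F$, for any $G\in\mathrm{HOM}_{\FoamN}(\emptyset,\emptyset)$ one has $\phi_G([F])=\kup{G\circ F}=\kup{G}\kup{F}=\kup{G}\,\phi_\emptyset([F])$; by $\ZZ[X_1,\dots,X_N]$-linearity this yields $\phi_G=\kup{G}\cdot\phi_\emptyset$ on all of $\widetilde{\F}(\emptyset)$. Two consequences follow at once. On the one hand $\ker\phi_\emptyset\subseteq\ker\phi_G$ for every $G$, so $\bigcap_G\ker\phi_G=\ker\phi_\emptyset$ (the reverse inclusion is trivial, as $\emptyset$ is one of the $G$). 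On the other hand, for every closed foam $F$ the element $[F]-\kup{F}\,[\emptyset]$ lies in $\ker\phi_G$ for all $G$, because $\phi_G\bigl([F]-\kup{F}[\emptyset]\bigr)=\kup{G}\kup{F}-\kup{F}\kup{G}\kup{\emptyset}=0$. Hence in the quotient $\F(\emptyset)=\widetilde{\F}(\emptyset)/\bigcap_G\ker\phi_G$ we have $[F]=\kup{F}\,[\emptyset]$, so $\F(\emptyset)$ is generated as a $\ZZ[X_1,\dots,X_N]$-module by the single class of $[\emptyset]$.

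Finally I would produce the isomorphism explicitly. The assignment $\psi\co p\mapsto p\,[\emptyset]$ defines a $\ZZ[X_1,\dots,X_N]$-linear map $\ZZ[X_1,\dots,X_N]\to\F(\emptyset)$, surjective by the generation statement just obtained. By Remark~\ref{rmk:evaluationwelldefined} the map $\phi_\emptyset$ descends to $\overline{\phi_\emptyset}\co\F(\emptyset)\to\ZZ[X_1,\dots,X_N]$, and $\overline{\phi_\emptyset}\circ\psi(p)=p\,\kup{\emptyset}=p$, so $\overline{\phi_\emptyset}$ is a left inverse of $\psi$. A surjection admitting a left inverse is an isomorphism, whence $\F(\emptyset)\simeq\ZZ[X_1,\dots,X_N]$; tracking degrees (with $d_N(\emptyset)=0$ and $\kup{\emptyset}=1$ homogeneous of degree $0$) shows the isomorphism is graded.

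I do not expect a serious obstacle here: the argument is essentially formal once the disjoint-union multiplicativity of $\kup{\cdot}$ and the normalization $\kup{\emptyset}=1$ are in hand. The only point deserving a little care is the bookkeeping behind multiplicativity, namely that the Euler characteristics of the monochrome and bichrome surfaces, the circle counts $\theta^+_{ij}$, and the decoration product all split over a disjoint union; but this is immediate from the local, additive nature of all the quantities entering the definition of $\kupc{F,c}$.
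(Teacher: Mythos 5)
Your argument is correct and follows essentially the same route as the paper: multiplicativity of $\kup{\cdot}$ under disjoint union gives $\phi_G = \kup{G}\,\phi_\emptyset$, hence $[F]-\kup{F}[\emptyset]$ dies in the quotient and $[\emptyset]$ generates, while $\phi_\emptyset$ detects that it generates freely. Your explicit left-inverse step is a slightly more careful justification of freeness than the paper's terse remark that $\emptyset\neq 0$ in $\F(\emptyset)$, but the substance is identical.
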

The space $\widetilde{\F}(\emptyset)$ is spanned by all closed foams. Let $F$ be a closed foam and $G$ be an element of $\mathrm{HOM}_{\FoamN}(\emptyset,\emptyset)$ ($G$ is as well a closed foam), then $\kup{G\circ F} = \kup{G} \kup{F} = \kup{F}\kup{G\circ \emptyset}$. This means that $F - \kup{F}\emptyset$ is in $\bigcap_{G \in \mathrm{HOM}_{\FoamN}(\emptyset, \emptyset)} \ker \phi_G$. Therefore $\F(\emptyset)$ is spanned by (the equivalence class of) $\emptyset$. On the other hand $\phi_{\emptyset}(\emptyset)=1$, hence $\emptyset \neq 0$ in $\F(\emptyset)$. This proves that $\F(\emptyset) \simeq \ZZ[X_1, X_2, \dots, X_N]$.

\begin{claim}
  Let $\Gamma$ be an $\sll_N$-MOY graph and $\Gamma'$ be the same graph with the $0$ edges removed, then $\F(\Gamma) \simeq \F(\Gamma')$. An isomorphism (and its inverse) is given by (the image by $\F$ of) the foam described in the proof of Lemma~\ref{lem:remove0foam} (and its mirror image with respect to $\RR^2 \times \{\frac12\}$). Note that this is a categorified version of Lemma~\ref{lem:remove0}.
\end{claim}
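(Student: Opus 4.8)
The plan is to exhibit the two maps explicitly and prove that they are mutually inverse. Write $F \in \mathrm{HOM}_{\FoamN}(\Gamma', \Gamma)$ for the $\Gamma$-foam-$\Gamma'$ produced in Lemma~\ref{lem:remove0foam}, and let $\bar F \in \mathrm{HOM}_{\FoamN}(\Gamma, \Gamma')$ be its mirror image across $\RR^2 \times \{\frac12\}$. Applying the functor gives $\F(F)\co \F(\Gamma') \to \F(\Gamma)$ and $\F(\bar F)\co \F(\Gamma) \to \F(\Gamma')$, and the claim is that these are inverse to one another. By functoriality it suffices to prove $\F(\bar F \circ F) = \id_{\F(\Gamma')}$ and $\F(F \circ \bar F) = \id_{\F(\Gamma)}$, where the composites are endomorphisms of $\Gamma'$ and of $\Gamma$ respectively.

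First I would record the topological input. Since removing the $0$-labelled facets of $F$ yields the cylinder $\Gamma' \times [0,1]$, and since removal of $0$-facets is compatible with the gluing of foams along their common boundary, removing the $0$-facets of $\bar F \circ F$ produces the two cylinders glued along $\Gamma'$, i.e.\ the identity foam $\id_{\Gamma'} = \Gamma' \times [0,1]$ (the $0$-facets created along the movie of $F$ are exactly the ones destroyed along the reversed movie of $\bar F$). Likewise, removing the $0$-facets of $F \circ \bar F$ gives $\Gamma' \times [0,1]$, which is also what one obtains by removing the $0$-facets of $\id_\Gamma = \Gamma \times [0,1]$.

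Next I would feed this into the universal construction. To check $\F(\bar F \circ F) = \id_{\F(\Gamma')}$ it is enough, by the definition of $\F(\Gamma')$ as a quotient of $\widetilde{\F}(\Gamma')$, to verify that for every generator $x \in \mathrm{HOM}_{\FoamN}(\emptyset, \Gamma')$ and every closing foam $G \in \mathrm{HOM}_{\FoamN}(\Gamma', \emptyset)$ one has $\phi_G((\bar F \circ F) \circ x) = \phi_G(x)$, that is
\[
  \kup{G \circ (\bar F \circ F) \circ x} = \kup{G \circ x}.
\]
Both sides are evaluations of \emph{closed} foams, and by the previous paragraph these two closed foams become identical once their $0$-facets are removed; hence by Remark~\ref{rmk:remove0faces} (removing $0$-facets does not change the evaluation) the two evaluations agree. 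The identical argument with $\Gamma$, $F \circ \bar F$ and $\id_\Gamma$ gives $\F(F \circ \bar F) = \id_{\F(\Gamma)}$. Together these show $\F(F)$ and $\F(\bar F)$ are mutually inverse, so $\F(\Gamma) \simeq \F(\Gamma')$. Finally, since $\F(F)$ is homogeneous of degree $d_N(F)$ and its composite with $\F(\bar F)$ (homogeneous of the same degree, $\bar F$ being a mirror of $F$) equals the nonzero degree-$0$ map $\id_{\F(\Gamma')}$, one gets $2\,d_N(F) = 0$, so $\F(F)$ has degree $0$ and the isomorphism is graded, as required for the functor to land in $\PolN$.

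The main obstacle I anticipate is the topological bookkeeping of the first step: one must be confident that stripping the $0$-facets commutes with the composition of foams and that the $0$-facets introduced by the movie of Lemma~\ref{lem:remove0foam} are annihilated by the reversed movie, including any $0$-labelled bindings or closed $0$-facets created in the middle slice $\RR^2 \times \{\frac12\}$ where $F$ and $\bar F$ are glued. Once this is in place, everything else is a formal consequence of the universal construction and of Remark~\ref{rmk:remove0faces}.
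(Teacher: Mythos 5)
Your proof is correct and is essentially the paper's argument: the paper disposes of this claim with the single line ``This is a direct consequence of Remark~\ref{rmk:remove0faces}'', and what you have written is exactly that consequence spelled out — stripping the $0$-facets turns the closed foams $G\circ(\bar F\circ F)\circ x$ and $G\circ x$ (resp.\ $G\circ(F\circ\bar F)\circ x$ and $G\circ x$) into the same closed foam, so all the functionals $\phi_G$ agree and the composites descend to the identities on the quotients. The closing degree observation is a reasonable extra check and causes no problem.
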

This is a direct consequence of Remark~\ref{rmk:remove0faces}. 

\begin{claim}
  Let $\Gamma$ be an $\sll_N$-MOY graph and $\mathcal{C}$  a face-like cycle in $\Gamma$ and $\Gamma'$ be the $\sll_N$-MOY-graph obtained from $\Gamma$ by a cycle move along $\Gamma$. Then $\F(\Gamma) \simeq \F(\Gamma')$.
Note that this is a categorified version of Lemma~\ref{lem:turncycle}.
\end{claim}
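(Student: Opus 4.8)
The plan is to mimic the two preceding claims: I would exhibit explicit foams between $\Gamma$ and $\Gamma'$ whose $\F$-images are mutually inverse, and reduce the verification that their compositions are identities to the already-proven local relation of Lemma~\ref{lem:turningface}. Since the cycle $\mathcal{C}$ is face-like it bounds a disk $D$ in $\RR^2\setminus\Gamma$. I would build a $\Gamma'$-foam-$\Gamma$, call it $F_{\mathcal{C}}$, by taking $\Gamma\times[0,1]$ and, in a neighborhood of $D$, inserting the movie that caps $\mathcal{C}$ off with a facet labeled $N$: along each edge of $\mathcal{C}$ carrying a label $a$ this produces a binding of type $(a,N-a,N)$, replacing that edge by one labeled $N-a$ with the opposite orientation, exactly the operation of Lemma~\ref{lem:turncycle}. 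This is precisely the foam pictured in Lemma~\ref{lem:turningface} (the hashed facet being the $N$-labeled disk). As for the foams in the earlier claims, one checks that $F_{\mathcal{C}}$ has degree $0$, consistent with the fact that the cycle move introduces no $q$-shift in Lemma~\ref{lem:turncycle}, so the resulting map is graded.

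Applying the same construction to the face-like cycle of $\Gamma'$ obtained from $\mathcal{C}$ yields a $\Gamma$-foam-$\Gamma'$, which I denote $F_{\mathcal{C}'}$. Composing, $F_{\mathcal{C}'}\circ F_{\mathcal{C}}$ is a $\Gamma$-foam-$\Gamma$ that turns the face along $D$ and then turns it back: this is exactly the concatenation of two identical turning foams appearing on one side of Lemma~\ref{lem:turningface}, placed in the ambient context of $\Gamma\times[0,1]$. Because Lemma~\ref{lem:turningface} is a \emph{local} relation, for every closing foam $G\in\mathrm{HOM}_{\FoamN}(\Gamma,\emptyset)$ and every $F\in\mathrm{HOM}_{\FoamN}(\emptyset,\Gamma)$ it gives $\kup{G\circ(F_{\mathcal{C}'}\circ F_{\mathcal{C}})\circ F}=\kup{G\circ F}$, whence $\F(F_{\mathcal{C}'}\circ F_{\mathcal{C}})=\F(\Gamma\times[0,1])=\id_{\F(\Gamma)}$. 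The symmetric argument gives $\F(F_{\mathcal{C}}\circ F_{\mathcal{C}'})=\id_{\F(\Gamma')}$, so $\F(F_{\mathcal{C}})$ is an isomorphism with inverse $\F(F_{\mathcal{C}'})$, yielding $\F(\Gamma)\simeq\F(\Gamma')$.

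The hard part is not the abstract bookkeeping but pinning down the explicit turning foam $F_{\mathcal{C}}$ and checking that $F_{\mathcal{C}'}\circ F_{\mathcal{C}}$ is genuinely the double-turn configuration to which Lemma~\ref{lem:turningface} applies verbatim; in particular that the orientations forced on the $N$-labeled bindings (the subtlety flagged in the footnote to Lemma~\ref{lem:remove0foam}, where on a binding of type $(0,0,0)$ a choice must be made) are consistent across the two halves, so that the two turning movies really are the ``identical foams'' of that lemma. Once this identification is in place the evaluation equality is immediate, and since Lemma~\ref{lem:turningface} is asserted for arbitrary face-like cycles no further reduction is needed; a general, non-face-like cycle move would then follow by decomposing it into face-like ones as in the decomposition lemma established just after Lemma~\ref{lem:uniquenessMOY}.
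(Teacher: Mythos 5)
Your proposal is correct and follows essentially the same route as the paper: the authors also exhibit the pair of foams that cap the face-like cycle off with an $N$-labeled facet (the ``turning'' foams and their mirror images) as the isomorphism and its inverse, and deduce that both composites act as the identity on $\F$ directly from Lemma~\ref{lem:turningface}, using that the universal construction detects equality of morphisms via evaluation against all closures. The extra checks you flag (degree $0$, orientation consistency of the two halves) are sound and consistent with the paper's treatment.
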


An isomorphism and its inverse are given by:
\[
\F\left( \NB{\tikz[scale=0.8]{\tdplotsetmaincoords{75}{100}
\begin{scope}[tdplot_main_coords]
  \coordinate (aT) at (-1, -1, 3);
  \coordinate (bT) at (-1, 1, 3);
  \coordinate (cT) at (1, 1, 3);
  \coordinate (dT) at (1, -1, 3);
  \coordinate (amT) at (-1, -1, 1.35);
  \coordinate (bmT) at (-1, 1, 1.35);
  \coordinate (cmT) at (1, 1, 1.35); 
  \coordinate (dmT) at (1, -1, 1.35);
  \coordinate (amB) at (-1, -1, -1.35);
  \coordinate (bmB) at (-1, 1, -1.35);
  \coordinate (cmB) at (1, 1, -1.35); 
  \coordinate (dmB) at (1, -1, -1.35);
  \coordinate (at) at (-1, -1, 2);
  \coordinate (ab) at (-1, -1, -2);
  \coordinate (bt) at (-1, 1, 2);
  \coordinate (bb) at (-1, 1, -2);
  \coordinate (ct) at (1, 1, 2);
  \coordinate (cb) at (1, 1, -2);
  \coordinate (dt) at (1, -1, 2);
  \coordinate (db) at (1, -1, -2);
  \coordinate (At) at (-2, -2, 2);
  \coordinate (Bt) at (-2, 2, 2);
  \coordinate (Ct) at (2, 2, 2);
  \coordinate (Dt) at (2, -2, 2);
  \coordinate (aMt) at (-1, -1, 0.7);
  \coordinate (aMb) at (-1, -1, -0.7);
  \coordinate (bMt) at (-1, 1, 0.7);
  \coordinate (bMb) at (-1, 1, -0.7);
  \coordinate (cMt) at (1, 1, 0.7);
  \coordinate (cMb) at (1, 1, -0.7);
  \coordinate (dMt) at (1, -1, 0.7);
  \coordinate (dMb) at (1, -1, -0.7);
  \coordinate (AMt) at (-2, -2, 0.7);
  \coordinate (AMb) at (-2, -2, -0.7);
  \coordinate (BMt) at (-2, 2, 0.7);
  \coordinate (BMb) at (-2, 2, -0.7);
  \coordinate (CMt) at (2, 2, 0.7);
  \coordinate (CMb) at (2, 2, -0.7);
  \coordinate (DM) at (2, -2, 0);
  \coordinate (aB) at (-1, -1, -3);
  \coordinate (bB) at (-1, 1, -3);
  \coordinate (cB) at (1, 1, -3);
  \coordinate (dB) at (1, -1, -3);
  \coordinate (AB) at (-2, -2, -3);
  \coordinate (BB) at (-2, 2, -3);
  \coordinate (CB) at (2, 2, -3);
  \coordinate (DB) at (2, -2, -3);
  \coordinate (aM) at (-1, -1, 0);
  \coordinate (bM) at (-1, 1, 0);
  \coordinate (cM) at (1, 1, 0);
  \coordinate (dM) at (1, -1, 0);

  \filldraw[draw = black, rounded corners=1pt, thick, fill opacity = 0.3, fill = red]  (at) -- (aB) -- (AB) -- (At) -- (at);
  \filldraw[draw = black, rounded corners=1pt, thick, fill opacity = 0.3, fill = red]  (bt) -- (bB) -- (BB)  -- (Bt) -- (bt);

  \filldraw[draw = black, rounded corners=1pt, thick, fill opacity = 0.3, fill = yellow]   (aMt) --  (at) -- (bt) -- (bMt) --  (aMt);
  \filldraw[draw = black, rounded corners=1pt, thick, fill opacity = 0.3, fill = green]    (aMb) --  (aMt) -- (bMt) -- (bMb) --  (aMb);
  \filldraw[draw = black, rounded corners=1pt, thick, fill opacity = 0.3, fill = yellow]   (aMb) --  (ab) -- (bb) -- (bMb) --  (aMb);
  \filldraw[draw = black, rounded corners=1pt, thick, fill opacity = 0.3, fill = green]    (ab) -- (aB) --  (bB) --  (bb) -- cycle;

  \filldraw[draw = black, rounded corners=1pt, thick, fill opacity = 0.3, fill = red]  (dt) -- (dB) -- (DB) -- (Dt) -- cycle;

  \filldraw[draw = black, rounded corners=1pt, thick, fill opacity = 0.3, fill = yellow]  (aMt) -- (at) -- (dt) -- (dMt) -- cycle;
  \filldraw[draw = black, rounded corners=1pt, thick, fill opacity = 0.3, fill = green]   (aMb) --  (aMt) -- (dMt) -- (dMb) --  cycle;
  \filldraw[draw = black, rounded corners=1pt, thick, fill opacity = 0.3, fill = yellow]  (aMb) -- (ab) -- (db) -- (dMb) -- cycle;
  \filldraw[draw = black, rounded corners=1pt, thick, fill opacity = 0.3, fill = green]   (ab) -- (aB) --  (dB) --  (db) -- cycle;

  \filldraw[draw = black, rounded corners=1pt, thick, fill opacity = 0.3, fill = yellow]  (ct) -- (cMt) -- (dMt)  -- (dt) -- cycle;
  \filldraw[draw = black, rounded corners=1pt, thick, fill opacity = 0.3, fill = green]   (cMb) -- (cMt) --  (dMt)  --  (dMb) -- cycle;
  \filldraw[draw = black, rounded corners=1pt, thick, fill opacity = 0.3, fill = yellow]  (cMb) -- (cb) -- (db)  -- (dMb) -- cycle;
  \filldraw[draw = black, rounded corners=1pt, thick, fill opacity = 0.3, fill = green]   (cb) -- (cB) --  (dB)  --  (db) -- cycle;

  \filldraw[draw = black, rounded corners=1pt, thick, fill opacity = 0.3, fill = yellow]  (cMt) -- (ct) -- (bt) -- (bMt) -- cycle;
  \filldraw[draw = black, rounded corners=1pt, thick, fill opacity = 0.3, fill = green]   (cMt) -- (cMb) --  (bMb) --  (bMt) -- cycle;
  \filldraw[draw = black, rounded corners=1pt, thick, fill opacity = 0.3, fill = yellow]  (cb) -- (cMb) -- (bMb) -- (bb) -- cycle;
  \filldraw[draw = black, rounded corners=1pt, thick, fill opacity = 0.3, fill = green]   (cb) -- (cB) --  (bB) --  (bb) -- cycle;

  \filldraw[draw = black, rounded corners=1pt, thick, fill opacity = 0.3, fill = blue, pattern=north west lines, pattern color=black]    (aMt) -- (bMt) -- (cMt) -- (dMt) -- cycle;
  \filldraw[draw = black, rounded corners=1pt, thick, fill opacity = 0.3, fill = blue, pattern=north west lines, pattern color=black]    (aMb) -- (bMb) -- (cMb) -- (dMb) -- cycle;
  \filldraw[draw = black, rounded corners=1pt, thick, fill opacity = 0.3, fill = blue, pattern=north west lines, pattern color=black]    (ab) -- (bb) -- (cb) -- (db) -- cycle;
  \filldraw[draw = black, rounded corners=1pt, thick, fill opacity = 0.3, fill = blue]    (aMt) -- (bMt) -- (cMt) -- (dMt) -- cycle;
  \filldraw[draw = black, rounded corners=1pt, thick, fill opacity = 0.3, fill = blue]    (aMb) -- (bMb) -- (cMb) -- (dMb) -- cycle;

  \filldraw[draw = black, rounded corners=1pt, thick, fill opacity = 0.3, fill = blue]    (ab) -- (bb) -- (cb) -- (db) -- cycle;

  \filldraw[draw = black, rounded corners=1pt, thick, fill opacity = 0.3, fill = red]  (ct) -- (cB) -- (CB) -- (Ct) -- cycle; 
  \draw[very thick, <-] (at) -- (bt);
  \draw[very thick, <-] (bt) -- (ct);
  \draw[very thick, <-] (ct) -- (dt);
  \draw[very thick, <-] (dt) -- (at);
  \draw[very thick, ->] (aB) -- (bB);
  \draw[very thick, ->] (bB) -- (cB);
  \draw[very thick, ->] (cB) -- (dB);
  \draw[very thick, ->] (dB) -- (aB);
  \draw[very thick, ->, dashed] (aM) -- (bM);
  \draw[very thick, ->, dashed] (bM) -- (cM);
  \draw[very thick, ->, dashed] (cM) -- (dM);
  \draw[very thick, ->, dashed] (dM) -- (aM);

  \draw[very thick, <-, dashed] (amB) -- (bmB);
  \draw[very thick, <-, dashed] (bmB) -- (cmB);
  \draw[very thick, <-, dashed] (cmB) -- (dmB);
  \draw[very thick, <-, dashed] (dmB) -- (amB);

\end{scope}}}\right) \quad \textrm{and} \quad  \F\left( \NB{\tikz[scale=0.8]{\tdplotsetmaincoords{75}{100}
\begin{scope}[tdplot_main_coords]
  \coordinate (aT) at (-1, -1, 3);
  \coordinate (bT) at (-1, 1, 3);
  \coordinate (cT) at (1, 1, 3);
  \coordinate (dT) at (1, -1, 3);
  \coordinate (amT) at (-1, -1, 1.35);
  \coordinate (bmT) at (-1, 1, 1.35);
  \coordinate (cmT) at (1, 1, 1.35); 
  \coordinate (dmT) at (1, -1, 1.35);
  \coordinate (amB) at (-1, -1, -1.35);
  \coordinate (bmB) at (-1, 1, -1.35);
  \coordinate (cmB) at (1, 1, -1.35); 
  \coordinate (dmB) at (1, -1, -1.35);
  \coordinate (at) at (-1, -1, 2);
  \coordinate (ab) at (-1, -1, -2);
  \coordinate (bt) at (-1, 1, 2);
  \coordinate (bb) at (-1, 1, -2);
  \coordinate (ct) at (1, 1, 2);
  \coordinate (cb) at (1, 1, -2);
  \coordinate (dt) at (1, -1, 2);
  \coordinate (db) at (1, -1, -2);
  \coordinate (AT) at (-2, -2, 3);
  \coordinate (BT) at (-2, 2, 3);
  \coordinate (CT) at (2, 2, 3);
  \coordinate (DT) at (2, -2, 3);
  \coordinate (aMt) at (-1, -1, 0.7);
  \coordinate (aMb) at (-1, -1, -0.7);
  \coordinate (bMt) at (-1, 1, 0.7);
  \coordinate (bMb) at (-1, 1, -0.7);
  \coordinate (cMt) at (1, 1, 0.7);
  \coordinate (cMb) at (1, 1, -0.7);
  \coordinate (dMt) at (1, -1, 0.7);
  \coordinate (dMb) at (1, -1, -0.7);
  \coordinate (AMt) at (-2, -2, 0.7);
  \coordinate (AMb) at (-2, -2, -0.7);
  \coordinate (BMt) at (-2, 2, 0.7);
  \coordinate (BMb) at (-2, 2, -0.7);
  \coordinate (CMt) at (2, 2, 0.7);
  \coordinate (CMb) at (2, 2, -0.7);
  \coordinate (DM) at (2, -2, 0);
  \coordinate (aB) at (-1, -1, -3);
  \coordinate (bB) at (-1, 1, -3);
  \coordinate (cB) at (1, 1, -3);
  \coordinate (dB) at (1, -1, -3);
  \coordinate (AMt) at (-2, -2, 0.7);
  \coordinate (BMt) at (-2, 2, 0.7);
  \coordinate (CMt) at (2, 2, 0.7);
  \coordinate (DMt) at (2, -2, 0.7);
  \coordinate (aM) at (-1, -1, 0);
  \coordinate (bM) at (-1, 1, 0);
  \coordinate (cM) at (1, 1, 0);
  \coordinate (dM) at (1, -1, 0);

  \filldraw[draw = black, rounded corners=1pt, thick, fill opacity = 0.3, fill = red]  (aT) -- (aMt) -- (AMt) -- (AT) -- (aT);
  \filldraw[draw = black, rounded corners=1pt, thick, fill opacity = 0.3, fill = red]  (bT) -- (bMt) -- (BMt)  -- (BT) -- (bT);

  \filldraw[draw = black, rounded corners=1pt, thick, fill opacity = 0.3, fill = green]    (aT) --  (at) -- (bt) -- (bT) --  (aT);
  \filldraw[draw = black, rounded corners=1pt, thick, fill opacity = 0.3, fill = yellow]   (aMt) --  (at) -- (bt) -- (bMt) --  (aMt);

  \filldraw[draw = black, rounded corners=1pt, thick, fill opacity = 0.3, fill = red]  (dT) -- (dMt) -- (DMt) -- (DT) -- cycle;

  \filldraw[draw = black, rounded corners=1pt, thick, fill opacity = 0.3, fill = green]   (aT) --  (at) -- (dt) -- (dT) --  cycle;
  \filldraw[draw = black, rounded corners=1pt, thick, fill opacity = 0.3, fill = yellow]  (aMt) -- (at) -- (dt) -- (dMt) -- cycle;

  \filldraw[draw = black, rounded corners=1pt, thick, fill opacity = 0.3, fill = green]   (cT) --  (ct) -- (dt) -- (dT) --  (cT);
  \filldraw[draw = black, rounded corners=1pt, thick, fill opacity = 0.3, fill = yellow]  (ct) -- (cMt) -- (dMt)  -- (dt) -- cycle;

  \filldraw[draw = black, rounded corners=1pt, thick, fill opacity = 0.3, fill = green]   (cT) --  (ct) -- (bt) -- (bT) --  (cT);
  \filldraw[draw = black, rounded corners=1pt, thick, fill opacity = 0.3, fill = yellow]  (cMt) -- (ct) -- (bt) -- (bMt) -- cycle;
  \filldraw[draw = black, rounded corners=1pt, thick, fill opacity = 0.3, fill = blue, pattern=north west lines, pattern color=black]    (at) -- (bt) -- (ct) -- (dt) -- cycle;
  \filldraw[draw = black, rounded corners=1pt, thick, fill opacity = 0.3, fill = blue]    (at) -- (bt) -- (ct) -- (dt) -- cycle;

  \filldraw[draw = black, rounded corners=1pt, thick, fill opacity = 0.3, fill = red]  (cT) -- (cMt) -- (CMt) -- (CT) -- cycle; 
  \draw[very thick, <-] (aMt) -- (bMt);
  \draw[very thick, <-] (bMt) -- (cMt);
  \draw[very thick, <-] (cMt) -- (dMt);
  \draw[very thick, <-] (dMt) -- (aMt);
  \draw[very thick, ->] (aT) -- (bT);
  \draw[very thick, ->] (bT) -- (cT);
  \draw[very thick, ->] (cT) -- (dT);
  \draw[very thick, ->] (dT) -- (aT);

\end{scope}}}\right),
\]
where the hashed faces are meant to have label $N$.
The claim follows directly from Lemma~\ref{lem:turningface}.

\begin{claim}
  Suppose that $\Gamma$ and $\Gamma'$ are two $\sll_N$-MOY-graphs which are identical except in a disk where they look like:
\[ 
\Gamma = \stgamma \quad \textrm{and} \quad \Gamma'= \stgammaprime,
\]
Then $\F(\Gamma) \simeq \F(\Gamma')$. Note that this is a categorified version of Relation~(\ref{eq:relass}) in Definition~\ref{dfn:MOYevaluation}.
\end{claim}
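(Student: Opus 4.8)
The plan is to exhibit two foams realizing the move and to show, via the universal construction, that they induce mutually inverse isomorphisms. Concretely, let $F\colon \Gamma \to \Gamma'$ be the foam which equals the product foam $\Gamma\times[0,1]$ outside a small ball and which, inside that ball, carries a single tetrahedral (``$6j$'') singular vertex interpolating between the local picture $\stgamma$ at the bottom and $\stgammaprime$ at the top. Let $F'\colon \Gamma'\to\Gamma$ be the mirror image of $F$ with respect to $\RR^2\times\{\tfrac12\}$. These are honest morphisms in $\FoamN$, so by functoriality $\mathcal F(F)\colon \mathcal F(\Gamma)\to\mathcal F(\Gamma')$ and $\mathcal F(F')\colon\mathcal F(\Gamma')\to\mathcal F(\Gamma)$ are $\ZZ[X_1,\dots,X_N]$-linear maps, and they are of degree $0$ (this is checked directly from Definition~\ref{dfn:degreefoam}, since the move fixes the boundary and the vertex and facet contributions balance out, in agreement with the coefficient $1$ in Relation~(\ref{eq:relass})). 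It therefore suffices to prove that $\mathcal F(F')\circ\mathcal F(F)=\mathcal F(F'\circ F)=\id_{\mathcal F(\Gamma)}$ and, symmetrically, $\mathcal F(F\circ F')=\id_{\mathcal F(\Gamma')}$.

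By the construction of $\mathcal F$ on morphisms (Definition~\ref{dfn:Foam} together with Remark~\ref{rmk:evaluationwelldefined}), the endomorphism $\mathcal F(F'\circ F)$ of $\mathcal F(\Gamma)$ is the identity as soon as
\[
\kup{G\circ (F'\circ F)\circ H} = \kup{G\circ H}
\]
for every $\emptyset$-foam-$\Gamma$ $H$ and every $\Gamma$-foam-$\emptyset$ $G$; here the right-hand side is the evaluation of the closed foam $G\circ H$, obtained when $F'\circ F$ is replaced by the product foam $\Gamma\times[0,1]$. Both sides are closed foams that agree everywhere except inside a single ball, where $F'\circ F$ displays the left-hand local picture of Corollary~\ref{cor:matveev-pierigliani} (the cancelling pair of tetrahedral vertices) while the product foam displays the right-hand one. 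Applying Corollary~\ref{cor:matveev-pierigliani} inside this ball yields the equality for all $G$ and $H$; the symmetric argument, with the roles of $\Gamma$ and $\Gamma'$ exchanged, gives $\mathcal F(F\circ F')=\id_{\mathcal F(\Gamma')}$, whence $\mathcal F(\Gamma)\simeq\mathcal F(\Gamma')$.

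The step requiring care is the geometric identification underlying the previous paragraph: one must verify that the composite $F'\circ F$ of the two associativity foams is, up to an ambient isotopy fixing the boundary, the product foam modified \emph{exactly} by the Matveev--Pierigliani configuration appearing on the left of Corollary~\ref{cor:matveev-pierigliani}, so that the corollary applies verbatim and its right-hand side is the trivial (product) local model. This is precisely the foamy incarnation of the fact that a bigon of stacked tetrahedral vertices cancels; once this local model is pinned down, everything else is formal, the evaluation being local and insensitive to the surrounding closure $G\circ(-)\circ H$. I expect this verification—checking the cyclic orderings, orientations and labels around the two singular vertices so that they really match the picture of Lemma~\ref{lem-col-matveev-pierigliani}/Corollary~\ref{cor:matveev-pierigliani}—to be the only genuine obstacle; the algebraic bookkeeping is already absorbed into that corollary.
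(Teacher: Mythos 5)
Your proposal is correct and follows essentially the same route as the paper: the paper also takes the two mutually mirror foams carrying the tetrahedral singular vertex as the candidate isomorphisms and declares that the claim "follows directly from Corollary~\ref{cor:matveev-pierigliani}". You have merely spelled out the step the paper leaves implicit, namely that the universal construction reduces $\F(F'\circ F)=\id$ to the equality of closed evaluations $\kup{G\circ F'\circ F\circ H}=\kup{G\circ H}$, which is exactly the local relation of that corollary.
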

The isomorphisms are given by
\[
\F\left( {\scriptstyle{\NB{\tikz[scale=0.7]{\tdplotsetmaincoords{70}{110}
\begin{scope}[tdplot_main_coords]
  \coordinate (bT) at (-1, 0, 3);
  \coordinate (dT) at (1, 0, 3);
  \coordinate (AT) at (-2, -2, 3);
  \coordinate (BT) at (-2, 2, 3);
  \coordinate (CT) at (2, 2, 3);
  \coordinate (DT) at (2, -2, 3);
  \coordinate (MT) at (0,0, 1.5);
  \coordinate (aM) at (0, -1, 0);
  \coordinate (cM) at (0, 1, 0);
  \coordinate (AM) at (-2, -2, 0);
  \coordinate (BM) at (-2, 2, 0);
  \coordinate (CM) at (2, 2, 0);
  \coordinate (DM) at (2, -2, 0);
  \coordinate (MB) at (0,0, -1.5);
  \coordinate (bB) at (-1, 0, -3);
  \coordinate (dB) at (1, 0, -3);
  \coordinate (AB) at (-2, -2, -3);
  \coordinate (BB) at (-2, 2, -3);
  \coordinate (CB) at (2, 2, -3);
  \coordinate (DB) at (2, -2, -3);
  \filldraw[draw = black, rounded corners=1pt, thick, fill opacity = 0.3, fill = red]     (BT) -- (BM) -- (cM) -- (MT) -- (bT) -- (BT) node[midway, below, sloped, opacity=1] {$\scriptstyle{j}$};
  \filldraw[draw = black, rounded corners=1pt, thick, fill opacity = 0.3, fill = red] (AT) -- (AM) -- (aM) -- (MT)  -- (bT)  -- (AT) node[midway, below, opacity=1, black] { $\scriptstyle{i}$};
  \filldraw[draw = black, rounded corners=1pt, thick, fill opacity = 0.3, fill = red]    (CT) -- (CM)  --  (cM) -- (MT) -- (dT)-- (CT) node[near end, below, opacity=1] { $\scriptstyle{k}$};
  \filldraw[draw = black, rounded corners=1pt, thick, fill opacity = 0.3, fill = red]  (DT) -- (DM) node[midway, above, sloped, opacity=1] { $\scriptstyle{i+j+k}$} -- (aM) -- (MT) -- (dT)-- cycle;
  \filldraw[draw = black, rounded corners=1pt, thick, fill opacity = 0.3, fill = blue]  (bT) -- (dT) node[pos= 0.7, below, sloped, opacity=1] {$\scriptstyle{i+j}$} -- (MT) -- cycle;

  \filldraw[draw = black, rounded corners=1pt, thick, fill opacity = 0.3, fill = green]     (MT)-- (aM) -- (cM)-- cycle;
\draw (cM) node[midway, above, sloped, opacity=1] {$\scriptstyle{j+k}$}  -- (CM);

\end{scope}}}}}\right) \quad \textrm{and} \quad  
\F\left( {\scriptstyle{\NB{\tikz[scale=0.7]{\tdplotsetmaincoords{70}{110}
\begin{scope}[tdplot_main_coords]
  \coordinate (bT) at (-1, 0, 3);
  \coordinate (dT) at (1, 0, 3);
  \coordinate (AT) at (-2, -2, 3);
  \coordinate (BT) at (-2, 2, 3);
  \coordinate (CT) at (2, 2, 3);
  \coordinate (DT) at (2, -2, 3);
  \coordinate (MT) at (0,0, 1.5);
  \coordinate (aM) at (0, -1, 0);
  \coordinate (cM) at (0, 1, 0);
  \coordinate (AM) at (-2, -2, 0);
  \coordinate (BM) at (-2, 2, 0);
  \coordinate (CM) at (2, 2, 0);
  \coordinate (DM) at (2, -2, 0);
  \coordinate (MB) at (0,0, -1.5);
  \coordinate (bB) at (-1, 0, -3);
  \coordinate (dB) at (1, 0, -3);
  \coordinate (AB) at (-2, -2, -3);
  \coordinate (BB) at (-2, 2, -3);
  \coordinate (CB) at (2, 2, -3);
  \coordinate (DB) at (2, -2, -3);
  \filldraw[draw = black, rounded corners=1pt, thick, fill opacity = 0.3, fill = red]     (BB) -- (BM) -- (cM) -- (MB) -- (bB) -- (BB) node[midway, above, sloped, opacity=1] {$\scriptstyle{j}$};
  \filldraw[draw = black, rounded corners=1pt, thick, fill opacity = 0.3, fill = red] (AB) -- (AM) -- (aM) -- (MB)  -- (bB)  -- (AB) node[midway, above, opacity=1, black] { $\scriptstyle{i}$};
  \filldraw[draw = black, rounded corners=1pt, thick, fill opacity = 0.3, fill = red]    (CB) -- (CM)  --  (cM) -- (MB) -- (dB) -- (CB) node[near end, above, opacity=1] { $\scriptstyle{k}$};
  \filldraw[draw = black, rounded corners=1pt, thick, fill opacity = 0.3, fill = red]  (DB) -- (DM) node[midway, below, sloped, opacity=1] { $\scriptstyle{i+j+k}$} -- (aM) -- (MB) -- (dB)-- cycle;
  \filldraw[draw = black, rounded corners=1pt, thick, fill opacity = 0.3, fill = blue]  (bB) -- (dB) node[pos= 0.3, above, sloped, opacity=1] {$\scriptstyle{i+j}$} -- (MB) -- cycle;

  \filldraw[draw = black, rounded corners=1pt, thick, fill opacity = 0.3, fill = green]     (MB)-- (aM) -- (cM)-- cycle;
\draw (cM) node[midway, below, sloped, opacity=1] {$\scriptstyle{j+k}$}  -- (CM);

\end{scope}}}}}\right)
\]
and the claim follows directly from Corollary~\ref{cor:matveev-pierigliani}.
\begin{claim}
  The following local relation holds:
\begin{align*}
\F\left(\!\!\! \squarec \!\!\! \right)\! \simeq \!\!\! \bigoplus_{j= \max (m-n,0)}^m  \!\!\!\F
\left(\!\!\! \squared \!\!\! \right)\left\{ \begin{bmatrix}l \\ k-j \end{bmatrix}
 \right\}. \
\end{align*} 
Note that this is a categorified version of Relation~(\ref{eq:relsquare3}) in Definition~\ref{dfn:MOYevaluation}.
\end{claim}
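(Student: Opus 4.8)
The plan is to deduce this isomorphism from a decomposition of the identity endomorphism of $\mathcal{F}(\Gamma)$ into orthogonal idempotents, where $\Gamma$ denotes the web $\squarec$ and $\Gamma_j$ the web $\squared$ appearing in relation~(\ref{eq:relsquare3}). First I would read the local relation of Proposition~\ref{prop:complicatedfoam} through the universal construction. Since the evaluation $\kup{\cdot}$ determines all morphisms of $\mathcal{F}$ via the maps $\phi_G$, and since the identity $\kup{F}=\sum_{j,\alpha}(-1)^{|\alpha|+(l-k+j)(m-j)}\kup{F^j_\alpha}$ is \emph{local} (hence valid for every closure $G$), the product foam $F=\Gamma\times[0,1]$ and the formal combination $\sum_{j,\alpha}(-1)^{|\alpha|+(l-k+j)(m-j)}F^j_\alpha$ induce the same morphism $\mathcal{F}(\Gamma)\to\mathcal{F}(\Gamma)$. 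As $\mathcal{F}(F)=\mathrm{id}_{\mathcal{F}(\Gamma)}$, writing $e^j_\alpha:=(-1)^{|\alpha|+(l-k+j)(m-j)}F^j_\alpha$ gives $\mathrm{id}_{\mathcal{F}(\Gamma)}=\sum_{j,\alpha}\mathcal{F}(e^j_\alpha)$.

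By Corollary~\ref{cor:idempotents} the $e^j_\alpha$ are pairwise orthogonal idempotents. Since $\mathcal{F}(\Gamma)$ is a finitely generated projective (hence free, by Quillen--Suslin) $\ZZ[X_1,\dots,X_N]$-module, idempotent endomorphisms split, and a complete system of orthogonal idempotents summing to the identity yields
\[
\mathcal{F}(\Gamma)=\bigoplus_{j=\max(m-n,0)}^m\ \bigoplus_{\alpha\in T(k-j,l-k+j)}\operatorname{im}\mathcal{F}(e^j_\alpha),
\]
each summand being a finitely generated projective module, hence an object of $\PolN$.

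Then, for each fixed $j$, I would identify $\bigoplus_{\alpha}\operatorname{im}\mathcal{F}(e^j_\alpha)$ with a graded shift of $\mathcal{F}(\Gamma_j)$. Every foam $F^j_\alpha$ factors through the waist $\Gamma_j$: writing $\underline{F^j_\alpha}\colon\Gamma\to\Gamma_j$ for its bottom half and $\overline{F^j_\alpha}\colon\Gamma_j\to\Gamma$ for its top half, one has $F^j_\alpha=\overline{F^j_\alpha}\circ\underline{F^j_\alpha}$. Corollary~\ref{cor3.16} computes the opposite composition through $\Gamma_j$: the concatenation of $\underline{F^j_\alpha}$ with $\overline{F^{j'}_{\alpha'}}$ vanishes unless $j'=j$ and $\alpha'=\widehat{\alpha}$, in which case it equals $\pm\,\mathrm{id}_{\mathcal{F}(\Gamma_j)}$ with the explicit sign recorded there. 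After normalising by that sign, $\underline{F^j_\alpha}$ and $\overline{F^j_{\widehat\alpha}}$ are mutually inverse isomorphisms between $\operatorname{im}\mathcal{F}(e^j_\alpha)$ and a grading shift of $\mathcal{F}(\Gamma_j)$, the shift being the degree of $\underline{F^j_\alpha}$ read off from Lemma~\ref{lem:degreefit}. Summing over $\alpha\in T(k-j,l-k+j)$, the collected degree shifts assemble into $\begin{bmatrix}l\\k-j\end{bmatrix}$, because $\sum_{\alpha\in T(a,b)}q^{2|\alpha|-ab}=\begin{bmatrix}a+b\\a\end{bmatrix}$ with $a=k-j$, $b=l-k+j$ and $a+b=l$; this reproduces exactly the quantum multiplicity of relation~(\ref{eq:relsquare3}).

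The hard part is entirely front-loaded into Proposition~\ref{prop:complicatedfoam} and Corollary~\ref{cor3.16}, which themselves rest on the Schur-polynomial identities of Appendix~\ref{sec:an-identiy-schur}; granting those, the remaining points are bookkeeping. The first delicate point is the legitimacy of passing from the closed-foam identity to an identity of morphisms: this is precisely where one uses that the relation of Proposition~\ref{prop:complicatedfoam} is local, so it survives under every closure $G$, giving equality of the two images under each $\phi_G$ and hence in $\mathcal{F}$. The second is to track the grading shifts carefully, using the homogeneity and degree formula of Lemma~\ref{lem:degreefit} and Corollary~\ref{cor:degreefit}, so that the signs from Corollary~\ref{cor3.16} and the box-counting generating function combine to yield exactly the Gaussian binomial $\begin{bmatrix}l\\k-j\end{bmatrix}$ with no spurious overall shift.
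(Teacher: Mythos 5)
Your proposal is correct and follows essentially the same route as the paper: the paper's proof of this claim consists precisely of exhibiting the injection/projection pairs given by the two halves of the foams $F^j_\alpha$ (decorated with the Littlewood--Richardson coefficients $c^{\alpha}_{\beta_1\beta_2}$ and $c^{\widehat{\alpha}}_{\gamma_1\gamma_2}$) and invoking Proposition~\ref{prop:complicatedfoam}, whose two assertions (decomposition of the identity and orthogonal idempotency, the latter resting on Corollary~\ref{cor3.16}) are exactly the two ingredients you use. Your write-up merely makes explicit the bookkeeping the paper leaves implicit, namely the passage from the closed-foam identity to an identity of morphisms via the universal construction and the assembly of the degree shifts into $\begin{bmatrix}l \\ k-j\end{bmatrix}$.
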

A collection of couples of injection/projection is given by:
\begin{align*}
&\left(
(-1)^{|\alpha| + (l-k+j)(m-j)} \sum_{\beta_1, \beta_2} c_{\beta_1 \beta_2}^{\alpha}\F\left({\scriptstyle{\NB{\tikz[scale=0.7]{\tdplotsetmaincoords{70}{100}
\begin{scope}[tdplot_main_coords]
  \coordinate (aT) at (-1, -1, 3);
  \coordinate (bT) at (-1, 1, 3);
  \coordinate (cT) at (1, 1, 3);
  \coordinate (dT) at (1, -1, 3);
  \coordinate (AT) at (-2, -2, 3);
  \coordinate (BT) at (-2, 2, 3);
  \coordinate (CT) at (2, 2, 3);
  \coordinate (DT) at (2, -2, 3);
  \coordinate (aMt) at (-1, -1, 1);
  \coordinate (aMb) at (-1, -1, -1);
  \coordinate (bMt) at (-1, 1, 1);
  \coordinate (bMb) at (-1, 1, -1);
  \coordinate (cMt) at (1, 1, 1);
  \coordinate (cMb) at (1, 1, -1);
  \coordinate (dMt) at (1, -1, 1);
  \coordinate (dMb) at (1, -1, -1);
  \coordinate (AMt) at (-2, -2, 1);
  \coordinate (AMb) at (-2, -2, -1);
  \coordinate (BMt) at (-2, 2, 1);
  \coordinate (BMb) at (-2, 2, -1);
  \coordinate (CMt) at (2, 2, 1);
  \coordinate (CMb) at (2, 2, -1);
  \coordinate (DM) at (2, -2, 0);
  \coordinate (DMt) at (2, -2, 1);
  \coordinate (DMb) at (2, -2, -1);
  \coordinate (aB) at (-1, -1, -3);
  \coordinate (bB) at (-1, 1, -3);
  \coordinate (cB) at (1, 1, -3);
  \coordinate (dB) at (1, -1, -3);
  \coordinate (AB) at (-2, -2, -3);
  \coordinate (BB) at (-2, 2, -3);
  \coordinate (CB) at (2, 2, -3);
  \coordinate (DB) at (2, -2, -3);
  \draw[thin, <-] ($(dMb)!0.5!(aMt)$) --    +(0,-1.5,0) node [left, sloped] {${\scriptstyle{m-j}}$}; 
  \draw[thin, <-] ($(aT)!0.3!(dMt)$) --     +(0,-1.5,0) node [left, sloped] {$\scriptstyle{n+k}$}; 
  \draw[red,thick, <-] ($(bT)!0.7!(cMt)$)-- +(0,+1.5,0) node [right, sloped] {$\scriptstyle{\pi_{\beta_1}}$}; 
  \draw[thin, <-] ($(cMb)!0.7!(bMt)$) --    +(0,+1.5,0) node [right, sloped] {$\scriptstyle{n+l+j}$}; 
  \draw[thin, <-] ($(bT)!0.3!(cMt)$) --     +(0,+1.5,0) node [right, sloped] {$\scriptstyle{m+l-k}$};

  \filldraw[draw = black, rounded corners=1pt, thick, fill opacity = 0.3, fill = red]  (aT) -- (aMb) -- (AMb) -- (AT) -- (aT) node[sloped, midway, below, opacity = 1] {$\scriptstyle{m}$};
  \filldraw[draw = black, rounded corners=1pt, thick, fill opacity = 0.3, fill = red]  (bT) -- (bMb) -- (BMb)  -- (BT) -- (bT) node[sloped, midway, below, opacity = 1] {$\scriptstyle{n+l}$};

  \filldraw[draw = black, rounded corners=1pt, thick, fill opacity = 0.3, fill = green]   (aT) --  (aMt) -- (bMt) -- (bT) --  (aT) node[sloped, midway, below, opacity = 1] {$\scriptstyle{n+k-m}$};

  \filldraw[draw = black, rounded corners=1pt, thick, fill opacity = 0.3, fill = red]  (dT) -- (dMb) -- (DMb) node[sloped, midway, above, opacity = 1] {$\scriptstyle{n}$}-- (DT) -- cycle;

  \filldraw[draw = black, rounded corners=1pt, thick, fill opacity = 0.3, fill = yellow]  (aMt) -- (aMb) -- (dMb) -- (dMt) -- cycle;
  \filldraw[draw = black, rounded corners=1pt, thick, fill opacity = 0.3, fill = green]   (aT) --  (aMt) -- (dMt) -- (dT) --  cycle;

  \filldraw[draw = black, rounded corners=1pt, thick, fill opacity = 0.3, fill = yellow]  (cMt) -- (cMb) -- (dMb)  node [below, sloped, midway, opacity=1, above] {$\scriptstyle{n+j-l}$}  -- (dMt) -- cycle;
  \filldraw[draw = black, rounded corners=1pt, thick, fill opacity = 0.3, fill = yellow]  (aMt) -- (aMb) -- (bMb)  node [above, sloped, midway, opacity=1, above] {$\scriptstyle{j}$} -- (bMt) -- cycle;
  \filldraw[draw = black, rounded corners=1pt, thick, fill opacity = 0.3, fill = yellow]  (cMt) -- (cMb) -- (bMb) -- (bMt) -- cycle;
  \filldraw[draw = black, rounded corners=1pt, thick, fill opacity = 0.3, fill = green]   (cT) --  (cMt) -- (bMt) -- (bT) --  (cT);

  \filldraw[draw = black, rounded corners=1pt, thick, fill opacity = 0.3, fill = green]   (cT) --  (cMt) -- (dMt) -- (dT) --  (cT) node[sloped, midway, below, opacity = 1] {$\scriptstyle{k}$};

  \filldraw[draw = black, rounded corners=1pt, thick, fill opacity = 0.3, fill = blue]    (aMt) -- (bMt) -- (cMt) -- (dMt) node[sloped, midway, above, opacity = 1] {$\scriptstyle{n+k-m+j}$} -- cycle;

  \filldraw[draw = black, rounded corners=1pt, thick, fill opacity = 0.3, fill = red]  (cT) -- (cMb) -- (CMb) node[sloped, midway, above, opacity = 1] {$\scriptstyle{m+l}$} -- (CT) -- cycle; 
  \draw[red, thick, <-]($(cMt)!0.3!(dMb)$) .. controls  +(+1,0,0) and +(0, -1, 0) .. +(1,3,0)  node [right, sloped] {$\scriptstyle{\pi_{\beta_2}}$}; 
\end{scope}}}}}\right), \right. \\ &\qquad \quad
\left. \sum_{\gamma_1, \gamma_2} c_{\gamma_1 \gamma_2}^{\widehat{\alpha}}\F\left({\scriptstyle{\NB{\tikz[scale=0.7]{\tdplotsetmaincoords{70}{100}
\begin{scope}[tdplot_main_coords]
  \coordinate (aT) at (-1, -1, 3);
  \coordinate (bT) at (-1, 1, 3);
  \coordinate (cT) at (1, 1, 3);
  \coordinate (dT) at (1, -1, 3);
  \coordinate (AT) at (-2, -2, 3);
  \coordinate (BT) at (-2, 2, 3);
  \coordinate (CT) at (2, 2, 3);
  \coordinate (DT) at (2, -2, 3);
  \coordinate (aMt) at (-1, -1, 1);
  \coordinate (aMb) at (-1, -1, -1);
  \coordinate (bMt) at (-1, 1, 1);
  \coordinate (bMb) at (-1, 1, -1);
  \coordinate (cMt) at (1, 1, 1);
  \coordinate (cMb) at (1, 1, -1);
  \coordinate (dMt) at (1, -1, 1);
  \coordinate (dMb) at (1, -1, -1);
  \coordinate (AMt) at (-2, -2, 1);
  \coordinate (AMb) at (-2, -2, -1);
  \coordinate (BMt) at (-2, 2, 1);
  \coordinate (BMb) at (-2, 2, -1);
  \coordinate (CMt) at (2, 2, 1);
  \coordinate (CMb) at (2, 2, -1);
  \coordinate (DM) at (2, -2, 0);
  \coordinate (DMt) at (2, -2, 1);
  \coordinate (DMb) at (2, -2, -1);
  \coordinate (aB) at (-1, -1, -3);
  \coordinate (bB) at (-1, 1, -3);
  \coordinate (cB) at (1, 1, -3);
  \coordinate (dB) at (1, -1, -3);
  \coordinate (AB) at (-2, -2, -3);
  \coordinate (BB) at (-2, 2, -3);
  \coordinate (CB) at (2, 2, -3);
  \coordinate (DB) at (2, -2, -3);
  \draw[thin, <-] ($(dMb)!0.7!(aMt)$) --    +(0,-1.5,0) node [left, sloped] {$\scriptstyle{m-j}$}; 
  \draw[red, thick, <-]($(aMb)!0.3!(dMt)$) --  +(0,-1.5,0) node [left, sloped] {$\scriptstyle{\pi_{\gamma_2}}$}; 
  \draw[thin, <-] ($(dB)!0.5!(aMb)$) --     +(0,-1.5,0) node [left, sloped] {$\scriptstyle{n+k}$}; 
  \draw[thin, <-] ($(cMb)!0.7!(bMt)$) --    +(0,+1.5,0) node [right, sloped] {$\scriptstyle{n+l+j}$}; 
  \draw[thin, <-] ($(cB)!0.7!(bMb)$) --     +(0,+1.5,0) node [right, sloped] {$\scriptstyle{m+l-k}$}; 
  \draw[red, thick, <-] ($(aB)!0.5!(bMb)$) .. controls +(-3,0,0) and  +(0,+1,0) .. +(-3,-3.1,0) node [left, sloped] {$\scriptstyle{\pi_{\gamma_1}}$}; 
  \filldraw[draw = black, rounded corners=1pt, thick, fill opacity = 0.3, fill = red]  (aMt) -- (aB) -- (AB) -- (AMt) -- (aMt) node[sloped, midway, below, opacity = 1] {$\scriptstyle{m}$};
  \filldraw[draw = black, rounded corners=1pt, thick, fill opacity = 0.3, fill = red]  (bMt) -- (bB) -- (BB)  -- (BMt) -- (bMt) node[sloped, midway, below, opacity = 1] {$\scriptstyle{n+l}$};

  \filldraw[draw = black, rounded corners=1pt, thick, fill opacity = 0.3, fill = green]   (aMb) -- (aB) --  (bB) node[sloped, midway, above, opacity = 1] {$\scriptstyle{n+k-m}$} --  (bMb) -- cycle;

  \filldraw[draw = black, rounded corners=1pt, thick, fill opacity = 0.3, fill = red]  (dMt) -- (dB) -- (DB) node[sloped, midway, above, opacity = 1] {$\scriptstyle{n}$}-- (DMt) -- cycle;

  \filldraw[draw = black, rounded corners=1pt, thick, fill opacity = 0.3, fill = yellow]  (aMt) -- (aMb) -- (dMb) -- (dMt) --  (aMt);
  \filldraw[draw = black, rounded corners=1pt, thick, fill opacity = 0.3, fill = green]   (aMb) -- (aB) --  (dB) --  (dMb) -- cycle;
  \filldraw[draw = black, rounded corners=1pt, thick, fill opacity = 0.3, fill = yellow]  (cMt) -- (cMb) -- (dMb)  -- (dMt) -- (cMt) node[midway, below, sloped , opacity =1] {$\scriptstyle{n+j-l}$};
  \filldraw[draw = black, rounded corners=1pt, thick, fill opacity = 0.3, fill = yellow]  (aMt) -- (aMb) -- (bMb) -- (bMt)-- cycle node[midway, below, opacity =1] {$\scriptstyle{j}$} ;
  \filldraw[draw = black, rounded corners=1pt, thick, fill opacity = 0.3, fill = yellow]  (cMt) -- (cMb) -- (bMb) -- (bMt) -- cycle;
  \filldraw[draw = black, rounded corners=1pt, thick, fill opacity = 0.3, fill = green]   (cMb) -- (cB) --  (bB) --  (bMb) -- cycle;
  \filldraw[draw = black, rounded corners=1pt, thick, fill opacity = 0.3, fill = green]   (cMb) -- (cB) --  (dB) node[sloped, midway, above, opacity = 1] {$\scriptstyle{k}$} --  (dMb) -- cycle;

  \filldraw[draw = black, rounded corners=1pt, thick, fill opacity = 0.3, fill = blue]    (aMb) -- (bMb) -- (cMb) -- (dMb) node[sloped, midway, above, opacity = 1] {$\scriptstyle{n+k-m+j}$} -- cycle;

  \filldraw[draw = black, rounded corners=1pt, thick, fill opacity = 0.3, fill = red]  (cMt) -- (cB) -- (CB) node[sloped, midway, above, opacity = 1] {$\scriptstyle{m+l}$} -- (CMt) -- cycle; 

\end{scope}}}}}\right)
\right)_{\substack{j\in \{\max (0, m-n),\dots, m\}  \\ \alpha \in T(k-j, l -k+j)}}.
\end{align*}
This follows from Proposition~\ref{prop:complicatedfoam}.

These four claims and Lemma~\ref{lem:uniquenessMOY} finish the proof.
\end{proof}

We now include a couple of local relations satisfied by $\kup{\bullet}$ without proof. They either follow from Proposition~\ref{prop:complicatedfoam} or are quite easy (in comparison with Proposition~\ref{prop:complicatedfoam}) to prove. They categorify some of the relations of the MOY relations given in Definition~\ref{dfn:MOYevaluation}.

\begin{prop}\label{prop:additionalrelation}
  The following relations hold:
\begin{align}
 \label{eq:sphere-ev}
   \kup{\scriptstyle{\NB{\tikz[scale=0.7]{\begin{scope}
  \draw[fill opacity =0.3, thick, fill = red] (0,0) circle (1cm);
  \draw[thick, dotted] (1,0) arc (0:180:1cm and 0.5cm) node[midway, above] {$\scriptstyle{a}$};
  \filldraw[draw = black, fill opacity =0.3, thick, fill = red] (0,0) circle (1cm);
  \draw[thick] (1,0) arc (0:-180:1cm and 0.5cm);
  \draw[red, thick, <-] (60:1) -- +(0.5, 0) node[right, red] {$\pi_{\alpha}$};
\end{scope}}}}\!\!} =
   \begin{cases}
   (-1)^{\frac{a(a+1)}2}  \pi_\beta (X_1, \dots X_N) & 
\begin{minipage}{4.1cm}
\textrm{if $\alpha$ has less than $a$ columns and $\alpha = \frac{\rho(a,N-a)}{\beta}$,}
\end{minipage} \\
     0 & \textrm{else,} \\
   \end{cases}
\end{align}
where $\frac{\rho(a,N-a)}{\beta}$ is the Young diagram obtained by stacking $\rho(a, N-a)$ on the top of $\beta$, 

If $\alpha \in T(a, b)$, $\beta \in T(b,a)$ and $\gamma \in T(a+b, N-a-b)$,
\begin{align} \label{eq:theta-ev}
   \kup{\scriptstyle{\NB{\tikz[scale=1]{\begin{scope}
  \fill[fill opacity =0.3, thick, fill = red]   (1,0) arc (0:-180:1cm) arc (180:0:1cm and 0.5cm);
  \fill[fill opacity =0.3, thick, fill = blue]  (0,0) ellipse (1cm and 0.5cm);
  \node at (0,0) {$\scriptstyle{b}$};
  \draw[thick, dotted] (1,0) arc (0:180:1cm and 0.5cm);
  \fill[fill opacity =0.3, thick, fill = green] (1,0) arc (0:180:1cm)  node[midway, below, opacity =1, black] {$\scriptstyle{a}$} arc (180:0:1cm and 0.5cm);
   \draw[red, thick, <-] (0.5,0) .. controls (0.5,0.3).. +(0.6, 0.3) node[right, red] {$\pi_{\beta}$};
  \fill[fill opacity =0.3, thick, fill = red]   (1,0) arc (0:-180:1cm) node[midway,above, opacity =1] {$\scriptstyle{a+b}$} arc (-180:0:1cm and 0.5cm);
  \fill[fill opacity =0.3, thick, fill = green] (1,0) arc (0:180:1cm)  arc (-180:0:1cm and 0.5cm);
  \draw[thick] (0,0) circle (1cm);
  \draw[thick, ->] (1,0) arc (0:-90:1cm and 0.5cm);
  \draw[thick, -]  (-1,0) arc (-180:-90:1cm and 0.5cm);

   \draw[red, thick, <-] (60:1) -- +(0.6, 0) node[right, red] {$\pi_{\alpha}$};
   \draw[red, thick, <-] (-60:1) -- +(0.6, 0) node[right, red] {$\pi_{\gamma}$};
\end{scope}}}}} =
   \begin{cases}
     (-1)^{(a+b)(a+b+1)/2 + |\alpha|} &
\begin{minipage}{3cm}
\textrm{if $\widehat{\beta}=\alpha$ and $\gamma= \rho(a+b, N-a -b)$,} 
\end{minipage}\\ 
0 &\textrm{else;}
   \end{cases}
\end{align}
\begin{align} \label{eq:neckcutting-ev}
    \kup{\scriptstyle{\NB{\tikz[scale=0.7]{\tdplotsetmaincoords{90}{0}
\begin{scope}[tdplot_main_coords]
  \coordinate (aT) at (1, 0, 3);
  \coordinate (bT) at (-1, 0, 3);
  \coordinate (aB) at (1, 0, -1);
  \coordinate (bB) at (-1, 0, -1);
  \filldraw[draw = black, rounded corners=1pt, thick, fill opacity = 0.3, dotted, fill = red]  (aT) arc (0:-180:1cm and 0.5cm) -- (bB) arc (180:0:1cm and 0.5cm) -- (aT);
  \filldraw[draw = black, rounded corners=1pt, thick, fill opacity = 0.3, fill = red]    (aT) arc (0:180:1cm and 0.5cm) -- (bB) arc (-180:0:1cm and 0.5cm) -- (aT);
  \draw[thick, black, ->-]  (aT) arc (0:-180:1cm and 0.5cm) node[midway, below] {$\scriptstyle{a}$};
  \draw[thick, black, ->-]  (aB) arc (0:-180:1cm and 0.5cm);
\end{scope}
}}}} = \sum_{\alpha \in T(a,N-a)}(-1)^{|\widehat{\alpha}| + \frac{N(N+1)}2}
    \kup{\scriptstyle{\NB{\tikz[scale=0.7]{\tdplotsetmaincoords{90}{0}
\begin{scope}[tdplot_main_coords]
  \coordinate (aT) at (1, 0, 3);
  \coordinate (bT) at (-1, 0, 3);
  \coordinate (aB) at (1, 0, -1);
  \coordinate (bB) at (-1, 0, -1);
  \coordinate (a2) at (1, 0, 0.4);
  \coordinate (b2) at (-1, 0, 0.4);
  \coordinate (a1) at (1, 0, -0.2);
  \coordinate (am) at (1, 0, 0.3);
  \coordinate (b1) at (-1, 0, -0.2);
  \coordinate (M2) at (0, 0, 0.4);
  \coordinate (M1) at (0, 0, -0.2);
  \fill[rounded corners=1pt, thick, fill opacity = 0.3, fill = red]   (aT) arc (0:-180:1cm and 0.5cm) arc (-180:0:1cm);
  \draw[thick]  (aT) arc (0:-180:1cm and 0.5cm) node[midway, below] {$\scriptstyle{a}$};
  \fill[draw = black, rounded corners=1pt, thick, fill opacity = 0.3, fill = red] (aT) arc (0:180:1cm and 0.5cm) arc (-180:0:1cm);
  \filldraw[draw= black, dotted,  rounded corners=1pt, thick, fill opacity = 0.3, fill = red] (aB) arc (0:180: 1cm and 0.5cm) -- (b1) arc (180:0: 1cm and 0.5cm) -- (aB);
  \filldraw[draw= black, rounded corners=1pt, thick, fill opacity = 0.3, fill = yellow] (a1) arc (0:180: 1cm and 0.5cm) -- (b2) arc (180:0: 1cm) -- (a1);
  \fill[blue, opacity = 0.3]  (M1) ellipse (1cm and 0.5cm) node[opacity=1, black] {$\scriptstyle{N}$};
  \filldraw[pattern=north west lines, thick] (M1) ellipse (1cm and 0.5cm);
  \filldraw[draw= black, rounded corners=1pt, thick, fill opacity = 0.3, fill = red] (aB) arc (0:-180: 1cm and 0.5cm) node[midway, above, opacity=1] {$\scriptstyle{a}$} -- (b1) arc (-180:0: 1cm and 0.5cm) -- (aB);
  \filldraw[draw= black, rounded corners=1pt, thick, fill opacity = 0.3, fill = yellow] (a1) arc (0:-180: 1cm and 0.5cm) -- (b2) arc (180:0: 1cm)node[opacity = 1, black,midway, below] {$\scriptstyle{N-a}$} -- (a1);
  \draw[thick, ->-] (a1) arc (0: -180: 1cm and 0.5cm);
  \draw[thick, ->-] (aT) arc (0: -180: 1cm and 0.5cm);
  \draw[thick, ->-] (aB) arc (0: -180: 1cm and 0.5cm);
\end{scope}
\draw[thick, red, <-] (am) -- +(0.5,0) node[right] {$\pi_{\widehat{\alpha}}$};
\draw[thick, red, <-] (aT) -- +(0.5,0) node[right] {$\pi_{{\alpha}}$};}}}},
\end{align}
moreover the terms of the right-hand side are pairwise orthogonal idempotents;
\begin{align} \label{eq:dot-migration}
    \kup{\scriptstyle{\NB{\tikz[scale=1]{\tdplotsetmaincoords{50}{10}
\begin{scope}[tdplot_main_coords]
  \coordinate (aT) at (-1, 1, 1);
  \coordinate (bT) at (-1,-1, 1);
  \coordinate (cT) at ( 1, 0, 1);
  \coordinate (oT) at ( 0, 0, 1);
  \coordinate (aB) at (-1, 1,-1);
  \coordinate (bB) at (-1,-1,-1);
  \coordinate (cB) at ( 1, 0,-1);
  \coordinate (oB) at ( 0, 0,-1);
  \filldraw[thick, draw= black, fill = green, fill opacity =0.3] (aT)-- (oT) node[pos=0.2, below, opacity =1] {$\scriptstyle{a}$} -- (oB) -- (aB) -- (aT);
  \filldraw[thick, draw= black, fill = blue,  fill opacity =0.3] (bT)-- (oT) -- (oB) -- (bB) node[pos = 0.8, above, opacity =1] {$\scriptstyle{b}$} -- (bT);
  \filldraw[thick, draw= black, fill = red,   fill opacity =0.3] (cT)-- (oT) node[midway, below, opacity =1, rotate =-5] {$\scriptstyle{a+b}$} -- (oB) -- (cB) -- (cT);
\end{scope}
  \draw[red, thick, <-] ($(oB)!0.5!(cB)$) -- +(0, -0.55) node[below, red] {$\pi_{\gamma}$};}}}} = \sum_{\alpha, \beta} c^{\gamma}_{\alpha\beta}
    \kup{\scriptstyle{\NB{\tikz[scale=1]{\tdplotsetmaincoords{50}{10}
\begin{scope}[tdplot_main_coords]
  \coordinate (aT) at (-1, 1, 1);
  \coordinate (bT) at (-1,-1, 1);
  \coordinate (cT) at ( 1, 0, 1);
  \coordinate (oT) at ( 0, 0, 1);
  \coordinate (aB) at (-1, 1,-1);
  \coordinate (bB) at (-1,-1,-1);
  \coordinate (cB) at ( 1, 0,-1);
  \coordinate (oB) at ( 0, 0,-1);
  \filldraw[thick, draw= black, fill = green, fill opacity =0.3] (aT)-- (oT) node[pos=0.2, below, opacity =1] {$\scriptstyle{a}$} -- (oB) -- (aB) -- (aT);
  \filldraw[thick, draw= black, fill = blue,  fill opacity =0.3] (bT)-- (oT) -- (oB) -- (bB) node[pos = 0.8, above, opacity =1] {$\scriptstyle{b}$} -- (bT);
  \filldraw[thick, draw= black, fill = red,   fill opacity =0.3] (cT)-- (oT) node[midway, below, opacity =1, rotate =-5] {$\scriptstyle{a+b}$} -- (oB) -- (cB) -- (cT);
\end{scope}
\draw[red, thick, <-] ($(oT)!0.5!(aT)$) -- +(0.7, 0) node[right, red] {$\pi_{\alpha}$};
\draw[red, thick, <-] ($(oB)!0.5!(bB)$) -- +(0, -0.3) node[below, red] {$\pi_{\beta}$};
}}}};
\end{align}
\begin{align}  \label{eq:digon}
    \kup{\scriptstyle{\NB{\tikz[scale=0.7]{\tdplotsetmaincoords{60}{140}
\begin{scope}[tdplot_main_coords]
  \coordinate (aT) at (-1, 0, 2);
  \coordinate (bT) at (+1, 0, 2);
  \coordinate (AT) at (-2.5, 0, 2);
  \coordinate (BT) at (+2.5, 0, 2);
  \coordinate (cT) at (0, +1, 2);
  \coordinate (dT) at (0, -1, 2);
  \coordinate (aB) at (-1, 0, -2);
  \coordinate (bB) at (+1, 0, -2);
  \coordinate (AB) at (-2.5, 0, -2);
  \coordinate (BB) at (+2.5, 0, -2);
  \coordinate (cB) at (0, +1, -2);
  \coordinate (dB) at (0, -1, -2);
  \coordinate (aM) at (-1, 0, 0);
  \coordinate (bM) at (+1, 0, 0);
\draw[thick, -<-] (AT) -- (aT);
\draw[thick, -<-] (bT) -- (BT);
\draw[thick, -<-] (aT) .. controls (dT) .. (bT);
\draw[thick, -<-] (aT) .. controls (cT) .. (bT);
\draw[thick, -<-] (AB) -- (aB);
\draw[thick, -<-] (bB) -- (BB);
\draw[thick, -<-] (aB) .. controls (dB) .. (bB);
\draw[thick, -<-] (aB) .. controls (cB) .. (bB);
\draw[thick, -<-] (aB) -- (aT);
\draw[thick, -<-] (bT) -- (bB);
  \filldraw[draw = black, rounded corners=1pt, thick, fill opacity = 0.3, fill = red]    (AT) -- (aT) node[midway, opacity =1, below, sloped] {$\scriptstyle{a+b}$} -- (aB) -- (AB) -- (AT);
  \filldraw[draw = black, rounded corners=1pt, thick, fill opacity = 0.3, fill = red]    (bT) -- (BT) node[midway, opacity =1, below, sloped] {$\scriptstyle{a+b}$} -- (BB) -- (bB) -- (bT);
  \filldraw[draw = black, rounded corners=1pt, thick, fill opacity = 0.3, fill = blue]   (bT) .. controls (dT) .. (aT) node[near end, below, opacity=1] {$\scriptstyle{a}$} -- (aB) .. controls (dB) .. (bB)-- (bT);
  \filldraw[draw = black, rounded corners=1pt, thick, fill opacity = 0.3, fill = green]   (bT) .. controls (cT) .. (aT) -- (aB)  .. controls (cB) .. (bB) node[near end, above, opacity=1] {$\scriptstyle{b}$} -- (bT);

\end{scope}}}}} = \sum_{\alpha \in T(a,b)}(-1)^{|\widehat{\alpha}|}
    \kup{\scriptstyle{\NB{\tikz[scale=0.7]{\tdplotsetmaincoords{60}{140}
\begin{scope}[tdplot_main_coords]
  \coordinate (aT) at (-1, 0, 2);
  \coordinate (bT) at (+1, 0, 2);
  \coordinate (AT) at (-2.5, 0, 2);
  \coordinate (BT) at (+2.5, 0, 2);
  \coordinate (cT) at (0, +1, 2);
  \coordinate (dT) at (0, -1, 2);
  \coordinate (aB) at (-1, 0, -2);
  \coordinate (bB) at (+1, 0, -2);
  \coordinate (AB) at (-2.5, 0, -2);
  \coordinate (BB) at (+2.5, 0, -2);
  \coordinate (cB) at (0, +1, -2);
  \coordinate (dB) at (0, -1, -2);
  \coordinate (aM) at (-1, 0, 0);
  \coordinate (bM) at (+1, 0, 0);
\draw[thick, -<-] (AT) -- (aT);
\draw[thick, -<-] (bT) -- (BT);
\draw[thick, -<-] (aT) .. controls (dT) .. (bT);
\draw[thick, -<-] (aT) .. controls (cT) .. (bT);
\draw[thick, -<-] (AB) -- (aB);
\draw[thick, -<-] (bB) -- (BB);
\draw[thick, -<-] (aB) .. controls (dB) .. (bB);
\draw[thick, -<-] (aB) .. controls (cB) .. (bB);
\draw[thick, -<-] (bT) .. controls (bM) and (aM) .. (aT);
\draw[thick, -<-] (aB) .. controls (aM) and (bM) .. (bB);
  \filldraw[draw = black, rounded corners=1pt, thick, fill opacity = 0.3, fill = red]    (AT) -- (aT)  .. controls (aM) and (bM) .. (bT) -- (BT) -- (BB) node[midway, opacity =1, right, sloped, rotate= 112 ] {$\scriptstyle{a+b}$} -- (bB) .. controls (bM) and (aM) .. (aB) -- (AB) -- (AT);
  \filldraw[draw = black, rounded corners=1pt, thick, fill opacity = 0.3, fill = blue]    (aT) .. controls (aM) and (bM) .. (bT) .. controls (dT) .. (aT) node [opacity=1, near end, below] {$\scriptstyle{a}$} coordinate[pos=0.3] (alpha); 
  \filldraw[draw = black, rounded corners=1pt, thick, fill opacity = 0.3, fill = green]   (aT) .. controls (aM) and (bM) .. (bT) .. controls (cT) .. (aT) node [opacity=1, near start, below] {$\scriptstyle{b}$} ;
  \filldraw[draw = black, rounded corners=1pt, thick, fill opacity = 0.3, fill = blue]    (aB) .. controls (aM) and (bM) .. (bB) .. controls (dB) .. (aB) node [opacity=1, near end, above] {$\scriptstyle{a}$};
  \filldraw[draw = black, rounded corners=1pt, thick, fill opacity = 0.3, fill = green]   (aB) .. controls (aM) and (bM) .. (bB) .. controls (cB) .. (aB) node [opacity=1, near start, above]  {$\scriptstyle{b}$} coordinate[pos=0.7] (alphah);
\end{scope}
  \draw[thick, red, <-] (alphah) -- +(0.5,0) node[right, red] {$\pi_{\widehat{\alpha}}$};
  \draw[thick, red, <-] (alpha) -- +(-0.5,0) node[left, red] {$\pi_\alpha$};}}}},
\end{align}
moreover the terms of the right-hand side are pairwise orthogonal idempotents;
\begin{align} \label{eq:digonDUR}
    \kup{\scriptstyle{\NB{\tikz[scale=0.7]{\tdplotsetmaincoords{60}{140}
\begin{scope}[tdplot_main_coords]
  \coordinate (aT) at (-1, 0, 2);
  \coordinate (bT) at (+1, 0, 2);
  \coordinate (AT) at (-2.5, 0, 2);
  \coordinate (BT) at (+2.5, 0, 2);
  \coordinate (cT) at (0, +1, 2);
  \coordinate (dT) at (0, -1, 2);
  \coordinate (aB) at (-1, 0, -2);
  \coordinate (bB) at (+1, 0, -2);
  \coordinate (AB) at (-2.5, 0, -2);
  \coordinate (BB) at (+2.5, 0, -2);
  \coordinate (cB) at (0, +1, -2);
  \coordinate (dB) at (0, -1, -2);
  \coordinate (aM) at (-1, 0, 0);
  \coordinate (bM) at (+1, 0, 0);
\draw[thick, -<-] (AT) -- (aT);
\draw[thick, -<-] (bT) -- (BT);
\draw[thick, ->-] (aT) .. controls (dT) .. (bT);
\draw[thick, -<-] (aT) .. controls (cT) .. (bT);
\draw[thick, -<-] (AB) -- (aB);
\draw[thick, -<-] (bB) -- (BB);
\draw[thick, ->-] (aB) .. controls (dB) .. (bB);
\draw[thick, -<-] (aB) .. controls (cB) .. (bB);
\draw[thick, -<-] (aT) -- (aB);
\draw[thick, -<-] (bB) -- (bT);

  \filldraw[draw = black, rounded corners=1pt, thick, fill opacity = 0.3, fill = red]    (AT) -- (aT) node[midway, opacity =1, below, sloped] {$\scriptstyle{a}$} -- (aB) -- (AB) -- (AT);
  \filldraw[draw = black, rounded corners=1pt, thick, fill opacity = 0.3, fill = red]    (bT) -- (BT) node[midway, opacity =1, below, sloped] {$\scriptstyle{a}$} -- (BB) -- (bB) -- (bT);
  \filldraw[draw = black, rounded corners=1pt, thick, fill opacity = 0.3, fill = blue]   (bT) .. controls (dT) .. (aT) node[near end, below, opacity=1] {$\scriptstyle{b}$} -- (aB) .. controls (dB) .. (bB)-- (bT);
  \filldraw[draw = black, rounded corners=1pt, thick, fill opacity = 0.3, fill = green]   (bT) .. controls (cT) .. (aT) -- (aB)  .. controls (cB) .. (bB) node[near end, above, opacity=1] {$\scriptstyle{a+b}$} -- (bT);
\end{scope}}}}} = \!\!\!\!\!\sum_{\alpha \in T(b,N-a -b)} \!\!\!\!\!(-1)^{|\alpha|}
    \kup{\scriptstyle{\NB{\tikz[scale=0.7]{\tdplotsetmaincoords{60}{140}
\begin{scope}[tdplot_main_coords]
  \coordinate (aT) at (-1, 0, 2);
  \coordinate (bT) at (+1, 0, 2);
  \coordinate (AT) at (-2.5, 0, 2);
  \coordinate (BT) at (+2.5, 0, 2);
  \coordinate (cT) at (0, +1, 2);
  \coordinate (dT) at (0, -1, 2);
  \coordinate (aB) at (-1, 0, -2);
  \coordinate (bB) at (+1, 0, -2);
  \coordinate (AB) at (-2.5, 0, -2);
  \coordinate (BB) at (+2.5, 0, -2);
  \coordinate (cB) at (0, +1, -2);
  \coordinate (dB) at (0, -1, -2);
  \coordinate (aM) at (-1, 0, 0);
  \coordinate (bM) at (+1, 0, 0);
\draw[thick, -<-] (AT) -- (aT);
\draw[thick, -<-] (bT) -- (BT);
\draw[thick, ->-] (aT) .. controls (dT) .. (bT);
\draw[thick, -<-] (aT) .. controls (cT) .. (bT);
\draw[thick, -<-] (AB) -- (aB);
\draw[thick, -<-] (bB) -- (BB);
\draw[thick, ->-] (aB) .. controls (dB) .. (bB);
\draw[thick, -<-] (aB) .. controls (cB) .. (bB);
\draw[thick, -<-] (bB) .. controls (bM) and (aM) .. (aB);
\draw[thick, -<-] (aT) .. controls (aM) and (bM) .. (bT);
  \filldraw[draw = black, rounded corners=1pt, thick, fill opacity = 0.3, fill = red]    (AT) -- (aT)  .. controls (aM) and (bM) .. (bT) -- (BT) -- (BB) node[midway, opacity =1, right, sloped, rotate= 112 ] {$\scriptstyle{a}$} -- (bB) .. controls (bM) and (aM) .. (aB) -- (AB) -- (AT);
  \filldraw[draw = black, rounded corners=1pt, thick, fill opacity = 0.3, fill = blue]    (aT) .. controls (aM) and (bM) .. (bT) .. controls (dT) .. (aT) node [opacity=1, near end, below] {$\scriptstyle{b}$} coordinate[pos=0.3] (alpha); 
  \filldraw[draw = black, rounded corners=1pt, thick, fill opacity = 0.3, fill = green]   (aT) .. controls (aM) and (bM) .. (bT) .. controls (cT) .. (aT) node [opacity=1, near start, below] {$\scriptstyle{a+b}$} ;
  \filldraw[draw = black, rounded corners=1pt, thick, fill opacity = 0.3, fill = blue]    (aB) .. controls (aM) and (bM) .. (bB) .. controls (dB) .. (aB) node [opacity=1, near end, above] {$\scriptstyle{b}$};
  \filldraw[draw = black, rounded corners=1pt, thick, fill opacity = 0.3, fill = green]   (aB) .. controls (aM) and (bM) .. (bB) .. controls (cB) .. (aB) node [opacity=1, near start, above]  {$\scriptstyle{a+b}$} coordinate[pos=0.7] (alphah);
\end{scope}
\filldraw[thick, black, pattern = north west lines ]  ($(alphah) + (-0.2,0.5)$) ellipse (0.1 and 0.3);
\draw[thick, black, <- ]  ($(alphah) + (-0.3,0.5)$)  arc (180:-180: 0.1 and 0.3);
\filldraw[thick, draw= black, fill = yellow, fill opacity =0.3] ($(alphah) + (-0.2, 0.8)$) -- +(1.5,0) node[midway, below, opacity = 1, black] {$\scriptstyle{N-a-b}$}  arc (90: -90:0.3) -- +(-1.5, 0) coordinate[midway] (aaa) {} arc (-90:90:0.1 and 0.3); 
  \filldraw[thick, draw= black, fill = yellow, fill opacity =0.3] ($(alphah) + (-0.2, 0.8)$) -- +(1.5,0) node[midway, below, opacity = 1, black] {$\scriptstyle{N-a-b}$}  arc (90: -90:0.3) -- +(-1.5, 0) coordinate[midway] (aaa) {} arc (270:90:0.1 and 0.3); 

  \draw[thick, red, <-] (aaa) -- +(0,-0.5) node[below, red] {$\pi_{\widehat{\alpha}}$};

  \draw[thick, red, <-] (alpha) -- +(-0.5,0) node[left, red] {$\pi_{\alpha}$};}}}},
\end{align}
where the two hashed disks are meant to have label $N$,  moreover the terms of the right-hand side are pairwise orthogonal idempotents;
\begin{align} \label{eq:joint}
    \kup{\scriptstyle{\NB{\tikz[scale=0.7]{\tdplotsetmaincoords{75}{60}
\begin{scope}[tdplot_main_coords]
  \coordinate (aT) at (-1, -1, 3);
  \coordinate (bT) at (-1, 1, 3);
  \coordinate (cT) at (1, 1, 3);
  \coordinate (dT) at (1, -1, 3);
  \coordinate (AT) at (-1, -2, 3);
  \coordinate (BT) at (-1, 2, 3);
  \coordinate (CT) at (1, 2, 3);
  \coordinate (DT) at (1, -2, 3);
  \coordinate (abMt) at (-1, 0, 1);
  \coordinate (abMb) at (-1, 0, -1);
  \coordinate (cdMt) at (1, 0, 1);
  \coordinate (cdMb) at (1, 0, -1);
  \coordinate (O) at (0,0,0);
  \coordinate (aB) at (-1, -1, -3);
  \coordinate (bB) at (-1, 1, -3);
  \coordinate (cB) at (1, 1, -3);
  \coordinate (dB) at (1, -1, -3);
  \coordinate (AB) at (-1, -2, -3);
  \coordinate (BB) at (-1, 2, -3);
  \coordinate (CB) at (1, 2, -3);
  \coordinate (DB) at (1, -2, -3);
\draw[thick, ->-] (AT) -- (aT);
\draw[thick, ->-] (DT) -- (dT);
\draw[thick, ->-] (cT) -- (CT);
\draw[thick, ->-] (bT) -- (BT);
\draw[thick, ->-] (aT) -- (bT);
\draw[thick, ->-] (dT) -- (cT);
\draw[thick, ->-] (aT) -- (dT);
\draw[thick, ->-] (bT) -- (cT);
\draw[thick, ->-] (AB) -- (aB);
\draw[thick, ->-] (DB) -- (dB);
\draw[thick, ->-] (cB) -- (CB);
\draw[thick, ->-] (bB) -- (BB);
\draw[thick, ->-] (aB) -- (bB);
\draw[thick, ->-] (dB) -- (cB);
\draw[thick, ->-] (aB) -- (dB);
\draw[thick, ->-] (bB) -- (cB);

\draw[thick, ->-] (aB) -- (aT);
\draw[thick, ->-] (cT) -- (cB);
\draw[thick, ->-] (dT) -- (dB);
\draw[thick, ->-] (bB) -- (bT);

  \filldraw[draw = black, rounded corners=1pt, thick, fill opacity = 0.3, fill = red]     (AT) -- (aT) -- (aB) -- (AB) -- (AT) node[pos = 0.5, right, opacity =1, rotate =7] {$\scriptstyle{k+s}$} ;
  \filldraw[draw = black, rounded corners=1pt, thick, fill opacity = 0.3, fill = orange]  (BT) -- (bT) -- (bB) -- (BB) -- (BT)node[pos = 0.05, left, opacity =1, rotate =7] {$\scriptstyle{k-r}$} ;
  \filldraw[draw = black, rounded corners=1pt, thick, fill opacity = 0.3, fill = yellow]  (aT) -- (bT) node[midway, below, opacity =1] {$\scriptstyle{k}$} -- (bB) -- (aB) -- (aT);
  \filldraw[draw = black, rounded corners=1pt, thick, fill opacity = 0.3, fill = green]  (cT) -- (bT) node[midway, below, opacity =1] {$\scriptstyle{r}$} -- (bB) -- (cB) -- (cT);
  \filldraw[draw = black, rounded corners=1pt, thick, fill opacity = 0.3, fill = blue]   (dT) -- (aT) node[midway, below, opacity =1] {$\scriptstyle{s}$} -- (aB) -- (dB) -- (dT);
  \filldraw[draw = black, rounded corners=1pt, thick, fill opacity = 0.3, fill = yellow]  (cT) -- (dT) -- (dB) -- (cB) node[midway, above, opacity =1] {$\scriptstyle{l}$} --  (cT); 
  \filldraw[draw = black, rounded corners=1pt, thick, fill opacity = 0.3, fill = orange]  (DT) -- (dT) -- (dB) -- (DB) -- (DT) node[pos = 0.95, right, opacity =1, rotate =7] {$\scriptstyle{l-s}$} ;
  \filldraw[draw = black, rounded corners=1pt, thick, fill opacity = 0.3, fill = red]     (CT) -- (cT) -- (cB) -- (CB) -- (CT) node[pos = 0.5, left, opacity =1, rotate =7] {$\scriptstyle{l+r}$} ; 
\end{scope}}}}} = \sum_{\alpha \in T(r,s)}(-1)^{|\widehat{\alpha}|}
    \kup{\scriptstyle{\NB{\tikz[scale=0.7]{\tdplotsetmaincoords{75}{60}
\begin{scope}[tdplot_main_coords]
  \coordinate (aT) at (-1, -1, 3);
  \coordinate (bT) at (-1, 1, 3);
  \coordinate (cT) at (1, 1, 3);
  \coordinate (dT) at (1, -1, 3);
  \coordinate (AT) at (-1, -2, 3);
  \coordinate (BT) at (-1, 2, 3);
  \coordinate (CT) at (1, 2, 3);
  \coordinate (DT) at (1, -2, 3);
  \coordinate (abMt) at (-1, 0, 1);
  \coordinate (abMb) at (-1, 0, -1);
  \coordinate (cdMt) at (1, 0, 1);
  \coordinate (cdMb) at (1, 0, -1);
  \coordinate (O) at (0,0,0);
  \coordinate (aB) at (-1, -1, -3);
  \coordinate (bB) at (-1, 1, -3);
  \coordinate (cB) at (1, 1, -3);
  \coordinate (dB) at (1, -1, -3);
  \coordinate (AB) at (-1, -2, -3);
  \coordinate (BB) at (-1, 2, -3);
  \coordinate (CB) at (1, 2, -3);
  \coordinate (DB) at (1, -2, -3);
\draw[thick, ->-] (AT) -- (aT);
\draw[thick, ->-] (DT) -- (dT);
\draw[thick, ->-] (cT) -- (CT);
\draw[thick, ->-] (bT) -- (BT);
\draw[thick, ->-] (aT) -- (bT);
\draw[thick, ->-] (dT) -- (cT);
\draw[thick, ->-] (aT) -- (dT);
\draw[thick, ->-] (bT) -- (cT);
\draw[thick, ->-] (AB) -- (aB);
\draw[thick, ->-] (DB) -- (dB);
\draw[thick, ->-] (cB) -- (CB);
\draw[thick, ->-] (bB) -- (BB);
\draw[thick, ->-] (aB) -- (bB);
\draw[thick, ->-] (dB) -- (cB);
\draw[thick, ->-] (aB) -- (dB);
\draw[thick, ->-] (bB) -- (cB);

\draw[thick, ->-] (abMb) -- (cdMb);
\draw[thick, ->-] (cdMt) -- (abMt);

\draw[thick, ->-] (abMb) -- (abMt);
\draw[thick, ->-] (cdMt) -- (cdMb);

\draw[thick, ->-] (aB) -- (abMb);
\draw[thick, ->-] (cT) -- (cdMt);
\draw[thick, ->-] (dT) -- (cdMt);
\draw[thick, ->-] (bB) -- (abMb);

\draw[thick, ->-] (abMt) -- (aT);
\draw[thick, ->-] (cdMb) -- (cB);
\draw[thick, ->-] (cdMb) -- (dB);
\draw[thick, ->-] (abMt) -- (bT);

  \filldraw[draw = black, rounded corners=1pt, thick, fill opacity = 0.3, fill = red]     (AT) -- (aT) -- (abMt) -- (abMb) -- (aB) -- (AB) -- (AT) node[pos = 0.5, right, opacity =1, rotate =7] {$\scriptstyle{k+s}$} ;
  \filldraw[draw = black, rounded corners=1pt, thick, fill opacity = 0.3, fill = orange]  (BT) -- (bT) -- (abMt) -- (abMb) -- (bB) -- (BB) -- (BT)node[pos = 0.05, left, opacity =1, rotate =7] {$\scriptstyle{k-r}$} ;
  \filldraw[draw = black, rounded corners=1pt, thick, fill opacity = 0.3, fill = yellow]  (aB) -- (bB) node[midway, above, opacity =1] {$\scriptstyle{k}$} -- (abMb) -- (aB);
  \filldraw[draw = black, rounded corners=1pt, thick, fill opacity = 0.3, fill = yellow]  (aT) -- (bT) node[midway, below, opacity =1] {$\scriptstyle{k}$} -- (abMt) -- (aT); 
  \draw [red, thick, <-] ($(bT)!0.40!(cdMt)$) -- +(0,0,1.5) node[above] {$\pi_{\alpha}$};
  \draw [red, thick, <-] ($(dB)!0.40!(abMb)$) -- +(0,0,-2) node[below] {$\pi_{\widehat{\alpha}}$};
  \filldraw[draw = black, rounded corners=1pt, thick, fill opacity = 0.3, fill = green]  (bT) -- (abMt) -- (cdMt) -- (cT) -- (bT) node[midway, below, opacity =1] {$\scriptstyle{r}$} ;
  \filldraw[draw = black, rounded corners=1pt, thick, fill opacity = 0.3, fill = green]  (bB) -- (abMb) -- (cdMb) -- (cB) -- (bB) node[midway, above, opacity =1] {$\scriptstyle{r}$} ;
  \filldraw[draw = black, rounded corners=1pt, thick, fill opacity = 0.3, fill = blue]   (aT) -- (abMt) -- (cdMt) -- (dT) -- (aT) node[midway, below, opacity =1] {$\scriptstyle{s}$} ;
  \filldraw[draw = black, rounded corners=1pt, thick, fill opacity = 0.3, fill = blue]   (aB) -- (abMb) -- (cdMb) -- (dB) -- (aB) node[midway, above, opacity =1] {$\scriptstyle{s}$} ;
  \filldraw[draw = black, rounded corners=1pt, thick, fill opacity = 0.3, fill = gray]   (abMb) -- (cdMb) -- (cdMt) -- (abMt)-- (abMb);
  \node[rotate = -15] at (O) {$\scriptstyle{r+s}$};
  \filldraw[draw = black, rounded corners=1pt, thick, fill opacity = 0.3, fill = yellow]  (cB) -- (dB) node[midway, above, opacity =1] {$\scriptstyle{l}$}-- (cdMb) -- (cB); 
  \filldraw[draw = black, rounded corners=1pt, thick, fill opacity = 0.3, fill = yellow]  (cT) -- (dT) node[midway, below, opacity =1] {$\scriptstyle{l}$}-- (cdMt) -- (cT); 
  \filldraw[draw = black, rounded corners=1pt, thick, fill opacity = 0.3, fill = orange]  (DT) -- (dT) -- (cdMt) -- (cdMb) -- (dB) -- (DB) -- (DT) node[pos = 0.95, right, opacity =1, rotate =7] {$\scriptstyle{l-s}$} ;
  \filldraw[draw = black, rounded corners=1pt, thick, fill opacity = 0.3, fill = red]     (CT) -- (cT) -- (cdMt) -- (cdMb) -- (cB) -- (CB) -- (CT) node[pos = 0.5, left, opacity =1, rotate =7] {$\scriptstyle{l+r}$} ; 
\end{scope}}}}},
\end{align}
moreover the terms of the right-hand side are pairwise orthogonal idempotents, this last relation is to be compared with \cite[Relation 2.9]{MR3263166}.
\end{prop}

\section{Applications and conjectures}
\label{sec:applications}
\subsection{Evaluation of the variables}
\label{sec:relat-with-appr}

\emph{Generalities on the evaluation.}
In this subsection, we inspect the behavior of the theory if one evaluates the variables $X_1,\dots, X_N$ on complex numbers $z_1, \dots, z_N:= \mathbf{z}$ ($\mathbf{z}$ is thought of as a multi-set). This is closely related to the work by Rose and Wedrich \cite{2015arXiv150102567R}.

The first thing to observe is that if the variables $z_1, \dots, z_N$ are not 2 by 2 distinct, the evaluation of colored foams make no sense. However, we can still evaluate $\kup{\bullet}$. 
Another big difference is that unless $z_1 = \dots =z_N =0$, the theory is not graded anymore but rather filtered. 

Using this new evaluation and the universal construction one can construct a functor $\Fz\colon \FoamN \to \CC$-$\mathsf{vect}_{\mathrm{filt}}$. The categories $\PolN$ and $\CC$-$\mathsf{vect}_{\mathrm{filt}}$ are related by the functor 
\[
  \begin{array}{crcl}
    \Tz \colon & \PolN& \to &\CC\textrm{-}\mathsf{vect}_{\mathrm{filt}} \\
&    M\in \mathrm{ob}(\PolN)& \mapsto & \CC \otimes_{\ZZ[X_1, X_2, \dots, X_N]} M \\ 
&    f\in \mathrm{HOM}(M,N) &\mapsto& f_{X_i \to z_i}, \\
  \end{array}
\]
where $\CC$  is seen as $\ZZ[X_1, X_2, \dots, X_N]$-module by evaluating $X_i$ on $z_i$.

\begin{prop}\label{prop:categoricalnonsense}
  The functors $\Fz$ and $\Tz \circ \F$ are isomorphic as monoidal functors.
\end{prop}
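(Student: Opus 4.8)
The plan is to construct, for each $\sll_N$-MOY-graph $\Gamma$, a natural isomorphism $\eta_\Gamma\colon \Tz\F(\Gamma)\to\Fz(\Gamma)$ and then to check that $\eta$ respects the disjoint-union monoidal structures. Write $R:=\ZZ[X_1,\dots,X_N]$. First I would compare the two constructions at the level of the naive, pre-quotient functors. Let $\widetilde{\Fz}$ denote the $\CC$-linear analogue of $\widetilde{\F}$, i.e.\ the functor sending $\Gamma$ to the free $\CC$-vector space on the set of $\emptyset$-foams-$\Gamma$. Since $\widetilde{\F}(\Gamma)$ is the free $R$-module on that same set, there is a canonical identification $\widetilde{\Fz}(\Gamma)=\CC\otimes_R\widetilde{\F}(\Gamma)$ (the grading shifts $\{d_N(G)\}$ becoming the filtration). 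The single compatibility that drives everything is that, for each $G\in\mathrm{HOM}_{\FoamN}(\Gamma,\emptyset)$, the $\mathbf z$-evaluation map and the generic one are related by $\phi_G^{\mathbf z}=\phi_G|_{X_i\to z_i}$, which is immediate from the definition $\phi_G^{\mathbf z}(F)=\kup{G\circ F}|_{X_i\to z_i}$.

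Next I would produce a canonical surjection. Because tensoring is right exact, $\Tz\F(\Gamma)=\CC\otimes_R\bigl(\widetilde{\F}(\Gamma)/\bigcap_G\ker\phi_G\bigr)$ is the quotient of $\widetilde{\Fz}(\Gamma)$ by the $\CC$-span $\overline N$ of the image of $\bigcap_G\ker\phi_G$, while by the same universal construction $\Fz(\Gamma)=\widetilde{\Fz}(\Gamma)/N_{\mathbf z}$ with $N_{\mathbf z}:=\bigcap_G\ker\phi_G^{\mathbf z}$. The compatibility above gives $\overline N\subseteq N_{\mathbf z}$: if $\phi_G(x)=0$ for all $G$, then $\phi_G^{\mathbf z}(1\otimes x)=\phi_G(x)|_{X_i\to z_i}=0$ for all $G$. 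Hence there is a canonical $\CC$-linear surjection $\eta_\Gamma\colon\Tz\F(\Gamma)\twoheadrightarrow\Fz(\Gamma)$.

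The heart of the matter is injectivity, and this is where I would invoke Theorem~\ref{thm:main} through Remark~\ref{rmk:orthbase}. Choose a basis $(b_i)_i$ of the free $R$-module $\F(\Gamma)$ together with the dual basis $(b_i^\star)_i$ of Remark~\ref{rmk:orthbase}, realised as linear combinations of foams and thus inducing $\CC$-linear maps $\phi_{b_i^\star}^{\mathbf z}\colon\Fz(\Gamma)\to\CC$, and satisfying $b_i^\star(b_j)=\delta_{ij}$. Then $(1\otimes b_i)_i$ is a $\CC$-basis of $\Tz\F(\Gamma)$. Applying $\Tz$ to the identity $b_i^\star(b_j)=\delta_{ij}$ and using that the Kronecker symbols are integer constants unchanged by $X_i\mapsto z_i$, I obtain $\phi_{b_i^\star}^{\mathbf z}\bigl(\eta_\Gamma(1\otimes b_j)\bigr)=\delta_{ij}$. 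Thus the vectors $\eta_\Gamma(1\otimes b_i)$ are $\CC$-linearly independent; since $\eta_\Gamma$ is onto and sends a basis to linearly independent elements, it is bijective. The main obstacle is exactly this no-dimension-drop step: specialising the variables could a priori collapse $\Fz(\Gamma)$, and the point is that one has an \emph{orthonormal} pairing (not merely a nondegenerate one, whose Gram matrix might have non-constant entries degenerating at a special $\mathbf z$). The genuine work is therefore already packaged into the explicit dual basis produced by the proof of Theorem~\ref{thm:main}.

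Finally I would check that $\eta$ is a monoidal natural isomorphism. Naturality in a foam $F\colon\Gamma_0\to\Gamma_1$ is formal: both $\Tz\F(F)$ and $\Fz(F)$ are induced by the same assignment $G\mapsto F\circ G$ on the common free space $\widetilde{\Fz}$, and $\eta$ is defined by passing to the respective quotients, so the comparison square commutes. For monoidality, the disjoint-union structures on $\F$ and $\Fz$ both stem from the multiplicativity $\kup{F_1\sqcup F_2}=\kup{F_1}\kup{F_2}$ of the closed evaluation, while $\Tz$ is monoidal since base change identifies $\CC\otimes_R(M\otimes_R M')$ with $(\CC\otimes_R M)\otimes_\CC(\CC\otimes_R M')$. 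Taking product bases $b_i\otimes b_j$ on $\F(\Gamma_1\sqcup\Gamma_2)$, the components $\eta_{\Gamma_1\sqcup\Gamma_2}$ and $\eta_{\Gamma_1}\otimes\eta_{\Gamma_2}$ agree on generators, so $\eta$ intertwines the monoidal constraints, completing the proof that $\Fz\cong\Tz\circ\F$ as monoidal functors.
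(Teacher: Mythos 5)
Your proposal is correct and rests on exactly the same key idea as the paper's proof: both use the dual basis $(b_i^\star)$ of Remark~\ref{rmk:orthbase}, realised as $\emptyset$-foams-$\Gamma$, to pair against the specialised classes and conclude that the canonical comparison map cannot collapse anything (the paper phrases this as the well-definedness of the inverse natural transformation $\tau_2$, you phrase it as injectivity of the surjection $\eta_\Gamma$, which is the same statement). Your write-up is somewhat more explicit about the pre-quotient identification, naturality and monoidality, but the mathematical content coincides with the paper's argument.
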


\begin{proof}
Since $\F$ and $\Fz$ are defined by universal construction, for every $\sll_N$-MOY-graph $\Gamma$, the elements of $\Fz(\Gamma)$ and $\F(\Gamma)$ can be thought of as (linear combinations of) foams. Furthermore, if a linear combination of foams in $\F(\Gamma)$ is equal to $0$ then the same linear combination (with the variable $X_\bullet$ evaluated in $\mathbf{z}$) is equal to $0$ in $\Fz(\Gamma)$. This correspondence defines a natural transformation $\tau_1\colon:   \Tz\circ \F \to \Fz$. To define $\tau_2$ the analogue (and inverse) natural transformation from $\Fz$ to $\Tz\circ \F$, we need to know that if a $\CC$-linear combination of foam $f$ is equal to $0$ in $\Fz(\Gamma)$, then the same linear combination is equal to $0$ in $\CC\otimes _{X_i \to z_i} \F(\Gamma)$. In $\F(\Gamma)$, we can find a basis $(f_i)_{i \in I}$ and a dual basis $(f_i^{\star})_{i\in I}$  (see Remark~\ref{rmk:orthbase}), we write $f = \sum P_i(X_1, \dots, X_N)f_i$ for some polynomial $P_\bullet$ in $\CC[X_1, \dots X_N]$. Evaluating $f_i^*$ on $f$, we deduce that $P_i(z_1, \dots, z_N)=0$ for any $i$ in $I$. This implies that $f$ is equal to $0$ in $\CC\otimes _{X_i \to z_i} \F(\Gamma)$. 
\end{proof}

\emph{The graded case.}
In this paragraph we are interested in the functor $\F_{\mathbf{0}}$. This still makes sense over $\ZZ$, hence we consider $\F_{\mathbf{0}}$ to be a functor from $\FoamN$ to $\ZZ$-$\mathsf{mod}_{\mathrm{gr}}$.
In this paper we were only concerned with closed $\sll_N$-MOY-graphs. The functor $\F_{\mathbf{0}}$ (just like the functor $\F$) could be easily promoted to a 2-functor from the 2-category of $\sll_N$-foams with corners to the $2$-category of graded $\ZZ$-algebras  (see \cite{MR1928174} and \cite{MR2174270} for the $\sll_2$-case and \cite{2012arXiv1206.2118M} and \cite{LHRThese} for the $\sll_3$-case and \cite{MR3198835} for the $\sll_N$-case). In \cite{queffelec2014mathfrak}, Queffelec and Rose construct a 2-functor $\FQR$ from the 2-category $N$-$\mathsf{Ladder}$ of $N$-ladders (which is a non-full sub-$2$-category of the $2$-category of $\sll_N$-foams with corners) to the $2$-category of graded $\ZZ$-algebras. 

\begin{prop}\label{prop:Z0QR}
  The functors $\FQR$ and the functor $\F_{\mathbf{0}}$ restricted to $N$-$\mathsf{Ladder}$ are isomorphic.
\end{prop}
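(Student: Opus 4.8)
The plan is to reduce the comparison of the two $2$-functors to a comparison of two closed-foam evaluations, and then to invoke the fact that the Queffelec--Rose relations determine such an evaluation uniquely. Both $\FQR$ and $\F_{\mathbf{0}}$ are instances of the universal construction of \cite{MR1362791}: the functor $\F_{\mathbf{0}}$ is built from our evaluation $\kup{\bullet}_N$ specialized at $X_1 = \dots = X_N = 0$, while (as is shown in \cite{queffelec2014mathfrak}) $\FQR$ is built from the Kapustin--Li evaluation of closed $\sll_N$-foams \cite{MR2039036}. Since the universal construction of Definitions~\ref{dfn:Foam} and \ref{dfn:PolN} depends only on the ground-ring values assigned to closed foams, it suffices to prove that the two closed-foam evaluations agree on every closed $\sll_N$-foam (equivalently, on every closed foam arising as a $2$-endomorphism of the unit in $N$-$\mathsf{Ladder}$).

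First I would recall from \cite{queffelec2014mathfrak} the explicit finite list of local relations (sphere and theta evaluations, neck-cutting, digon removal, associativity, dot migration, and the square/MOY relation) which they prove suffice to reduce any closed foam to an element of the ground ring, and which therefore characterize the evaluation once the normalization $\kup{\emptyset} = 1$ is fixed. Then I would check, relation by relation, that $\kup{\bullet}_N$ specialized at $X = 0$ satisfies each of them. The bulk of this verification is already available: the associativity relation is Corollary~\ref{cor:matveev-pierigliani} and the square relation is Proposition~\ref{prop:complicatedfoam}, while the sphere, theta, neck-cutting, dot-migration and digon relations are precisely those collected in Proposition~\ref{prop:additionalrelation}. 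Setting $X_i = 0$ in these statements specializes our $\ZZ[X_1, \dots, X_N]^{\mathfrak{S}_N}$-valued relations to the graded $\ZZ$-valued relations of Queffelec--Rose, so that this step amounts to matching normalizations and sign conventions rather than proving anything new.

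Once the two evaluations are seen to satisfy the same complete set of relations with the same normalization, the uniqueness built into the Queffelec--Rose evaluation forces them to coincide on all closed foams. Consequently the pairings $\phi_G$ defining the two universal constructions coincide, the kernels $\bigcap_G \ker \phi_G$ agree, and the state spaces $\F_{\mathbf{0}}(\Gamma)$ and $\FQR(\Gamma)$ are canonically identified; since both functors act on a foam by post-composition, this identification is natural in $\Gamma$ and compatible with the monoidal structure, yielding the desired isomorphism of $2$-functors on $N$-$\mathsf{Ladder}$. The main obstacle I anticipate is not conceptual but a matter of bookkeeping: carefully aligning the conventions of the two papers—the orientation and cyclic-ordering data on bindings, the degree shifts, and above all the signs—so that our relations at $X = 0$ are \emph{literally} the Queffelec--Rose relations and not merely projectively equal to them. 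A secondary point to pin down is that \cite{queffelec2014mathfrak} indeed realizes $\FQR$ as the universal construction attached to its closed-foam evaluation, so that comparing evaluations genuinely compares the two $2$-functors.
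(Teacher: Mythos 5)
Your proposal matches the paper's own (sketched) proof: both arguments reduce the statement to verifying that the closed-foam evaluation $\kup{\bullet}$ specialized at $X_1=\dots=X_N=0$ satisfies the complete list of local relations (3.8)--(3.20) of Queffelec--Rose, invoking Lemma~\ref{lem-col-matveev-pierigliani}, Proposition~\ref{prop:additionalrelation} and Proposition~\ref{prop:complicatedfoam} for the substantive ones and leaving the remaining two to a direct computation. The only minor inaccuracy is that $\FQR$ is constructed via categorified skew Howe duality rather than from the Kapustin--Li evaluation, but this does not affect your argument, which only uses that the Queffelec--Rose relations determine the evaluation of closed foams uniquely.
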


\begin{proof}[Sketch of the proof]
The only things to prove for this proposition is that our evaluation of foams fulfills the local relations (3.8)-(3.20) in \cite{queffelec2014mathfrak}:
  \begin{itemize}
  \item Relation (3.8) follows from Lemma~\ref{lem-col-matveev-pierigliani},
  \item Relation (3.9) is relation (\ref{eq:dot-migration}) in Proposition~\ref{prop:additionalrelation},
  \item Relation (3.10) follows from relations~(\ref{eq:theta-ev}) and (\ref{eq:neckcutting-ev}) in Proposition~\ref{prop:additionalrelation},
  \item Relation (3.11) follows from relation~(\ref{eq:digon}) in Proposition~\ref{prop:additionalrelation},
  \item Relation (3.12)  is obvious,
  \item Relations (3.15) and (3.16) are special cases from Proposition~\ref{prop:complicatedfoam},
  \item Relations (3.17) -- (3.20) are obvious.
  \end{itemize}
Hence, only relations (3.13) and (3.14) are to be checked. This is an easy\footnote{This is \emph{really} easy since colorings on both sides are in one-one correspondence.} computation.
\end{proof}

\begin{cor}\label{cor:slnhomology}  
  The functor $\F$ together with the classical \emph{hypercube of resolution} technology developed by Bar-Natan in \cite{MR1917056} (see as well \cite[Section 4.2]{queffelec2014mathfrak} for the $\sll_N$ case) gives rise to an homology theory which categorifies the $\sll_n$-link invariant.
\end{cor}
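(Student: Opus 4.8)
The plan is to run the standard cube-of-resolutions construction and to reduce Reidemeister invariance to the local foam relations already at our disposal. First I would fix a diagram $D$ of an oriented link whose components carry labels, and replace each crossing by the local two-term complex of webs prescribed by Bar-Natan's formalism \cite{MR1917056}: a positive (resp.\ negative) crossing between strands labelled $a$ and $b$ is sent to the complex whose two objects are the MOY resolutions of the crossing, joined by the relevant zip/unzip foams, with grading shifts normalized so that the graded Euler characteristic reproduces the $\sll_N$ skein relation. Taking the tensor product of these local complexes over all crossings produces a bounded complex $\llbracket D\rrbracket$ in the category of complexes over (the additive graded closure of) $\FoamN$. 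Applying the functor $\F$ of Theorem~\ref{thm:main} objectwise, and to the connecting foams on morphisms, yields a bounded complex $C(D)\eqdef \F(\llbracket D\rrbracket)$ of finitely generated graded $\ZZ[X_1,\dots,X_N]$-modules; its homology $H(D)$ is the proposed invariant, and everything is considered in the homotopy category $\mathrm{Kom}$.

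Next I would verify that $C(D)$ categorifies the polynomial. By Theorem~\ref{thm:main}, for every MOY graph $\Gamma$ occurring in the cube the graded rank of $\F(\Gamma)$ equals $\kup{\Gamma}_N$. Since the local crossing complexes were set up precisely so that their graded Euler characteristics realize the $\sll_N$ skein relation, the graded Euler characteristic of $C(D)$ equals the state sum defining the Reshetikhin--Turaev $\sll_N$ polynomial of $D$, read off from the MOY evaluation of the resolutions.

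The heart of the matter is homotopy invariance under the Reidemeister moves R1, R2, R3 (in their oriented, labelled, ladder forms). Here I would follow Queffelec--Rose \cite[Section~4.2]{queffelec2014mathfrak}: each invariance is established by an explicit chain of local simplifications --- delooping, Gaussian elimination, and the application of a fixed finite list of local foam relations --- performed inside $\mathrm{Kom}$. The essential point is that every local relation invoked in that argument is one we have already proved for $\F$: Corollary~\ref{cor:matveev-pierigliani} (associativity), Proposition~\ref{prop:complicatedfoam} (the square relation categorifying \eqref{eq:relsquare3}), and the relations of Proposition~\ref{prop:additionalrelation}, and moreover these hold integrally and equivariantly over $\ZZ[X_1,\dots,X_N]$, not merely after specialization. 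For the graded functor, Proposition~\ref{prop:Z0QR} already identifies $\F_{\mathbf{0}}$ with $\FQR$ on ladders, so the invariance is literally their theorem; for the equivariant $\F$ the same homotopies go through verbatim, since they are assembled only from the foams witnessing these local relations, which $\F$ evaluates correctly. Hence each Reidemeister move induces a homotopy equivalence, and $H(D)$ depends only on the oriented link type.

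The main obstacle is bookkeeping rather than conceptual. One must check that the finite list of local relations used in the Queffelec--Rose invariance argument is entirely contained in, or immediately derivable from, those of Propositions~\ref{prop:complicatedfoam} and~\ref{prop:additionalrelation}, and that the grading shifts in the local crossing complexes are normalized so that the Euler characteristic matches the chosen normalization of the $\sll_N$ polynomial. Both are routine given the explicit dictionary furnished by Proposition~\ref{prop:Z0QR}, and no new foam computation is required beyond those already carried out in Section~\ref{sec:categ-moy-calc}.
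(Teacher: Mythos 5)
Your proposal is correct and follows essentially the same route the paper intends: the corollary is stated without a written proof precisely because, once Proposition~\ref{prop:Z0QR} identifies $\F_{\mathbf{0}}$ with $\FQR$ and the local relations of Corollary~\ref{cor:matveev-pierigliani}, Proposition~\ref{prop:complicatedfoam} and Proposition~\ref{prop:additionalrelation} are in hand, the cube-of-resolutions construction and the Queffelec--Rose invariance argument apply verbatim. Your elaboration of the Euler-characteristic check via Theorem~\ref{thm:main} and of the bookkeeping needed to transport the homotopies to the integral equivariant setting matches the paper's intent.
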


\emph{The other cases.}
We now consider the general case and, just as in \cite{2015arXiv150102567R}, we ignore the filtrations. We will need to work with $\F$ for different values of $N$, so that we write $\F^N$ instead of $\F$.

\begin{lem}\label{lem:evallsamevalue}
  If $\mathbf{z} = {z_1^N}$ and we forget about the filtration (or graduation), the functors $\F^N_{\mathbf{0}}$ and $\F^N_{\mathbf{z}}$ are isomorphic.
\end{lem}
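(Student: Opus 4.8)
The plan is to reduce everything to a single elementary observation: the evaluation of a closed foam behaves in a controlled way under the global translation $X_i \mapsto X_i + z_1$ of the variables. Write $\sigma$ for the automorphism of $\CC[X_1,\dots,X_N]$ sending $X_i$ to $X_i + z_1$, so that evaluating a polynomial at $\mathbf z = z_1^N$ equals first applying $\sigma$ and then evaluating at $\mathbf 0$. By Proposition~\ref{prop:categoricalnonsense} we have $\Fz \cong \Tz\circ\F$ and, as a special case, $\F_{\mathbf 0}\cong \mathcal T_{\mathbf 0}\circ\F$, so it is enough to compare the two base changes $\mathcal T_{\mathbf 0}\circ\F$ and $\Tz\circ\F$, that is, the fibres of the free $\CC[X_1,\dots,X_N]$-modules $\F(\Gamma)$ at the points $\mathbf 0$ and $z_1^N$.

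First I would record how $\kup{\cdot}$ transforms under $\sigma$. In the defining formula $\kup{F,c} = (-1)^{s(F,c)} P(F,c)/Q(F,c)$, the sign $s(F,c)$ does not involve the variables, and $Q(F,c) = \prod_{i<j}(X_i - X_j)^{\chi(F_{ij}(c))/2}$ depends only on the differences $X_i - X_j$; hence both are invariant under $\sigma$, and only $P(F,c) = \prod_f P_f(c(f))$ is affected. Since each decoration $P_f$ is a symmetric polynomial, replacing every variable by its $\sigma$-translate amounts to applying the substitution $\tau\colon t_j \mapsto t_j + z_1$ to $P_f$, and $\tau$ preserves symmetry. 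Writing $\mathfrak t(F)$ for the same decorated foam with each facet decoration $P_f$ replaced by $\tau(P_f)$, a termwise comparison yields $\kup{F}(X_1+z_1,\dots,X_N+z_1) = \kup{\mathfrak t(F)}(X_1,\dots,X_N)$ for every closed decorated foam $F$, and in particular $\kup{F}(\mathbf 0) = \kup{\mathfrak t(F)}(z_1^N)$. For undecorated foams this specialises to the statement that $\kup{F}$ is translation invariant.

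The second step is to organise this into a statement about functors. The assignment $F \mapsto \mathfrak t(F)$ is the identity on objects and, because $\tau$ is a ring endomorphism of the decoration algebra and facet decorations multiply when facets are glued, it is compatible with the composition of foams; hence $\mathfrak t$ is an automorphism of $\FoamN$, with inverse given by the shift $t_j\mapsto t_j - z_1$. The identity $\mathrm{ev}_{\mathbf 0} = \mathrm{ev}_{z_1^N}\circ\mathfrak t$ on closed foams then matches the two families of pairings $\phi_G$ entering the universal construction: for foams $G$ and $H$ one has $\kup{H\circ G}(\mathbf 0) = \kup{\mathfrak t(H)\circ\mathfrak t(G)}(z_1^N)$, so $\mathfrak t$ carries the radical defining $\F_{\mathbf 0}(\Gamma)$ isomorphically onto the radical defining $\F_{\mathbf z}(\Gamma)$ and descends to isomorphisms $\F_{\mathbf 0}(\Gamma)\to\F_{\mathbf z}(\Gamma)$. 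Forgetting the grading is forced at this point, since $\tau$ (and hence $\mathfrak t$) lowers degrees and is only a filtered, not graded, isomorphism.

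The part I expect to require the most care is precisely this transfer through the universal construction. One must check that the decoration shift is genuinely functorial (that it respects the gluing of facets and the linearity in decorations used to present $\widetilde{\F}$), and, more delicately, that the descended maps assemble into a \emph{natural} transformation rather than a merely pointwise isomorphism. The subtle point is that $\mathfrak t$ changes the action of dotted facets, so naturality should be read as the identity $\F_{\mathbf 0}\cong\F_{\mathbf z}\circ\mathfrak t$ with $\mathfrak t$ an object-fixing autoequivalence of the source; confirming that this is what ``isomorphic after forgetting the filtration'' means, and that no grading-sensitive relation obstructs the comparison, is where the genuine work lies.
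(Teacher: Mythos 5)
Your proposal is correct and follows essentially the same route as the paper, whose proof is exactly the observation that the change of variables $X_i \mapsto X_i - z_1$ affects only the $P$-part of the evaluation (the sign and the factor $Q$ depend only on Euler characteristics, circle counts, and differences $X_i - X_j$) and does not respect the grading. You have simply spelled out in detail the transfer of this change of variables through the universal construction, which the paper leaves implicit.
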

\begin{proof}
  This simply comes from a change of variables: $X_i \mapsto X_i -z_1$ in the evaluation of foams. Note that it only affects the $P$-part of the evaluation and that it does not respect the graduation.
\end{proof}

\begin{dfn}\label{dfn:symlagrangeidempotent}
  Let $y_1, \dots y_k$ be $k$ 2 by 2 distinct complex numbers, for $i=1,\dots, k$ the \emph{Lagrange polynomial $L^{y_i}_{y_1,\dots, y_k}$} is the one-variable polynomial given by:
\[
L^{y_i}_{y_1,\dots, y_k}(X) \eqdef \prod_{\substack{j=1\\ j \neq i}}^k \frac{X-y_j}{y_i - y_j}.
\]
It satisfies  $L^{y_i}_{y_1,\dots, y_k}(y_j) = \delta_{ij}$ for every $j = 1, \dots, k$. Furthermore, we have:
\[\sum _{i=1}^k L^{y_i}_{y_1,\dots, y_k} =1.
\]
Let $\underline{x}= (x_1, \dots, x_a)$ be an $a$-tuple of elements of $\{y_1,\dots, y_k\}$. The \emph{Lagrange polynomial $L^{\underline{x}}_{y_1,\dots, y_k}$} is the $a$-variables polynomial given by:
 \[L^{\underline{x}}_{y_1, \dots, y_k}(X_1, \dots, X_a)\eqdef 
\prod_{j=1}^a L^{x_j}_{y_1, \dots, y_k}(X_j).
\]
It satisfies $L^{\underline{x}}_{y_1, \dots, y_k} (\underline{t}) = \delta_{\underline{x}\underline{t}}$ for every $a$-tuple $\underline{t}$ of elements of $\mathbf{y}$. Furthermore, we have:
\[\sum _{\underline{x}\in {{y_1, \dots y_k}^a}} L^{\underline{x}}_{y_1, \dots, y_k} =1.
\]
Finally, let $\mathbf{y} =\{y_1^{N_1}, \dots, y_k^{N_k}\}$ be a multi-set (the exponent gives the multiplicities) and $\mathbf{x}$ be a multi-sub-set of elements of $\mathbf{y}$ with $a$ elements (counted with multiplicity) and $\underline{x}$ an $a$-tuple of elements of $\mathbf{y}$ given by fixing an order on $\mathbf{x}$. The \emph{symmetric Lagrange polynomial $s_aL^{\mathbf{x}}_{\mathbf{y}}$} is the symmetric $a$-variables polynomial given by:
\[
s_aL^{\mathbf{x}}_{\mathbf{y}} (X_1, \dots X_a) \eqdef \frac{\prod_{j=1}^k N_{\mathbf{x}, y_j}!}{a!} \sum_{\sigma \in \mathfrak{S}_a} L^{\underline{x}}_{y_1, \dots, y_k}(X_{\sigma(1)}, \dots, X_{\sigma(a)}),
\]
where $N_{\mathbf{x}, y_j}$ is the multiplicity of $y_j$ in $\mathbf{x}$. This polynomial does not depend on the choice of $\underline{x}$ and satisfies:
$s_aL^{\mathbf{x}}_{\mathbf{y}}(\mathbf{t}) = \delta_{\mathbf{xt}}$ for every multi-sub-set $\mathbf{t}$ of elements of $\mathbf{y}$ with $a$ elements. Note that since, these are symmetric polynomial, it makes sense to evaluate them on multi-sets. Furthermore we have:
\[
  \sum _{\substack{\mathbf{x}\subseteq \mathbf{y} \\ |\mathbf{x}|=a}} s_aL^{\mathbf{x}}_{\mathbf{y}} =1.
\]
\end{dfn}

We now suppose that $z_1, \dots z_k$ are 2 by 2 distinct and appear in $\mathbf{z}$ with multiplicity $N_1, \dots, N_k$. 

\begin{dfn}
  Let $\Gamma$ be an $\sll_N$-MOY graph. An \emph{$(N_1 \dots, N_k)$ decomposition of $\Gamma$} is a $k$-tuple $(\Gamma_1, \dots, \Gamma_k)$ of MOY graphs such that:
  \begin{itemize}
  \item For every $i$ in $\{1, \dots, k\}$, the underlying oriented graph of $\Gamma_i$  is the same as the underlying oriented graph of $\Gamma$,
  \item For every $i$ in  $\{1, \dots, k\}$, $\Gamma_i$ is an $\sll_{N_i}$-MOY-graph,
  \item For every $e$ in $E(\Gamma)$, $l(e) = \sum_{i=1}^k l_i(e)$, where $l: E(\Gamma) \to \NN$  (resp. $l_i: E(\Gamma_i)=E(\Gamma) \to \NN$) is the labeling of $\Gamma$ (resp. $\Gamma_i$).
  \end{itemize}
\end{dfn}

The following proposition relates our work with \cite[Theorem 1]{2015arXiv150102567R}. 

\begin{prop}\label{prop:decomposition}
For every $\sll_N$-MOY-graph $\Gamma$, we have:
  \[ 
    \Fz^N(\Gamma) \simeq \bigoplus_{(\Gamma_1, \dots, \Gamma_k)\textrm{ $(N_1 \dots, N_k)$-decomposition of $\Gamma$}} \bigotimes \F_{\mathbf{0}}^{N_i}(\Gamma_i).
\]
\end{prop}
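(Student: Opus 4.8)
The plan is to decompose the identity of $\Fz^N(\Gamma)$ into orthogonal idempotents indexed by the $(N_1,\dots,N_k)$-decompositions of $\Gamma$, built from the symmetric Lagrange polynomials of Definition~\ref{dfn:symlagrangeidempotent}, and then to identify the image of each idempotent with the corresponding summand $\bigotimes_i \F_{\mathbf{0}}^{N_i}(\Gamma_i)$. Fix a partition $\Col = P_1 \sqcup \cdots \sqcup P_k$ with $|P_a| = N_a$, with each pigment $p \in P_a$ specialized by $X_p \mapsto z_a$. The combinatorial backbone is the observation that, by the flow condition on colorings, a coloring $c$ of any foam $F$ is the same datum as: the integers $l_a(f) = \#(c(f)\cap P_a)$, which satisfy $\sum_a l_a(f) = l(f)$ together with the flow condition and hence assemble into an $(N_1,\dots,N_k)$-decomposition; together with, for each $a$, the coloring $c_a = c \cap P_a$ of the sub-$\sll_{N_a}$-foam $F_a$. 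Applied to $F = \Gamma \times \SS^1$ and combined with Remark~\ref{rmk:orthbase} (ranks equal numbers of colorings, all monochrome and bichrome surfaces being tori), this already yields the numerical identity $\dim \Fz^N(\Gamma) = \sum \prod_i \dim \F_{\mathbf{0}}^{N_i}(\Gamma_i)$ summed over decompositions.

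First I would construct the idempotents. On the facet of an edge $e$ with label $l(e)$, I decorate with $s_{l(e)}L^{\mathbf{x}_e}_{\mathbf{z}}$ for a multi-subset $\mathbf{x}_e$ of $\mathbf{z}$ of size $l(e)$. Since a decoration on a facet colored $A$ contributes $s_{l(e)}L^{\mathbf{x}_e}_{\mathbf{z}}$ evaluated at the multi-set $\{z_{[p]} : p \in A\}$, the relations $s_aL^{\mathbf{x}}_{\mathbf{z}}(\mathbf{t}) = \delta_{\mathbf{x}\mathbf{t}}$ and $\sum_{\mathbf{x}} s_aL^{\mathbf{x}}_{\mathbf{z}} = 1$ of Definition~\ref{dfn:symlagrangeidempotent} show that these decorations act on $\Fz^N(\Gamma)$ as orthogonal idempotents projecting onto colorings whose block-structure on $e$ is $\mathbf{x}_e$, and that they sum to the identity. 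A compatible family $(\mathbf{x}_e)_e$ is exactly a choice of the multiplicities $l_a(e)$, and the flow condition at the vertices forces the surviving families to be precisely the $(N_1,\dots,N_k)$-decompositions of $\Gamma$, all others projecting to $0$. This exhibits $\Fz^N(\Gamma) \simeq \bigoplus_{(\Gamma_i)} \varpi_{(\Gamma_i)}\Fz^N(\Gamma)$, where $\varpi_{(\Gamma_i)}$ is the product over edges of these idempotents.

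It then remains to identify $\varpi_{(\Gamma_i)}\Fz^N(\Gamma)$ with $\bigotimes_i \F_{\mathbf{0}}^{N_i}(\Gamma_i)$. Here I would write explicit merging and splitting foams between $\Gamma$ and the disjoint collection of the $\Gamma_i$: since they share the same underlying oriented graph, along each edge the labels $l_i(e)$ are simply zipped together into $l(e)$. Decorating these with the idempotents above, I would check that the two composites equal the identity of $\bigotimes_i \F_{\mathbf{0}}^{N_i}(\Gamma_i)$ and the idempotent $\varpi_{(\Gamma_i)}$. At the level of the colored state sum this is the factorisation of $\kup{\bullet}_N$ along the coloring bijection: using Lemma~\ref{lem2.5} the $Q$- and sign-contributions split into a within-block part — which, since $F_i(c) = (F_a)_i(c_a)$ for $i\in P_a$, reproduces after the substitution $X_p \mapsto z_a$ and Lemma~\ref{lem:evallsamevalue} the $\sll_{N_a}$ data at the $\mathbf{0}$-specialisation — and a cross-block part supported on the units $(z_a - z_b)$ with $a\neq b$. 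Together with the dimension count above, this forces the two foam maps to be mutually inverse, and functoriality and monoidality are then automatic, every map being the image under $\F$ of a genuine decorated foam.

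The hard part will be this last identification, and specifically the cross-block contributions: the factors $(z_a-z_b)^{\chi(F_{ij}(c))/2}$ and the signs $\theta^+_{ij}$ with $i\in P_a$, $j\in P_b$ couple the two blocks through $\chi(F_{i\cap j}(c))$, and the index-weighted sign $\sum_{i} i\,\chi(F_i(c))/2$ uses the global labels rather than the within-block ones, so the factorisation of the colored evaluation is not transparent term by term. Rather than trying to peel off a uniform scalar, I would verify directly that the merge-then-split and split-then-merge foams evaluate to the identity and to $\varpi_{(\Gamma_i)}$ respectively; this localises the coupling to the relations already established in Proposition~\ref{prop:additionalrelation}, in particular the theta and digon idempotent relations \eqref{eq:theta-ev} and \eqref{eq:digon}, where the cross-block data cancels between the two maps and only the invertibility of the $(z_a-z_b)$ is used.
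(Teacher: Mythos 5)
Your proposal follows essentially the same route as the paper's (sketched) proof: decompose the identity of $\Fz^N(\Gamma)$ into orthogonal idempotents built from the symmetric Lagrange polynomials of Definition~\ref{dfn:symlagrangeidempotent} placed on the facets meeting $\Gamma$, index the surviving idempotents by $(N_1,\dots,N_k)$-decompositions via the flow condition, and then identify each summand with $\bigotimes_i \F_{\mathbf{0}}^{N_i}(\Gamma_i)$ by a splitting/merging construction (the paper delegates this last step to the splitting functor of Rose--Wedrich, where you spell out the merge--split foams directly). Your closing decision to verify the two composites on closed evaluations rather than factor the colored state sum termwise also matches the paper's caveat that individual colored evaluations do not survive the specialization $X_i\mapsto z_\bullet$, only sums over Kempe-equivalence classes do.
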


\begin{proof}[Sketch of the proof]
  The functor $\Fz^N$ is defined by setting $X_1 =\dots =X_{N_1}= z_1$,  $X_{N_1+1} = \dots = X_{N_1 + N_2}= z_2$, \dots{} and $X_{N-N_k+1}= \dots = X_N = z_k$ in the evaluation of foams. As stated before,  we cannot set $X_\bullet = z_\bullet$ in the evaluation of colored foam. 
However, given a coloring $c$ of a foam $F$ and $i$ and $j$ two distinct  pigments of $\Col$ the proof of Proposition~\ref{prop:sympol} shows that if we consider the set $\mathcal{C}$ of all colorings of $F$ which are Kempe-equivalent relative to $i$ and $j$, then 
\[
\sum_{c'\in \mathcal{C}} \kupc{F,c'}
\]
does not have a factor $(X_i - X_j)$ at the denominator (which is a product of factor of the form $(X_\bullet - X_\bullet)$). We now consider the equivalence relation $\sim_{\mathbf{z}}$ on the colorings of $F$ generated by Kempe moves relatively to all colors $i$ and $j$ such that $X_i$ and  $X_j$ are sent on the same complex number $z_\bullet$. Thanks to the previous discussion, for all coloring $c$ of $F$, the quantity
\[
\sum_{c'\sim_{\mathbf{z}}c} \kupc{F,c'}
\]
is well-defined when setting $X_1 =\dots = X_{N_1}= z_1$,  $X_{N_1+1} = \dots = X_{N_1 + N_2}= z_2$, \dots{} and $X_{N-N_k+1}= \dots = X_N = z_k$. Suppose now that $F$ is a $\Gamma$-foam, using the symmetric Lagrange polynomials on the facets which intersect $\Gamma$ and using the ``Karoubi envelop technology'' as described by Bar-Natan and Morrison in \cite{MR2253455} (see as well \cite[Section 4.2]{2015arXiv150102567R}), we  can decompose $\Fz^N(\Gamma)$ into a direct sum of ``$\Fz^N(\Gamma,d)$'' where $d$ is an extra-labeling of $\Gamma$ which indicates on each edge of $\Gamma$ how many variables will be sent on $z_i$ for every $i$. Then using a splitting functor analogue to the one describe in \cite[Section 4.3]{2015arXiv150102567R}),  we can express $\Fz^N(\Gamma,d)$ as a tensor product of the form $\bigotimes \F_{\mathbf{0}}^{N_i}(\Gamma_i)$ for $(N_1 \dots, N_k)$-decomposition of $\Gamma$ which is given by $d$. This gives:
\[\Fz^N(\Gamma) \simeq \bigoplus_{(\Gamma_1, \dots, \Gamma_k)\textrm{ $(N_1 \dots, N_k)$-decomposition of $\Gamma$}} \bigotimes \F_{\mathbf{0}}^{N_i}(\Gamma_i).
\]
\end{proof}

\subsection{Equivariant cohomology of (partial) flag varieties}
\label{sec:equiv-cohom-part}

Let us first recall a well-known fact about $\sll_N$-MOY-graphs with symmetries:

\begin{prop}\label{prop:Frobeniusalgebra}
  Suppose that a $\sll_N$ MOY graph $\Gamma$ has a symmetry axis which is transverse to every edge and that the symmetry reverses the orientations and preserves the labels (see Figure~\ref{fig:symweb} for an example). Then $\F(\Gamma)$ has a natural structure of Frobenius algebra. Note that different symmetry axes gives \emph{a priori} non-isomorphic Frobenius algebra structures.
\end{prop}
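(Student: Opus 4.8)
The plan is to read off the Frobenius structure from the symmetry axis $\ell$ by cutting $\Gamma$ along $\ell$ into two halves. Write $\Gamma = \Gamma^+ \cup_b \Gamma^-$, where $b = \Gamma \cap \ell$ is the finite transverse set of labelled points on the axis and $\Gamma^\pm$ are the parts of $\Gamma$ on either side; the hypothesis that the reflection $r$ across $\ell$ reverses orientations and preserves labels says precisely that $\Gamma^- = r(\Gamma^+)$ as oriented labelled half-webs, so the two halves are mirror-dual. Using this I would produce four structure foams: a \emph{unit} $\eta \in \mathrm{HOM}_{\FoamN}(\emptyset,\Gamma)$ and a \emph{counit} $\epsilon \in \mathrm{HOM}_{\FoamN}(\Gamma,\emptyset)$ obtained by capping the folded half-web along $\ell$, together with a \emph{multiplication} $\mu \in \mathrm{HOM}_{\FoamN}(\Gamma\sqcup\Gamma,\Gamma)$ and its mirror \emph{comultiplication} $\delta \in \mathrm{HOM}_{\FoamN}(\Gamma,\Gamma\sqcup\Gamma)$ built from the product foam $\Gamma\times[0,1]$ bent into a pair-of-pants across $\ell$. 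The only subtle point in the construction is orientation bookkeeping at the bends: bending $\Gamma\times[0,1]$ reverses the orientation of one boundary copy, and it is exactly the orientation-reversing symmetry $r$ that identifies the reversed copy with $\Gamma$ again, so that the bent object is a bona fide foam in the sense of Definition~\ref{dfn:foam}. Applying $\F$ and the monoidal structure $\F(\Gamma\sqcup\Gamma)\cong\F(\Gamma)\otimes\F(\Gamma)$ then turns $\eta,\mu,\epsilon,\delta$ into candidate structure maps on $V(\Gamma) := \F(\Gamma)$.

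Next I would check the algebra axioms. Associativity of $\mu$, the unit axiom for $\eta$, and the Frobenius compatibility $(\mu\otimes\id)\circ(\id\otimes\delta)=\delta\circ\mu=(\id\otimes\mu)\circ(\delta\otimes\id)$ are all instances of an ambient isotopy, rel boundary, between the two composite foams involved: sliding the pairs of pants and the caps past one another is a movie of foams with fixed top and bottom. Since a foam with boundary is taken up to such isotopy (Definition~\ref{dfn:foamwithboundary}) and $\F$ is a genuine functor on $\FoamN$, isotopic foams induce equal maps, so each axiom reduces to exhibiting the relevant isotopy and requires no evaluation; commutativity of $\mu$, when the symmetry makes it symmetric, follows in the same way. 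This step is routine and is the two-dimensional analogue of the classical proof that a circle yields a Frobenius algebra.

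The hard part will be non-degeneracy of the induced bilinear form $\langle x,y\rangle := \epsilon(\mu(x\otimes y))$, that is, that $(V(\Gamma),\epsilon)$ is genuinely Frobenius and not merely a (co)algebra. Here the symmetry is indispensable: reflecting a foam across $\ell$ and turning it upside down defines a bijection $\rho\co \mathrm{HOM}_{\FoamN}(\Gamma,\emptyset)\xrightarrow{\ \sim\ }\mathrm{HOM}_{\FoamN}(\emptyset,\Gamma)$, and by construction the pairing foam $\epsilon\circ\mu\co\Gamma\sqcup\Gamma\to\emptyset$ computes $\langle [F],[F']\rangle$ as a closed foam which, after reflecting the second input across $\ell$, is identified with $G\circ F$ for $G=\rho^{-1}([F'])$, so that $\langle [F],[F']\rangle = \kup{G\circ F}$. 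Thus $\langle\cdot,\cdot\rangle$ is, up to the isomorphism $\rho$, exactly the canonical pairing that the universal construction uses to define $\F(\Gamma)$ as the quotient of $\widetilde{\F}(\Gamma)$ by the intersection of the kernels of the maps $\phi_G$; consequently its radical is zero. Then Remark~\ref{rmk:orthbase}, which provides for any basis $(b_i)$ of the free module $\F(\Gamma)$ a dual family $(b_i^\star)$ realized by combinations of $\emptyset$-foams-$\Gamma$, lets me transport that dual family through $\rho$ into $V(\Gamma)$ itself, exhibiting the Gram matrix of $\langle\cdot,\cdot\rangle$ as invertible over $\ZZ[X_1,\dots,X_N]$ and hence the pairing as perfect. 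I expect this identification of the geometric pairing with the universal pairing, together with the orientation-reversal check that makes the bent foams legitimate, to be the crux; everything else is isotopy and functoriality. Finally, since the foams $\eta,\mu,\epsilon,\delta$ depend on the chosen axis $\ell$, a different axis produces a priori different structure maps, which accounts for the closing remark of the statement.
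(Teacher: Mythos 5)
Your proposal is correct and follows essentially the same route as the paper's (sketched) proof: define the multiplication and trace by folding the half-web along the axis, get associativity and the unit from isotopy of foams, and obtain non-degeneracy by observing that $\epsilon(\mu(f_1\otimes f_2))=\kup{G\circ F}$ is exactly the canonical pairing the universal construction quotients by, with the dual bases of Remark~\ref{rmk:orthbase} upgrading non-degeneracy to perfection. The only cosmetic difference is that you construct the comultiplication explicitly as the mirror of $\mu$, whereas the paper notes it is not strictly needed and follows from monoidality of $\F$.
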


\begin{figure}[H]
  \centering
  \begin{tikzpicture}
  {\tiny  \begin{scope}[yscale= 0.7]
  \coordinate (a) at ( 1, 1);
  \coordinate (b) at ( 1,-1);
  \coordinate (c) at (-1,-1);
  \coordinate (d) at (-1, 1);
  \coordinate (A) at ( 2, 2);
  \coordinate (B) at ( 2,-2);
  \coordinate (C) at (-2,-2);
  \coordinate (D) at (-2, 2);
  \draw[->] (a) -- (b) node [midway, sloped, above] {$2$};
  \draw[->] (b) -- (c) node [midway, sloped, above] {$1$};
  \draw[<-] (c) -- (d) node [midway, sloped, above] {$1$};
  \draw[->] (d) -- (a) node [midway, sloped, above] {$1$};
  \draw[->] (A) -- (B) node [midway, sloped, above] {$1$};
  \draw[->] (B) -- (C) node [midway, sloped, above] {$2$};
  \draw[->] (C) -- (D) node [midway, sloped, above] {$4$};
  \draw[->] (D) -- (A) node [midway, sloped, above] {$2$};
  \draw[<-] (a) -- (A) node [midway, sloped, above] {$1$};
  \draw[->] (b) -- (B) node [midway, sloped, above] {$1$};
  \draw[->] (c) -- (C) node [midway, sloped, above] {$2$};
  \draw[<-] (d) -- (D) node [midway, sloped, above] {$2$};
\draw[red, densely dashed] (-3,0) -- (3, 0);
\end{scope}}
  \end{tikzpicture}
  \caption{A MOY-graph with a symmetry axis.}
  \label{fig:symweb}
\end{figure}
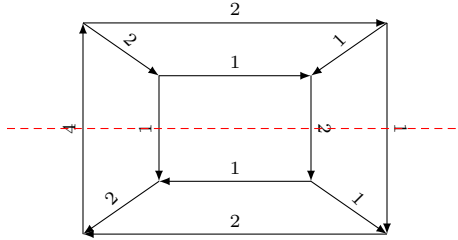

\begin{proof}[Sketch of the proof]
  We need to tell where the multiplication and the trace structures come from.
For a more systematic and category-oriented approach we refer to \cite{2012arXiv1206.2118M} or \cite{LHR2} for the $\sll_3$-case or to \cite{MR3198835} for the $\sll_N$ case. We consider an $\sll_N$-MOY-graph $\Gamma$ with a symmetry axis.
Since the functor $\F$ is defined by a universal construction, one can think of elements of $\F(\Gamma)$ as $\ZZ[X_1, \dots X_N]$-linear combination of foams. Hence it is enough to explain how the definition of the multiplication and the trace on foams. Let us denote by $\Sigma$ one half of the graph $\Gamma$, by \reflectbox{$\Sigma$} the other half and by $\epsilon$ the intersection of the symmetry axis with $\Gamma$ (hence we have $\Gamma = \Sigma \cup_\epsilon$\reflectbox{$\Sigma$}). The definitions of the multiplication and the trace  are given in Figures~\ref{fig:Frobprod} and \ref{fig:Frobtrace}.

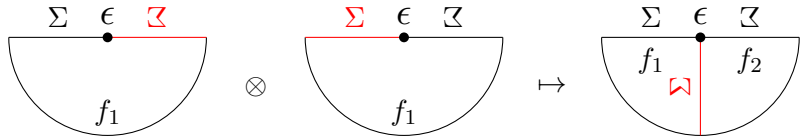
\begin{figure}[H]
  \centering
  \begin{tikzpicture}[scale =1.3]
    \begin{scope}
\begin{scope}
\draw (1,0) arc (0:-180:1)  node[above, midway] {$f_1$};
\draw[red] (1,0) -- (0,0) node[above, midway] {\reflectbox{$\Sigma$}};
\draw (-1,0) -- (0,0) node[above, midway]  {$\Sigma$};
\fill (0,0) circle (0.5mm) node[above, midway] {$\epsilon$};
\node at (1.5,- 0.5) {$\otimes$};
\end{scope}
\begin{scope}[xshift = 3cm]
\draw (1,0) arc (0:-180:1)  node[above, midway] {$f_1$};
\draw (1,0) -- (0,0) node[above, midway] {\reflectbox{$\Sigma$}};
\draw[red] (-1,0) -- (0,0) node[above, midway]  {$\Sigma$};
\fill (0,0) circle (0.5mm) node[above, midway] {$\epsilon$};
\node at (1.5,- 0.5) {$\mapsto$};
\end{scope}
\begin{scope}[xshift = 6cm]
\draw (1,0) arc (0:-180:1);
\draw (1,0) -- (0,0) node[above, midway] {\reflectbox{$\Sigma$}} node[midway, below] {$f_2$};
\draw[red] (0,-1) -- (0,0) node[above, midway, sloped] {${\Sigma}$};
\draw (-1,0) -- (0,0) node[above, midway] {{$\Sigma$}} node[midway, below] {$f_1$};
\fill (0,0) circle (0.5mm) node[above, midway] {$\epsilon$};
\end{scope}
\end{scope}
  \end{tikzpicture}
  \caption{Multiplication.}
  \label{fig:Frobprod}
\end{figure}

\begin{figure}[H]
  \centering
\[\tau\left( \NB{\tikz{\begin{scope}[scale= 1]
\draw (1,0) arc (0:-180:1)   node[above, midway] {$f$};
\draw (1,0) -- (0,0) node[above, midway] {\reflectbox{$\Sigma$}};
\draw (-1,0) -- (0,0) node[above, midway]  {$\Sigma$};
\fill (0,0) circle (0.5mm) node[above, midway] {$\epsilon$};
\end{scope}}} \right) = \kup{\NB{\tikz{\begin{scope}[scale= 1]
\draw (1,0) arc (0:-180:1)  node[above, midway] {$f$};
\draw (1,0) -- (0,0) node[above, midway] {\reflectbox{$\Sigma$}};
\draw (-1,0) -- (0,0) node[above, midway]  {$\Sigma$};
\fill (0,0) circle (0.5mm) node[above, midway]  {$\epsilon$};
\draw (1,0) arc (0:180:1) node[below, midway, sloped] {$\Sigma \times I$} ;
\end{scope}}}} \]
  \caption{Trace.}
  \label{fig:Frobtrace}
\end{figure}

The unit is given by a foam which is isotopic to $\Sigma\times I$. The associativity is obvious. The fact that the trace and the multiplication induces a non-degenerate pairing comes directly from the universal construction: indeed taking the trace of the product of $f_1$ and $f_2$ is the same as closing up $f_1$ with $f_2$. The co-multiplication is a little more complicated to write down, however we do not strictly need it and it follows from the functor $\F$ being monoidal.
\end{proof}

The cohomology of Grassmannian and flag varieties has been a guide line in the history of categorification of the $\sll_N$-knot invariants (see for example \cite{MR2391017}, \cite{MR2100691}, \cite{MR3190356} or \cite{queffelec2014mathfrak}). Here we prove that the spaces associated with the so-called generalized $\theta$-graphs are indeed isomorphic as rings to the cohomology rings of partial flag varieties.

\begin{dfn}\label{dfn:gentheta}
  Let $a_1, \dots, a_k$ be positive integers such that $\sum_{i=1}^k a_i=N$. The MOY-graph
\[
  \begin{tikzpicture}
  {\tiny  \begin{scope}
  \coordinate (T0)   at (0,1);
  \coordinate (T1)   at (1,1);
  \coordinate (T2)   at (2,1);
  \coordinate (T3)   at (3,1);

  \coordinate (TNm1) at (4,1);
  \coordinate (TN)   at (5,1);
  \coordinate (B0)   at (0,-1);
  \coordinate (B1)   at (1,-1);
  \coordinate (B2)   at (2,-1);
  \coordinate (B3)   at (3,-1);

  \coordinate (BNm1) at (4,-1);
  \coordinate (BN)   at (5,-1);
  \draw[->] (T1) -- (T0) node [above, midway, rotate = 45, anchor =west] {$a_1+ a_2$};
  \draw[<-] (B1) -- (B0) node [below, midway, rotate = -45, anchor =west] {$a_1+ a_2$};
  \draw[->] (T2) -- (T1) node [above, midway, rotate = 45, anchor =west] {$a_1+ a_2+ a_3$};
  \draw[<-] (B2) -- (B1) node [below, midway, rotate = -45, anchor =west] {$a_1+ a_2 + a_3$};
  \draw[->] (TN) -- (TNm1) node [above, midway, rotate = 45, anchor =west] {$N-a_k$};
  \draw[<-] (BN) -- (BNm1) node [below, midway, rotate = -45, anchor =west] {$N-a_k$};
  \draw[dotted] (T2) -- (TNm1);
  \draw[dotted] (B2) -- (BNm1);
  \draw[->] (T0) .. controls +(-1,0) and  +(-1,0) .. (B0) node[near start, left ] {$a_1$};
  \draw[->] (T0) -- (B0) node[near start, left] {$a_2$};
  \draw[->] (T1) -- (B1) node[near start, left] {$a_3$};
  \draw[->] (T2) -- (B2) node[near start, left] {$a_4$};
  \draw[->] (TNm1) -- (BNm1) node[near start, right] {$a_{k-1}$};
  \draw[<-] (TN) .. controls +(1,0) and  +(1,0) .. (BN) node[near start, right] {$N$};
  \draw[->] (TN) -- (BN) node[near start, right] {$a_k$};
  \draw [densely dashed, red] (-1,0) -- (6,0);
\end{scope} }
  \end{tikzpicture}
\]
is called the \emph{generalized $\theta$-graph of type $(a_1, \dots a_k)$} and is denoted by $\theta(a_1, \dots a_k)$. Note that is comes naturally with a symmetry axis.
\end{dfn}

\begin{notation}\label{not:partiaflag}
  Let $a_1, \dots, a_k$ be positive integers such that $\sum_{i=1}^k a_i=N$. We denote by $Fl_N(a_1, \dots, a_k)$ the flag variety corresponding to the following inclusion:
\[
\CC^{a_1} \subseteq \CC^{a_1+a_2} \subseteq \CC^{a_1+a_2 + a_3} \subseteq  \dots \subseteq \CC^{N-a_k}
\]
in a fixed space $\CC^N$. It is isomorphic to $GL(N,\CC)/P(a_1,\dots, a_k)$ with $P(a_1, \dots, a_k)$ being the parabolic subgroup consisting of matrix of the form:
\newcommand{\partialbox}[2]{\ensuremath{\NB{\tikz[scale=0.2]{\draw (#2,-#2) -- (-#2,-#2) --(-#2,#2) -- (#2,#2) --cycle; \node at (0,0) {$#1$};}}} }
\[
  \begin{pmatrix}
    \partialbox{a_1}{2} & \star               & \star  & \cdots              & \star  \\
 0                      & \partialbox{a_2}{1} & \star  & \cdots               & \star  \\
    \vdots              &                     & \ddots &                     & \vdots \\
  0& \cdots              & 0      & \partialbox{a_{k-1}}{3} & \star  \\
 0      &0                & \cdots              & 0      & \partialbox{a_{k}}{2}   \\
  \end{pmatrix}
\]
where the size of the blocks are given by the integers $a_\bullet$. We denote by $T$ the maximal torus, that is the sub-group of invertible diagonal matrices.
We denote by $H_T^\bullet (Fl_N(a_1, \dots,a_N))$ the $T$-equivariant cohomology ring of $Fl_N(a_1, \dots, a_k)$ with coefficients in $\ZZ$. It is an algebra over $H_T^\bullet(\mathrm{point}) \simeq \ZZ[X_1, \dots X_N]$. For definition of the equivariant cohomology we refer to \cite{MR3263556}, and references therein.
\end{notation}

\begin{thm}[\cite{MR3263556}] \label{thm:pres_equivariant_coh}
  The space $H_T^\bullet (Fl_N(1, \dots,1))$ is isomorphic to:
\[
\frac{  \ZZ[T_1, \dots, T_N, X_1, \dots X_N] }{ \left\{P(X_1, \dots, X_N) - P(T_1, \dots T_N) | P \in \ZZ[Z_1, \dots ,Z_N]^{\mathfrak{S}_N}\right\}}
\] as a $\ZZ[X_1, \dots X_N]$-algebra.
Let $a_1, \dots, a_k$ be positive integers such that $\sum_{i=1}^k a_i =N$ and $\mathfrak{S}_{a_1,\dots, a_k} := \mathfrak{S}_{a_1} \times \dots \times \mathfrak{S}_{a_k} \subset \mathfrak{S}_N$. This group acts on $H_T^\bullet (Fl_N(1, \dots,1))$ by permuting the variables $T_\bullet$. 
 The space $H_T^\bullet (Fl_N(a_1, \dots,a_k))$ is isomorphic as  a $\ZZ[X_1, \dots X_N]$-algebra to
\begin{align*}
& H_T^\bullet (Fl_N(1, \dots,1))^{\mathfrak{S}_{a_1,\dots, a_k}} \\ & \quad \simeq 
\frac{\ZZ[X_1, \dots X_{a_1}]^{\mathfrak{S}_{a_1}} \otimes \dots \otimes\ZZ[X_{N-a_{k}+1}, \dots, X_{N}]^{\mathfrak{S}_{a_k}}\otimes \ZZ[T_1, \dots T_N]}{\left\{P(X_1, \dots, X_N) - P(T_1, \dots T_N) | P \in \ZZ[Z_1, \dots, Z_N]^{\mathfrak{S}_N}\right\}}.
\end{align*}
\end{thm}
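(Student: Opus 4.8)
This presentation is classical; I indicate the strategy I would follow. First I would treat the full flag variety $Fl_N(1,\dots,1) = GL(N,\CC)/B$, with $B$ the Borel subgroup of upper triangular matrices. It carries the tautological filtration $0 = \mathcal{V}_0 \subset \mathcal{V}_1 \subset \dots \subset \mathcal{V}_N = \underline{\CC}^N$ by $T$-equivariant subbundles with $\mathrm{rk}\,\mathcal{V}_i = i$, and I set $T_i \eqdef c_1^T(\mathcal{V}_i/\mathcal{V}_{i-1}) \in H_T^2(Fl_N(1,\dots,1))$. Since the ambient bundle $\underline{\CC}^N$ is $T$-equivariantly trivial with weights $X_1,\dots,X_N$, the Whitney sum formula gives $\prod_{i=1}^N(1+T_i) = \prod_{j=1}^N(1+X_j)$, that is $e_j(T) = e_j(X)$ for all $j$, equivalently $P(T) = P(X)$ for every symmetric $P$. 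This yields a $\ZZ[X]$-algebra map from the displayed quotient to $H_T^\bullet(Fl_N(1,\dots,1))$. To see it is an isomorphism I would invoke Leray--Hirsch for the Borel fibration $ET \times_T Fl_N(1,\dots,1) \to BT$: the $T_i$ restrict to generators of the coinvariant algebra $H^\bullet(G/B)$, so $H_T^\bullet$ is free over $H_T^\bullet(\mathrm{pt}) = \ZZ[X]$ of rank $N!$, with the Schubert basis. The target quotient is also free of rank $N!$ over $\ZZ[X]$ (the relations $e_j(T)-e_j(X)$ form a regular sequence, so the staircase monomials $\prod_i T_i^{d_i}$ with $0 \le d_i \le N-i$ descend to a basis), and the map sends basis to basis.

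Next I would pass to the partial flag variety through the projection $\pi\co Fl_N(1,\dots,1) \to Fl_N(a_1,\dots,a_k)$, which realises the fibration $G/B \to G/P$ with fibre $P/B \cong \prod_i GL(a_i,\CC)/B_{a_i}$. This fibre has cohomology concentrated in even degrees, so Leray--Hirsch applies again, $\pi^\ast$ is injective, and its image is exactly the subring fixed by the residual action of $W_P = \mathfrak{S}_{a_1} \times \dots \times \mathfrak{S}_{a_k}$ on $H_T^\bullet(G/B)$ (the action fixing $\ZZ[X] = H_T^\bullet(\mathrm{pt})$ and permuting the Chern roots $T_\bullet$ within the blocks of sizes $a_1,\dots,a_k$). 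This gives the first claimed isomorphism $H_T^\bullet(Fl_N(a_1,\dots,a_k)) \cong H_T^\bullet(Fl_N(1,\dots,1))^{W_P}$.

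It then remains to compute these invariants inside the Borel presentation, which yields the displayed formula. The key mechanism is that $\ZZ[T_1,\dots,T_N]$ is free as a module over its block-symmetric subring $\ZZ[T]^{W_P}$ (a product over blocks of the freeness of $\ZZ[t_1,\dots,t_a]$ over $\ZZ[t_1,\dots,t_a]^{\mathfrak{S}_a}$), and the defining elements $P(T)-P(X)$ are $W_P$-invariant; hence forming $W_P$-invariants commutes with the quotient by the ideal they generate. I expect the genuinely delicate points to be the \emph{integral} ones: that the Borel presentation carries no further relations over $\ZZ$ rather than merely after inverting some integers (handled above by the rank count against the Schubert basis), and that taking invariants commutes with the quotient even though $|W_P|$ is not invertible in $\ZZ$ (handled by the freeness of $\ZZ[T]$ over $\ZZ[T]^{W_P}$, used in place of a Reynolds operator). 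Both are standard, and the detailed argument is the content of \cite{MR3263556}.
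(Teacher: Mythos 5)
The paper offers no proof of this statement: it is quoted from \cite{MR3263556} and used as a black box, so there is no in-paper argument to measure yours against. On its own terms your sketch is the standard one and I see no gap: the Borel-type presentation of $H_T^\bullet(Fl_N(1,\dots,1))$ via the equivariant Chern roots of the tautological filtration and the Whitney formula; the integral identification by a rank count, using Leray--Hirsch over $BT$ on one side and the regular-sequence/staircase-monomial argument on the other; the identification of $H_T^\bullet(G/P)$ with the $W_P$-invariants of $H_T^\bullet(G/B)$ via the fibration $P/B \to G/B \to G/P$; and the commutation of $W_P$-invariants with the quotient from freeness of $\ZZ[T_1,\dots,T_N]$ over its block-symmetric subring. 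On that last point, be aware that freeness (with a basis containing $1$) only gives you directly that the block-invariant presentation injects into the invariants of the quotient; surjectivity is most cleanly closed off by a second rank count of free $\ZZ[X_1,\dots,X_N]$-modules ($N!/\prod_i a_i!$ on both sides), which you have available, so this is a presentational rather than a mathematical gap. One caveat concerns the target statement rather than your argument: your construction naturally produces block-symmetric functions in the Chern roots $T_\bullet$ tensored with the \emph{full} polynomial ring in the equivariant parameters $X_\bullet$, whereas the paper's displayed formula places the block-invariance on the $X$'s. The two rings are exchanged by the symmetry $X \leftrightarrow T$ of the relations $P(X)-P(T)$, but only the former carries the asserted $\ZZ[X_1,\dots,X_N]$-algebra structure in the obvious way (the group is even said to act by permuting the $T_\bullet$), so rather than claiming your computation ``yields the displayed formula'' you should note this apparent labelling slip in the statement and record which convention you are proving.
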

\begin{prop}\label{prop:equiv}
  The $\ZZ[X_1, \dots X_N]$-algebra $\F(\theta(a_1, \dots, a_k))$ is isomorphic to the ring $H_T^\bullet(Fl_N(a_1,\dots a_N))$.
\end{prop}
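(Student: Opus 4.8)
The plan is to realise $\F(\theta(a_1,\dots,a_k))$ through its Frobenius structure and to match it, generator by generator, with the presentation of $H_T^\bullet(Fl_N(a_1,\dots,a_k))$ given in Theorem~\ref{thm:pres_equivariant_coh}. By Proposition~\ref{prop:Frobeniusalgebra} the symmetry axis of $\theta(a_1,\dots,a_k)$ endows $\F(\theta(a_1,\dots,a_k))$ with the structure of a commutative Frobenius $\ZZ[X_1,\dots,X_N]$-algebra (commutativity following from the isotopy exchanging the two halves of the theta graph). Writing $\Sigma$ for one half of the graph, every element is a $\ZZ[X_1,\dots,X_N]$-linear combination of decorated copies of $\Sigma\times I$, and the product is obtained by superimposing decorations. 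For the strand carrying the label $a_i$ I denote by $T^{(i)}_1,\dots,T^{(i)}_{a_i}$ its Chern roots, so that decorating that facet by a symmetric polynomial $f$ in $a_i$ variables realises $f(T^{(i)}_1,\dots,T^{(i)}_{a_i})$ as an element of $\F(\theta(a_1,\dots,a_k))$.

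I would then define a $\ZZ[X_1,\dots,X_N]$-algebra map
\[
\Phi\colon H_T^\bullet(Fl_N(a_1,\dots,a_k)) \longrightarrow \F(\theta(a_1,\dots,a_k))
\]
out of the presentation of Theorem~\ref{thm:pres_equivariant_coh}: send the $\mathfrak{S}_{a_i}$-symmetric variables of the $i$-th block to the decorations of the $i$-th strand (i.e.\ the block elementary symmetric functions to the Chern classes $e_r(T^{(i)})$) and send each $X_j$ to itself. The crucial point is well-definedness, namely that the defining relations $P(X_1,\dots,X_N)=P(T_1,\dots,T_N)$ for $P\in\ZZ[Z_1,\dots,Z_N]^{\mathfrak{S}_N}$ hold in $\F(\theta(a_1,\dots,a_k))$. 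This is where the foam relations do the work: decorating the central facet labelled $N$ (the one crossing the symmetry axis, whose only colouring is the whole of $\Col$) by a symmetric polynomial $P$ contributes the scalar $P(X_1,\dots,X_N)$, whereas the dot-migration Relation~(\ref{eq:dot-migration}) of Proposition~\ref{prop:additionalrelation}, applied at the two vertices, rewrites that same central decoration as the symmetric function $P$ of the union $T^{(1)}\cup\dots\cup T^{(k)}$ of all $N$ Chern roots. Under $\Phi$ this union is exactly $(T_1,\dots,T_N)$, so the relation $P(X)=P(T)$ is forced and $\Phi$ is a well-defined algebra homomorphism.

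It then remains to see that $\Phi$ is bijective. For surjectivity I would argue that the strand decorations generate $\F(\theta(a_1,\dots,a_k))$ as a $\ZZ[X_1,\dots,X_N]$-algebra: every element is a combination of decorated half-foams, and using dot migration together with the digon and theta evaluations (Relations~(\ref{eq:digon}) and (\ref{eq:theta-ev})) one reduces any decoration to one supported on the strand facets of $\Sigma\times I$; these are precisely the images of the block-symmetric generators. Equivalently, the explicit basis of $\F(\theta(a_1,\dots,a_k))$ produced by the proof of Theorem~\ref{thm:main} (see Remark~\ref{rmk:orthbase}) consists of such decorated foams. Finally I would close the argument by a rank count: both sides are graded free $\ZZ[X_1,\dots,X_N]$-modules, the graded rank of $\F(\theta(a_1,\dots,a_k))$ equals $\kup{\theta(a_1,\dots,a_k)}$ by Theorem~\ref{thm:main}, and this Gaussian multinomial is exactly the Poincaré polynomial of $Fl_N(a_1,\dots,a_k)$, hence the graded rank of $H_T^\bullet(Fl_N(a_1,\dots,a_k))$ over $\ZZ[X_1,\dots,X_N]$. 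A graded surjection between graded free modules of equal finite graded rank is an isomorphism, so $\Phi$ is an isomorphism of $\ZZ[X_1,\dots,X_N]$-algebras.

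I expect the main obstacle to be the well-definedness step, i.e.\ checking that the symmetric relations $P(X)=P(T)$ genuinely survive the foam evaluation; this is the only place where the global geometry of the theta graph (all $N$ pigments appearing on the central facet, and the precise behaviour of dot migration at a trivalent vertex) enters, and it is what pins down the quotient by the ideal generated by $P(X)-P(T)$. The surjectivity and the rank-matching are comparatively formal once the local relations of Proposition~\ref{prop:additionalrelation} and the basis furnished by Theorem~\ref{thm:main} are in hand.
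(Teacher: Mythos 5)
Your proposal is correct and follows essentially the same route as the paper: define the algebra map out of the presentation of Theorem~\ref{thm:pres_equivariant_coh} by sending block-symmetric decorations to decorated half-theta foams, get surjectivity from neck-cutting and dot-migration, and conclude by comparing ranks of free $\ZZ[X_1,\dots,X_N]$-modules. The only difference is that you spell out the well-definedness of the map (via the $N$-labelled facet and dot migration), a point the paper dismisses with ``this is obviously an algebra map''; that added detail is welcome but does not change the argument.
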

\begin{proof}
  We use the presentation of $H_T^\bullet(Fl_N(a_1,\dots a_N))$ of Theorem~\ref{thm:pres_equivariant_coh} and define a $\ZZ[X_1, \dots X_N]$-linear map by: \[ 
   \begin{array}{crcl}
      \phi\colon & H_T^\bullet(Fl_N(a_1,\dots a_N)) &\to&  \F(\theta(a_1, \dots, a_k)) \\ 
      & \prod_{i=1}^k \pi_{\lambda_i}(X_{a_i+1}, \dots, X_{a_{i+1}}) & \mapsto &\hspace{-0.2cm}\NB{\tiny \tikz[scale=0.7]{\tdplotsetmaincoords{70}{30}
\begin{scope}[tdplot_main_coords, scale = 1.5]
  \coordinate (T0)   at (0,1,0);
  \coordinate (T1)   at (1,1,0);
  \coordinate (T2)   at (2,1,0);
  \coordinate (T3)   at (2.5,1,0);
  \coordinate (T4)   at (3.5,1,0);

  \coordinate (TNm1) at (4,1,0);
  \coordinate (TN)   at (5,1,0);
  \coordinate (C0)   at (0,0,-2);
  \coordinate (C1)   at (1,0,-2);
  \coordinate (C2)   at (2,0,-2);
  \coordinate (C3)   at (2.5,0,-2);
  \coordinate (C4)   at (3.5,0,-2);

  \coordinate (CNm1) at (4,0,-2);
  \coordinate (CN)   at (5,0,-2);
  \coordinate (B0)   at (0,-1,0);
  \coordinate (B1)   at (1,-1,0);
  \coordinate (B2)   at (2,-1,0);
  \coordinate (B3)   at (2.5,-1,0);
  \coordinate (B4)   at (3.5,-1,0);

  \coordinate (BNm1) at (4,-1,0);
  \coordinate (BN)   at (5,-1,0);
  \draw[->] (T1) -- (T0);
  \draw[<-] (B1) -- (B0);
  \draw[->] (T2) -- (T1);
  \draw[<-] (B2) -- (B1);
  \draw[->] (TN) -- (TNm1);
  \draw[<-] (BN) -- (BNm1);
  \draw[dotted] (T2) -- (TNm1);
  \draw[dotted] (B2) -- (BNm1);
  \draw[->] (T0) arc (90:270:1) node[pos=0.3, below] {$a_1$} node [pos=1, above, black] {${\pi_{\lambda_1}}$};
 \draw[opacity = 0] (T4) -- (B4) .. controls +(0,0,-2)  and +(0,0,0)..(C4) coordinate[pos=0.77] (C42) .. controls +(0,0,0) and +(0,0,-2) .. (T4); 
 \draw[opacity = 0] (T3) -- (B3) .. controls +(0,0,-2)  and +(0,0,0)..(C3) coordinate[pos=0.77] (C32) .. controls +(0,0,0) and +(0,0,-2) .. (T3); 
  \filldraw[fill opacity = 0, fill =red] (T0) -- (B0) .. controls +(0,0,-2) and +(0,0,0) ..(C0) coordinate[pos=0.77] (C02).. controls +(0,0,0) and +(0,0,-2) .. (T0);
  \filldraw[fill opacity = 0, fill =red] (TN) -- (BN) .. controls +(0,0,-2)  and +(0,0,0)..(CN) coordinate[pos=0.77] (CN2) .. controls +(0,0,0) and +(0,0,-2) .. (TN); 
 \fill[opacity = 0.3, blue] (TNm1) -- (TN) arc (90:30:1) .. controls +(0,0, -1) and +(0.5, 0, 0) .. (CN2) .. controls +(0,0,0) and (C42) .. (C42) .. controls +(0,0.3,0) and +(0,0,-1) .. (T4); 
 \fill[opacity = 0.3, blue] (T3) -- (T0) arc (90:210:1) .. controls +(0,0, -1) and +(-0.5, 0, 0) .. (C02) .. controls +(0,0,0) and (C02) .. (C32) .. controls +(0,0.3,0) and +(0,0,-1) .. (T3); 
  \filldraw[fill opacity = 0.3, fill =red] (T0) -- (B0) .. controls +(0,0,-2) and +(0,0,0) ..(C0) .. controls +(0,0,0) and +(0,0,-2) .. (T0);
  \filldraw[fill opacity = 0.3, fill =red] (TN) -- (BN) .. controls +(0,0,-2)  and +(0,0,0)..(CN) .. controls +(0,0,0) and +(0,0,-2) .. (TN); 
  \filldraw[fill opacity = 0.3, fill =red] (T1) -- (B1) .. controls +(0,0,-2)  and +(0,0,0)..(C1).. controls +(0,0,0) and +(0,0,-2) .. (T1);
  \filldraw[fill opacity = 0.3, fill =red] (T2) -- (B2) .. controls +(0,0,-2)  and +(0,0,0)..(C2).. controls +(0,0,0) and +(0,0,-2) .. (T2);
  \filldraw[fill opacity = 0.3, fill =red] (TNm1) -- (BNm1) .. controls +(0,0,-2)  and +(0,0,0)..(CNm1).. controls +(0,0,0) and +(0,0,-2) .. (TNm1);
  \node[above, black] at ($(C0)+ (0,0,1)$)  {${\pi_{\lambda_2}}$};
  \node[above, black] at ($(C1)+ (0,0,1)$)  {${\pi_{\lambda_3}}$};
  \node[above, black] at ($(C2)+ (0,0,1)$)  {${\pi_{\lambda_4}}$};
  \node[above, black] at ($(CNm1)+ (0,0,1)$)  {${\pi_{\lambda_{k-1}}}$};
  \node[above, black] at ($(CN)+ (0,0,1)$)  {${\pi_{\lambda_k}}$};
  \draw[->] (T0) -- (B0) node[near start, below] {$a_2$};
  \draw[->] (T1) -- (B1) node[near start, below] {$a_3$};
  \draw[->] (T2) -- (B2) node[near start, below] {$a_4$};
  \draw[->] (TNm1) -- (BNm1) node[near start, below] {$a_{k-1}$};
  \draw[<-] (TN) arc (90:-90:1) node [pos= 0.1, below] {$N$};
  \draw[->] (TN) -- (BN) node[near start, below] {$a_k$};
  \fill[opacity = 0.1, blue] (BNm1) -- (BN) arc (-90:30:1) .. controls +(0,0, -1) and +(0.5, 0, 0) .. (CN2) .. controls +(0,0,0) and (C42) .. (C42) .. controls +(-0.3,0) and +(0,0,-1) .. (B4); 
  \fill[opacity = 0.1, blue] (B3) -- (B0) arc (270:210:1) .. controls +(0,0, -1) and +(-0.5, 0, 0) .. (C02) .. controls +(0,0,0) and (C32) .. (C32) .. controls +(-0.3,0) and +(0,0,-1) .. (B3); 
\end{scope}}}
    \end{array}.
\]
This is obviously an algebra map. Furthermore it is surjective since the foams on the right-hand-side when the $\lambda_\bullet$  vary obviously span $\F(\theta(a_1, \dots, a_k))$. This is a direct consequences of the neck-cutting (Relation~(\ref{eq:neckcutting-ev})) and dot-migrations (Relation~(\ref{eq:dot-migration})). Furthermore both are free of rank $\frac{N!}{\prod_{i=1}^k a_i!}$. 
\end{proof}

From the digon formula
in Proposition~\ref{prop:additionalrelation}, we immediately get the following proposition.

\begin{prop}\label{prop:basemultithetafoam}
  A base of $\F(\theta(a_1, \dots, a_k))$ as a  $\ZZ[X_1, \dots X_N]$-module, is given by the images by $\phi$ of $1\otimes \bigotimes_{i=2}^k \pi_{\lambda_i}$ with $\lambda_i$ in $T(a_i, \sum_{j=1}^{i-1} a_j)$, for $i$ in $\{2,\dots, k\}$. A dual basis (with respect to the trace introduced in Proposition~\ref{prop:Frobeniusalgebra}) is given by:
\[\left( (-1)^{\sum_{j=2}^{k} |\widehat{\lambda_i}| + N(N+1)/2}\NB{\tiny \tikz[scale=0.7]{\tdplotsetmaincoords{70}{30}
\begin{scope}[tdplot_main_coords, scale = 1.5]
  \coordinate (T0)   at (0,1,0);
  \coordinate (T1)   at (1,1,0);
  \coordinate (T2)   at (2,1,0);
  \coordinate (T3)   at (2.5,1,0);
  \coordinate (T4)   at (3.5,1,0);

  \coordinate (TNm1) at (4,1,0);
  \coordinate (TN)   at (5,1,0);
  \coordinate (C0)   at (0,0,-2);
  \coordinate (C1)   at (1,0,-2);
  \coordinate (C2)   at (2,0,-2);
  \coordinate (C3)   at (2.5,0,-2);
  \coordinate (C4)   at (3.5,0,-2);

  \coordinate (CNm1) at (4,0,-2);
  \coordinate (CN)   at (5,0,-2);
  \coordinate (B0)   at (0,-1,0);
  \coordinate (B1)   at (1,-1,0);
  \coordinate (B2)   at (2,-1,0);
  \coordinate (B3)   at (2.5,-1,0);
  \coordinate (B4)   at (3.5,-1,0);

  \coordinate (BNm1) at (4,-1,0);
  \coordinate (BN)   at (5,-1,0);
  \draw[->] (T1) -- (T0);
  \draw[<-] (B1) -- (B0);
  \draw[->] (T2) -- (T1);
  \draw[<-] (B2) -- (B1);
  \draw[->] (TN) -- (TNm1);
  \draw[<-] (BN) -- (BNm1);
  \draw[dotted] (T2) -- (TNm1);
  \draw[dotted] (B2) -- (BNm1);
  \draw[->] (T0) arc (90:270:1) node[pos=0.3, below] {$a_1$} node [pos=1, above, black] {${\pi_{\widehat{\lambda_2}}}$};
 \draw[opacity = 0] (T4) -- (B4) .. controls +(0,0,-2)  and +(0,0,0)..(C4) coordinate[pos=0.77] (C42) .. controls +(0,0,0) and +(0,0,-2) .. (T4); 
 \draw[opacity = 0] (T3) -- (B3) .. controls +(0,0,-2)  and +(0,0,0)..(C3) coordinate[pos=0.77] (C32) .. controls +(0,0,0) and +(0,0,-2) .. (T3); 
  \filldraw[fill opacity = 0, fill =red] (T0) -- (B0) .. controls +(0,0,-2) and +(0,0,0) ..(C0) coordinate[pos=0.77] (C02).. controls +(0,0,0) and +(0,0,-2) .. (T0);
  \filldraw[fill opacity = 0, fill =red] (TN) -- (BN) .. controls +(0,0,-2)  and +(0,0,0)..(CN) coordinate[pos=0.77] (CN2) .. controls +(0,0,0) and +(0,0,-2) .. (TN); 
 \fill[opacity = 0.3, blue] (TNm1) -- (TN) arc (90:30:1) .. controls +(0,0, -1) and +(0.5, 0, 0) .. (CN2) .. controls +(0,0,0) and (C42) .. (C42) .. controls +(0,0.3,0) and +(0,0,-1) .. (T4); 
 \fill[opacity = 0.3, blue] (T3) -- (T0) arc (90:210:1) .. controls +(0,0, -1) and +(-0.5, 0, 0) .. (C02) .. controls +(0,0,0) and (C02) .. (C32) .. controls +(0,0.3,0) and +(0,0,-1) .. (T3); 
  \filldraw[fill opacity = 0.3, fill =red] (T0) -- (B0) .. controls +(0,0,-2) and +(0,0,0) ..(C0) .. controls +(0,0,0) and +(0,0,-2) .. (T0);
  \filldraw[fill opacity = 0.3, fill =red] (TN) -- (BN) .. controls +(0,0,-2)  and +(0,0,0)..(CN) .. controls +(0,0,0) and +(0,0,-2) .. (TN); 
  \filldraw[fill opacity = 0.3, fill =red] (T1) -- (B1) .. controls +(0,0,-2)  and +(0,0,0)..(C1).. controls +(0,0,0) and +(0,0,-2) .. (T1);
  \filldraw[fill opacity = 0.3, fill =red] (T2) -- (B2) .. controls +(0,0,-2)  and +(0,0,0)..(C2).. controls +(0,0,0) and +(0,0,-2) .. (T2);
  \filldraw[fill opacity = 0.3, fill =red] (TNm1) -- (BNm1) .. controls +(0,0,-2)  and +(0,0,0)..(CNm1).. controls +(0,0,0) and +(0,0,-2) .. (TNm1);

  \node[above, black] at ($(T0)+ (0.3,0,-0.4)$)  {${\pi_{\widehat{\lambda_3}}}$};
  \node[above, black] at ($(T1)+ (0.3,0,-0.4)$)  {${\pi_{\widehat{\lambda_4}}}$};
  \node[above, black] at ($(T2)+ (0.3,0,-0.4)$)  {${\pi_{\widehat{\lambda_5}}}$};
  \node[above, black] at ($(TNm1)+ (0.3,0,-0.4)$)  {${\pi_{\widehat{\lambda_k}}}$};
  \draw[->] (T0) -- (B0) node[near start, below] {$a_2$};
  \draw[->] (T1) -- (B1) node[near start, below] {$a_3$};
  \draw[->] (T2) -- (B2) node[near start, below] {$a_4$};
  \draw[->] (TNm1) -- (BNm1) node[near start, below] {$a_{k-1}$};
  \draw[<-] (TN) arc (90:-90:1) node [pos= 0.1, below] {$N$};
  \draw[->] (TN) -- (BN) node[near start, below] {$a_k$};
  \fill[opacity = 0.1, blue] (BNm1) -- (BN) arc (-90:30:1) .. controls +(0,0, -1) and +(0.5, 0, 0) .. (CN2) .. controls +(0,0,0) and (C42) .. (C42) .. controls +(-0.3,0) and +(0,0,-1) .. (B4); 
  \fill[opacity = 0.1, blue] (B3) -- (B0) arc (270:210:1) .. controls +(0,0, -1) and +(-0.5, 0, 0) .. (C02) .. controls +(0,0,0) and (C32) .. (C32) .. controls +(-0.3,0) and +(0,0,-1) .. (B3); 
\end{scope}}} \right)_{\substack{i=2,\dots, k \\ \lambda_i \in T(a_i, \sum_{j=1}^{i-1} a_j) }}.
 \]
\end{prop}

We can use foams to compute structure constants of the cohomology rings of flag varieties associated with the bases given in Proposition~\ref{prop:basemultithetafoam}:

\begin{prop}\label{prop:structuralconstant}
  Let $a_1, \dots, a_k$ be positive integers which sum to $N$ and $\mathbf{\alpha} = (\alpha_2, \dots, \alpha_k)$ and $\mathbf{\beta} = (\beta_2, \dots, \beta_k)$ two collections of Young diagrams such that for every $i$ in $\{2, \dots, k\}$, $\alpha_i$ and $\beta_i$ are in $T(a_i, \sum_{j=1}^{i-1} a_j)$. Finally let us denote by $h_{\mathbf{\alpha}}$ and $h_{\mathbf{\beta}}$ the elements of $H_T^\bullet(Fl_N(a_1,\dots a_N))$ corresponding to $\mathbf{\alpha}$ and $\mathbf{\beta}$, then we have:
\[
h_{\mathbf{\alpha}}h_{\mathbf{\beta}}= \sum_{\mathbf{\lambda}} \mathbf{c}_{\mathbf{\alpha}\mathbf{\beta}}^{\mathbf{\lambda}} h_{\mathbf{\lambda}},
\]
where the $\mathbf{\lambda}$ runs on all collections $(\lambda_2, \dots, \lambda_k)$ of Young diagrams with $\lambda_i$ are in $T(a_i, \sum_{j=1}^{i-1} a_j)$ and where 
\[
\mathbf{c}_{\mathbf{\alpha}\mathbf{\beta}}^{\mathbf{\lambda}} \eqdef (-1)^{\sum_{j=2}^k |\widehat{\lambda_j}| + N(N+1)/2} \kup{\!\!\!\!\scriptstyle{\NB{\tikz[scale =0.7]{\tdplotsetmaincoords{70}{30}
\begin{scope}[tdplot_main_coords, scale = 1.5]
  \coordinate (T0)   at (0,1,0);
  \coordinate (T1)   at (1,1,0);
  \coordinate (T2)   at (2,1,0);
  \coordinate (T3)   at (2.5,1,0);
  \coordinate (T4)   at (3.5,1,0);
  \coordinate (TNm1) at (4,1,0);
  \coordinate (TN)   at (5,1,0);
  \coordinate (C0)   at (0,0,0);
  \coordinate (C1)   at (1,0,0);
  \coordinate (C2)   at (2,0,0);
  \coordinate (C3)   at (2.5,0,0);
  \coordinate (C4)   at (3.5,0,0);
  \coordinate (CNm1) at (4,0,0);
  \coordinate (CN)   at (5,0,0);
  \coordinate (B0)   at (0,-1,0);
  \coordinate (B1)   at (1,-1,0);
  \coordinate (B2)   at (2,-1,0);
  \coordinate (B3)   at (2.5,-1,0);
  \coordinate (B4)   at (3.5,-1,0);
  \coordinate (BNm1) at (4,-1,0);
  \coordinate (BN)   at (5,-1,0);

 \draw[opacity = 0] (C3) arc (0:360:0.5cm and 1cm) coordinate[pos=0.25] (C3T);
 \draw[opacity = 0] (C0) arc (0:360:0.5cm and 1cm) coordinate[pos=0.75] (C0B);
 \draw[opacity = 0] (C4) arc (0:360:0.5cm and 1cm) coordinate[pos=0.25] (C4T);
 \draw[opacity = 0] (CN) arc (0:360:0.5cm and 1cm) coordinate[pos=0.75] (CNB);
 \fill[fill opacity = 0.3, fill =blue] (C3T) -- +(-2.5, 0,0) .. controls +(-2,0,0) and +(-2,0,0) .. (C0B) -- +(2.5,0,0).. controls +(0.7,0,0) and +(0.7,0,0)  .. (C3T);
 \fill[fill opacity = 0.3, fill =blue] (C4T) -- +(1.5, 0,0) .. controls +(2,0,0) and +(2,0,0) .. (CNB) -- +(-1.5,0,0).. controls +(0.7,0,0) and +(0.7,0,0)  .. (C4T);
 \filldraw[->-, thick, fill opacity = 0.3, fill =red, draw opacity=1, ->-] (C0)   arc (0:360:0.5cm and 1cm);
 \filldraw[->-, thick, fill opacity = 0.3, fill =red, ->-, draw opacity=1] (C1)   arc (0:360:0.5cm and 1cm);
 \filldraw[->-, thick, fill opacity = 0.3, fill =red, ->-, draw opacity=1] (C2)   arc (0:360:0.5cm and 1cm);
 \filldraw[->-, thick, fill opacity = 0.3, fill =red, ->-, draw opacity=1] (CN)   arc (0:360:0.5cm and 1cm);
 \filldraw[->-, thick, fill opacity = 0.3, fill =red, ->-, draw opacity=1] (CNm1) arc (0:360:0.5cm and 1cm);
 \draw[ <-]  ($(C0)   + (-0.5, 0,-0.5)$) -- +(0,0,-1) node[below] {$\scriptstyle{a_2}$};
 \draw[ <-]  ($(C0B)   + (0.5, 0,0)$) -- +(0,0,-0.8) node[below] {$\scriptstyle{a_1+a_2}$};
 \draw[ <-]  ($(C0B)   + (1.5, 0,0)$) -- +(0,0,-1) node[below] {$\scriptstyle{a_1+a_2+a_3}$};
 \draw[ <-]  ($(C0B)   + (2.5, 0,0)$) -- +(0,0,-1.2) node[below] {$\scriptstyle{a_1+a_2+a_3+ a_4}$};
 \draw[ <-]  ($(C0B)   + (3.7, 0,0)$) -- +(0,0,-0.6) node[below] {$\scriptstyle{N -a_k - a_{k-1}}$};
\draw[ <-]  ($(C0B)   + (4.7, 0,0)$) -- +(0,0,-0.8) node[below] {$\scriptstyle{N -a_k}$};
 \draw[ <-]  ($(C1)   + (-0.5, 0,-0.5)$) -- +(0,0,-1) node[below] {$\scriptstyle{a_3}$};
 \draw[ <-]  ($(C2)   + (-0.5, 0,-0.5)$) -- +(0,0,-1) node[below] {$\scriptstyle{a_4}$};
 \draw[ <-]  ($(CNm1) + (-0.5, 0,-0.5)$) -- +(0,0,-1) node[below] {$\scriptstyle{a_{k-1}}$};
 \draw[ <-]  ($(CN)   + (-0.5, 0,-0.5)$) -- +(0,0,-1) node[below] {$\scriptstyle{a_k}$};
 \draw[thick, red, <-]  ($(C0)   + (-0.5, 0,0.5)$) -- +(0,0,+1.5) node[above] {$\scriptstyle{\pi_{\alpha_2} \pi_{\beta_2}}$};
 \draw[thick, red, <-]  ($(C1)   + (-0.5, 0,0.5)$) -- +(0,0,+1.5) node[above] {$\scriptstyle{\pi_{\alpha_3} \pi_{\beta_3}}$};
 \draw[thick, red, <-]  ($(C2)   + (-0.5, 0,0.5)$) -- +(0,0,+1.5) node[above] {$\scriptstyle{\pi_{\alpha_4} \pi_{\beta_4}}$};
 \draw[thick, red, <-]  ($(CNm1) + (-0.5, 0,0.5)$) -- +(0,0,+1.5) node[above] {$\scriptstyle{\pi_{\alpha_{k-1}} \pi_{\beta_{k-1}}}$};
 \draw[thick, red, <-]  ($(CN)   + (-0.5, 0,0.5)$) -- +(0,0,+1.5) node[above] {$\scriptstyle{\pi_{\alpha_k} \pi_{\beta_k}}$};
 \fill[fill opacity = 0.1, fill =blue] (C4T) -- +(1.5, 0,0) .. controls +(2,0,0) and +(2,0,0) .. (CNB) -- +(-1.5,0,0).. controls +(-0.7,0,0) and +(-0.7,0,0)  .. (C4T);
 \fill[fill opacity = 0.1, fill =blue] (C3T) -- +(-2.5, 0,0) .. controls +(-2,0,0) and +(-2,0,0) .. (C0B) -- +(2.5,0,0).. controls +(-0.7,0,0) and +(-0.7,0,0)  .. (C3T);
 \draw[thick, red, <-]  ($(C0)   + (-1.5, 0,0.5)$) -- +(0,0,+0.8) node[above] {$\scriptstyle{\pi_{\widehat{\lambda_2}}}$};
 \draw[thick, red, <-]  ($(C4T)   + (-1.0,0,0)$) -- +(0,0,+0.5) node[above] {$\scriptstyle{\pi_{\widehat{\lambda_5}}}$};
 \draw[thick, red, <-]  ($(C4T)   + (-2.0,0,0)$) -- +(0,0,+0.5) node[above] {$\scriptstyle{\pi_{\widehat{\lambda_4}}}$};
 \draw[thick, red, <-]  ($(C4T)   + (-3.0,0,0)$) -- +(0,0,+0.5) node[above] {$\scriptstyle{\pi_{\widehat{\lambda_3}}}$};
 \draw[thick, red, <-]  ($(C4T)   + (0.0,0,0)$) -- +(0,0,+0.5) node[above] {$\scriptstyle{\pi_{\widehat{\lambda_{k-1}}}}$};
 \draw[thick, red, <-]  ($(C4T)   + (1.0,0,0)$) -- +(0,0,+0.5) node[above] {$\scriptstyle{\pi_{\widehat{\lambda_{k}}}}$};
 \draw[<-]  ($(C0)   + (-1.5, 0, -0.5)$) -- +(0,0,-1) node[below] {$\scriptstyle{a_1}$};
 \draw[<-]  ($(CN)   + (0.5, 0, -0.5)$) -- +(0,0,-1) node[below] {$\scriptstyle{N}$};

\end{scope}}}}\!\!\!\!}.
\]
  \end{prop}

\begin{rmk}\label{rmk:structuredegree0}
  \begin{enumerate}
  \item The last formula can be seen as an implementation of the Atiyah--Bott--Berline--Vergne integration formula for the computation of the structure constants of the partial flag varieties see \cite[Section 2.6]{MR2976939}.
  \item If we restrict to the case where $\sum_{j=2}^k |\lambda_j| = \sum_{j=2}^k |\alpha_j| + \sum_{j=2}^k |\beta_j|$, the coefficient $\mathbf{c}_{\mathbf{\alpha} \mathbf{\beta}}^{\lambda}$ is an integer (it is a symmetric polynomial of degree $0$). 
  \end{enumerate}
\end{rmk}

\begin{cor}\label{cor:LRgrassmanian}
   Let $\alpha$, $\beta$ and $\lambda$ be three Young diagrams such that $|\alpha| + |\beta|=  |\lambda|$. Choose two non-negative integers $a$ and $b$ such that $\alpha$, $\beta$ and $\lambda$ are in $T(b,a)$, then $\mathbf{c}_{{\alpha} {\beta}}^{\lambda}$ is precisely the Littlewood--Richardson coefficient $c_{{\alpha} {\beta}}^{\lambda}$ and can be computed via:
\begin{align*}
c_{\alpha {\beta}}^{\mathbf{\lambda}}= &(-1)^{|\widehat{\lambda}|+ N(N+1)/2 }\kup{\scriptstyle{\NB{\tikz[scale =0.8]{\begin{scope}
  \fill[fill opacity =0.3, thick, fill = red]   (1,0) arc (0:-180:1cm) arc (180:0:1cm and 0.5cm);
  \fill[fill opacity =0.3, thick, fill = blue]  (0,0) ellipse (1cm and 0.5cm);
  \node at (0,0) {$\scriptstyle{b}$};
  \draw[thick, dotted] (1,0) arc (0:180:1cm and 0.5cm);
  \fill[fill opacity =0.3, thick, fill = green] (1,0) arc (0:180:1cm)  node[midway, below, opacity =1, black] {$\scriptstyle{a}$} arc (180:0:1cm and 0.5cm);
   \draw[red, thick, <-] (0.5,0) .. controls (0.5,0.3).. +(0.6, 0.3) node[right, red] {$\pi_{\alpha}\pi_{\beta}$};
  \fill[fill opacity =0.3, thick, fill = red]   (1,0) arc (0:-180:1cm) node[midway,above, opacity =1] {$\scriptstyle{N}$} arc (-180:0:1cm and 0.5cm);
  \fill[fill opacity =0.3, thick, fill = green] (1,0) arc (0:180:1cm)  arc (-180:0:1cm and 0.5cm);
  \draw[thick] (0,0) circle (1cm);
  \draw[thick, ->] (1,0) arc (0:-90:1cm and 0.5cm);
  \draw[thick, -]  (-1,0) arc (-180:-90:1cm and 0.5cm);

   \draw[red, thick, <-] (60:1) -- +(0.6, 0) node[right, red] {$\pi_{\widehat{\lambda}}$};
\end{scope}}}}}_N \\ &= (-1)^{N(N+1)/2 +|\widehat{\lambda}|} \sum_{\substack{A\sqcup B = \{X_1, \dots, X_N\} \\ |A|=a, |B| = b}}(-1)^{|B<A|} \frac{a_\alpha(A) a_\beta(A) a_{\widehat{\lambda}}(B)}{\Delta(X_1, \dots, X_N)}.
\end{align*}
where $N= a+b$. Moreover to evaluate this foam we may use any evaluation $X_i \mapsto z_i$ (see Section~\ref{sec:relat-with-appr}).
\end{cor}

\subsection{Further possible developments}
\label{sec:furth-poss-devel}

\begin{itemize}
\item The foam evaluation relies on the evaluation of colored foams and as explained in Section~\ref{sec:sub-sl_k-foams}, it relates on how $\sll_2$-sub-foams are embedded in a colored foam. Since the classification of simple Lie relies basically on how different copies of $\sll_2$ interact with one another, it seems possible to adapt our strategy to categorify the graphical calculus for other simple Lie algebras and eventually to have a categorification of other quantum link invariants. In the introduction, we suggested that  pigments can be thought of as a simple $\mathfrak{gl}_N$-roots, and that bichrome surfaces may correspond to positive $\sll_N$-roots. This idea might be taken as a guideline in order to define the evaluation of (colored) foams for other Lie algebra types.
\item 
The 2-functor extension discussed in Subsection~\ref{sec:relat-with-appr}
would yield a new definition of the $\sll_N$-arc algebras \cite{MR3198835}. Arc algebras are of great interest since they are closely related to the dual canonical bases of some spaces. They have been investigated in various cases by Ehrig, Tubbenhauer and their coauthors \cite{2016arXiv161107444, 2016arXiv160108010, 2015arXiv151004884}. This new definition could be enlightening, since it could allow some combinatorial and geometric constructions similar to the ones in \cite{LHR2} and might eventually give a way to compute these dual-canonical bases.
\item Since we do not require $\sll_N$-MOY-graph to be ladders, the graphical presentation of the $\sll_N$-homology we have is simpler than the one given \cite{queffelec2014mathfrak}. One can therefore hope for a definition of a ``fully colored\footnote{That is an homology theory which would categorify the $\sll_N$-invariant of links colored by arbitrary finite dimensional representations, not only the minuscule ones.}'' $\sll_N$-homology with help of some resolutions of simple $\sll_N$-representations in terms of tensor powers of minuscules modules. See \cite{MR2124557} for the $\sll_2$-case and \cite{2015arXiv150308451R} for the $\sll_3$-case.
\end{itemize}

\appendix
\section{Two identities on Schur polynomials}
\label{sec:an-identiy-schur}
The aim of this section is to prove Propositions~\ref{prop:schurformula_complicated} and~\ref{prop:orthog} which are the key points in the proof of Theorem~\ref{thm:main}. For the sake of being self-contained and in order to introduce notations, we recall a few thing about symmetric polynomials. We use English notations for Young diagrams. For a kind introduction to symmetric polynomials we refer to \cite{MR3443860}.

Let us first recall some notations:
\begin{notation}\label{not:YD}
\begin{itemize} 
\item
If $\alpha$ is a Young diagram, $|\alpha|$ is the number of
    boxes of $\alpha$ and $\alpha^t$ is the transposed Young diagram.
If $a$ and $b$ are two non-negative integers,  $T(a,b)$ denotes the set of all Young diagram having at most $a$ columns and $b$ lines, $\rho(a,b)$ denotes the rectangular Young diagram with $a$ columns and $b$ lines. 
If a Young diagram $\alpha$ is specified to be in a certain $T(a,b)$, then 
\begin{itemize}
\item its complement is denote by $\alpha^c$: it is obtained by rotating by 180° the set of boxes of $\rho(a,b)$ which are not in $\alpha$,
\item the \emph{dual} of $\alpha$ is defined as $(\alpha^t)^c$ or equivalently as $(\alpha^c)^t$ and is denoted by $\widehat{\alpha}$.
\end{itemize}
An illustration is given in Figure~\ref{fig:YDcomplement}). Young diagrams will always be denoted by Greek letters. 
\begin{figure}[!h]
  \centering
  \begin{tikzpicture}[scale = 0.8]
    \input{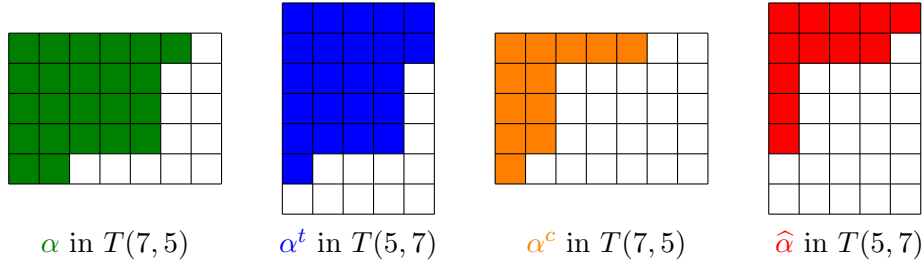}
  \end{tikzpicture} 
  \caption{A Young diagram, its transposed, its complement and its dual.}
  \label{fig:YDcomplement}
\end{figure}
\item The capital roman letters will denote sets of variables. Since we will be dealing with anti-symmetric polynomial, it is very important that these sets are ordered. If $A=\{a_1, \dots, a_n\}$ is a set of variables (with the order given by indices) the \emph{Vandermonde determinant on $A$} is denoted by $\Delta(A)$ and is equal to
\[
\Delta(A) := \det
\begin{pmatrix}
  1 &a_1 & a_1^2 &\cdots & a_1^{n-1}  \\
  1 & a_2 & a_2^2 &\dots & a_2^{n-1} \\
  \vdots&\vdots & \vdots &  &\vdots \\
  1 &a_n & a_n^2 & \cdots & a_n ^{n-1}
\end{pmatrix}
 = \det \left( a_i^{j-1} \right)_{\substack{{1\leq i\leq n} \\ {1\leq j\leq n} }}
= \prod_{i<j} (a_j - a_i).\]
\item If  $A=\{a_1, \dots, a_n\}$ is a set of variables and $\lambda= (\lambda_1 \geq \lambda_2\geq \cdots \geq \lambda_{n}\geq 0) $ is a Young diagram with at most $n$ non-empty columns, the \emph{Schur polynomial in $A$ associated with $\alpha$} is denoted by $\pi_\alpha$ and is equal to:
\[
\pi_{\lambda}(A) :=\frac{ \det
\begin{pmatrix}
 a_1^{\lambda_n + 0} &a_1^{\lambda_{n-1} +1}  
&\cdots  &a_1^{\lambda_1 + n-1}  \\
 a_2^{\lambda_n + 0} &a_2^{\lambda_{n-1} +1} 
&\cdots  &a_2^{\lambda_1 + n-1}  \\
  \vdots&\vdots  
&  &\vdots \\
 a_n^{\lambda_n + 0} &a_n^{\lambda_{n-1} +1}  
&\cdots  &a_n^{\lambda_1 + n-1}  \\
\end{pmatrix} }{\Delta(A)} = \frac{
 \det \left( a_i^{\lambda_{n-j+1} + j-1} \right)_{\substack{{1\leq i\leq n} \\ {1\leq j\leq n} }}
}{\Delta(A)}.
\]
If $\lambda$ has more than $n$ non-empty columns, we set: $\pi_\lambda(A) =0$. The Schur polynomials in $A$ are symmetric in $A$ and they do not depend on the order we chose on $A$. It will be convenient to set $a_\lambda(A)= \pi_\lambda(A)\Delta(A)$. These are anti-symmetric polynomials and they depend on the order on $A$.
\item If $A$ and $B$ are two (ordered) set of variables, the symbol $AB$ denotes the disjoints union of these two sets with the conditions that every element in $A$ is lower than the every element of $B$. The symbols $A\sqcup B$ requires that $A$ and $B$ are both subsets of a bigger set $C$ and that they are disjoint, it denotes the union of $A$ and $B$ with the order induced by the order of $C$. 
\item If $A$ and $B$ are two disjoint subsets of an ordered set $C$, $|A<B|$ denotes the number of pairs $(a,b)$ in $A\times B$ such that $a$ is lower than $b$. We have $|A<B|+ |B<A| = |A||B|$.
\item If $A$ is a set of variables and $a_1$ and $a_2$ are two non-negative integers such that $a_1+a_2= |A|$, $A(a_1, a_2)$ denotes the set of partitions of $|A|$ into two subsets $A_1$ and $A_2$, such that $|A_1|=a_1$ and $|A_2|= a_2$.
\item If $A$ and $B$ are two sets of variables, we set:
  \[
    \nabla (A,B) := \prod_{\substack{a\in A \\ b\in B}}(a -b) = \frac{\Delta(BA)}{\Delta(A)\Delta(B)}.
\]
\item The \emph{Richardson-Littlewood coefficients} are denoted $c_{\alpha \beta}^{\gamma}$ for $\alpha$, $\beta$ and $\gamma$ three Young diagrams. It is zero unless $|\alpha|+ |\beta|=|\gamma|$. We will use the following facts:
  \begin{enumerate}
  \item Let $A$ be a set of variable and  $\alpha$ and $\beta$ two  Young diagrams. We have
    \[
\pi_{\alpha}(A)\pi_{\beta}(A) = \sum_\lambda c_{\alpha \beta}^{\gamma}\pi_\lambda(A).
\]
\item Let $A$ and $B$ be two disjoint sets of variables and $\gamma$ a Young diagram. We have:
\[
\pi_\lambda(AB) = \sum_{\alpha \beta} c_{\alpha \beta}^{\gamma}\pi_{\alpha}(A)\pi_{\beta}(B).
\]
\item If $a$ and $b$ are two non-negative integers, we have:
\[
c^{\rho(a,b)}_{\alpha \beta} =
\begin{cases}
  1 & \textrm{if $\alpha$ is in $T(a,b)$ and $\beta =\widehat{\alpha}^t$,} \\
0 &\textrm{else.}
\end{cases}
\]
  \end{enumerate}
\end{itemize}
\end{notation}

\begin{lem} \label{lem:VDM2antisym_fixYD}

Let $A$ be a set of variables, $a_1$ and $a_2$ two non-negative integers such that $|A|= a_1 + a_2$ and $\alpha$ a Young diagram in $T(a_1, a_2)$. Then we have:
\begin{align*}
\Delta(A) &= (-1)^{|\alpha|}\sum_{(A_1,A_2) \in A(a_1, a_2)} (-1)^{|A_2<A_1|} a_\alpha(A_1) a_{\widehat{\alpha}}(A_2) \\
&= (-1)^{|\widehat{\alpha}|}\sum_{(A_1,A_2) \in A(a_1, a_2)} (-1)^{|A_1<A_2|} a_\alpha(A_1) a_{\widehat{\alpha}}(A_2).
\end{align*} 
\end{lem}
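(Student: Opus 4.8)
The plan is to realize the claimed identity as a single generalized Laplace (cofactor) expansion of the Vandermonde determinant $\Delta(A)=a_\emptyset(A)=\det(x_i^{p-1})_{1\le i,p\le n}$, where $A=\{x_1,\dots,x_n\}$ and $n=a_1+a_2$. The conceptual heart is a piece of exponent bookkeeping. Writing the alternant $a_\alpha(A_1)$ with $|A_1|=a_1$ as a determinant, its column exponents form the set $E_1=\{\alpha_i+a_1-i : 1\le i\le a_1\}\subseteq\{0,\dots,n-1\}$; likewise $a_{\widehat\alpha}(A_2)$ with $|A_2|=a_2$ has column exponents $E_2=\{\widehat\alpha_i+a_2-i : 1\le i\le a_2\}$. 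First I would record the key fact that the definition of the dual $\widehat\alpha$ is exactly what makes $E_1\sqcup E_2=\{0,1,\dots,n-1\}$: the map $\alpha\mapsto E_1$ is the standard bijection between Young diagrams inside the $a_1\times a_2$ box and $a_1$-element subsets of $\{0,\dots,n-1\}$, and $E_2$ is precisely the complementary subset. Along the way I would also note the size relations $|\alpha|+|\widehat\alpha|=a_1a_2$, together with $\sum_{e\in E_1}e=|\alpha|+\binom{a_1}{2}$ and $\sum_{e\in E_2}e=|\widehat\alpha|+\binom{a_2}{2}$.

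Given this, I would expand $\det(x_i^{p-1})$ by generalized Laplace expansion along the columns whose exponents lie in $E_2$, i.e. along the positions $S=\{e+1 : e\in E_2\}$. A subset $T\subseteq\{1,\dots,n\}$ of $a_2$ rows selects a splitting $A=A_1\sqcup A_2$ with $A_2=\{x_i:i\in T\}$, and because $E_1$ and $E_2$ are already listed in increasing order, the two complementary minors are exactly $a_{\widehat\alpha}(A_2)$ and $a_\alpha(A_1)$, with no extra sign coming from column reordering. This gives $\Delta(A)=\sum_{(A_1,A_2)\in A(a_1,a_2)}(-1)^{\sum_{i\in T}i+\sum_{s\in S}s}\,a_\alpha(A_1)\,a_{\widehat\alpha}(A_2)$, and it remains only to simplify the Laplace sign.

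The sign computation is the one genuinely fiddly step, though it is elementary, and I expect it to be the main obstacle. Using $\sum_{s\in S}s=\sum_{e\in E_2}e+a_2=|\widehat\alpha|+\binom{a_2}{2}+a_2$ and the elementary identity $\sum_{i\in T}i=|A_1<A_2|+\binom{a_2+1}{2}$ (the $A_2$-element in position $t_k$ is preceded by exactly $t_k-k$ elements of $A_1$), the Laplace exponent becomes $|A_1<A_2|+|\widehat\alpha|+\bigl(\binom{a_2+1}{2}+\binom{a_2}{2}+a_2\bigr)$. Since $\binom{a_2+1}{2}+\binom{a_2}{2}+a_2=a_2^2+a_2\equiv0\pmod2$ and $|\widehat\alpha|\equiv a_1a_2-|\alpha|$, this reduces mod $2$ to $|A_1<A_2|+a_1a_2+|\alpha|\equiv|A_2<A_1|+|\alpha|$, the last step because $|A_1<A_2|+|A_2<A_1|=a_1a_2$. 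Hence every term carries the sign $(-1)^{|\alpha|+|A_2<A_1|}$, which is the first displayed equality after moving $(-1)^{|\alpha|}$ to the other side. The second equality then follows term by term with no further expansion: the two summands differ in exponent by $(|\alpha|-|\widehat\alpha|)+(|A_2<A_1|-|A_1<A_2|)\equiv a_1a_2+a_1a_2\equiv0\pmod2$, again using $|\alpha|+|\widehat\alpha|=a_1a_2$ and $|A_1<A_2|+|A_2<A_1|=a_1a_2$. The structural input, that duality of Young diagrams is exactly complementarity of exponent sets, is what makes the whole identity drop out of a single determinant expansion.
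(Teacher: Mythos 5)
Your proof is correct and takes essentially the same route as the paper: both realize the identity as a generalized Laplace (multi-minor) expansion of the Vandermonde determinant, resting on the observation that $\alpha\mapsto\widehat{\alpha}$ corresponds exactly to complementation of the column-exponent sets in $\{0,\dots,n-1\}$. The only difference is that you pin down the sign by a direct computation for general $\alpha$, whereas the paper's sketch checks it for $\alpha=\emptyset$ and then inducts on $|\alpha|$; your write-up is, if anything, the more complete of the two.
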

\begin{proof}[Sketch of proof] This comes from developing the Vandermonde determinant along $a_1$ lines. 
If  $\alpha$ is the empty diagram: we develop it along the first $a_1$ lines and the sign is correct.  Moreover, thanks to the anti-symmetry of the determinant, the quantity
\[
\sum_{(A_1,A_2) \in A(a_1, a_2)} (-1)^{|A_1<A_2|} \pi_\alpha(A_1) \pi_{\widehat{\alpha}}(A_2)  
\] 
get multiplied by $-1$ whenever the number of boxes of $\alpha$  increases by one. 
\end{proof}
\begin{cor}\label{cor:nothat2null}
  Let $a_1$ and $a_2$ be two non-negative integers, $\alpha$ a Young diagram in $T(a_1, a_2)$,  $\beta$ a Young diagram in $T(a_2, a_1)$ and $A$ a set of variables such that $|A|=a_1 + a_2$. The following identity holds:
\[
\sum_{(A_1, A_2) \in A(a_1, a_2)} (-1)^{|A_2<A_1|}a_{\alpha}(A_1)a_{\beta}(A_2) = (-1)^{|\alpha|}\delta_{\alpha\widehat{\beta}}\Delta(A).
\]
\end{cor}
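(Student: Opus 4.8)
The plan is to show that the left–hand side, which I denote
\[
S \;=\; \sum_{(A_1,A_2)\in A(a_1,a_2)} (-1)^{|A_2<A_1|}\,a_{\alpha}(A_1)\,a_{\beta}(A_2),
\]
is an antisymmetric polynomial in $A$ that is divisible by $\Delta(A)$ with a \emph{constant} quotient $c$, and then to pin down $c$ by reading off a single coefficient. Throughout set $n=a_1+a_2$ and write $A=\{x_1<\dots<x_n\}$. The first step is to recognise $S$ as the antisymmetrisation of $a_\alpha\otimes a_\beta$: using that $a_\alpha$ and $a_\beta$ are already antisymmetric, and that $(-1)^{|A_2<A_1|}$ is exactly the sign of the shuffle permutation attached to the ordered partition $(A_1,A_2)$, one checks the identity
\[
S=\frac{1}{a_1!\,a_2!}\sum_{\sigma\in\mathfrak{S}_n}\operatorname{sgn}(\sigma)\,a_\alpha\big(x_{\sigma(1)},\dots,x_{\sigma(a_1)}\big)\,a_\beta\big(x_{\sigma(a_1+1)},\dots,x_{\sigma(n)}\big).
\]
The right–hand side is a nonzero scalar multiple of the full antisymmetrisation of $a_\alpha\otimes a_\beta$, hence is antisymmetric in $x_1,\dots,x_n$; in particular $\Delta(A)$ divides $S$ and $S/\Delta(A)$ is symmetric.

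Next I would bound the degree of $S$ in each single variable. The highest power of one variable occurring in $a_\alpha(A_1)$ equals $(a_1-1)$ plus the largest part of $\alpha$, which is at most $n-1$ because $\alpha$ fits in the box $T(a_1,a_2)$; the symmetric statement holds for $a_\beta(A_2)$ since $\beta\in T(a_2,a_1)$. Thus every variable occurs in $S$ to degree at most $n-1$. Since $\Delta(A)$ already has degree exactly $n-1$ in each variable, the symmetric quotient $S/\Delta(A)$ has degree $\le 0$ in every variable and is therefore a constant: $S=c\,\Delta(A)$.

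It remains to identify $c$, and here I would compare the coefficient of the staircase monomial $x_1^{0}x_2^{1}\cdots x_n^{n-1}$ on both sides. This monomial appears in $\Delta(A)=\det(x_i^{\,j-1})$ with coefficient $1$ (only the identity permutation contributes), so its coefficient in $S$ is exactly $c$. On the other hand, a summand $a_\alpha(A_1)a_\beta(A_2)$ can contribute to this monomial only if the exponent multiset of $a_\alpha(A_1)$, namely $E_\alpha=\{\alpha_j+a_1-j\}$, is realised on the variables of $A_1$ and $E_\beta=\{\beta_j+a_2-j\}$ on those of $A_2$; since $E_\alpha,E_\beta\subseteq\{0,\dots,n-1\}$ with $|E_\alpha|=a_1$ and $|E_\beta|=a_2$, this is possible precisely when $E_\alpha\sqcup E_\beta=\{0,\dots,n-1\}$. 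By the standard correspondence between partitions in a rectangle and their exponent (beta-)sets, $E_\alpha$ and $E_\beta$ are complementary in $\{0,\dots,n-1\}$ if and only if $\beta=\widehat{\alpha}$, equivalently $\alpha=\widehat{\beta}$. Hence $c=0$ whenever $\beta\neq\widehat\alpha$, giving $S=0$; and when $\beta=\widehat\alpha$ Lemma~\ref{lem:VDM2antisym_fixYD} already yields $S=(-1)^{|\alpha|}\Delta(A)$, i.e.\ $c=(-1)^{|\alpha|}$. Together the two cases give exactly $S=(-1)^{|\alpha|}\delta_{\alpha\widehat\beta}\Delta(A)$.

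The main obstacle I anticipate is the sign bookkeeping in the first step: one must verify carefully that $(-1)^{|A_2<A_1|}$ is indeed the sign of the corresponding shuffle, so that the antisymmetrisation identity for $S$ holds on the nose. The other point requiring care is the beta-set duality $E_{\widehat\alpha}=\{0,\dots,n-1\}\setminus E_\alpha$, which underlies the vanishing for $\beta\neq\widehat\alpha$; this is classical, but the precise complement/reversal convention has to be matched to the definition of $\widehat{\alpha}$ used here. Once these two conventions are fixed, the remaining manipulations are routine, and I would in fact only invoke Lemma~\ref{lem:VDM2antisym_fixYD} to supply the value of the single surviving constant.
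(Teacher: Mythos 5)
Your argument is correct, but it packages the vanishing case differently from the paper. The paper's proof of Corollary~\ref{cor:nothat2null} is essentially one sentence: for $\alpha\neq\widehat{\beta}$ the left-hand side is recognized as the multi-row Laplace expansion of the $n\times n$ generalized Vandermonde matrix $\bigl(x_i^{e}\bigr)$ whose column exponents are $E_\alpha$ followed by $E_\beta$; since $|E_\alpha|+|E_\beta|=n$ and both sets sit inside $\{0,\dots,n-1\}$ without being complementary, two columns coincide and the determinant vanishes. You reach the same conclusion by a slightly longer but more self-contained route: you first prove antisymmetry of the sum via the shuffle-sign identity (which is the same combinatorial fact that underlies the Laplace expansion, and your verification that $(-1)^{|A_2<A_1|}$ is the shuffle sign is sound), then use the single-variable degree bound $\deg_{x_i}\le n-1$ to force the quotient by $\Delta(A)$ to be a constant, and finally read that constant off the staircase monomial $x_1^0x_2^1\cdots x_n^{n-1}$, where the beta-set complementation $E_{\widehat{\alpha}}=\{0,\dots,n-1\}\setminus E_\alpha$ (which you correctly flag as the convention-sensitive point, and which does match the paper's $\widehat{\alpha}=(\alpha^c)^t$) decides between the two cases. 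Both proofs delegate the non-vanishing case to Lemma~\ref{lem:VDM2antisym_fixYD}. What your version buys is that you never need to assert that the sum \emph{equals} a specific determinant, only that it is antisymmetric of bounded degree, at the cost of the extra ``constant quotient'' scaffolding; the paper's version is shorter but leans entirely on recognizing the generalized Laplace expansion.
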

\begin{proof}
  The case $\alpha=\widehat{\beta}$ is done in the previous lemma. Whenever $\alpha\neq \widehat{\beta}$, the expression
\[
\sum_{(A_1, A_2) \in A(a_1, a_2)} (-1)^{|A_2<A_1|}a_{\alpha}(A_1)a_{\beta}(A_2) 
\]
computes the determinant by a multi-line development of a matrix which contains (at least) two columns which are equal.
\end{proof}

Similarly we have:
\begin{lem}  \label{lem:rectangle2schur}
Let $A$ be a set of variables, $a_1$ and $a_2$ two non-negative integers such that $|A|= a_1 + a_2$,$\alpha$ a Young diagram in $T(a_1, a_2)$ and $\beta$ a Young diagram in $T(a_2,a_1)$. Then we have:
\begin{align*}
&\sum_{(A_1, A_2) \in A(a_1, a_2)} (-1)^{|A_2<A_1|} a_{\frac{\rho(a_1,c)}{\alpha}}(A_1) a_{\frac{\rho(a_2,c)}{\widehat{\beta}}}(A_2) =  (-1)^{|\alpha|} \delta_{\widehat{\alpha}\beta}a_{\rho(a, c)}(A) \quad \textrm{and} \\
&\sum_{(A_1, A_2) \in A(a_1, a_2)} (-1)^{|A_1<A_2|} a_{\frac{\rho(a_1,c)}{\alpha}}(A_1) a_{\frac{\rho(a_2,c)}{\widehat{\beta}}}(A_2) =  (-1)^{|\widehat{\alpha}|}\delta_{\widehat{\alpha}\beta}a_{\rho(a, c)}(A),
\end{align*} 
where $\frac{\rho(a_1,c)}{\alpha}$ (resp. $\frac{\rho(a_2, c)}{\widehat{\alpha}}$) is the vertical concatenation of the rectangle $\rho(a_1,c)$ (resp. $\rho(a_2,c)$) with $\alpha$ (resp. $\widehat{\alpha}$). 
\end{lem}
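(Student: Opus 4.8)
The plan is to deduce this statement from Corollary~\ref{cor:nothat2null} in exactly the way that corollary was deduced from Lemma~\ref{lem:VDM2antisym_fixYD}: the rectangles $\rho(\cdot,c)$ contribute nothing more than an overall power of the product of the variables. The one fact I would record first is that decorating a diagram by the rectangle with $c$ rows multiplies the associated alternant by $\big(\prod x\big)^{c}$; precisely,
\[
  a_{\frac{\rho(a_1,c)}{\alpha}}(A_1)=\Big(\prod_{x\in A_1}x\Big)^{c} a_{\alpha}(A_1),
  \qquad
  a_{\frac{\rho(a_2,c)}{\widehat\alpha}}(A_2)=\Big(\prod_{x\in A_2}x\Big)^{c} a_{\widehat\alpha}(A_2),
\]
and likewise $a_{\rho(a,c)}(A)=\big(\prod_{x\in A}x\big)^{c}\Delta(A)$. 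Each of these holds because stacking the rectangle adds $c$ to every part of the diagram (padded with zeros), and on the level of Schur polynomials one has $\pi_{\lambda+(c^{m})}=\big(\prod_{x}x\big)^{c}\pi_{\lambda}$ in $m$ variables; multiplying through by the Vandermonde $\Delta$ turns this into the stated identity for the alternants $a_{\bullet}=\pi_{\bullet}\Delta$.

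Granting this, the second step is simply to multiply the identity of Corollary~\ref{cor:nothat2null} by the symmetric monomial $\prod_{x\in A}x^{c}=\prod_{x\in A_1}x^{c}\cdot\prod_{x\in A_2}x^{c}$. Sending one factor $\prod_{x\in A_1}x^{c}$ into $a_{\alpha}(A_1)$ and the other $\prod_{x\in A_2}x^{c}$ into $a_{\widehat\alpha}(A_2)$ on the left, and rewriting the right-hand side as $(-1)^{|\alpha|}\big(\prod_{x\in A}x\big)^{c}\Delta(A)=(-1)^{|\alpha|}a_{\rho(a,c)}(A)$, converts the corollary verbatim into the first asserted identity. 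The general-$\beta$ form with the factor $\delta_{\widehat\alpha\beta}$ comes out of the full statement of Corollary~\ref{cor:nothat2null}: for $\beta\neq\widehat\alpha$ the left-hand side, even after multiplication by $\prod_x x^{c}$, is the development of a determinant two of whose columns coincide, hence is $0$. The second identity of the lemma follows in exactly the same fashion, starting instead from the mirror statement of Lemma~\ref{lem:VDM2antisym_fixYD} (the one carrying $(-1)^{|\widehat\alpha|}$ and the weight $(-1)^{|A_1<A_2|}$), proved identically.

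The computation is routine, and the only point that needs genuine care is the bookkeeping of the dual. I would verify that the exponent multiset of $\rho(a,c)$ in the $a_1+a_2$ variables splits, along a chosen block of $a_1$ columns, precisely into the exponent multisets of $\frac{\rho(a_1,c)}{\alpha}$ and $\frac{\rho(a_2,c)}{\widehat\alpha}$, and that this complementation is exactly the one recorded by $\alpha\mapsto\widehat\alpha$; this is the same complementary-partition phenomenon underlying Lemma~\ref{lem:VDM2antisym_fixYD}, merely translated upward by $c$. Since $\prod_{x}x^{c}$ is a symmetric monomial it changes neither the signs $(-1)^{|\alpha|}$, $(-1)^{|\widehat\alpha|}$ nor the vanishing produced by a repeated column, so no new sign subtlety arises and both identities fall out together. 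Equivalently, and more in the spirit of the proof of Lemma~\ref{lem:VDM2antisym_fixYD}, one may argue directly by developing the alternant $a_{\rho(a,c)}(A)$ along $a_1$ of its lines; the two routes are the same computation read in opposite directions.
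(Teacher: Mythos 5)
Your proof is correct and is essentially the paper's own argument: the paper gives no proof beyond ``Similarly we have'', meaning the same multi-row Laplace expansion as in Lemma~\ref{lem:VDM2antisym_fixYD} and Corollary~\ref{cor:nothat2null}, and your reduction via $a_{\frac{\rho(m,c)}{\gamma}}(Y)=(\prod_{y\in Y}y)^{c}\,a_{\gamma}(Y)$ (valid because stacking $\rho(m,c)$ adds $c$ to every column height, hence shifts every exponent of the alternant by $c$) is exactly that computation made explicit. One bookkeeping caveat at the spot you yourself flagged: taken literally, your reduction turns the second factor into $a_{\widehat{\beta}}(A_2)$, so Corollary~\ref{cor:nothat2null} gives vanishing unless $\widehat{\beta}=\widehat{\alpha}$, i.e.\ $\beta=\alpha$, which produces $\delta_{\alpha\beta}$ rather than the stated $\delta_{\widehat{\alpha}\beta}$; your vanishing condition ``$\beta\neq\widehat{\alpha}$'' is the correct one only if the second subscript is read as $\frac{\rho(a_2,c)}{\beta}$ (which is what the paper's own ``resp.''~clause and the column count suggest, since $\widehat{\beta}\in T(a_1,a_2)$ need not fit under $\rho(a_2,c)$), so the discrepancy is inherited from the statement rather than introduced by your argument, but it should be resolved explicitly before the lemma is applied.
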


\begin{lem}  \label{lem:VDM2antisym_fix_part}
Let $A$ be a set of variables, and $A= A_1 \sqcup A_2$ a decomposition of A. Then we have:
\begin{align*}
\Delta(A) &= (-1)^{|A_2<A_1|} \sum_{\alpha \in T(|A_1|, |A_2|)} (-1)^{|\alpha|}  a_\alpha(A_1) a_{\widehat{\alpha}}(A_2) \\
 &= (-1)^{|A_1<A_2|} \sum_{\alpha \in T(|A_1|, |A_2|)} (-1)^{|\widehat{\alpha}|}  a_\alpha(A_1) a_{\widehat{\alpha}}(A_2). 
\end{align*}
\end{lem}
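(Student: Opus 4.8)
The plan is to obtain both identities from a single Laplace (cofactor) expansion of the Vandermonde determinant, exactly parallel to the expansion that underlies Lemma~\ref{lem:VDM2antisym_fixYD}: there one fixes the columns (equivalently the diagram $\alpha$) and sums over the splittings of the rows, whereas here one fixes the splitting $A=A_1\sqcup A_2$ of the rows and sums over the column subsets. Writing $n=|A|=a_1+a_2$ and $V=\bigl(x^{j-1}\bigr)_{x\in A,\,1\le j\le n}$, so that $\Delta(A)=\det V$, I would let $I\subseteq\{1,\dots,n\}$ be the set of positions occupied by $A_1$ in the ordered set $A$ and expand along these $a_1$ rows:
\[
\Delta(A)=\sum_{\substack{J\subseteq\{1,\dots,n\}\\ |J|=a_1}}(-1)^{\sigma(I)+\sigma(J)}\,\det\bigl(V_{I,J}\bigr)\,\det\bigl(V_{I^{c},J^{c}}\bigr),
\]
where $\sigma(I)=\sum_{i\in I}i$, $\sigma(J)=\sum_{j\in J}j$, and $V_{I,J}$ denotes the corresponding submatrix. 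This is the same ``development along $a_1$ lines'' that the sketch of Lemma~\ref{lem:VDM2antisym_fixYD} invokes, only regrouped.

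The first step is to read each summand as a product of alternants indexed by a Young diagram. A column subset $J=\{j_1<\dots<j_{a_1}\}$ records the strictly increasing exponent set $\{j_1-1<\dots<j_{a_1}-1\}\subseteq\{0,\dots,n-1\}$ and hence, by the rule $\alpha_{a_1-m+1}+(m-1)=j_m-1$, a unique diagram $\alpha\in T(a_1,a_2)$ with $\det(V_{I,J})=a_\alpha(A_1)$; as $J$ runs over all $a_1$-subsets, $\alpha$ runs over all of $T(a_1,a_2)$, matching the index set of the statement. The one combinatorial input I would isolate as a short observation is that the complementary exponent set $\{0,\dots,n-1\}\setminus J$ is precisely the $\beta$-set of $\widehat{\alpha}$, so that $\det(V_{I^{c},J^{c}})=a_{\widehat{\alpha}}(A_2)$; this is the standard fact that complementation of a $\beta$-set inside $\{0,\dots,n-1\}$ realizes the duality $\alpha\mapsto\widehat{\alpha}$ of Notation~\ref{not:YD}, and it can be checked directly from the definitions of $\alpha^{c}$ and $\alpha^{t}$.

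The delicate part — and the step I expect to be the main obstacle — is the sign. I would show that $\sigma(J)=a_1+|\alpha|+\binom{a_1}{2}$ (since the exponents sum to $|\alpha|+\binom{a_1}{2}$) and that $\sigma(I)=|A_2<A_1|+\binom{a_1+1}{2}$ (because the $m$-th element of $A_1$ is preceded by exactly $(\text{its position})-m$ elements of $A_2$). Adding these, the triangular contributions combine as $\binom{a_1}{2}+\binom{a_1+1}{2}+a_1=a_1^{2}+a_1=a_1(a_1+1)$, which is even, so $(-1)^{\sigma(I)+\sigma(J)}=(-1)^{|A_2<A_1|}(-1)^{|\alpha|}$. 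Substituting into the expansion gives the first equality. The second equality then follows for free: since $|A_1<A_2|+|A_2<A_1|=a_1a_2$ and $|\alpha|+|\widehat{\alpha}|=a_1a_2$ (both equal the area of the $a_1\times a_2$ box), the exponents $|A_2<A_1|+|\alpha|$ and $|A_1<A_2|+|\widehat{\alpha}|$ differ by $2a_1a_2\equiv0\pmod 2$, so the two prefactors agree term by term.
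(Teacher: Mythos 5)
Your proof is correct and follows essentially the same route as the paper, which disposes of this lemma in two lines by invoking the same Laplace expansion of the Vandermonde determinant with the row partition $A=A_1\sqcup A_2$ fixed and the column subsets (equivalently the diagrams $\alpha$, via their exponent sets) as the summation index. The only difference is one of detail: you make explicit the $\beta$-set complementation realizing $\alpha\mapsto\widehat{\alpha}$ and the sign bookkeeping $\sigma(I)+\sigma(J)\equiv|A_2<A_1|+|\alpha|\pmod 2$, both of which check out and which the paper leaves implicit.
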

\begin{proof}[Sketch of proof]
  This is very analogues to Lemma~\ref{lem:VDM2antisym_fixYD}. But instead of developing along lines, we develop along columns.   
\end{proof}

\begin{lem} \label{lem:nabla2schur}
Let $A$ and $B$ be two sets of variables.
We have:
\[
\nabla (A,B) = \sum_{\alpha  \in T(|A|, |B|)} (-1)^{|\widehat{\alpha}|}\pi_{\alpha}(A) \pi_{\widehat{\alpha}}(B).
\]
\end{lem}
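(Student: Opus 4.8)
The plan is to deduce this from Lemma~\ref{lem:VDM2antisym_fix_part}, which is the same statement with a Vandermonde determinant in place of $\nabla$. The bridge between the two is the elementary identity
\[
\nabla(A,B) = \frac{\Delta(BA)}{\Delta(A)\Delta(B)}
\]
recorded in Notation~\ref{not:YD}: in the ordered set $BA$ every element of $B$ precedes every element of $A$, so the determinant $\Delta(BA)$ splits off the two internal Vandermonde factors and leaves exactly the product $\prod_{a\in A,\,b\in B}(a-b)=\nabla(A,B)$.

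First I would apply the second identity of Lemma~\ref{lem:VDM2antisym_fix_part} to the ordered set $C=BA$, decomposed as $C=A\sqcup B$ with $A$ the block of larger elements and $B$ the block of smaller ones. In this ordering no element of $A$ is below an element of $B$, so $|A<B|=0$ and the global sign $(-1)^{|A<B|}$ is trivial; the lemma therefore gives
\[
\Delta(BA) = \sum_{\alpha \in T(|A|,|B|)} (-1)^{|\widehat{\alpha}|}\, a_\alpha(A)\, a_{\widehat{\alpha}}(B).
\]
Here I use that the orders induced on the blocks $A$ and $B$ inside $BA$ agree with their standalone orders, so that $a_\alpha(A)$ and $a_{\widehat\alpha}(B)$ are genuinely the antisymmetric polynomials of Notation~\ref{not:YD}.

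Finally I would substitute $a_\alpha(A)=\pi_\alpha(A)\Delta(A)$ and $a_{\widehat\alpha}(B)=\pi_{\widehat\alpha}(B)\Delta(B)$, factor $\Delta(A)\Delta(B)$ out of the sum, and divide by it, invoking the bridge identity to recognise the result as $\nabla(A,B)$; this produces the asserted formula
\[
\nabla(A,B) = \sum_{\alpha \in T(|A|,|B|)} (-1)^{|\widehat{\alpha}|}\, \pi_\alpha(A)\, \pi_{\widehat{\alpha}}(B).
\]
There is no real obstacle beyond the sign and order bookkeeping: the one point to verify carefully is that the inversion count vanishes for the ordering $BA$. As a consistency check one can run the argument through $AB$ instead, where the first identity of Lemma~\ref{lem:VDM2antisym_fix_part} contributes a factor $(-1)^{|A||B|}$ together with $(-1)^{|\alpha|}$; since $\alpha\in T(|A|,|B|)$ forces $|\alpha|+|\widehat\alpha|=|A||B|$, one has $(-1)^{|A||B|+|\alpha|}=(-1)^{|\widehat\alpha|}$, so both routes yield the same right-hand side.
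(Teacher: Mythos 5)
Your proof is correct and is essentially the paper's own argument: both expand $\Delta(BA)$ via the second identity of Lemma~\ref{lem:VDM2antisym_fix_part} applied to the decomposition $BA = A \sqcup B$ (where the inversion count $|A<B|$ vanishes) and then divide by $\Delta(A)\Delta(B)$ to recover $\nabla(A,B)$. The only addition is your consistency check via the $AB$ ordering, which is fine but not needed.
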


\begin{proof}
We may suppose that the variables of $B$ are lower than the variables of $A$.
  We have:
  \begin{align*}
    \nabla(A,B) &= \frac{\Delta(BA)}{\Delta(A) \Delta(B)} \\
&= \frac{(-1)^{|A<B|}}{\Delta(A) \Delta(B)}\sum_{\alpha \in  T(|A|,|B|)} (-1)^{|\widehat{\alpha}|}a_\alpha(A) a_{\widehat{\alpha}}(B) \\
&= \sum_{\alpha \in  T(|A|,|B|)} (-1)^{|\widehat{\alpha}|}\pi_\alpha(A) \pi_{\widehat{\alpha}}(B).
  \end{align*}
\end{proof}

\begin{lem}\label{lem:comult_common_variable}
  Let $A, B$ and $C$ be 3 sets of variables, and $\gamma$ a Young diagram. Then the following identity holds:
\[
\sum_{\alpha, \beta } c_{\alpha \beta}^{\gamma} (-1)^{|\alpha|} \pi_\alpha(A\cup C) \pi_{\beta^t}(B\cup C) =
\sum_{\alpha, \beta } c_{\alpha \beta}^{\gamma} (-1)^{|\alpha|} \pi_\alpha(A) \pi_{\beta^t}(B).
\]
\end{lem}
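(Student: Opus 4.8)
The plan is to prove the family-wide statement that the whole collection $\{T_\gamma\}_\gamma$, where
\[
T_\gamma(X,Y) := \sum_{\alpha,\beta} c_{\alpha\beta}^{\gamma}(-1)^{|\alpha|}\pi_\alpha(X)\pi_{\beta^t}(Y),
\]
is packaged by a single generating kernel that manifestly depends on $X$ and $Y$ only through their virtual ``difference'', so that enlarging both alphabets by a common set $C$ changes nothing. First I would introduce an auxiliary alphabet $Z=(z_1,z_2,\dots)$ and assemble the generating series $\sum_\gamma T_\gamma(X,Y)\pi_\gamma(Z)$. Applying the product rule for Schur polynomials $\sum_\gamma c_{\alpha\beta}^{\gamma}\pi_\gamma(Z)=\pi_\alpha(Z)\pi_\beta(Z)$ from Notation~\ref{not:YD}, this series factors completely in the two families of indices:
\[
\sum_\gamma T_\gamma(X,Y)\pi_\gamma(Z) = \Bigl(\sum_\alpha (-1)^{|\alpha|}\pi_\alpha(X)\pi_\alpha(Z)\Bigr)\Bigl(\sum_\beta \pi_{\beta^t}(Y)\pi_\beta(Z)\Bigr).
\]

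The key step is to recognize the two factors as Cauchy kernels. Since $\pi_\alpha$ is homogeneous of degree $|\alpha|$, we have $(-1)^{|\alpha|}\pi_\alpha(Z)=\pi_\alpha(-Z)$, so the ordinary Cauchy identity gives the first factor as $\prod_{x\in X,z\in Z}(1+xz)^{-1}$, while the dual Cauchy identity gives the second as $\prod_{y\in Y,z\in Z}(1+yz)$ (both standard, see \cite{MR3443860}). Hence
\[
\sum_\gamma T_\gamma(X,Y)\pi_\gamma(Z) = \prod_{\substack{x\in X\\ z\in Z}}\frac{1}{1+xz}\prod_{\substack{y\in Y\\ z\in Z}}(1+yz).
\]
Specializing $X=A\cup C$ and $Y=B\cup C$, the contributions of $C$ appear as the mutually inverse factors $\prod_{c\in C,z}(1+cz)^{-1}$ and $\prod_{c\in C,z}(1+cz)$, which cancel, leaving precisely the kernel attached to $X=A$, $Y=B$:
\[
\sum_\gamma T_\gamma(A\cup C, B\cup C)\pi_\gamma(Z) = \prod_{\substack{a\in A\\ z\in Z}}\frac{1}{1+az}\prod_{\substack{b\in B\\ z\in Z}}(1+bz) = \sum_\gamma T_\gamma(A,B)\pi_\gamma(Z).
\]

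Finally I would extract coefficients: as the Schur polynomials $\{\pi_\gamma(Z)\}_\gamma$ in the infinite alphabet $Z$ are linearly independent, equality of the two generating series forces $T_\gamma(A\cup C,B\cup C)=T_\gamma(A,B)$ for every $\gamma$, which is exactly the claimed identity. The only delicate points — and thus the main, though minor, obstacle — concern the finite-versus-infinite alphabet bookkeeping: the Cauchy identities are used with $X$ and $Y$ finite (so the $\alpha$- and $\beta$-sums truncate automatically, $\pi_\lambda$ vanishing once $\lambda$ is too large to fit the finite alphabet) and with $Z$ an infinite auxiliary alphabet, which is what legitimizes the final coefficient extraction by linear independence. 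Once the generating series is set up, the entire substance of the argument lies in identifying the two Cauchy kernels; after that, the cancellation of the common alphabet $C$ is immediate.
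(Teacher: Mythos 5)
Your proof is correct, and it takes a genuinely different route from the paper's. The paper first uses the compatibility of product and coproduct on symmetric functions (i.e.\ that $\pi_\gamma\mapsto\sum_{\alpha,\beta}c_{\alpha\beta}^{\gamma}(-1)^{|\alpha|}\pi_\alpha(X)\pi_{\beta^t}(Y)$ is multiplicative in $\gamma$) to reduce to the case where $\gamma$ is a single row, and then verifies that case by an explicit telescoping computation after adjoining one variable of $C$ at a time. You instead package all $\gamma$ simultaneously into the kernel $\sum_\gamma T_\gamma(X,Y)\pi_\gamma(Z)$, factor it via $\sum_\gamma c_{\alpha\beta}^{\gamma}\pi_\gamma(Z)=\pi_\alpha(Z)\pi_\beta(Z)$, and identify the two factors as the Cauchy and dual Cauchy kernels $\prod(1+xz)^{-1}$ and $\prod(1+yz)$, after which the common alphabet $C$ cancels on the nose and linear independence of the $\pi_\gamma(Z)$ over an infinite auxiliary alphabet extracts the identity for each $\gamma$. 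Your version buys a uniform, computation-free argument in which the cancellation mechanism is completely transparent (the sign $(-1)^{|\alpha|}$ is exactly what turns the first Cauchy factor into the reciprocal of the second on the common variables), at the mild cost of invoking the two Cauchy identities and the infinite-alphabet linear-independence argument; the paper's version stays entirely within finite alphabets and elementary manipulations but hides the reason the identity holds inside a row-by-row telescoping sum. Your handling of the finite-versus-infinite bookkeeping (truncation of the $\alpha$- and $\beta$-sums by vanishing of $\pi_\lambda$ on too-small alphabets, $Z$ infinite only to justify coefficient extraction) is exactly the right care to take, so I see no gap.
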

\begin{proof}[Sketch of the proof]
Since the coproduct and the product of symmetric polynomials are compatible, it is enough to show the statement for $\gamma = l_n$ a line diagram (in this proof $l_i$ is a line tableau of length $i$ and $c_j$ is a column diagram of height $j$). 
We want to show:
\[
\sum_{i+j =n} (-1)^i \pi_{l_i}(A\cup C) \pi_{c_j}(B\cup C) =
\sum_{i+j =n} (-1)^i \pi_{l_i}(A) \pi_{c_j}(B). 
\]
It is enough to consider the case where $C$ has one element (denoted by $C$ as well). We have:
\begin{align*}
&\sum_{i+j =n} (-1)^i \pi_{l_i}(A\cup C) \pi_{c_j}(B\cup C) \\
&= \sum_{i+j =n} \sum_{i_1=0,1}  \sum _{j_1=0}^j(-1)^i C^{i_1}\pi_{l_{i-i_1}}(A)  C^{j_1}\pi_{c_{j-j_1}}(B) \\
&= \sum_{i+j =n} \sum _{j_1=0}^{j}(-1)^i C^{j_1} \pi_{l_{i}}(A)  \pi_{c_{j-j_1}}(B) \\
&\quad +  \sum_{\substack{i+j =n \\ i\geq 1}} \sum _{j_1=0}^{j}(-1)^i C^{j_1+1} \pi_{l_{i-1}}(A)  \pi_{c_{j-j_1}}(B) \\
&=\sum_{i+j =n} (-1)^i \pi_{l_i}(A) \pi_{c_j}(B). 
\end{align*}
\end{proof}
\begin{cor}\label{cor:Schur_common_variable}
   Let $A, B$ and $C$ be 3 sets of variables, and let $a$ and $b$ be two integers. The following equations holds:
 \[
 \sum_{\alpha \in T(a,b)} (-1)^{|\alpha|} \pi_\alpha(A\cup C) \pi_{\widehat{\alpha}}(B\cup C) = \sum_{\alpha \in T(a,b)} (-1)^{|\alpha|} \pi_\alpha(A) \pi_{\widehat{\alpha}}(B).
 \]
\end{cor}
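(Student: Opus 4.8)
The plan is to obtain this as the special case $\gamma = \rho(a,b)$ of Lemma~\ref{lem:comult_common_variable}. First I would substitute the rectangular diagram $\gamma = \rho(a,b)$ (with $a$ columns and $b$ lines) into the identity of that lemma, so that both sides become sums over all pairs $(\alpha,\beta)$ weighted by the Littlewood--Richardson coefficient $c_{\alpha\beta}^{\rho(a,b)}$.

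The second step uses fact~(3) of Notation~\ref{not:YD}, which records that $c_{\alpha\beta}^{\rho(a,b)} = 1$ precisely when $\alpha \in T(a,b)$ and $\beta = \widehat{\alpha}^{t}$, and vanishes otherwise. Consequently the double sums collapse: on each side only the terms indexed by $\alpha \in T(a,b)$ with $\beta = \widehat{\alpha}^{t}$ survive, each with coefficient $1$. For such a term one has $\beta^{t} = (\widehat{\alpha}^{t})^{t} = \widehat{\alpha}$, hence $\pi_{\beta^{t}} = \pi_{\widehat{\alpha}}$. Feeding this back into Lemma~\ref{lem:comult_common_variable} turns it, term by term, into the claimed identity, which completes the argument.

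There is no genuine obstacle here; the result is a direct corollary. The only point deserving a moment's care is the bookkeeping of the dualization conventions---checking that specializing $\gamma$ to a rectangle pairs $\alpha$ with $\widehat{\alpha}^{t}$, and that $(\cdot)^{tt}=\mathrm{id}$ restores $\widehat{\alpha}$---so that the signs $(-1)^{|\alpha|}$ and the arguments $A\cup C$, $B\cup C$ (respectively $A$, $B$) line up correctly on both sides.
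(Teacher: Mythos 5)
Your proposal is correct and is exactly the paper's argument: the paper proves this corollary by applying Lemma~\ref{lem:comult_common_variable} to $\gamma = \rho(a,b)$, with the collapse of the sum via $c_{\alpha\beta}^{\rho(a,b)}$ and the identity $(\widehat{\alpha}^{t})^{t}=\widehat{\alpha}$ left implicit. You have merely spelled out the bookkeeping the paper omits.
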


\begin{proof}
  This is the previous lemma applied to $\lambda = \rho(a,b)$.
\end{proof}

We can translate the previous equality at the level of Littlewood-Richardson coefficients: 

\begin{cor}\label{cor:LRorthog}
  Let $a$ and $b$ be two non-negative integers and $\alpha$ in $T(a,b)$, $\beta$ in $T(b,a)$ and $\nu$ an arbitrary Young diagram. The following relation holds:
\[
\sum_{\substack{\tau, \alpha_1 \in T(a,b) \\ \alpha_2 \in T(b,a)}}(-1)^{|\tau|}c^{\nu}_{\alpha_1 \alpha_2}c^{\tau}_{\alpha_1 \alpha} c^{\widehat{\tau}}_{\alpha_2 \beta} =
\begin{cases}
  (-1)^{|\alpha|}& \textrm{if $\nu=\emptyset$ and $\beta = \widehat{\alpha}$},\\
  0 & \textrm{else}.
\end{cases}
\]
\end{cor}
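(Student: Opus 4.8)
The plan is to obtain this purely combinatorial identity by extracting coefficients from the Schur-polynomial identity of Corollary~\ref{cor:Schur_common_variable}, which is exactly the ``translation at the level of Littlewood--Richardson coefficients'' announced just before the statement. Throughout I take $A$, $B$ and $C$ to be three disjoint sets of variables, each with enough variables that the families $\{\pi_\lambda(A)\}$, $\{\pi_\mu(B)\}$ and $\{\pi_\kappa(C)\}$ are linearly independent (equivalently, I read the identity in the ring of symmetric functions, where the Schur functions form a basis). Since for fixed $\alpha,\beta,\nu$ only finitely many, boundedly-sized diagrams contribute, this is harmless.

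First I would expand both Schur polynomials on the left-hand side of Corollary~\ref{cor:Schur_common_variable} using the coproduct rule of Notation~\ref{not:YD}(2):
\[
\pi_\tau(A\cup C) = \sum_{\alpha,\alpha_1} c^{\tau}_{\alpha\alpha_1}\,\pi_\alpha(A)\,\pi_{\alpha_1}(C), \qquad \pi_{\widehat\tau}(B\cup C) = \sum_{\beta,\alpha_2} c^{\widehat\tau}_{\beta\alpha_2}\,\pi_\beta(B)\,\pi_{\alpha_2}(C),
\]
so that the left-hand side becomes $\sum_{\tau\in T(a,b)}(-1)^{|\tau|}\sum c^{\tau}_{\alpha\alpha_1}c^{\widehat\tau}_{\beta\alpha_2}\,\pi_\alpha(A)\pi_\beta(B)\pi_{\alpha_1}(C)\pi_{\alpha_2}(C)$, while the right-hand side $\sum_{\tau}(-1)^{|\tau|}\pi_\tau(A)\pi_{\widehat\tau}(B)$ carries no $C$-dependence. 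I would then compare, on both sides, the coefficient of $\pi_\alpha(A)\pi_\beta(B)$ in the independent variables $A$ and $B$. On the right this is nonzero only when $\tau=\alpha$ and $\widehat\tau=\beta$, giving the constant $(-1)^{|\alpha|}\delta_{\beta,\widehat\alpha}$ (which forces $\alpha\in T(a,b)$); on the left it is the symmetric function $\sum_{\tau}(-1)^{|\tau|}\sum_{\alpha_1,\alpha_2}c^{\tau}_{\alpha\alpha_1}c^{\widehat\tau}_{\beta\alpha_2}\,\pi_{\alpha_1}(C)\pi_{\alpha_2}(C)$ in $C$.

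Finally I would expand $\pi_{\alpha_1}(C)\pi_{\alpha_2}(C)=\sum_\nu c^{\nu}_{\alpha_1\alpha_2}\pi_\nu(C)$ via the product rule of Notation~\ref{not:YD}(1) and compare the coefficient of $\pi_\nu(C)$. Using the symmetry $c^{\tau}_{\alpha\alpha_1}=c^{\tau}_{\alpha_1\alpha}$ and $c^{\widehat\tau}_{\beta\alpha_2}=c^{\widehat\tau}_{\alpha_2\beta}$ of the Littlewood--Richardson coefficients, this yields
\[
\sum_{\tau,\alpha_1,\alpha_2}(-1)^{|\tau|}\,c^{\nu}_{\alpha_1\alpha_2}\,c^{\tau}_{\alpha_1\alpha}\,c^{\widehat\tau}_{\alpha_2\beta} = (-1)^{|\alpha|}\,\delta_{\beta,\widehat\alpha}\,\delta_{\nu,\emptyset},
\]
which is exactly the asserted formula. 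The summation ranges $\alpha_1\in T(a,b)$ and $\alpha_2\in T(b,a)$ appear automatically, since $c^{\tau}_{\alpha_1\alpha}\neq 0$ forces $\alpha_1\subseteq\tau\in T(a,b)$ and $c^{\widehat\tau}_{\alpha_2\beta}\neq 0$ forces $\alpha_2\subseteq\widehat\tau\in T(b,a)$.

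All the computations are routine once the set-up is fixed; the only point requiring genuine care — and hence the main (mild) obstacle — is the legitimacy of the coefficient extraction, namely ensuring that $A$, $B$, $C$ carry enough variables for the relevant Schur polynomials to be linearly independent, so that matching coefficients of $\pi_\alpha(A)\pi_\beta(B)\pi_\nu(C)$ is valid. As noted above this is guaranteed by the boundedness of the diagrams that contribute for fixed data.
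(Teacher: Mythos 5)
Your proposal is correct and follows essentially the same route as the paper: expand $\sum_{\tau}(-1)^{|\tau|}\pi_\tau(A\cup C)\pi_{\widehat\tau}(B\cup C)$ via the coproduct and product rules for Littlewood--Richardson coefficients, equate it with $\sum_{\tau}(-1)^{|\tau|}\pi_\tau(A)\pi_{\widehat\tau}(B)$ by Corollary~\ref{cor:Schur_common_variable}, and identify the coefficients of $\pi_\alpha(A)\pi_\beta(B)\pi_\nu(C)$. Your explicit attention to the linear-independence of the Schur polynomials (taking enough variables) is a point the paper leaves implicit, but the argument is the same.
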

\begin{proof}
  We have
\begin{align*}
&\sum_{\tau \in T(a,b)} (-1)^{|\tau|} \pi_\tau(A\cup C) \pi_{\widehat{\tau}}(B\cup C)  \\
&\qquad =
\sum_{\substack{\tau, \alpha_1 \in T(a,b) \\ \alpha_2 \in T(b,a)}}(-1)^{|\tau|}c^{\nu}_{\alpha_1 \alpha_2}c^{\tau}_{\alpha_1 \alpha} c^{\widehat{\tau}}_{\alpha_2 \beta}(-1)^{\alpha} \pi_\nu(C) \pi_{\alpha}(A) \pi_{\beta}(B) .
\end{align*}
But this is equal to 
\[
\sum_{\tau \in T(a,b)} (-1)^{|\alpha|} \pi_\tau(A) \pi_{\widehat{\tau}}(B).
\]
Identifying the terms gives the result.
\end{proof}

\begin{lem}\label{lem:hat_schur}
  Suppose $\alpha$, $\beta$ and $\gamma$ are three Young diagrams in $T(a,c), T(b,c)$ and $T(a+b,c)$. The following equality holds:
\[c^{\gamma}_{{\beta\alpha}} =  c^{\widehat{\gamma}}_{\widehat{\alpha} \widehat{\beta}}.\]
Note that the $\widehat{\bullet}$ has for each Young diagram a specific meaning. 
\end{lem}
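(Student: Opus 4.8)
The plan is to deduce the identity from three symmetries of the Littlewood--Richardson coefficients: commutativity $c^{\gamma}_{\beta\alpha}=c^{\gamma}_{\alpha\beta}$, transposition $c^{\gamma}_{\alpha\beta}=c^{\gamma^t}_{\alpha^t\beta^t}$, and a rectangular complementation $c^{\gamma}_{\alpha\beta}=c^{\gamma^c}_{\alpha^c\beta^c}$. The point is that, by definition, the dual factors as $\widehat{\alpha}=(\alpha^c)^t$, so that complementing all three diagrams and then transposing turns $c^{\gamma}_{\alpha\beta}$ into $c^{\widehat{\gamma}}_{\widehat{\alpha}\widehat{\beta}}$. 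Commutativity is immediate from the symmetry of the product rule in Notation~\ref{not:YD}, and transposition is the classical statement that the involution $s_\lambda\mapsto s_{\lambda^t}$ is a ring homomorphism; these I would simply invoke. The substance is the complementation symmetry, which I would prove directly from Schur polynomials using only the bialternant formula and product rule of Notation~\ref{not:YD}.

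For that step, fix a set $A$ of exactly $c$ variables and write $P=\prod_{x\in A}x$. I would first record the complementation identity $\pi_{\alpha^c}(A)=P^{a}\,\pi_\alpha(A^{-1})$, valid for $\alpha\in T(a,c)$, where $A^{-1}$ denotes the set of inverses and $\alpha^c$ is the complement in $\rho(a,c)$; this follows from the bialternant formula for $\pi_\alpha$ by substituting $A\mapsto A^{-1}$ and reversing the order of the columns of the determinant, the Vandermonde factor absorbing the leftover power of $P$. Starting from $\pi_\alpha(A)\pi_\beta(A)=\sum_{\gamma}c^{\gamma}_{\alpha\beta}\,\pi_\gamma(A)$, I would then substitute $A\mapsto A^{-1}$ and multiply by $P^{a+b}=P^a\cdot P^b$; since only $\gamma\in T(a+b,c)$ contribute and $P^{a+b}$ is exactly the factor needed to turn each $\pi_\gamma(A^{-1})$ into $\pi_{\gamma^c}(A)$, this yields
\[
\pi_{\alpha^c}(A)\,\pi_{\beta^c}(A)=\sum_{\gamma\in T(a+b,c)}c^{\gamma}_{\alpha\beta}\,\pi_{\gamma^c}(A).
\]
Comparing with the product rule applied to $\pi_{\alpha^c}(A)\pi_{\beta^c}(A)$ and using that the $\pi_\mu(A)$ with $\mu$ having at most $c$ rows form a basis of the symmetric polynomials in $c$ variables, I would read off $c^{\gamma}_{\alpha\beta}=c^{\gamma^c}_{\alpha^c\beta^c}$, every complement being taken in the appropriate $c$-row rectangle $\rho(a,c)$, $\rho(b,c)$, or $\rho(a+b,c)$.

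Finally I would chain the three symmetries in the order commutativity, complementation, transposition:
\[
c^{\gamma}_{\beta\alpha}=c^{\gamma}_{\alpha\beta}=c^{\gamma^c}_{\alpha^c\beta^c}=c^{(\gamma^c)^t}_{(\alpha^c)^t(\beta^c)^t}=c^{\widehat{\gamma}}_{\widehat{\alpha}\widehat{\beta}},
\]
the last equality being just the definition $\widehat{\bullet}=(\bullet^c)^t$. The main obstacle I anticipate is bookkeeping rather than conceptual: I must track the three distinct column counts $a$, $b$, $a+b$ attached to the common row count $c$, verify that the complementation identity uses the correct exponent ($P^a$ for $\alpha$, $P^b$ for $\beta$, $P^{a+b}$ for $\gamma$) so that degrees and rectangle containments are consistent, and confirm that the two summation ranges agree, namely that every $\mu$ with $c^{\mu}_{\alpha^c\beta^c}\neq 0$ automatically lies in $T(a+b,c)$ since columns add up under the product. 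Once these sizes are pinned down the coefficient comparison is forced, and the identity follows.
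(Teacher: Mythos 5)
Your proof is correct, but it takes a genuinely different route from the paper's. You factor the duality $\widehat{\bullet}=(\bullet^{c})^{t}$ into its two classical constituents: you prove the rectangular complementation symmetry $c^{\gamma}_{\alpha\beta}=c^{\gamma^{c}}_{\alpha^{c}\beta^{c}}$ by the inversion substitution $A\mapsto A^{-1}$ in the bialternant, and you import the transposition symmetry $c^{\gamma}_{\alpha\beta}=c^{\gamma^{t}}_{\alpha^{t}\beta^{t}}$ as a known fact. The paper instead obtains the combined statement in one stroke: it expands $\nabla(A\cup B,C)$ in two ways --- once by Lemma~\ref{lem:nabla2schur} applied to $A\cup B$ versus $C$ followed by the coproduct rule, and once via the factorization $\nabla(A\cup B,C)=\nabla(A,C)\,\nabla(B,C)$ followed by the product rule in the $C$ variables --- and then identifies coefficients of the linearly independent products $\pi_{\alpha}(A)\pi_{\beta}(B)\pi_{\widehat{\gamma}}(C)$, the signs matching because $|\widehat{\gamma}|=|\widehat{\alpha}|+|\widehat{\beta}|$ whenever the coefficient is nonzero. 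The paper's argument is self-contained given Lemma~\ref{lem:nabla2schur}, which is already established in the appendix, and never needs the $\omega$-involution; yours isolates the complementation symmetry as a reusable statement in its own right, at the cost of invoking transposition externally. The one bookkeeping caveat, which you correctly anticipate, is that the paper's $\pi_{\lambda}(A)$ is indexed so that $\lambda$ has at most $|A|$ columns (it is the Schur polynomial of $\lambda^{t}$ in the usual convention), so your identity $\pi_{\alpha^{c}}(A)=P^{a}\pi_{\alpha}(A^{-1})$ with $|A|=c$ should be run for the transposed diagrams, i.e.\ with the number of variables equal to the dimension $c$ shared by the three rectangles $\rho(a,c)$, $\rho(b,c)$, $\rho(a+b,c)$; since the Littlewood--Richardson coefficients themselves do not depend on this convention, this only reorders your chain of equalities and does not affect the conclusion.
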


\begin{proof}
  Let $A, B$ and $C$ be three disjoint sets of variables with cardinalities $a$, $b$ and $c$. We have:
\begin{align*}
\nabla(A\cup B, C) &= \sum_{{\gamma\in T(a+b,c)} } (-1)^{|\widehat{\gamma}|} \pi_{\gamma}(A\cup B) \pi_{\widehat{\gamma}}(C)
\\&= \sum_{\substack{\gamma\in T(a+b,c) \\ \alpha \in T(a,c) \\ \beta \in T(b,c)}} (-1)^{|\widehat{\gamma}|} c_{\alpha \beta}^{\gamma} \pi_{\alpha}(A) \pi_{\beta}(B) \pi_{\widehat{\gamma}}(C).
\end{align*}

On the other hand we have:
\begin{align*}
\nabla(A\cup B, C) &= \nabla(A,C) \nabla(B,C) \\
&=  \left(  \sum_{\alpha\in T(a,c)} (-1)^{|\widehat{\alpha}|} \pi_{\alpha}(A) \pi_{\widehat{\alpha}}(C) \right)
  \left(  \sum_{\beta\in T(b,c)} (-1)^{|\widehat{\beta}|} \pi_{\beta}(A) \pi_{\widehat{\beta}}(C) \right)
\\ &= \sum_{\substack{\gamma\in T(a+b,c) \\ \alpha \in T(a,c) \\ \beta \in T(b,c)}} (-1)^{|\widehat{\alpha}| + |\widehat{\beta}|} c_{\widehat{\alpha} \widehat{\beta}}^{\widehat{\gamma}} \pi_{\alpha}(A) \pi_{\beta}(B) \pi_{\widehat{\gamma}}(C).
\end{align*}
Identifying the terms we get what we wanted.
\end{proof}

The proofs of Lemma~\ref{lem:schurformula_simple} and of Propositions~\ref{prop:schurformula_complicated} and~\ref{prop:orthog} are relatively technical. We strongly advise the reader to first ignore the signs issues, and to deal with them separately on a second reading. For typographical reason, in some statements and some proofs, we may divide by the sign and not multiply by it, as it is usually done. We hope that, this will not disrupt the reading.

\begin{lem}\label{lem:schurformula_simple}
  Let $A_1$, $A_2$ and $B$ three disjoint sets of variables, such that $|B| =  |A_1\sqcup A_2|$ we have the following identity:
\[
\sum_{(B_1,B_2) \in B(|A_1|, |A_2|)} (-1)^{|B_2 < B_1|} \nabla(A_1,B_2) \nabla(A_2, B_1) \Delta(B_1) \Delta(B_2) =  \nabla(A_1, A_2) \Delta(B).
\]
\end{lem}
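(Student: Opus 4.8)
The plan is to expand the two $\nabla$-factors on the left using Lemma~\ref{lem:nabla2schur} and then to recognise the remaining sum over set partitions of $B$ as an instance of the orthogonality relation of Corollary~\ref{cor:nothat2null}. Writing $a_1=|A_1|$ and $a_2=|A_2|$, I would first record, via Lemma~\ref{lem:nabla2schur},
\[
\nabla(A_1,B_2)=\sum_{\alpha\in T(a_1,a_2)}(-1)^{|\widehat\alpha|}\pi_\alpha(A_1)\pi_{\widehat\alpha}(B_2),\qquad
\nabla(A_2,B_1)=\sum_{\beta\in T(a_2,a_1)}(-1)^{|\widehat\beta|}\pi_\beta(A_2)\pi_{\widehat\beta}(B_1).
\]
Multiplying these two expansions and absorbing the factors $\Delta(B_1)$ and $\Delta(B_2)$ of the summand into the Schur polynomials on the $B$-parts turns them into the antisymmetric polynomials $a_{\widehat\beta}(B_1)=\pi_{\widehat\beta}(B_1)\Delta(B_1)$ and $a_{\widehat\alpha}(B_2)=\pi_{\widehat\alpha}(B_2)\Delta(B_2)$, so that the summand of the left-hand side becomes
\[
(-1)^{|B_2<B_1|}\sum_{\alpha,\beta}(-1)^{|\widehat\alpha|+|\widehat\beta|}\pi_\alpha(A_1)\pi_\beta(A_2)\,a_{\widehat\beta}(B_1)\,a_{\widehat\alpha}(B_2).
\]

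Next I would interchange the order of summation so that the sum over partitions $(B_1,B_2)\in B(a_1,a_2)$ sits innermost, acting only on $a_{\widehat\beta}(B_1)a_{\widehat\alpha}(B_2)$. Since $\widehat\beta\in T(a_1,a_2)$ is paired with $B_1$ of size $a_1$ and $\widehat\alpha\in T(a_2,a_1)$ with $B_2$ of size $a_2$, Corollary~\ref{cor:nothat2null} applies directly and yields
\[
\sum_{(B_1,B_2)\in B(a_1,a_2)}(-1)^{|B_2<B_1|}a_{\widehat\beta}(B_1)a_{\widehat\alpha}(B_2)=(-1)^{|\widehat\beta|}\,\delta_{\widehat\beta,\widehat{\widehat\alpha}}\,\Delta(B).
\]
Using that duality is an involution on $T(a_1,a_2)$, i.e. $\widehat{\widehat\alpha}=\alpha$, the Kronecker delta forces $\beta=\widehat\alpha$ and collapses the double sum to a single sum over $\alpha\in T(a_1,a_2)$. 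The accumulated sign is $(-1)^{|\widehat\alpha|+2|\widehat\beta|}=(-1)^{|\widehat\alpha|}$, so what remains is
\[
\Delta(B)\sum_{\alpha\in T(a_1,a_2)}(-1)^{|\widehat\alpha|}\pi_\alpha(A_1)\pi_{\widehat\alpha}(A_2)=\nabla(A_1,A_2)\,\Delta(B),
\]
the final equality being Lemma~\ref{lem:nabla2schur} applied once more, now to $\nabla(A_1,A_2)$. This is exactly the right-hand side.

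The computation itself is short; the only delicate point is the sign bookkeeping, which I expect to be the main obstacle. I would keep careful track of which rectangle, $T(a_1,a_2)$ or $T(a_2,a_1)$, each diagram and its dual belongs to, verify that the two parts enter Corollary~\ref{cor:nothat2null} in the correct order (the part $B_1$ of size $a_1$ carrying $\widehat\beta\in T(a_1,a_2)$, the part $B_2$ of size $a_2$ carrying $\widehat\alpha\in T(a_2,a_1)$), and confirm the involutivity $\widehat{\widehat\alpha}=\alpha$ together with the parity simplification $(-1)^{|\widehat\alpha|+2|\widehat\beta|}=(-1)^{|\widehat\alpha|}$. Following the authors' own advice preceding this block of lemmas, I would first check the identity with all signs suppressed and only afterwards reinstate them.
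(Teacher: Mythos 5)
Your proposal is correct and follows essentially the same route as the paper's own proof: expand both $\nabla$-factors via Lemma~\ref{lem:nabla2schur}, absorb $\Delta(B_1)\Delta(B_2)$ into the antisymmetric polynomials $a_{\widehat\beta}(B_1)a_{\widehat\alpha}(B_2)$, apply the orthogonality of Corollary~\ref{cor:nothat2null} to collapse the sum over partitions of $B$, and reassemble the surviving diagonal sum into $\nabla(A_1,A_2)$. The sign bookkeeping $(-1)^{|\widehat\alpha|+2|\widehat\beta|}=(-1)^{|\widehat\alpha|}$ and the identification of which rectangle each dual diagram lives in match the paper exactly.
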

\begin{proof}
\begin{align*}
&  \sum_{(B_1,B_2) \in B(|A_1|, |A_2|)} 
(-1)^{|B_2 < B_1|} 
\nabla(A_1,B_2) \nabla(A_2, B_1) \Delta(B_1) \Delta(B_2) \\ 
&= \sum_{\substack{(B_1,B_2) \in B(|A_1|, |A_2|) \\ \alpha \in T(|A_1|, |A_2|) \\ \beta \in T(|A_2|, |A_1|)}} 
(-1)^{|B_2 < B_1| + |\widehat{\alpha}| + |\widehat{\beta}|} 
\pi_{\alpha}(A_1) \pi_{\widehat{\alpha}}(B_2) \pi_{\beta}(A_2) \pi_{\widehat{\beta}}(B_1) \Delta(B_1) \Delta(B_2) \\ 
&= \sum_{\substack{(B_1,B_2) \in B(|A_1|, |A_2|) \\ \alpha \in T(|A_1|, |A_2|) \\ \beta \in T(|A_2|, |A_1|)}} 
(-1)^{|B_2 < B_1| + |\widehat{\beta}| + |\widehat{\alpha}|} 
\pi_{\alpha}(A_1) \pi_{\beta}(A_2) a_{\widehat{\alpha}}(B_2) a_{\widehat{\beta}}(B_1).
\end{align*}
We use Corollary~\ref{cor:nothat2null} to continue:
\begin{align*}
& \qquad= \sum_{\substack{ \alpha \in T(|A_1|, |A_2|) \\ \beta \in T(|A_2|, |A_1|)}} 
(-1)^{ |\widehat{\alpha}|} \pi_{\alpha}(A_1) 
\pi_{\beta}(A_2) \delta_{\widehat{\alpha} \beta} \Delta(B) \\ 
& \qquad = \sum_{\substack{ \alpha \in T(|A_1|, |A_2|)}} 
(-1)^{ |\widehat{\alpha}|} 
\pi_{\alpha}(A_1) \pi_{\widehat{\alpha}}(A_2) \Delta(B) \\ 
&\qquad = \nabla (A_1, A_2) \Delta(B).
\end{align*}
\end{proof}

Exchanging the roles of $A\eqdef A_1\sqcup A_2$ and $B$, we get:
\begin{cor}\label{cor:schurformula_simple}
  Let $A$, $A_1$ and $B_2$ three disjoint sets of variables, such that $|A| =  |B_1\sqcup B_2|$. We have the following identity:
\[
\sum_{(B_1,B_2) \in B(|A_1|, |A_2|)} (-1)^{|A_2 < A_1|} \nabla(A_1,B_2) \nabla(A_2, B_1) \Delta(A_1) \Delta(A_2) =  \nabla(B_1, B_2) \Delta(A).
\]
\end{cor}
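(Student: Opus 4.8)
The plan is to deduce this identity directly from Lemma~\ref{lem:schurformula_simple} by relabeling the three sets, the only genuine work being a sign bookkeeping for the non-symmetric factor $\nabla$. Following the hint, the set to be partitioned is now $A = A_1 \sqcup A_2$, while $B_1$ and $B_2$ play the role of the two fixed sets.

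First I would apply Lemma~\ref{lem:schurformula_simple} verbatim to the triple of pairwise disjoint sets $B_1$, $B_2$, $A$, taken respectively in the roles of $A_1$, $A_2$, $B$. This substitution is legitimate precisely because the hypothesis $|A| = |B_1 \sqcup B_2|$ is exactly the size constraint the lemma requires. The lemma then yields
\[
\sum_{(A_1,A_2)\in A(|B_1|,|B_2|)} (-1)^{|A_2 < A_1|}\,\nabla(B_1,A_2)\,\nabla(B_2,A_1)\,\Delta(A_1)\,\Delta(A_2) = \nabla(B_1,B_2)\,\Delta(A),
\]
where the summation now runs over the partitions of $A$ into $(A_1,A_2)$ with $|A_1| = |B_1|$ and $|A_2| = |B_2|$.

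Second, I would reconcile the summand above with the one appearing in the statement of the corollary, which carries the factor $\nabla(A_1,B_2)\,\nabla(A_2,B_1)$ rather than $\nabla(B_1,A_2)\,\nabla(B_2,A_1)$. Using the elementary identity $\nabla(U,V) = (-1)^{|U||V|}\nabla(V,U)$ valid for disjoint sets $U,V$, we obtain $\nabla(A_1,B_2) = (-1)^{|A_1||B_2|}\nabla(B_2,A_1)$ and $\nabla(A_2,B_1) = (-1)^{|A_2||B_1|}\nabla(B_1,A_2)$. Since every term in the sum satisfies $|A_1| = |B_1|$ and $|A_2| = |B_2|$, the total sign is $(-1)^{|B_1||B_2| + |B_2||B_1|} = (-1)^{2|B_1||B_2|} = 1$, so that $\nabla(A_1,B_2)\,\nabla(A_2,B_1) = \nabla(B_1,A_2)\,\nabla(B_2,A_1)$ with no residual sign. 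Hence the two summands coincide term by term, and the displayed identity is exactly the assertion of Corollary~\ref{cor:schurformula_simple}.

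The step I expect to require the most care is this final sign verification: because $\nabla$ is genuinely non-symmetric in its two arguments, one must check that swapping the arguments of both factors produces the sign $(-1)^{2|B_1||B_2|}$, which is trivially $+1$ but is the one and only place where the cardinality constraint on the partition $(A_1,A_2)$ is actually used. Everything else is a pure relabeling of Lemma~\ref{lem:schurformula_simple}, with no new computation needed.
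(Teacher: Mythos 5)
Your proposal is correct and is exactly the paper's argument: the paper derives the corollary in one line by ``exchanging the roles of $A=A_1\sqcup A_2$ and $B$'' in Lemma~\ref{lem:schurformula_simple}, i.e.\ applying the lemma to $(B_1,B_2,A)$ in place of $(A_1,A_2,B)$. Your added verification that reversing the arguments of both $\nabla$-factors contributes $(-1)^{|B_1||A_2|+|B_2||A_1|}=(-1)^{2|B_1||B_2|}=1$ (using $|A_1|=|B_1|$, $|A_2|=|B_2|$) is the only detail the paper leaves implicit, and you have it right.
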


\begin{prop}\label{prop:schurformula_complicated}
  Let $A_1$, $A_2$ and $B$ three disjoint sets of variables, such that $|B| \leq  |A_1\sqcup A_2|$ we have the following identity:
\begin{align*}
&\sum_{\substack{B = B_1 \sqcup B_2 \\ \epsilon\in T(|A_1| -|B_1|, |A_2|-|B_2|)}}  \frac{
\nabla(A_1,B_2) \nabla(A_2, B_1) \Delta(B_1) \Delta(B_2) \pi_\epsilon(A_1 B_2) \pi_{\widehat{\epsilon}}(A_2 B_1)}{(-1)^{|B_2 < B_1| + |\epsilon| +|B_1|(|A_2| - |B_2|) )}} =  \nabla(A_1, A_2) \Delta(B).
\end{align*}
\end{prop}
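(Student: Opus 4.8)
The plan is to mimic the proof of Lemma~\ref{lem:schurformula_simple}: expand every factor of the summand into Schur polynomials living on the four individual sets $A_1,A_2,B_1,B_2$, collapse the sum over the decompositions $B=B_1\sqcup B_2$ with Corollary~\ref{cor:nothat2null}, and recognise the surviving sum as an instance of the Littlewood--Richardson orthogonality of Corollary~\ref{cor:LRorthog}. Note that the case $|B|=|A_1\sqcup A_2|$ forces $\epsilon=\emptyset$ and recovers Lemma~\ref{lem:schurformula_simple}, so that lemma is the degenerate case of the statement.

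First I would expand the two Vandermonde products by Lemma~\ref{lem:nabla2schur}, writing $\nabla(A_1,B_2)=\sum_{\mu}(-1)^{|\widehat\mu|}\pi_\mu(A_1)\pi_{\widehat\mu}(B_2)$ and $\nabla(A_2,B_1)=\sum_{\nu}(-1)^{|\widehat\nu|}\pi_\nu(A_2)\pi_{\widehat\nu}(B_1)$, and I would split the two Schur factors $\pi_\epsilon(A_1B_2)$ and $\pi_{\widehat\epsilon}(A_2B_1)$ across their sets via the coproduct rule of Notation~\ref{not:YD}, $\pi_\epsilon(A_1B_2)=\sum_{\sigma,\tau}c^{\epsilon}_{\sigma\tau}\pi_\sigma(A_1)\pi_\tau(B_2)$ and likewise for $\widehat\epsilon$. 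Merging the Schur polynomials that share a set by the product rule, and absorbing $\Delta(B_1)$ and $\Delta(B_2)$, turns the $B_1$- and $B_2$-parts into alternants $a_\eta(B_1)$ and $a_\lambda(B_2)$, where $\eta$ and $\lambda$ are built from $\widehat\mu,\tau,\widehat\nu,\kappa$ through Littlewood--Richardson coefficients.

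Then, for each fixed pair of sizes $(|B_1|,|B_2|)$, I would carry out the sum over the splittings $B=B_1\sqcup B_2$ exactly as in Lemma~\ref{lem:schurformula_simple}: Corollary~\ref{cor:nothat2null} replaces $\sum_{B_1\sqcup B_2}(-1)^{|B_2<B_1|}a_\eta(B_1)a_\lambda(B_2)$ by $(-1)^{|\eta|}\delta_{\eta\widehat\lambda}\Delta(B)$, which both produces the factor $\Delta(B)$ on the right-hand side and forces a duality between the $B_1$- and $B_2$-labels, thereby eliminating all genuine cross dependence between the $A$'s and the $B$'s. What remains is a sum in the variables $A_1,A_2$ only, weighted by a product of Littlewood--Richardson coefficients and summed over $\epsilon$; after using Lemma~\ref{lem:hat_schur} to match the two duals, this is precisely the left-hand side of Corollary~\ref{cor:LRorthog}. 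Its empty-diagram evaluation forces the two coproduct pieces of $\epsilon$ to be dual to one another and reassembles $\sum_{\omega}(-1)^{|\widehat\omega|}\pi_\omega(A_1)\pi_{\widehat\omega}(A_2)=\nabla(A_1,A_2)$, giving $\nabla(A_1,A_2)\Delta(B)$ as required.

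The routine but delicate part, and the real obstacle, is the sign and box bookkeeping. One must check that every intermediate diagram automatically lands in the $T(\cdot,\cdot)$ demanded by Corollaries~\ref{cor:nothat2null} and~\ref{cor:LRorthog}, so that the prescribed range $\epsilon\in T(|A_1|-|B_1|,|A_2|-|B_2|)$ is exactly the one for which no term falls outside the relevant boxes; the exponent $|B_2<B_1|+|\epsilon|+|B_1|(|A_2|-|B_2|)$ is tailored so that the signs emitted by Lemma~\ref{lem:nabla2schur}, Corollary~\ref{cor:nothat2null} and Corollary~\ref{cor:LRorthog} line up. A convenient way to isolate these signs at the outset is the elementary identity $\nabla(X,Y)\,\Delta(Y)\,\pi_\lambda(XY)=(-1)^{|X||Y|}a_\lambda(XY)/\Delta(X)$, which lets one pull out the global factor $1/(\Delta(A_1)\Delta(A_2))$ and reduce the whole statement to the purely alternant identity $\sum_{B_1\sqcup B_2}\sum_\epsilon(-1)^{\star}a_\epsilon(A_1B_2)a_{\widehat\epsilon}(A_2B_1)=(-1)^{|A_1||A_2|}\Delta(A_1A_2)\Delta(B)$, on which the comparison of signs is appreciably cleaner.
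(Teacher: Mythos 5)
There is a genuine gap at the step where you collapse the sum over decompositions $B=B_1\sqcup B_2$. Corollary~\ref{cor:nothat2null} only asserts $\sum_{(B_1,B_2)}(-1)^{|B_2<B_1|}a_{\eta}(B_1)a_{\lambda}(B_2)=(-1)^{|\eta|}\delta_{\eta\widehat{\lambda}}\Delta(B)$ under the hypothesis that $\eta$ and $\lambda$ lie in the \emph{complementary} rectangles $T(|B_1|,|B_2|)$ and $T(|B_2|,|B_1|)$. In Lemma~\ref{lem:schurformula_simple} this hypothesis holds automatically because $|B_1|=|A_1|$, $|B_2|=|A_2|$ and the only diagrams appearing on $B_1,B_2$ are the duals $\widehat{\alpha},\widehat{\beta}$ coming from Lemma~\ref{lem:nabla2schur}. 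In your expansion, however, the diagram $\eta$ on $B_1$ is a Littlewood--Richardson product of $\widehat{\nu}\in T(|B_1|,|A_2|)$ with a piece $\tau'$ of $\widehat{\epsilon}$, so its height can far exceed $|B_2|$ (and likewise for $\lambda$ on $B_2$); moreover $|B_1|$ need not equal $|A_1|$. Outside the complementary rectangles the sum $\sum(-1)^{|B_2<B_1|}a_{\eta}(B_1)a_{\lambda}(B_2)$ is a Laplace expansion that produces $\pm a_{\kappa}(B)$ for a nontrivial $\kappa$ whenever the two shifted exponent sets are disjoint — not a delta function. These extra terms carry genuine $B$-dependence through $\pi_{\kappa}(B)$, and your argument neither accounts for them nor shows they cancel; consequently the subsequent identification of the residual $A$-sum with Corollary~\ref{cor:LRorthog} (which in any case has three LR coefficients in a specific pattern, whereas your residual sum has four) is not justified.

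For comparison, the paper goes the other way around: it adjoins an auxiliary alphabet $X$ with $|X|=|A_1|+|A_2|-|B|$, applies the square case (Lemma~\ref{lem:schurformula_simple}) to the enlarged set $BX$, where the rectangles do match, and then extracts the coefficient of $a_{\rho(|X|,|B|)}(X)$ from both sides of $\nabla(A_1,A_2)\Delta(BX)$. The Schur factors $\pi_{\epsilon}(A_1B_2)\pi_{\widehat{\epsilon}}(A_2B_1)$, the range $\epsilon\in T(|A_1|-|B_1|,|A_2|-|B_2|)$, and the sign all fall out of that coefficient extraction (via Lemmas~\ref{lem:VDM2antisym_fixYD}, \ref{lem:rectangle2schur} and the comultiplication), so the problematic collapse never arises. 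Your observations that the proposition degenerates to Lemma~\ref{lem:schurformula_simple} when $|B|=|A_1|+|A_2|$ and to Lemma~\ref{lem:nabla2schur} when $B=\emptyset$ are correct and are indeed noted in the paper, but they do not repair the middle of the argument.
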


\begin{proof}
Before starting the proof it is worth it to note that the partitions $B = B_1 \sqcup B_2$ which contribute to the sum satisfy $|B_1| \leq |A_1|$ and $|B_2| \leq |A_2|$, that Lemma~\ref{lem:schurformula_simple} is exactly this one in the case $|B| = |A_1| + |A_2|$ and that Lemma~\ref{lem:nabla2schur} is this one with $B=\emptyset$.

Let us also explain the strategy. We introduce a set $X$ of variables such that $|X| = |A_1| + |A_2| -|B|$ and we consider $G= \nabla(A_1, A_2) \Delta(BX)$. The previous lemma gives us an alternative expression for $G$. Then we look at $G$ as anti-symmetric polynomial in $X$ and identify the coefficient of  $a_{\rho(|X|, |B|)}(X)$ on both sides. We suppose that every variable in $X$ is greater than every variable in $B$. 
\begin{align*}
\\ G &=  \sum_{\substack{B = B_1 \sqcup B_2 \\  X = X_1 \sqcup X_2 \\  |X_1| + |B_1| = |A_1| \\ |X_2| + |B_2| =|A_2|}}
\frac{
\nabla(A_1, B_2X_2) \nabla(A_2, B_1X_1) \Delta(B_1 X_1)  \Delta(B_2 X_2)}
{ (-1)^{|B_2X_2< B_1X_1| }}
\\ &=  \sum_{\substack{B = B_1 \sqcup B_2 \\  X = X_1 \sqcup X_2 \\  |X_1| + |B_1| = |A_1| \\ |X_2| + |B_2| =|A_2| \\ \alpha \in T(|B_1|, |X_1|) \\ \beta \in T(|B_2|, |X_2|)}}
\frac{\nabla(A_1, B_2) \nabla(A_2, B_1) \nabla(A_1, X_2)\nabla(A_2,X_1)
 a_{\alpha}(B_1)  a_{\widehat{\alpha}}(X_1)  a_{{\beta}}(B_2)  a_{\widehat{\beta}}(X_2) }
{(-1)^{|B_2 X_2< B_1 X_1| + |\alpha| + |\beta|}}.
\end{align*}
We used Lemma~\ref{lem:VDM2antisym_fixYD}, and now we expand the $\nabla$'s thanks to Lemma~\ref{lem:nabla2schur}.
\begin{align*} 
G&=  \sum_{\substack{B = B_1 \sqcup B_2 \\  X = X_1 \sqcup X_2 \\  |X_1| + |B_1| = |A_1| \\ |X_2| + |B_2| =|A_2| \\ \alpha \in T(|B_1|, |X_1|) \\ \beta \in T(|B_2|, |X_2|)\\ \gamma \in T(|A_1|, |X_2|) \\ \delta \in T(|A_2|, |X_1|) }}
\frac{
\nabla(A_1, B_2) \nabla(A_2, B_1)
\pi_{\gamma}(A_1)  \pi_{\widehat{\gamma}}(X_2)  \pi_{{\delta}}(A_2)  \pi_{\widehat{\delta}}(X_1) 
a_{\alpha}(B_2)  a_{\widehat{\alpha}}(X_2)  a_{{\beta}}(B_1)  a_{\widehat{\beta}}(X_1) 
}{
(-1)^{|B_2X_2< B_1 X_1| +|\alpha| + |\beta| + |\widehat{\gamma}| +|\widehat{\delta}|} 
}
\\ &=  \sum_{\substack{B = B_1 \sqcup B_2 \\  X = X_1 \sqcup X_2 \\  |X_1| + |B_1| = |A_1| \\ |X_2| + |B_2| =|A_2| \\ \alpha \in T(|B_1|, |X_1|) \\ \beta \in T(|B_2|, |X_2|)\\ \gamma \in T(|A_1|, |X_2|) \\ \delta \in T(|A_2|, |X_1|) \\ \lambda \in T(|A_2| + |B_1|,|X_1|) \\ \mu \in T(|A_1| + |B_2|,|X_2|) }}
\frac{
 \nabla(A_1, B_2) \nabla(A_2, B_1)
 c^{\widehat{\lambda}}_{\widehat{\delta} \widehat{\beta}} c^{\widehat{\mu}}_{\widehat{\gamma} \widehat{\alpha}}
 \pi_{\gamma}(A_1) \pi_{{\delta}}(A_2)  a_{\widehat{\mu}}(X_2)    a_{\widehat{\lambda}}(X_1) 
 a_{\alpha}(B_2)   a_{{\beta}}(B_1)
}{
 (-1)^{|B_2X_2< B_1 X_1| +|\alpha| + |\beta| + |\widehat{\gamma}| +|\widehat{\delta}|}
}
\\ &=  \sum_{\substack{B = B_1 \sqcup B_2 \\  X = X_1 \sqcup X_2 \\  |X_1| + |B_1| = |A_1| \\ |X_2| + |B_2| =|A_2| \\ \alpha \in T(|B_1|, |X_1|) \\ \beta \in T(|B_2|, |X_2|)\\ \gamma \in T(|A_1|, |X_2|) \\ \delta \in T(|A_2|, |X_1|) \\ \lambda \in T(|A_2| + |B_1|,|X_1|) \\ \mu \in T(|A_1| + |B_2|,|X_2|) }}
\frac{
 \nabla(A_1, B_2) \nabla(A_2, B_1)
 c^{\lambda}_{\delta \beta} c^{\mu}_{\gamma \alpha}
 \pi_{\gamma}(A_1) \pi_{{\delta}}(A_2)  a_{\widehat{\mu}}(X_2)    a_{\widehat{\lambda}}(X_1) 
 a_{\alpha}(B_2)   a_{{\beta}}(B_1)
}{
(-1)^{|B_2< B_1|+ |X_2<X_1| +|B_2||X_1| +|\alpha| + |\beta| + |\widehat{\gamma}| +|\widehat{\delta}|}
}.
\end{align*}
We may sum over the decompositions $X= X_1\sqcup X_2$ with $|X_1|$ and $|X_2|$ fixed. This yields an anti-symmetric polynomial in $X$ with coefficients in $A_1 \sqcup A_2 \sqcup B$. We look at the coefficient of $a_{\rho(|X|, |B|)}(X)$. In the sum this implies that $\widehat{\lambda}$ contains $\rho(|B_1|,|X_2|)$ as a sub-Young diagram and $\widehat{\mu}$ contains $\rho(|B_2|,|X_1|)$ as a sub-Young diagram and that $\lambda = \widehat{\mu}$ in $T(|X_2|, |X_1|)$. 

We can use Lemma~\ref{lem:rectangle2schur} to compute the coefficient  of  $a_{\rho(|X|, |B|)}(X)$ in $G$ which we denote by $H$:

\begin{align*}
H&= \sum_{\substack{B = B_1 \sqcup B_2 \\ |X_1| + |B_1| = |A_1| \\ |X_2| + |B_2| =|A_2| \\ \alpha \in T(|B_1|, |X_1|) \\ \beta \in T(|B_2|, |X_2|)\\ \gamma \in T(|A_1|, |X_2|) \\ \delta \in T(|A_2|, |X_1|) \\ \tau \in T(|X_2|, |X_1|)}}
\frac{
\nabla(A_1, B_2) \nabla(A_2, B_1)
 c^{\tau}_{\delta \beta} c^{\widehat{\tau}}_{\gamma \alpha}
 \pi_{\gamma}(A_1) \pi_{{\delta}}(A_2) 
 a_{\alpha}(B_2)   a_{{\beta}}(B_1) 
}{
 (-1)^{|B_2< B_1| +|B_2||X_1| +|\alpha| + |\beta| + |\widehat{\gamma}| +|\widehat{\delta}| + |\widehat{\tau}|}
}.
\end{align*}
We renamed $\lambda$ into $\tau$ since these diagrams are not in the same rectangle. When
 $c^{\tau}_{\delta \beta} c^{\widehat{\tau}}_{\gamma \alpha} \neq 0$, we have:
 \begin{itemize}
 \item $ |\beta| + |\widehat{\delta}| = |\beta| - |\delta| + |A_2||X_1| \equiv |\tau| + |A_2||X_1|$,
 \item $|\widehat{\gamma}| + |\gamma| = |A_2||X_1|$,
 \item $|\alpha| + |\widehat{\gamma}| = |\alpha| - |\gamma| + |A_1||X_2|\equiv |\widehat{\tau}| + |A_1||X_2|.$
 \end{itemize}
Using the comultiplication of symmetric polynomials, we get:
\begin{align*}
H&= \sum_{\substack{B = B_1 \sqcup B_2 \\   |X_1| + |B_1| = |A_1| \\ |X_2| + |B_2| =|A_2|\\ \tau \in T(|X_2|, |X_1|) \\}}
\frac{
\nabla(A_1, B_2) \nabla(A_2, B_1)
\Delta(B_1) \Delta(B_2) \pi_\tau(A_2 B_1) \pi_{\widehat{\tau}} (A_1 B_2)
}{
  (-1)^{|B_2< B_1| +|B_2||X_1| + |A_2||X_1|+ |A_1| |X_2| +|\tau|}
}.
\end{align*}
We have:
\[
|B_2||X_1| + |A_2||X_1|+ |A_1| |X_2| \equiv |X_1|(|B_2|-|A_2|)+ |A_1| |X_2| \equiv |B_1|(|A_2| -|B_2|)
\]
and
\begin{align*}
H&= \sum_{\substack{B = B_1 \sqcup B_2 \\  |X_1| + |B_1| = |A_1| \\ |X_2| + |B_2| =|A_2|\\ \tau \in T(|X_2|, |X_1|) \\}}
\frac{
\nabla(A_1, B_2) \nabla(A_2, B_1)
\Delta(B_1) \Delta(B_2) \pi_\tau(A_2 B_1) \pi_{\widehat{\tau}} (A_1 B_2) 
}{
 (-1)^{|B_2< B_1 | + |\tau| + |B_1|(|A_2| - |B_2|)} 
}.
\end{align*}
On the other hand, we have:
\[
G=\nabla(A_1, A_2) \Delta(BX) = \nabla(A_1, A_2) \Delta(B) \sum_{\eta \in T(|B|, |X|)} (-1)^{|{\eta}|}\pi_\eta(B)a_{\widehat{\eta}}(X).
\]
This gives:
\[
H= \nabla(A_1, A_2) \Delta(B).
\]
\end{proof}

\begin{cor}\label{cor:schurformula_complicated}
  Let $A_1$, $A_2$, $B$ and $C$ four disjoint sets of variables, such that $|B| \leq  |A_1\sqcup A_2|$ we have the following identity:
\begin{align*}
&\sum_{\substack{B = B_1 \sqcup B_2 \\ \epsilon\in T(|A_1| -|B_1|, |A_2|-|B_2|)}}  
\frac{
\nabla(A_1,B_2) \nabla(A_2, B_1) \Delta(B_1) \Delta(B_2) \pi_\epsilon(A_1 B_2 C) \pi_{\widehat{\epsilon}}(A_2 B_1 C)  
}{
(-1)^{|B_2 < B_1| + |\epsilon| +|B_1|(|A_2| - |B_2|) )}
}
=\nabla(A_1, A_2) \Delta(B).
\end{align*}
\end{cor}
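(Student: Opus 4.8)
The plan is to deduce this corollary directly from Proposition~\ref{prop:schurformula_complicated}, treating the extra set $C$ as a collection of ``common variables'' and absorbing it via Corollary~\ref{cor:Schur_common_variable}. The only difference between the two statements is that the Schur factors $\pi_\epsilon$ and $\pi_{\widehat\epsilon}$ are now evaluated on $A_1 B_2 C$ and $A_2 B_1 C$ instead of on $A_1 B_2$ and $A_2 B_1$; everything else—including the rectangle $T(|A_1|-|B_1|, |A_2|-|B_2|)$ over which $\epsilon$ ranges and the dualization $\widehat\epsilon$ taken inside that rectangle—is unchanged.

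First I would fix a decomposition $B = B_1 \sqcup B_2$ and isolate the inner sum over $\epsilon$. For this fixed decomposition the factors $\nabla(A_1,B_2)$, $\nabla(A_2,B_1)$, $\Delta(B_1)$, $\Delta(B_2)$ and the sign $(-1)^{|B_2<B_1| + |B_1|(|A_2|-|B_2|)}$ do not depend on $\epsilon$, so they pull out of the $\epsilon$-sum; the only $\epsilon$-dependent part is
\[
\sum_{\epsilon \in T(|A_1|-|B_1|,\, |A_2|-|B_2|)} (-1)^{|\epsilon|}\,\pi_\epsilon(A_1 B_2 C)\,\pi_{\widehat\epsilon}(A_2 B_1 C),
\]
where I use that $(-1)^{-|\epsilon|} = (-1)^{|\epsilon|}$, so the sign coming from the denominator matches the sign in Corollary~\ref{cor:Schur_common_variable}.

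Then I would apply Corollary~\ref{cor:Schur_common_variable} with $a = |A_1|-|B_1|$, $b = |A_2|-|B_2|$, the role of its set ``$A$'' played by $A_1 B_2$, the role of ``$B$'' by $A_2 B_1$, and common variables $C$ (invoking that Schur polynomials are symmetric, so ordered concatenation and union agree as arguments of $\pi$). This rewrites the displayed inner sum as $\sum_\epsilon (-1)^{|\epsilon|}\pi_\epsilon(A_1 B_2)\pi_{\widehat\epsilon}(A_2 B_1)$, i.e.\ precisely the $C$-free summand. Reassembling over all decompositions $B = B_1 \sqcup B_2$ then turns the left-hand side into the left-hand side of Proposition~\ref{prop:schurformula_complicated}, which equals $\nabla(A_1,A_2)\Delta(B)$.

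The index bookkeeping is the only thing to watch: one must check that the dual $\widehat\epsilon$ used here and the dual in Corollary~\ref{cor:Schur_common_variable} refer to the same rectangle $T(a,b)$, and that the sign conventions line up as above. There is no genuine obstacle—the entire content sits in Proposition~\ref{prop:schurformula_complicated} and Corollary~\ref{cor:Schur_common_variable}, and this corollary is their formal consequence obtained by inserting the common variables $C$ into the two Schur factors.
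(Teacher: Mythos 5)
Your proposal is correct and matches the paper's own (one-line) proof, which simply states that the corollary follows directly from Corollary~\ref{cor:Schur_common_variable} and Proposition~\ref{prop:schurformula_complicated}; you have merely spelled out the routine bookkeeping (fixing $B=B_1\sqcup B_2$, pulling out the $\epsilon$-independent factors, and applying the common-variable identity with $a=|A_1|-|B_1|$, $b=|A_2|-|B_2|$). No further comment is needed.
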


\begin{proof}
  It follows directly from Corollary~\ref{cor:Schur_common_variable} and Proposition~\ref{prop:schurformula_complicated}.
\end{proof}

\begin{lem}
  \label{lem:LRpoincare}
  If $a$ and $b$ are two non-negative integer and $\alpha$, $\beta$ and $\gamma$ are three Young diagram in $T(a,b)$, we have:
\[
c_{\alpha \beta}^{\gamma^c} = c_{\gamma \beta}^{\alpha^c}.
\]
\end{lem}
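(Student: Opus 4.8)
The plan is to realise the left-hand side as a fully symmetric triple-product coefficient and then invoke commutativity of Schur multiplication. Fix the ambient rectangle $\rho(a,b)$ and, for $\alpha,\beta,\gamma$ in $T(a,b)$, consider the coefficient of $\pi_{\rho(a,b)}$ in the product $\pi_\alpha\pi_\beta\pi_\gamma$ of Schur polynomials. Since multiplication of Schur polynomials is commutative, this coefficient is invariant under every permutation of $\alpha$, $\beta$, $\gamma$; in particular it is unchanged when $\alpha$ and $\gamma$ are exchanged. The entire lemma will then follow once I identify this coefficient with $c_{\alpha\beta}^{\gamma^c}$.

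To carry out the identification, I would first expand $\pi_\alpha\pi_\beta=\sum_\lambda c_{\alpha\beta}^\lambda\pi_\lambda$ using the product rule for Schur polynomials recalled in Notation~\ref{not:YD}, then multiply by $\pi_\gamma$ and expand once more, so that the coefficient of $\pi_{\rho(a,b)}$ in $\pi_\alpha\pi_\beta\pi_\gamma$ becomes $\sum_\lambda c_{\alpha\beta}^\lambda\,c_{\lambda\gamma}^{\rho(a,b)}$. The key input is the rectangle rule also recalled in Notation~\ref{not:YD}, namely that $c_{\lambda\gamma}^{\rho(a,b)}=1$ when $\lambda\in T(a,b)$ and $\gamma=\widehat{\lambda}^t$, and vanishes otherwise. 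Unwinding the definitions of the duals gives $\widehat{\lambda}^t=\lambda^c$ (indeed $\widehat{\lambda}=(\lambda^c)^t$, so $\widehat{\lambda}^t=\lambda^c$), and since complementation in $T(a,b)$ is involutive the condition $\gamma=\lambda^c$ is equivalent to $\lambda=\gamma^c$. Hence the sum collapses to the single surviving term, yielding
\[
[\pi_{\rho(a,b)}]\bigl(\pi_\alpha\pi_\beta\pi_\gamma\bigr)=c_{\alpha\beta}^{\gamma^c}.
\]

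Combining the two halves finishes the argument: the coefficient of $\pi_{\rho(a,b)}$ in $\pi_\alpha\pi_\beta\pi_\gamma$ equals $c_{\alpha\beta}^{\gamma^c}$, and by commutativity it also equals the coefficient in $\pi_\gamma\pi_\beta\pi_\alpha$, which the same computation identifies with $c_{\gamma\beta}^{\alpha^c}$; therefore $c_{\alpha\beta}^{\gamma^c}=c_{\gamma\beta}^{\alpha^c}$. The only point that needs care — the main, and rather mild, obstacle — is ensuring that partitions $\lambda\notin T(a,b)$ contribute nothing to the sum $\sum_\lambda c_{\alpha\beta}^\lambda\,c_{\lambda\gamma}^{\rho(a,b)}$; this is immediate from the rectangle rule, which forces $c_{\lambda\gamma}^{\rho(a,b)}=0$ unless $\lambda$ already fits inside $\rho(a,b)$, so no spurious terms survive and the selection $\lambda=\gamma^c$ is genuinely unique.
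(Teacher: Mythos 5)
Your proof is correct, but it takes a genuinely different route from the paper. The paper's argument is purely combinatorial: by the Littlewood--Richardson rule, $c_{\alpha\beta}^{\gamma^c}$ counts LR skew tableaux of shape $\gamma^c/\alpha$ with weight $\beta$, and rotating the rectangle $\rho(a,b)$ by $180°$ carries the skew shape $\gamma^c/\alpha$ onto $\alpha^c/\gamma$, inducing a bijection with the tableaux counted by $c_{\gamma\beta}^{\alpha^c}$. You instead identify both sides with the coefficient of $\pi_{\rho(a,b)}$ in the triple product $\pi_\alpha\pi_\beta\pi_\gamma$ and appeal to commutativity, using only the two facts already recorded in Notation~\ref{not:YD} (the product expansion and the rectangle rule $c^{\rho(a,b)}_{\lambda\mu}=\delta_{\mu,\lambda^c}$ for $\lambda\in T(a,b)$). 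Your computation is sound: the degree condition $|\alpha|+|\beta|+|\gamma|=ab$ is automatically enforced (both sides vanish otherwise), the identification $\widehat{\lambda}^t=\lambda^c$ is right, and the collapse of $\sum_\lambda c_{\alpha\beta}^{\lambda}c_{\lambda\gamma}^{\rho(a,b)}$ to the single term $\lambda=\gamma^c$ is justified since $c_{\lambda\gamma}^{\rho(a,b)}=0$ whenever $\lambda\not\subseteq\rho(a,b)$. What your approach buys is that it avoids the tableau bijection entirely (the paper's ``obvious'' correspondence in fact requires rotating the filling and complementing its entries, then checking the lattice-word condition survives), and it proves slightly more: the quantity $c_{\alpha\beta}^{\gamma^c}$ is invariant under \emph{all} permutations of $(\alpha,\beta,\gamma)$, not just the transposition of $\alpha$ and $\gamma$. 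What the paper's bijection buys is an explicit combinatorial matching of the objects being counted, which your algebraic identity does not provide.
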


\begin{proof}[Sketch of the proof]
  Thanks to the Littlewood-Richardson rule, $c_{\alpha \beta}^{\gamma^c}$ (resp. $c_{\gamma \beta}^{\alpha^c}$ ) is given by the number of skew semi-standard tableaux of shape $\gamma^c/\alpha$ (resp. $\alpha^c/\gamma$) with weight $\beta$. There is an obvious one-one correspondence between these two families of skew semi-standard tableaux.
\end{proof}

\begin{prop}\label{prop:orthog}
Let $B_1$, $B_2$, $A$ and $C$ be four disjoint sets of variables such that $|B_1|+ |B_2| \leq |A|$ and $a_1$ and $a_2$, two nonnegative integers such that $a_1\geq |B_1|$ and $a_2 \geq |B_2|$. We consider a Young diagram $\alpha$ in $T(a_1 -|B_1|, a_2-|B_2|)$  and $\beta$ a Young diagram in $T(a_2 -|B_2|, a_1-|B_1|)$. 
  The following relation holds:
\begin{align*}
&\sum_{(A_1,A_2) \in A(a_1, a_2)} (-1)^{|A_2 <A_1|}{\nabla(A_1, B_2) \nabla(A_2, B_1)\pi_{\alpha}(A_1B_2C) \pi_{\beta} (A_2B_1C)}\Delta(A_1)\Delta(A_2) \\ &=
\begin{cases}
(-1)^  {|\beta|+ |B_2|(a_1- |B_1|)}\Delta(A)\nabla(B_1,B_2)  & \textrm{if $\beta=\widehat{\alpha}$,} \\
0 &\textrm{else}.
\end{cases}
\end{align*}
\end{prop}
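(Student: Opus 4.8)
The plan is to reduce the statement to the orthogonality relation of Corollary~\ref{cor:nothat2null} for the $A$-variables, after which the residual combinatorics will be governed by Corollary~\ref{cor:LRorthog}. First I would separate, in each summand, the dependence on $A_1$ and $A_2$ from the dependence on the fixed sets $B_1,B_2,C$. Using the comultiplication of Schur polynomials (Notation~\ref{not:YD}) I would write $\pi_{\alpha}(A_1B_2C)=\sum_{\mu,\rho}c^{\alpha}_{\mu\rho}\pi_{\mu}(A_1)\pi_{\rho}(B_2C)$ and $\pi_{\beta}(A_2B_1C)=\sum_{\nu,\omega}c^{\beta}_{\nu\omega}\pi_{\nu}(A_2)\pi_{\omega}(B_1C)$, and expand the two $\nabla$-factors by Lemma~\ref{lem:nabla2schur}, $\nabla(A_1,B_2)=\sum_{\sigma}(-1)^{|\widehat{\sigma}|}\pi_{\sigma}(A_1)\pi_{\widehat{\sigma}}(B_2)$ and similarly for $\nabla(A_2,B_1)$. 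Collecting the purely $A_1$-factors with $\Delta(A_1)$ via the product rule $\pi_{\sigma}\pi_{\mu}=\sum_{\kappa}c^{\kappa}_{\sigma\mu}\pi_{\kappa}$ turns them into $a_{\kappa}(A_1)=\pi_{\kappa}(A_1)\Delta(A_1)$, and likewise the $A_2$-factors become $a_{\eta}(A_2)$.

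At this point the only remaining $A$-dependence is $\sum_{(A_1,A_2)\in A(a_1,a_2)}(-1)^{|A_2<A_1|}a_{\kappa}(A_1)a_{\eta}(A_2)$, which by Corollary~\ref{cor:nothat2null} equals $(-1)^{|\kappa|}\delta_{\kappa\widehat{\eta}}\Delta(A)$. This collapses the $\kappa,\eta$ sums (imposing $\kappa=\widehat{\eta}$ in $T(a_1,a_2)$) and pulls out the global factor $\Delta(A)$. Setting $a:=a_1-|B_1|$ and $b:=a_2-|B_2|$, so that $\alpha\in T(a,b)$ and $\beta\in T(b,a)$ exactly as in Corollary~\ref{cor:LRorthog}, what survives is a sum over the diagrams $\sigma,\tau,\mu,\nu,\rho,\omega$ of a product of Littlewood--Richardson coefficients against $\pi_{\widehat{\sigma}}(B_2)\pi_{\widehat{\tau}}(B_1)\pi_{\rho}(B_2C)\pi_{\omega}(B_1C)$. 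Using Lemma~\ref{lem:hat_schur} and Lemma~\ref{lem:LRpoincare} to rewrite the coefficients carrying the hat and complement operations, I would recognise this residual sum as an instance of the orthogonality Corollary~\ref{cor:LRorthog}, in which the free diagram $\nu$ is played by the $C$-dependence and the roles of the sets $A,B$ there are played by $B_2,B_1$ here.

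The output of Corollary~\ref{cor:LRorthog} is exactly a two-sided indicator: it vanishes unless the free diagram is empty, which forces the $C$-variables to drop out, and unless $\beta=\widehat{\alpha}$; in the surviving case it contributes the sign $(-1)^{|\alpha|}$. In that branch the leftover factors $\pi_{\widehat{\sigma}}(B_2)\pi_{\widehat{\tau}}(B_1)$ reassemble, again through Lemma~\ref{lem:nabla2schur}, into $\nabla(B_1,B_2)$, yielding the claimed value $(-1)^{|\beta|+|B_2|(a_1-|B_1|)}\Delta(A)\nabla(B_1,B_2)$.

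The main obstacle is not the structural reduction but the sign bookkeeping: one must track the exponents introduced by each comultiplication, by each Vandermonde expansion of Lemma~\ref{lem:nabla2schur}, and by the delta $\delta_{\kappa\widehat{\eta}}$ of Corollary~\ref{cor:nothat2null}, and check that they telescope to precisely $|\beta|+|B_2|(a_1-|B_1|)$. As in the proof of Proposition~\ref{prop:schurformula_complicated}, I expect it to be cleanest to verify the identity first while ignoring all signs, and then reinstate them on a second pass.
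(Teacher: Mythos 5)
Your opening reduction is sound: expanding the two $\nabla$-factors by Lemma~\ref{lem:nabla2schur}, comultiplying $\pi_\alpha(A_1B_2C)$ and $\pi_\beta(A_2B_1C)$, merging the $A_1$- (resp.\ $A_2$-) factors into $a_\kappa(A_1)$ (resp.\ $a_\eta(A_2)$) and applying Corollary~\ref{cor:nothat2null} does legitimately kill the sum over $(A_1,A_2)$, produce the factor $\Delta(A)$, and impose $\kappa=\widehat{\eta}$ in $T(a_1,a_2)$. The gap is in the last step: the residue is \emph{not} an instance of Corollary~\ref{cor:LRorthog}. What you are left with is a sum over (roughly) seven diagrams $\sigma,\tau,\mu,\nu,\rho,\omega,\kappa$ of a product of \emph{four} Littlewood--Richardson coefficients $c^{\alpha}_{\mu\rho}c^{\beta}_{\nu\omega}c^{\kappa}_{\sigma\mu}c^{\widehat{\kappa}}_{\tau\nu}$ against \emph{four} Schur factors $\pi_{\widehat{\sigma}}(B_2)\pi_{\widehat{\tau}}(B_1)\pi_{\rho}(B_2C)\pi_{\omega}(B_1C)$, whereas Corollary~\ref{cor:LRorthog} is a three-index sum of three LR coefficients with no polynomial factors. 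Three things are unaccounted for: (a) your duality constraint $\kappa=\widehat{\eta}$ lives in the rectangle $T(a_1,a_2)$, while the orthogonality you want to invoke pairs diagrams dual in $T(a_1-|B_1|,\,a_2-|B_2|)$, and translating between the two dualities is a nontrivial step (this is what the rectangle-splitting Lemma~\ref{lem:rectangle2schur} is for elsewhere in the appendix); (b) the factor $\nabla(B_1,B_2)$ must be extracted from $\pi_{\widehat{\sigma}}(B_2)\pi_{\widehat{\tau}}(B_1)\pi_{\rho}(B_2C)\pi_{\omega}(B_1C)$, but Lemma~\ref{lem:nabla2schur} only reassembles $\pi_{\widehat{\tau}}(B_1)\pi_{\widehat{\sigma}}(B_2)$ into $\nabla(B_1,B_2)$ when $\widehat{\tau}$ and $\widehat{\sigma}$ are dual in $T(|B_1|,|B_2|)$, and nothing in your setup enforces that ($\widehat{\sigma}$ a priori ranges over $T(|B_2|,a_1)$ and $\widehat{\tau}$ over $T(|B_1|,a_2)$); (c) the elimination of the $C$-variables. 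None of these is mere sign bookkeeping; together they are the actual content of the proof.

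The paper sidesteps all three at once by a generating-function detour you do not have: it introduces auxiliary alphabets $X_1,X_2$ with $|X_1|=a_1-|B_1|$, $|X_2|=a_2-|B_2|$, applies Corollary~\ref{cor:schurformula_simple} to the sets $B_2X_2$ and $B_1X_1$, and expands both sides in the Schur basis of $X_1,X_2$. Matching coefficients of $\pi_{\widehat{\alpha_1}}(X_2)\pi_{\widehat{\alpha_2}}(X_1)$ yields the key intermediate identity for $\sum(-1)^{|A_2<A_1|}\nabla(A_1,B_2)\nabla(A_2,B_1)\pi_{\alpha_1}(A_1)\pi_{\alpha_2}(A_2)\Delta(A_1)\Delta(A_2)$ with the factor $\nabla(B_1,B_2)\Delta(A)$ already pulled out explicitly and with $\alpha_1,\alpha_2$ automatically confined to the correct rectangles; only then are the $B_2C$- and $B_1C$-factors reattached, and LR associativity, Lemma~\ref{lem:LRpoincare} and Corollary~\ref{cor:LRorthog} close the argument. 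If you want to keep your direct route, you would need to supply the analogues of these steps by hand; as written, the sentence ``I would recognise this residual sum as an instance of Corollary~\ref{cor:LRorthog}'' is where the proof actually lives.
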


\begin{proof}
  We use Corollary~\ref{cor:schurformula_simple}. Let $X_1$ and $X_2$ be two disjoint sets of variables such that $|X_1| = a_1 -|B_1|$ and $|X_2| = a_2 -|B_2|$. We suppose that the variables in $X_1$ and $X_2$ are greater than any variable in $A$ and $B_1$ and $B_2$ We have:
\[
\sum_{(A_1, A_2) \in A(a_1, a_2)} (-1)^{|A_2 < A_1|} \nabla(A_1,B_2X_2) \nabla(A_2, B_1X_1) \Delta(A_1) \Delta(A_2) = \nabla(B_1X_1, B_2X_2) \Delta(A).
\]
Developing on both sides we get:
\begin{align*}
 & \sum_{(A_1, A_2) \in A(a_1, a_2)} 
(-1)^{|A_2 < A_1|} \nabla(A_1,B_2)\nabla(A_1,X_2) \nabla(A_2, B_1)\nabla(A_2,X_1) \Delta(A_1) \Delta(A_2) \\ 
&\qquad \qquad \qquad= \nabla(B_1,B_2)\nabla(X_1, B_2) \nabla(B_1,X_2)\nabla(X_1,X_2) \Delta(A).
\end{align*}
Let us now inspect the left-hand side:
\begin{align*}
 & \sum_{(A_1, A_2) \in A(a_1, a_2)} (-1)^{|A_2 < A_1|} \nabla(A_1,B_2)\nabla(A_1,X_2) \nabla(A_2, B_1)\nabla(A_2,X_1) \Delta(A_1) \Delta(A_2) \\ &= 
 \sum_{\substack{(A_1, A_2) \in A(a_1,a_2) \\ \alpha_2 \in T(a_2, a_1 - |B_1|)} }
\frac{
\nabla(A_1,B_2)\nabla(A_2, B_1) \pi_{\alpha_1}(A_1) \pi_{\widehat{\alpha_1}}(X_2) \pi_{\alpha_2}(A_2) \pi_{\widehat{\alpha_2}}(X_1) \Delta(A_1) \Delta(A_2)}{
(-1)^{|A_2 < A_1| + |\widehat{\alpha_1}| + |\widehat{\alpha_2}|}
}.
\end{align*}
And now the right-hand side:
\begin{align*}
&\nabla(B_1,B_2)\nabla(X_1, B_2) \nabla(B_1,X_2)\nabla(X_1,X_2) \Delta(A) \\ &=
\sum_{\substack{\beta_1 \in T(|B_1|, a_2 - |B_2|) \\ \beta_2 \in T(|B_2|, a_1 - |B_1|)  \\ \tau \in T(a_1 - |B_1|, a_2- |B_2|)}}
\frac{
\nabla(B_1,B_2)\pi_{\beta_1}(B_1) \pi_{\widehat{\beta_1}}(X_2) \pi_{\beta_2}(B_2) \pi_{\widehat{\beta_2}}(X_1) \pi_{\tau}(X_1) \pi_{\widehat{\tau}}(X_2) \Delta(A) 
}{
(-1)^{|\widehat{\beta_1}| + |\beta_2| + |\widehat{\tau}|}
}
\\ 
&= \sum_{\substack{\beta_1 \in T(|B_1|, a_2 - |B_2|) \\ \beta_2 \in T(|B_2|, a_1 - |B_1|)  \\ \tau \in T(a_1 - |B_1|, a_2- |B_2|) \\ \lambda, \mu}}
\frac{
c_{\widehat{\beta_2}\tau}^{\lambda} c_{\widehat{\beta_1}\widehat{\tau}}^{\mu}   \nabla(B_1,B_2)\pi_{\beta_1}(B_1)  \pi_{\beta_2}(B_2)  \pi_{\lambda}(X_1) \pi_{\mu}(X_2) \Delta(A)
}{
(-1)^{|\widehat{\beta_1}| + |\beta_2| + |\widehat{\tau}|}
}. 
\end{align*}
We can identify on both sides with $\widehat{\lambda} = \alpha_2$ in  $T(a_2, a_1- |B_1|)$ and $\widehat{\mu}= \alpha_1$ in $T(a_1, a_2- |B_2|)$. This gives:
\begin{align*}
&\sum_{(A_1, A_2) \in A(a_1, a_2)} (-1)^{|A_2 < A_1|} \nabla(A_1,B_2)\nabla(A_2, B_1) \pi_{\alpha_1}(A_1) \pi_{\alpha_2}(A_2) \Delta(A_1) \Delta(A_2) \\ &=
\sum_{\substack{\beta_1 \in T(|B_1|, a_2 - |B_2|) \\ \beta_2 \in T(|B_2|, a_1 - |B_1|)  \\ \tau \in T(a_1 - |B_1|, a_2- |B_2|) \\ }} (-1)^{|\tau| + |B_2|(a_1-|B_1|)} c_{\widehat{\beta_2}\tau}^{\widehat{\alpha_2}} c_{\widehat{\beta_1}\widehat{\tau}}^{\widehat{\alpha_1}}   \nabla(B_1,B_2)\pi_{\beta_1}(B_1)  \pi_{\beta_2}(B_2)  \Delta(A).
\end{align*}
Therefore:
\begin{align*}
&\sum_{(A_1, A_2) \in A(a_1, a_2)} (-1)^{|A_2 < A_1|}  \nabla(A_1,B_2)\nabla(A_2, B_1) \pi_{\alpha}(A_1B_2C) \pi_{\beta}(A_2B_1C) \Delta(A_1) \Delta(A_2) 
\\ &=
\left(\sum_{\substack{\gamma_1, \gamma_2 \\ \alpha_1\in T(a_1, a_2-|B_2|) \\ \alpha_2 \in T(a_2, a_1 -|B_1|) }}c_{\alpha_1 \gamma_1}^{\alpha} c_{\alpha_2 \gamma_2}^{\beta} \pi_{\gamma_1}(B_2C) \pi_{\gamma_2}(B_1C)\right) \\ & \qquad \qquad \cdot
\left(\sum_{\substack{\beta_1 \in T(|B_1|, a_2 - |B_2|) \\ \beta_2 \in T(|B_2|, a_1 - |B_1|)  \\ \tau \in T(a_1 - |B_1|, a_2- |B_2|) \\ }} 
\frac{c_{\widehat{\beta_2}\tau}^{\widehat{\alpha_2}} c_{\widehat{\beta_1}\widehat{\tau}}^{\widehat{\alpha_1}}   \nabla(B_1,B_2)\pi_{\beta_1}(B_1)  \pi_{\beta_2}(B_2)  \Delta(A)}{
(-1)^{|\tau| + |B_2|(a_1-|B_1|)}
}
\right)
\\ &=
\sum_{\substack{\gamma_1, \gamma_2 \\\alpha_1\in T(a_1, a_2-|B_2|) \\ \alpha_2 \in T(a_2, a_1 -|B_1|)  \\ \beta_1 \in T(|B_1|, a_2 - |B_2|) \\ \beta_2 \in T(|B_2|, a_1 - |B_1|)  \\ \tau \in T(a_1 - |B_1|, a_2- |B_2|) \\ }} 
\frac{
c_{\alpha_1 \gamma_1}^{\alpha} c_{\alpha_2 \gamma_2}^{\beta} c_{\beta_2\widehat{\tau}}^{\alpha_2} c_{\beta_1{\tau}}^{\alpha_1}   \pi_{\gamma_1}(B_2C) \pi_{\gamma_2}(B_1C)  \nabla(B_1,B_2)\pi_{\beta_1}(B_1)  \pi_{\beta_2}(B_2)  \Delta(A)
}{
(-1)^{|\tau| + |B_2|(a_1-|B_1|)|}
}
.
\end{align*}
Since $\alpha$ is in $T(a_1 -|B_1|, a_2-|B_2|)$ and $\beta$ in $T(a_2-|B_2|, a_1 -|B_1|)$ we can restrict the ranges of $\alpha_1$, $\gamma_1$, $\beta_2$ and $\gamma_2$. We use the associativity property on Littlewood-Richardson coefficients (namely $c_{\alpha_2 \gamma_2}^{\beta} c_{\beta_2\widehat{\tau}}^{\alpha_2} =c_{\alpha_2 \widehat{\tau}}^{\beta} c_{\beta_2\gamma_2}^{\alpha_2}$ and $c_{\alpha_1 \gamma_1}^{\alpha} c_{\beta_1{\tau}}^{\alpha_1} = c_{\tau \gamma_1}^{\alpha} c_{\beta_1 \gamma_1}^{\alpha_1}$). This gives:
\begin{align*}
 &=
\sum_{\substack{ \alpha_1, \gamma_1\in T(a_1-|B_1|, a_2-|B_2|) \\ \alpha_2, \gamma_1 \in T(a_2-|B_2|, a_1 -|B_1|)  \\ \beta_1 \in T(|B_1|, a_2 - |B_2|) \\ \beta_2 \in T(|B_2|, a_1 - |B_1|)  \\ \tau \in T(a_1 - |B_1|, a_2- |B_2|) \\ }} 
\frac{
c_{\tau \gamma_1}^{\alpha} c_{\alpha_2 \widehat{\tau}}^{\beta} c_{\beta_2\gamma_2}^{\alpha_2} c_{\beta_1 \gamma_1}^{\alpha_1}   \pi_{\gamma_1}(B_2C) \pi_{\gamma_2}(B_1C)  \nabla(B_1,B_2)\pi_{\beta_1}(B_1)  \pi_{\beta_2}(B_2)  \Delta(A)
}{
(-1)^{|\tau| + |B_2|(a_1-|B_1|)}
} 
\\ &=
\sum_{\substack{ \alpha_1, \gamma_1\in T(a_1-|B_1|, a_2-|B_2|) \\ \alpha_2, \gamma_1 \in T(a_2-|B_2|, a_1 -|B_1|)  \\ \beta_1 \in T(|B_1|, a_2 - |B_2|) \\ \beta_2 \in T(|B_2|, a_1 - |B_1|)  \\ \tau \in T(a_1 - |B_1|, a_2- |B_2|) \\ }} 
(-1)^{|\tau| + |B_2|(a_1-|B_1|)}
c_{\tau \alpha_1}^{\alpha} c_{\alpha_2 \widehat{\tau}}^{\beta}
 \pi_{\alpha_1}(B_{1}B_2C) \pi_{\alpha_2}(B_1B_2C)  \nabla(B_1,B_2)
\Delta(A)
\\
&=
\sum_{\substack{ \nu \\\alpha_1, \gamma_1\in T(a_1-|B_1|, a_2-|B_2|) \\ \alpha_2, \gamma_1 \in T(a_2-|B_2|, a_1 -|B_1|)  \\ \beta_1 \in T(|B_1|, a_2 - |B_2|) \\ \beta_2 \in T(|B_2|, a_1 - |B_1|)  \\ \tau \in T(a_1 - |B_1|, a_2- |B_2|) \\ }} 
(-1)^{|\tau| + |B_2|(a_1-|B_1|)}
c^{\nu}_{\alpha_1\alpha_2}c_{\tau \alpha_1}^{\alpha} c_{\alpha_2 \widehat{\tau}}^{\beta}
 \pi_{\nu}(B_{1}B_2C)  \nabla(B_1,B_2)
\Delta(A). 
\end{align*}
We now use Lemma~\ref{lem:LRpoincare} and change $\tau^c$ for $\tau$. This gives:
\begin{align*}
&\sum_{(A_1, A_2) \in A(a_1, a_2)} (-1)^{|A_2 < A_1|}\nabla(A_1,B_2)\nabla(A_2, B_1) \pi_{\alpha}(A_1B_2C) \pi_{\beta}(A_2B_1C) \Delta(A_1) \Delta(A_2) 
\\ &=
\sum_{\substack{ \nu \\\alpha_1, \gamma_1\in T(a_1-|B_1|, a_2-|B_2|) \\ \alpha_2, \gamma_1 \in T(a_2-|B_2|, a_1 -|B_1|)  \\ \beta_1 \in T(|B_1|, a_2 - |B_2|) \\ \beta_2 \in T(|B_2|, a_1 - |B_1|)  \\ \tau \in T(a_1 - |B_1|, a_2- |B_2|) \\ }} (-1)^{|\tau| + |B_2|(a_1-|B_1|)}
c^{\nu}_{\alpha_1\alpha_2}c_{\alpha^c \alpha_1}^{\tau} c_{\alpha_2 \beta^c}^{\widehat{\tau}}
 \pi_{\nu}(B_{1}B_2C)  \nabla(B_1,B_2)
\Delta(A).
\end{align*}
We conclude with Corollary~\ref{cor:LRorthog}.
\end{proof}

\section{Details of computations}
\label{sec:tables}
\newcommand{\z}{\cellcolor{green!20}}
\newcommand{\q}{\cellcolor{yellow!20}}
\clearpage\label{tables}
\includepdf[pages={1}]{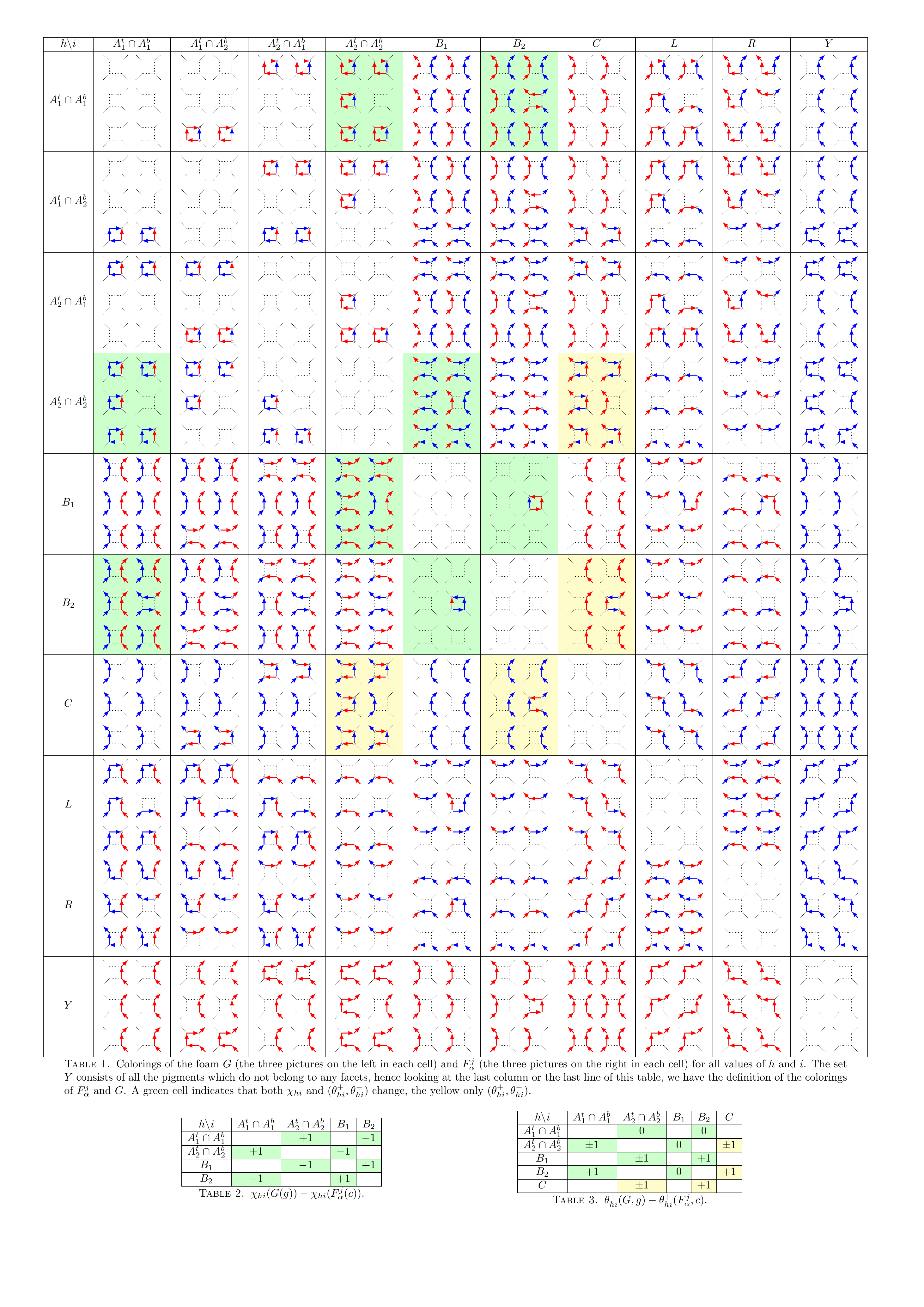}

\label{tables2}
\includepdf[pages={1}]{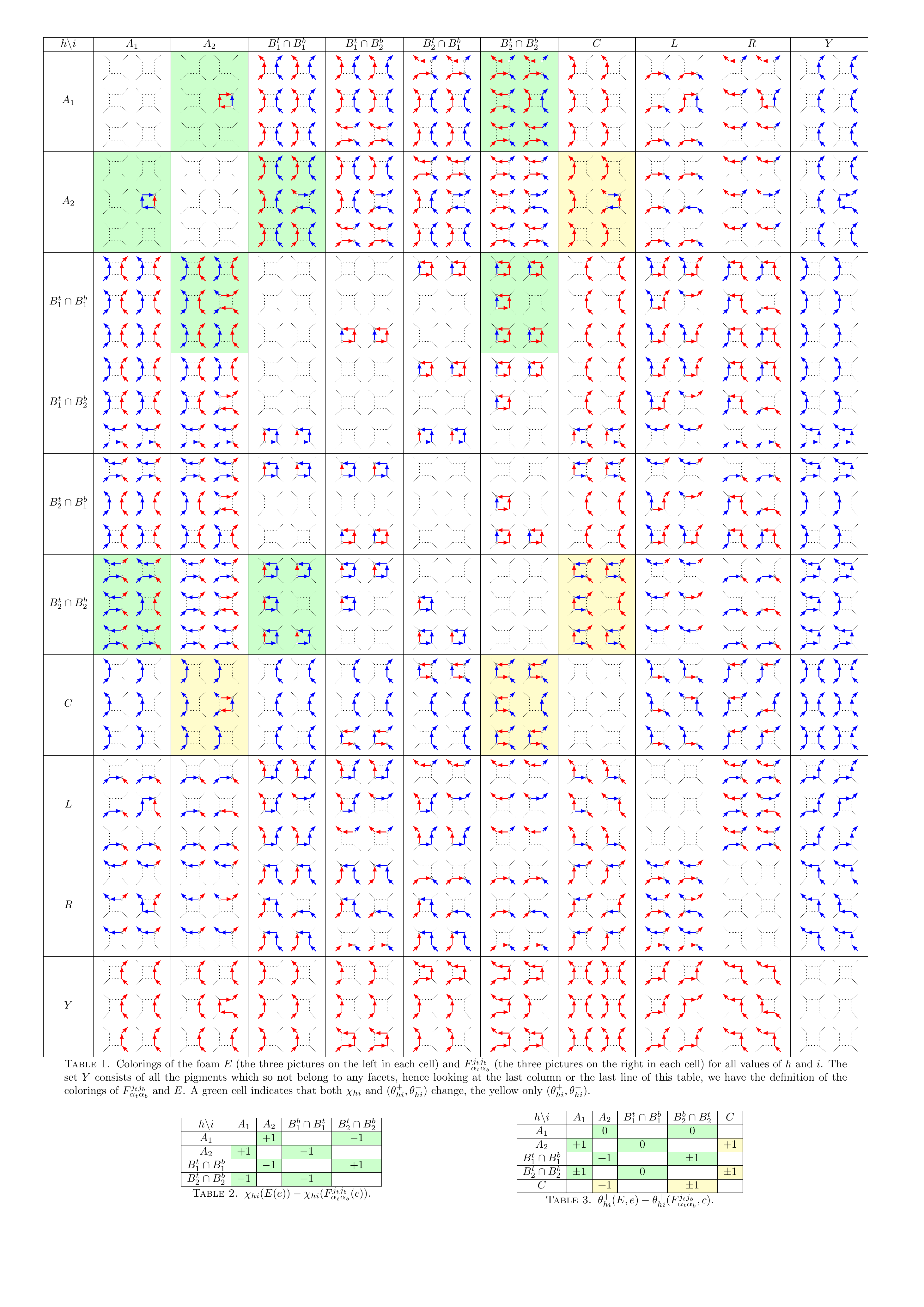}

\bibliographystyle{alphaurl}
\bibliography{bibliocef}



\end{document}